\def\slash{\kern0.18em/\penalty\exhyphenpenalty\kern0.18em}
\def\dash{\kern0.18em--\penalty\exhyphenpenalty\kern0.18em}
\DeclareFontFamily{U}{BOONDOX-calo}{\skewchar\font=45 }
\DeclareFontShape{U}{BOONDOX-calo}{m}{n}{<-> s*[1.05] BOONDOX-r-calo}{}
\DeclareFontShape{U}{BOONDOX-calo}{b}{n}{<-> s*[1.05] BOONDOX-b-calo}{}
\DeclareMathAlphabet{\mcb}{U}{BOONDOX-calo}{m}{n}
\SetMathAlphabet{\mcb}{bold}{U}{BOONDOX-calo}{b}{n}
\newcommand*{\fat}{}
\DeclareRobustCommand*{\fat}{%
\mathbin{\mathpalette\bigcdot@{}}}
\newcommand*{\bigcdot@scalefactor}{.5}
\newcommand*{\bigcdot@widthfactor}{1.15}
\newcommand*{\bigcdot@}[2]{%
  \sbox0{$#1\vcenter{}$}
  \sbox2{$#1\cdot\m@th$}%
  \hbox to \bigcdot@widthfactor\wd2{%
    \hfil
    \raise\ht0\hbox{%
      \scalebox{\bigcdot@scalefactor}{%
        \lower\ht0\hbox{$#1\bullet\m@th$}%
      }%
    }%
    \hfil
  }%
}
    \pgfmathsetlength{\pgf@xb}{\pgfkeysvalueof{/pgf/outer xsep}}%
    \pgfmathsetlength{\pgf@yb}{\pgfkeysvalueof{/pgf/outer ysep}}%
\colorlet{symbols}{blue!90!black}
\colorlet{constants}{red!90!black}
\colorlet{testcolor}{green!60!black}
\definecolor{darkred}{rgb}{0.7,0.1,0.1}
\definecolor{darkblue}{rgb}{0.1,0.1,0.8}
\def\symbol#1{\textcolor{symbols}{#1}}
\def\rd#1{\textcolor{darkred}{#1}}
\def\1{\mathbf{\symbol{1}}}
\let\f\frac
\def\X{\symbol{X}}
\def\drawx{\draw[-,solid] (-3pt,-3pt) -- (3pt,3pt);\draw[-,solid] (-3pt,3pt) -- (3pt,-3pt);}
\def\decorate#1#2{
        \ifnum#2>0
    		\foreach \count in {1,...,#2}{
	       	let
				\p1 = (sourcenode.center),
                \p2 = (sourcenode.east),
				\n1 = {\x2-\x1},
				\n2 = {0.1*\n1},
				\n3 = {(1.3+0.6*(\count-1))*\n1},
				\n4 = {0.7*\n1}
			in 
        		node[rectangle,fill=current,rotate=30,inner sep=0pt,minimum width=0.4*\n2,minimum height=1.5*\n2] at ($(sourcenode.center) + (\n3,\n4)$) {}
				}
		\fi
        \ifnum#1>0
    		\foreach \count in {1,...,#1}{
	       	let
				\p1 = (sourcenode.center),
                \p2 = (sourcenode.east),
				\n1 = {\x2-\x1},
				\n2 = {0.1*\n1},
				\n3 = {(1.3+0.6*(\count-1))*\n1},
				\n4 = {0.7*\n1}
			in 
        		node[rectangle,fill=current,rotate=-30,inner sep=0pt,minimum width=0.4*\n2,minimum height=1.5*\n2] at ($(sourcenode.center) + (\n3,-\n4)$) {}
				}
		\fi
}
\tikzset{
    deccircle/.style 2 args={
        circle,
        alias=sourcenode,
        append after command={\decorate{#1}{#2}}
    },
    deccircle/.default={0}{0},
    deccross/.style 2 args={
        crosscircle,
        alias=sourcenode,
        append after command={\decorate{#1}{#2}}
    },
    decsquare/.style 2 args={
        diamond,
        alias=sourcenode,
	append after command={
			let
			\p1 = (sourcenode.center),
            \p2 = (sourcenode.east),
			\n1 = {0.12*(\x2-\x1)}
			in
            node [
                draw=current,thin,
                inner sep=0pt,
                minimum width=\n1,
                minimum height=\n1,
                cross out
            ] at (sourcenode.center) {}
        },
        append after command={\decorate{#1}{#2}}
            },
    deccross/.default={0}{0},
}
\tikzstyle{tinydots}=[dash pattern=on \pgflinewidth off \pgflinewidth]
\tikzset{
	root/.style={circle,fill=testcolor,inner sep=0pt, minimum size=2mm},
	dot/.style={circle,fill=black,inner sep=0pt, minimum size=1mm},
	var/.style={circle,fill=black!10,draw=black,inner sep=0pt, minimum size=2mm},
	dotred/.style={circle,fill=black!50,inner sep=0pt, minimum size=2mm},
	generic/.style={semithick,shorten >=1pt,shorten <=1pt},
	dist/.style={ultra thick,draw=testcolor,shorten >=1pt,shorten <=1pt},
	testfcn/.style={ultra thick,testcolor,shorten >=1pt,shorten <=1pt,<-},
	testfcnx/.style={ultra thick,testcolor,shorten >=1pt,shorten <=1pt,<-,
		postaction={decorate,decoration={markings,mark=at position 0.6 with {\drawx}}}},
	kprime/.style={semithick,shorten >=1pt,shorten <=1pt,densely dashed,->},
	kprimex/.style={semithick,shorten >=1pt,shorten <=1pt,densely dashed,->,
		postaction={decorate,decoration={markings,mark=at position 0.4 with {\drawx}}}},
	kernel/.style={semithick,shorten >=1pt,shorten <=1pt,->},
	kernelboundary/.style={very thick,draw=darkred},
	multx/.style={shorten >=1pt,shorten <=1pt,
		postaction={decorate,decoration={markings,mark=at position 0.5 with {\drawx}}}},
	kernelx/.style={semithick,shorten >=1pt,shorten <=1pt,->,
		postaction={decorate,decoration={markings,mark=at position 0.4 with {\drawx}}}},
	kernel1/.style={->,semithick,shorten >=1pt,shorten <=1pt,postaction={decorate,decoration={markings,mark=at position 0.45 with {\draw[-] (0,-0.1) -- (0,0.1);}}}},
	kernel2/.style={->,semithick,shorten >=1pt,shorten <=1pt,postaction={decorate,decoration={markings,mark=at position 0.45 with {\draw[-] (0.05,-0.1) -- (0.05,0.1);\draw[-] (-0.05,-0.1) -- (-0.05,0.1);}}}},
	kernelBig/.style={decorate, decoration={zigzag,amplitude=1pt,segment length = 1pt,pre length=2pt,post length=2pt}},
	rho/.style={dotted,semithick,shorten >=1pt,shorten <=1pt},
	xix/.style 2 args={deccross={#1}{#2},fill=current!10,draw=current,thin,inner sep=0pt,minimum size=1.2mm},
    xix/.default={0}{0},
	xixb/.style 2 args={deccross={#1}{#2},fill=current!10,draw=current,thin,inner sep=0pt,minimum size=1.8mm},
    xixb/.default={0}{0},
	xi/.style 2 args={deccircle={#1}{#2},solid,fill=current!10,draw=current,thin,inner sep=0pt,minimum size=1.2mm},
	xis/.style 2 args={deccircle={#1}{#2},diamond,fill=current!10,draw=current,thin,inner sep=0pt,minimum size=1.5mm},
	xisx/.style 2 args={decsquare={#1}{#2},fill=current!10,draw=current,thin,inner sep=0pt,minimum size=1.5mm},
    xi/.default={0}{0},
    xis/.default={0}{0},
    xisx/.default={0}{0},
	xib/.style 2 args={deccircle={#1}{#2},fill=current!10,draw=current,thin,inner sep=0pt,minimum size=1.8mm},
    xib/.default={0}{0},
	not/.style={circle,solid,thin,fill=current,draw=current,inner sep=0pt,minimum size=0.5mm},
	>=stealth,
	}
\colorlet{current}{symbols}
\def\DeclareSymbol#1#2#3{
\expandafter\gdef\csname MH@symb@#1\endcsname{\,\tikzsetnextfilename{symbol#1}\colorlet{current}{symbols}%
\tikz[baseline=#2,scale=0.15,fill=symbols,draw=symbols]{#3}\,}
\expandafter\gdef\csname MH@symb@#1s\endcsname{\tikzsetnextfilename{symbol#1}\colorlet{current}{symbols}\scalebox{0.7}%
{\tikz[baseline=#2,scale=0.15,fill=symbols,draw=symbols]{#3}}}
\expandafter\gdef\csname MH@symb@#1c\endcsname{\,\tikzsetnextfilename{symbol#1c}\colorlet{current}{constants}%
\tikz[baseline=#2,scale=0.15,fill=constants,draw=constants]{#3}\,}
}
\def\<#1>{\ifthenelse{\boolean{mmode}}{\mathchoice{\csname MH@symb@#1\endcsname}{\csname MH@symb@#1\endcsname}{\csname MH@symb@#1s\endcsname}{\csname MH@symb@#1ss\endcsname}}{\csname MH@symb@#1\endcsname}}
\def\invariant{{\<Xismall>}}
\def\sXi{\symbol{\Xi}}
\def\Wick#1{\mathord{{:}{#1}{:}}}
\newtheorem{assumption}[lemma]{Assumption}
\def\DD{\mathfrak{d}}
\def\s{\mathfrak{s}}
\def\mft{\mathfrak{t}}
\def\c{\mathfrak{c}}
\def\mfL{\mathfrak{L}}
\def\Lab{\mathfrak{L}}
\def\CCum{\textnormal{\tiny{Cum}}}
\newcommand{\V}  {\overline{V}}
\def\TT{\mathcal{T}}
\def\GG{\mathcal{G}}
\def\V{\overline{V}}
\def\VV{\hat{V}_\eps}
\def\${|\!|\!|}
\def\Eta{\symbol{\Xi_1}}
\def\CI{\symbol{\mathcal{I}}}
\def\dCI{\rd{\pmb{\mathcal{I}}}}
\def\XX#1#2{\symbol{\Xi_{#1}^{#2}}}
\def\X{\symbol{X}}
\def\1{\symbol{\mathbf{1}}}
\def\u{u^{(0)}}
\def\uu{u^{(1)}}
\def\v{v^{(0)}_\eps}
\def\vv{v^{(1)}_\eps}
\def\w#1{v^{(1,#1)}_\eps}
\def\vvv{\bar v_\eps}
\def\Reps{R_\eps^{(d)}}
\def\Repsone{R_\eps^{(1)}}
\def\Rteps{\hat R_\eps^{(d)}}
\def\scale{\mathbf{n}}
\def\tree{\mathbf{t}}
\def\trees{\mathbf{T}}
\def\half{{1\over 2}}
\def\Neu{\textnormal{\tiny \textrm{Neu}}}
\def\Dir{\textnormal{\tiny \textrm{Dir}}}
\def\PPi{\boldsymbol{\Pi}}
\def\hPeps{\hat\PPi_\eps}
\def\Veps{\rlap{$\overline{\phantom{V}}$}V_{\eps}}
\def\Alg{\mathop{\textrm{Alg}}\nolimits}
\def\Vec{\mathop{\textrm{Vec}}\nolimits}
\def\degb{\mathop{\overline{\textrm{deg}}}\nolimits}
\def\BPHZ{{\textnormal{\tiny \textsc{bphz}}}}
\def\Deltam{\Delta^{\!-}}
\def\beps{{\!\!\!\eps}}
\def\id{\mathrm{id}}
\def\restr{\mathrel{\upharpoonright}}
\newcommand\dcirc[1]{%
  {
   \mathop{\kern0pt #1}\limits^{
     \vbox to-1.85ex{
       \kern-2ex 
       \hbox to 0pt{\hss\normalfont\kern0em$\mathring{}\ \mathring{}$\hss}%
       \vss 
     }
   }
  }
}
\newcommand\tcirc[1]{%
  {
   \mathop{\kern0pt #1}\limits^{
     \vbox to-1.85ex{
       \kern-2ex 
       \hbox to 0pt{\hss\normalfont\kern0em$\mathring{}\kern0.2em\mathring{}\kern0.2em\mathring{}$\hss}%
       \vss 
     }
   }
  }
}
\begin{document}

\title{Fluctuations around a homogenised\\ semilinear random PDE}
\author{Martin Hairer$^1$ and \'Etienne Pardoux$^2$}
\institute{Imperial College London, UK, \email{m.hairer@imperial.ac.uk} 
\and  Aix Marseille Univ, CNRS, Centrale Marseille, I2M, Marseille, France, \email{etienne.pardoux@univ-amu.fr}}

\date{5 May 2021}

\maketitle

\begin{abstract}
We consider a semilinear parabolic partial differential equation in $\R_+\times [0,1]^d$, where 
$d=1, 2$ or $3$, with a highly oscillating random potential and either homogeneous Dirichlet or 
Neumann boundary condition. If the amplitude of the oscillations has the right size compared to its typical 
spatiotemporal scale, then the solution of our equation converges to the solution of a deterministic 
homogenised parabolic PDE, which is a form of law of large numbers.  Our main interest is in 
the associated central limit theorem. Namely, we study the limit of a properly rescaled difference between 
the initial random solution and its LLN limit. In dimension $d=1$, that rescaled difference converges 
as one might expect to a centred Ornstein-Uhlenbeck process. However, in dimension $d=2$, the limit 
is a non-centred Gaussian process, while in dimension $d=3$, before taking the CLT limit, we need to 
subtract at an intermediate scale the solution of a deterministic parabolic PDE, subject (in the case of Neumann boundary condition) to a \textit{non-homogeneous} Neumann boundary condition. 
Our proofs make use of the theory of regularity structures, in particular of the very recently 
developed methodology allowing to treat parabolic PDEs with boundary conditions within that theory.\\[.5em]
\textbf{Keywords:} Homogenisation, CLT, regularity structures. 
\end{abstract}

\tableofcontents

\section{Introduction}

Fix $D = [0,1]^d$ with $d \le 3$,
and consider the family of functions $u_\eps \colon [0,T]\times D \to \R$
solving the PDE
\begin{equ}[e:mainLLN]
\partial_t u_\eps(t,x)= \Delta  u_\eps(t,x)+H(u_\eps(t,x))+G(u_\eps(t,x))\eta_\eps(t,x),\quad u_\eps(0,x)=u_0(x),
\end{equ}
endowed with either Dirichlet boundary conditions $u_\eps(t,x) = 0$ for $x\in\partial D$
or Neumann boundary conditions $\scal{n(x), \nabla u_\eps(t,x)} = 0$, where $n$ denotes the outward facing
unit vector normal to the boundary of $D$.
The driving noise $\eta_\eps$ appearing in this equation is given by 
\begin{equ}[e:scalingEta]
	\eta_\eps(t,x)=\eps^{-1}\eta( \eps^{-2}t,\eps^{-1}x)\;,
\end{equ}
where $\eta(t,x)$ is a stationary centred random field, 
which we do not assume Gaussian, but with relatively good mixing properties (see Assumption~\ref{ass:kappa} 
below for details)
and moments of all orders after testing against a test function. 
Note that $\eta_\eps$ is scaled by $\eps^{-1}$ rather than $\eps^{-(d+2)/2}$, so the noise 
from \cite{HP} and \cite{GuTsai} (which were restricted to $d=1$) has been multiplied by $\eps^{d/2}$.

In the case when $G$ is linear and $H=0$, this problem has been well studied. See for example
\cite{Bal} for the case where furthermore $\eta$ is Gaussian and constant in time (note that in our case 
$\mathfrak{m}=2$ so the exponent $\alpha$ appearing there would equal $1$ in our case, as used in
\eqref{e:scalingEta}), \cite{BalGu} for a similar result in the non-Gaussian case.

Although we will allow $\eta$ to be a generalised random field in $d=1$, we assume throughout that there exist 
locally integrable \textit{functions} $\kappa_p \colon (\R^{d+1})^p \to \R$ that are continuous outside of the big diagonal
$\Delta_p = \{(z_1,\ldots,z_p) \in (\R^{d+1})^p\,:\, \exists i\neq j\;\text{with}\; z_i = z_j\}$
and such that, for any $\CC_0^\infty$ test functions $\phi_1,\ldots,\phi_p$,
the joint cumulant $\kappa_p(\phi_1,\ldots,\phi_p)$ of $\eta(\phi_1),\ldots,\eta(\phi_p)$ satisfies
\begin{equ}
\kappa_p(\phi_1,\ldots,\phi_p) = \int \kappa_p(z_1,\ldots,z_p)\,\phi_1(z_1)\cdots \phi_p(z_p)\,dz_1\cdots dz_p\;.
\end{equ}
(By stationarity, the functions $\kappa_p$ only depend on the differences of their arguments.)
Here and below, we always use the convention that $z$ (resp.\ $z_i$, $\bar z$, etc) denotes a space-time coordinate given by $z=(t,x)$ (resp.\ $z_i=(t_i,x_i)$, $\bar z=(\bar t,\bar x)$, etc).
We furthermore normalise our problem by assuming that the covariance of $\eta$ integrates to $1$ in the sense that
\begin{equ}[e:normalisation]
\int_{\R^{d+1}} \kappa_2(0,z)\,dz = 1\;.
\end{equ}
In particular, we assume that $\kappa_2(0,\cdot)$ is absolutely integrable, but this will in any case follow
from Assumption~\ref{ass:kappa} below.
Define furthermore the constants
\begin{equs}[2]
\<XiIXic>&=\int P(z)\,\kappa_2(0,z)\,dz,\quad&
\<XiIXiXc>&=\int P(z)\,x \kappa_2(0,z)\,dz,\\
\<XiIXi^2c>&=\int P(z) P(z')\,\kappa_3(0,z,z')\,dz\,dz',\quad&
\<XiIXiIXic>&=\int P(z) P(z'-z)\,\kappa_3(0,z,z')\,dz\,dz',
\end{equs}
where $P$ denotes the heat kernel, i.e.\ the fundamental solution to the heat equation
on \textit{the whole space}. Here and below, symbols drawn in red denote fixed \textit{constants},
while symbols drawn in blue will later denote basis vectors of a suitable regularity structure associated
to our problem. In dimension $d=1$, our assumptions on $\kappa_2$ will guarantee that the integral 
$\<XiIXic>$ converges absolutely, while in dimensions $2$ and $3$ our assumptions on $\kappa_2$ and $\kappa_3$
will guarantee that all of these integrals converge absolutely.

The scaling $\eps^{-1}$ chosen in $\eta_\eps$ is such that $u_\eps$ converges as $\eps \to 0$ to a limit $\u$,
which is our first result.
Indeed, writing
\begin{equ}[e:Heta]
H_\eta(u) = H(u) + \<XiIXic> G'(u) G(u)\;,
\end{equ}
we have the following ``law of large numbers''.

\begin{theorem}\label{theo:LLN}
Let $u_\eps$ be as above and let $\u$ be the (local) solution to the deterministic PDE
\begin{equ}[e:ubar]
\d_t \u = \Delta \u + H_\eta(\u)\;,
\end{equ}
with the same initial condition $u_0 \in \CC^\alpha$ as \eqref{e:mainLLN} (for some arbitrary $\alpha \in (0,1)$)
and with homogeneous Dirichlet (resp.\ Neumann) boundary condition.
In the case of Dirichlet boundary conditions, we impose that $u_0$ vanishes on the boundary.

Assume that 
the functions $G,H\colon \R \to \R$ are of class $\CC^5$ and $\CC^4$ respectively, that
the driving field $\eta$ satisfies
Assumption~\ref{ass:kappa} below, and let furthermore $T > 0$
be such that the (possible) explosion time for $\u$ is greater than $T$.
Then, in probability and uniformly over $[0,T] \times D$, $u_\eps$ converges to
$\u$ as $\eps \to 0$.
\end{theorem}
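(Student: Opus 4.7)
The plan is to cast \eqref{e:mainLLN} in the framework of regularity structures. Since the noise $\eta_\eps$ scales like $\eps^{-1}$ rather than the white-noise scaling $\eps^{-(d+2)/2}$, the equation is strongly subcritical and only a small regularity structure is needed: in addition to the abstract polynomials it must contain $\Xi$ (for $\eta_\eps$), $\CI(\Xi)$, and the handful of products such as $\Xi\cdot\CI(\Xi)$ and $\Xi\cdot\CI(\Xi)^2$ which generate the renormalisation constants $\<XiIXic>$, $\<XiIXiXc>$, $\<XiIXi^2c>$, $\<XiIXiIXic>$ displayed just before the statement (with a few additional symbols needed when $d=3$). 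For each $\eps>0$, the field $\eta_\eps$ is smooth, so I would build a canonical model $Z_\eps$ by pointwise multiplication and heat-kernel convolution, and then a renormalised model $\hat Z_\eps$ in which each of these offending products has its deterministic expectation — computed from $\kappa_2$ and $\kappa_3$ via the same integral formulas that define those constants — subtracted off.

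The central probabilistic step is to show that $\hat Z_\eps$ converges, as $\eps\to 0$, to the \emph{trivial} model in which $\Xi$ and every symbol containing it is sent to zero. This reduces to a second- and higher-moment computation: after a parabolic rescaling of the integration variables, each moment becomes an integral over $(\R^{d+1})^p$ of products of heat kernels against the cumulants $\kappa_p$, and Assumption~\ref{ass:kappa} should provide enough integrable decay for these integrals to converge and, once the mean has been subtracted, to vanish with a positive power of $\eps$. Given this, local Lipschitz continuity of the solution map in the model topology translates $\hat Z_\eps \to Z_{\mathrm{triv}}$ into convergence of the reconstructed solution. The crucial algebraic point is that the lift of the renormalised equation is arranged so that $H$ gets replaced by $H_\eta = H + \<XiIXic> G'G$; hence the limit of $U_\eps$ reconstructed under $\hat Z_\eps$ solves \eqref{e:ubar}, i.e.\ it is $\u$, and the modelled-distribution topology is strong enough to give the claimed uniform convergence on $[0,T]\times D$.

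The hardest part, I expect, is accommodating the Dirichlet or Neumann boundary conditions within this machinery. The kernel $\CI$ must be replaced by a localised Green's function on $D$ that is no longer translation invariant, and, especially in the Neumann case, one must worry about the possible appearance of additional boundary renormalisation constants beyond the bulk $\<XiIXic>$. This requires the recently developed framework of regularity structures on domains with boundary, together with a careful check that boundary-supported contributions to the renormalisation vanish in the limit — plausible because $\eta_\eps$ only ``feels'' $\partial D$ within a layer of width $\eps$. A secondary, more technical point is the restriction to $[0,T]$ below the explosion time of $\u$: this is standard, handled by a stopping-time argument that turns the local well-posedness of the abstract fixed point into uniform-in-$\eps$ convergence on $[0,T]$.
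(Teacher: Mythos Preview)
Your overall strategy matches the paper's: build a small regularity structure carrying a noise symbol of degree $-1-\kappa$, renormalise by subtracting the constant $\<XiIXic>$ from the product $\Xi\,\CI(\Xi)$, prove that the renormalised model converges to the trivial one as $\eps\to 0$, and conclude by continuity of the abstract fixed point. Two points deserve comment.

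First, a minor one: the constants $\<XiIXiXc>$, $\<XiIXi^2c>$, $\<XiIXiIXic>$ that you list enter only at the CLT level; for the LLN the paper's structure contains just $\<Xi1>$, $\<IXi1>$, $\<Xi1IXi1>$ and $\X\<Xi1>$, and only $\<XiIXic>$ is subtracted. Including more is harmless but unnecessary.

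Second, and this is where your sketch is thin, the boundary difficulty is not the one you name. No boundary renormalisation is needed for the LLN (that phenomenon appears only at the CLT level, in the Neumann case). The real obstacle is more basic: since $\deg\<Xi1> = -1-\kappa < -1$, the modelled distribution $\one_+^D\,\hat G(U)\<Xi1>$ has spatial-boundary exponent below $-1$, and the reconstruction theorem of \cite{Mate} simply does not apply to it (restricting a generic $\CC^\alpha$ distribution to a half-space is ill-defined for $\alpha\le -1$). The paper's fix is to enlarge the structure by a second noise symbol $\<XiS1>$ of the same degree, whose model is declared to be $\eta_\eps\,\one_{\R\times D}$, i.e.\ the domain indicator is absorbed into the \emph{model} rather than multiplied at the level of the modelled distribution. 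One then replaces $\one_+^D\,\hat G(U)\<Xi1>$ by $\one_+\,\iota\bigl(\hat G(U)\<Xi1>\bigr)$; the latter is singular only at $t=0$, where the index lies in $(-2,-1)$, and a purpose-built variant of the reconstruction theorem (Theorem~\ref{thm:reconstructDomain}) handles that case. Without this device the fixed-point problem you write down is not well-posed in the $\CD^{\gamma,w}$ spaces of \cite{Mate}, so this is the step you would need to supply.
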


The proof of this result will be given in Section~\ref{sec:LLN}.
Our main quantity of interest however are the fluctuations of $u_\eps$
around its limit $\u$. One interesting feature of this problem is that in order to 
see these fluctuations, it is not sufficient to recenter around $\u$. Instead, as soon
as $d \ge 2$, a suitable first-order
correction $\uu$ living at scale $\eps$ has to be subtracted first. Our precise
``central limit theorem'' then takes the following form.

\begin{theorem}\label{theo:main}
Let $u_0$ be such that its extension to all of $\R^d$ by reflections\footnote{Here we perform the reflections consistent with the reflection principle
for our choice of boundary conditions.} is of class $\CC^3$,
let $u_\eps$ and $\u$ be as above and assume $G$, $H$, $T$ and $\eta$ are as in Theorem~\ref{theo:LLN}.
Let furthermore $\uu = 0$ for $d=1$ and, for $d \in \{2,3\}$, let $\uu$ be the solution to
\begin{equs}\label{e:ubar1}
\d_t \uu &= \Delta \uu  + H'_\eta(\u) \uu + \Psi(\u, \nabla \u)\;, \\
\Psi(u, p) &= {1\over 2}\<XiIXi^2c>  (G^2G'')(u) + \<XiIXiIXic> (G(G')^2)(u) + (G')^2(u) \,\scal{\<XiIXiXc>,p}\;,
\end{equs}
with zero initial condition. (Note that $\<XiIXiXc>$ is an $\R^d$-valued constant.)
In the case of Neumann boundary condition, we furthermore impose that 
$\scal{\nabla \uu(t,x), n(x)} = c(x) GG'(\u(t,x))$ for $x\in\d D$, where $c$ is an explicit function on the boundary of $D$ which is constant
on each of its faces (the precise values of $c$ on each face will be given in \eqref{e:intQ} below),
while we impose homogeneous Dirichlet boundary conditions
otherwise.

Then, in law and in $\CC^\alpha([0,T] \times D)$ for any
$\alpha < 1-{d\over 2}$,
one has
\begin{equ}[e:multiscale]
\lim_{\eps \to 0} {u_\eps - \u - \eps\uu \over  \eps^{d/2}}= v\;,
\end{equ}
where $v$ is the Gaussian process solving
\begin{equ}[e:deflimitv]
\d_t v = \Delta v + H_\eta'(\u) v+ G(\u) \xi\;,
\end{equ}
endowed with homogeneous Dirichlet (resp.\ Neumann) boundary condition and $0$ initial condition, and $\xi$ denotes
a standard space-time white noise.
\end{theorem}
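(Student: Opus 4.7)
The starting point of the proof would be the ansatz
\begin{equ}
u_\eps = \u + \eps\,\uu + \eps^{d/2} v_\eps.
\end{equ}
Substituting this into \eqref{e:mainLLN} and using \eqref{e:ubar}, \eqref{e:ubar1} to cancel the contributions at orders $\eps^0$ and $\eps^1$, a Taylor expansion of $H$ and $G$ around $\u$ yields an equation for $v_\eps$ of the schematic form
\begin{equ}
\d_t v_\eps = \Delta v_\eps + H_\eta'(\u)\,v_\eps + \eps^{-d/2}\bigl(G(\u)\eta_\eps - \<XiIXic>(GG')(\u)\bigr) + \mathcal{E}_\eps\;,
\end{equ}
where $\mathcal{E}_\eps$ collects products such as $\eps^{1-d/2} G'(\u)\uu\,\eta_\eps$, $\eps^{2-d/2} G''(\u)(\uu)^2\eta_\eps$ and $G'(\u)v_\eps\eta_\eps$. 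Individually these terms are ill-defined in the limit, but the coefficients appearing in $H_\eta$ and the explicit form of $\Psi$ in \eqref{e:ubar1} are chosen precisely so that, after the appropriate renormalisations, the divergent parts cancel and only $H_\eta'(\u)v_\eps + G(\u)\xi$ survives.

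To turn this heuristic into a theorem I would work in the framework of regularity structures. The plan is to construct a regularity structure $\mathscr{T}$ whose generators include the symbols drawn in blue in the statement, together with boundary analogues needed for the Neumann problem, and to build from $\eps^{d/2}\eta_\eps$ an admissible BPHZ-renormalised model $\PPi_\eps$. The required counterterms match exactly the four constants $\<XiIXic>$, $\<XiIXiXc>$, $\<XiIXi^2c>$ and $\<XiIXiIXic>$ (plus their boundary counterparts), and these are the very constants encoded in $H_\eta$, in $\Psi$, and in the Neumann data imposed on $\uu$. Combining a BPHZ-type convergence theorem, in the form adapted to non-Gaussian stationary noises with cumulants obeying Assumption~\ref{ass:kappa}, with the recently developed extension of regularity structures to parabolic problems with boundary, one shows that $\PPi_\eps$ converges in the model topology to the limiting model $\hat\PPi$ built from space-time white noise $\xi$.

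Once model convergence is established, the convergence $v_\eps \to v$ in $\CC^\alpha([0,T]\times D)$ for any $\alpha < 1-d/2$ follows from continuity of the abstract solution map. The equation for $v_\eps$ lifts to a fixed-point problem for a modelled distribution whose abstract coefficients are independent of $\eps$; reconstructing against $\PPi_\eps$ gives (a version of) $v_\eps$, while reconstructing against $\hat\PPi$ produces the solution $v$ of \eqref{e:deflimitv}. The cancellations inside $\mathcal{E}_\eps$ ensure that the corresponding symbols are of strictly positive homogeneity, so that their reconstructions vanish in the limit.

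The principal obstacle is the treatment of the boundary. In the Neumann case with $d=3$, imposing homogeneous Neumann data on $\uu$ would leave an uncancelled boundary divergence of size $\eps^{d/2}$ when reconstructing $v_\eps$; the piecewise-constant function $c(x)$ given by \eqref{e:intQ} is designed precisely so that this divergence is absorbed into the Neumann datum of $\uu$, so that $v$ inherits the homogeneous Neumann condition in the limit. Constructing and controlling the renormalised model on a bounded domain with non-trivial boundary data, and verifying these boundary cancellations order by order, is the most delicate step of the proof and the one where the boundary machinery for regularity structures is indispensable.
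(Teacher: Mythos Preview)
Your high-level strategy matches the paper's: define $v_\eps=(u_\eps-\u-\eps\uu)/\eps^{d/2}$, derive its equation, build a regularity structure carrying rescaled noises $\eps^{\alpha(i,j)}\xi_\eps$, prove BPHZ-type convergence of the renormalised model, and conclude by continuity of the solution map. The identification of the counterterms with the constants appearing in $H_\eta$, $\Psi$ and the Neumann data for $\uu$ is also correct.

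There is, however, a genuine gap in the step ``the equation for $v_\eps$ lifts to a fixed-point problem for a modelled distribution.'' A direct lift does \emph{not} work near $\d D$: the right-hand side contains terms like $G'(\u)\,\v\,\eta_\eps$ whose abstract description has boundary singularity index below $-1$ (since $\deg\<Xi1>,\deg\<Xi1IXi><-1$), so the reconstruction operator of \cite{Mate} is not defined on $\one_+^D$ times such a modelled distribution. This is not a technicality one can wave away; it is precisely where the boundary renormalisation originates, and it is why the paper does \emph{not} formulate a single fixed-point problem for $v_\eps$.

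The paper's resolution is a decomposition $v_\eps=\v+\vv+\vvv$ (Section~\ref{sec:decomp}). The piece $\v$ solves $\d_t\v=\Delta\v+G(\u)\xi_\eps$ and is described via new symbols $\<IXib>=\<dIXiS>-\<IXi>$ encoding the difference between the Neumann and whole-space heat kernels; this requires introducing ``restricted'' noise types $\<XiS>$, $\<XiS1>$ on which the model acts by $\one_{\R\times D}\xi_\eps$. The piece $\vv$ carries the inhomogeneous boundary datum $-\eps^{1-d/2}cGG'$ and absorbs the singular boundary contribution: one shows $\PPi_\eps\<Xi1IXib>\approx\eps^{1-d/2}c\,\delta_{\d D}$, and the renormalised symbol $\hPeps\<Xi1IXib>$ converges (Theorem~\ref{theo:correctionTermLp}). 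Only the remainder $\vvv$ is obtained from a fixed-point problem \eqref{e:absCLT}, and even there one needs the refined reconstruction theorem of Appendix~\ref{sec:reconstruction}, which goes beyond \cite{Mate} by exploiting the splitting $\PPi=\PPi^++\PPi^-$ built into admissible models (Definition~\ref{def:admissible}).

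One further inaccuracy: the extra terms do not vanish because their symbols have ``strictly positive homogeneity''---they do not. They vanish because the limiting model satisfies $\hat\PPi\,\XX ij=0$ for $i+j>0$, i.e.\ the accented noises themselves go to zero while their degrees remain negative.
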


\begin{proof}
Combining Proposition~\ref{prop:idenSol} with Proposition~\ref{prop:localSol} and \eqref{e:defveps}
shows that if we set $v_\eps = \eps^{-d/2}\big(u_\eps - \u - \eps\uu\big)$
then we do indeed have $\lim_{\eps \to 0} v_\eps = v$
(weakly in $\CC^{(d-2)/2-\kappa}$ on $[0,T]$). The limit $v$ is identified as the solution
to \eqref{e:deflimitv} by combining the second part of Proposition~\ref{prop:localSol}
with Lemma~\ref{lem:V0}.
\end{proof}

\begin{remark}
If all we were interested in is the law of large numbers, then the conditions of 
Assumption~\ref{ass:kappa} on $\eta$ could easily be weakened.
\end{remark}

\begin{remark}
In the case of Neumann boundary conditions, it may appear paradoxal that, even though 
$u_\eps$, $\u$ and $v$ all satisfy homogeneous boundary conditions, $\uu$ does not!
This phenomenon is very similar to the presence of the ``boundary renormalisation''
that can appear in the context of singular SPDEs \cite{Mate}. There is no contradiction since the
convergence $v_\eps \to v$ takes place in a very weak topology in which the notion of ``normal derivative
at the boundary'' is meaningless in a pointwise sense.
(A very simple example displaying a similar phenomenon is $n^{-1/2}\sin(nx)$, whose derivative at the origin 
diverges like $\sqrt n$ while that of its limit vanishes.)
\end{remark}

\begin{remark}\label{rem:defSol}
Regarding the precise meaning of the equation fulfilled by $\uu$ in the case of Neumann boundary condition,
 denote by $\delta_{\d D}$ the distribution
on $\R \times \d D$ given by
\begin{equ}
\delta_{\d D}(\phi) = \int_{\R} \int_{\d D} \phi(t,x)\,dx\,dt\;,
\end{equ}
where the integration over the faces of $\d D$ are performed against the two-dimensional 
Lebesgue measure. With this notation, the solution to any equation of the form
\begin{equ}[e:nonhomogeneous]
\d_t u = \Delta u + f\;\text{in $D$,}\quad \scal{\nabla u(t,x), n(x)} = g(x)\;\text{on $\d D$,}\quad
u(0,x) = u_0(x)\;,
\end{equ}
(and in particular the equation determining $\uu$) is defined as the solution to the integral equation
\begin{equ}[e:mildForm]
u(z) = \int_{D}P_\Neu(z,(0,x'))\,u_0(x')\,dx'
+ \int_{\R^4} P_\Neu(z,z') \bigl(f\one_+^D + g \delta_{\d D}\bigr)(dz')\;,
\end{equ}
where $P_\Neu$ denotes the homogeneous Neumann heat kernel,
with the convention that $g(t,x) = 0$ for $t \le 0$, and where $\one_+^D(t,x) = \one_{\{t \ge 0\}}\one_{\{x \in D\}}$.
Here, we used the notation $\int \phi(z)\,\eta(dz)$ for the usual pairing between a distribution $\eta$ and a suitable
test function $\phi$. To see that solutions to \eqref{e:mildForm} and \eqref{e:nonhomogeneous} do indeed coincide
if $f$ and $g$ are sufficiently regular for the solution to be differentiable up to the boundary,
it suffices to note that the mild formulation is equivalent to the weak formulation, see for example \cite{Walsh},
with the term $g \delta_{\d D}$ appearing as the boundary term when integrating by parts.
\end{remark}

\begin{remark}
In dimension $d=1$, the term $\uu$ in \eqref{e:multiscale} is of course redundant.
In dimension $d=2$, it is still the case that $\eps^{-d/2}(u_\eps - \u)$ converges to a limit,
but this limit is not centred anymore. In higher dimensions, additional corrections appear.
We expect to have a result of the form
\begin{equ}[e:developmentueps]
\lim_{\eps \to 0} \eps^{-d/2} \Big(u_\eps - \sum_{k=0}^{\lfloor d/2\rfloor} \eps^k u^{(k)} \Big) 
 = v\;,
\end{equ}
where $\u$ is as above and the $\bar u^{(k)}$ satisfy an equation of the type
\begin{equ}
\d_t  u^{(k)} = \Delta  u^{(k)}  + H'_\eta(\u) u^{(k)} + \Psi_k\;,
\end{equ}
for some inhomogeneity $\Psi_k$ depending on the $ u^{(\ell)}$ for $\ell < k$ and
some of their derivatives.
Since $v$ has vanishing expectation, we expect to also have
\begin{equ}
\lim_{\eps \to 0} \eps^{-d/2} \big(u_\eps - \E u_\eps \big) = v\;,
\end{equ}
so that the $u^{(k)}$ provide an expansion of $\E u_\eps$ in powers of $\eps$. Note however
that the techniques used in this paper do not provide moment bounds on the solution, so 
that even in $d \le 3$ this would require some additional work.
\end{remark}

\begin{remark}
The form of the terms appearing in the successive correctors as well as the constants multiplying
them can in principle be derived from \cite[Eq.~2.12]{Ilya} which describes the form of the counterterms
$\Upsilon$ associated to a given tree $\tau$. The recursion given there 
suggests a correspondence $\<Xic> \sim G(u)$
with incoming edges corresponding to functional derivatives with respect to $u$ (so $\<Xi'c> \sim G'(u)$
and $\<Xi''c> \sim G''(u)$)
and powers of $X$ corresponding to formal directional derivatives, so $\<XiXc> \sim G'(u)\nabla u$.
This shows a priori that the counterterm multiplying $\<XiIXic>$ for example must be of the form $G'(u)G(u)$, 
etc.
The numerical constants multiplying these terms do however differ from those appearing in 
\cite{Ilya} since their meaning is slightly different: in \cite{Ilya} we use counterterms to 
``recenter'' the original equation in order to obtain a finite limit while here we leave the original
equation untouched and compute its ``centering''. If we leave aside the behaviour at the boundary,
this in principle allows to guess the general form of the equations for the $u^{(k)}$ appearing 
in \eqref{e:developmentueps} for any dimension.
\end{remark}

The most surprising part of Theorem~\ref{theo:main} is surely the fact that in the case when
$u_\eps$ has homogeneous Neumann boundary conditions, even though 
$v$ and $\u$ both also have homogeneous boundary conditions, $\uu$ does not, which seems
to contradict \eqref{e:multiscale}. This is of course not a contradiction but merely suggests 
that if we write $v_\eps$ for the expression appearing under the limit in \eqref{e:multiscale},
then $v_\eps$ exhibits a kind of boundary layer. Note also that the statement that ``$v$
satisfies homogeneous boundary conditions'' only makes sense in terms of the integral equation that
it solves since $v$ itself is not differentiable at the boundary. (It is not even a function!)

Before we proceed, let us give a heuristic explanation for the appearance of this 
boundary layer. Consider the simplest case $H=0$, $G(u) = u$ and $u_0 > 0$, in which case we can consider
the Hopf-Cole transform $h_\eps = \log u_\eps$, yielding
\begin{equ}
\d_t h_\eps = \Delta h_\eps + |\nabla h_\eps |^2 + \eta_\eps\;.
\end{equ}
To leading order, one would expect the right hand side to behave like 
$|\nabla h_\eps |^2 \simeq \E |\nabla Z_\eps |^2$, where $Z_\eps$ solves 
$\d_t Z_\eps = \Delta Z_\eps + \eta_\eps$ endowed with homogeneous Neumann boundary conditions. 
It turns out that, in the interior of the domain, one
has \[\lim_{\eps \to 0} \E |\nabla Z_\eps |^2 = \<XiIXic>\;,\] which allows one to ``guess'' the
correct limit $\u$. On the boundary however $\nabla Z_\eps = 0$, so that one expects 
$\E |\nabla Z_\eps |^2 - \<XiIXic>$ to be of order $\CO(1)$ 
in a layer of width $\CO(\eps)$ around $\d D$. When going to the next scale, this results in
a boundary correction of order $\CO(\eps^{-1})$ in this boundary layer, which precisely scales
like a surface measure on the boundary. Remark~\ref{rem:defSol} shows that the net effect of
this correction is to modify the boundary condition.

The remainder of the article is structured as follows. First, in Section~\ref{sec:ass},
we formulate our main assumption on the driving noise $\eta$ and we show that this assumption
is ``reasonable'' by exhibiting an explicit class of examples for which it is satisfied.
In Section~\ref{sec:LLN}, we then show 
that the law of large numbers holds. Although this could probably be shown by ``classical''
means without too much effort, we will use the theory of regularity structures because it shortens the argument and 
allows us to introduce some results and notation that will be of use later on. In Section~\ref{sec:CLT}, we then 
show that the central limit theorem holds. The main tool in this proof is the convergence of
a certain ``model'' for an appropriate regularity structure as well as refinements of the type
of boundary estimates first considered in \cite{Mate}. The convergence of the model is 
given in Section~\ref{sec:model}.
Appendix~\ref{sec:almostComm} is devoted to the proof of a result showing that the operations
of ``convolution by a singular kernel'' and ``multiplication by a smooth function'' almost commute,
modulo a much smoother remainder, a fact that will undoubtedly sound familiar to anyone acquainted with
microlocal analysis. Appendix~\ref{sec:reconstruction} contains a version of the reconstruction theorem that is
purpose-built to allow us to deal with modelled distribution that have very singular boundary 
behaviour and goes beyond the version obtained in \cite{Mate}. This appendix was written in collaboration
with Máté Gerencsér.

\subsection*{Acknowledgements}

{\small
The authors gratefully acknowledge financial support from the
 Leverhulme Trust through a leadership award,
from the European Research Council through a consolidator grant, project 615897,
and from the Royal Society through a research professorship.
We are also grateful to Máté Gerencsér for allowing us to use 
his ideas for the results of Appendix~\ref{sec:reconstruction} and to Ajay Chandra for
discussions on the application of the results of \cite{Ajay}.
}

\section{Assumptions on the Noise}
\label{sec:ass}

In this section, we formulate our precise assumptions on the driving noise and we show that 
they are satisfied for example by a mollified Poisson process.
In a nutshell, we want to assume that correlations are bounded by $\|z-\bar z\|^{-2\underline c}$ at
small scales and $\|z-\bar z\|^{-2\overline c}$ at large scales with $\underline c = {1\over 2}-\delta$
and $\overline c = {d+2\over 2}+\delta$ for some $\delta \in (0,{1\over 2})$. 
However, we also want to encode the fact that higher order cumulants behave ``better'' than
what is obtained from simply using the Cauchy-Schwartz inequality. Note that our assumptions are
trivially satisfied by any continuous Gaussian process with correlations that decay 
at least like $\|z-\bar z\|^{-2\overline c}$. Here and below, $\|\cdot\|$ will always denote
the parabolic distance between space-time points. It will be convenient (in particular in Appendix~\ref{app:extHeat})
to make sure that $\|\cdot\|$ is smooth away from the origin, so we set for example $\|z\|^4 = \|(t,x)\|^4 = |x|^4 + |t|^2$.

\subsection{Coalescence trees}
\label{sec:coal}

In order to formulate this precisely, we need a simplified version of the
construction of \cite[Appendix~A]{Jeremy}. 
Given any configuration $(z_1,\ldots,z_p)$ of $p$ points in $\R^{d+1}$ with all 
distances distinct, we associate to it a binary tree $T$ in the following way.
Consider Kruskal's algorithm \cite{Kruskal} for constructing the minimal spanning tree of 
the complete graph with vertices $\{z_1,\ldots,z_p\}$ and edge-weights given by their
(parabolic) distances. One way of formalising this is the following. Consider the
set $\CP_p$ of partitions of $\Omega_p = \{1,\ldots,p\}$.
We define a distance $d_z$ between subsets of $\Omega_p$ as the Hausdorff distance induced by $\{z_1,\ldots,z_p\}$, namely
\begin{equ}
d_z(A,B) = \max\Big\{\sup_{i \in A}\inf_{j \in B}\|z_i - z_j\|, \sup_{j \in B}\inf_{i \in A}\|z_i - z_j\|\Big\}\;.
\end{equ}

We then define a map $K\colon \CP_p \to \CP_p$ in the following way. 
If $\pi = \{\Omega_p\}$, then $K(\pi) = \pi$. Otherwise, let $A \neq B \in \pi$ be such that
$d_z(A,B) \le d_z(C,D)$ for all $C,D \in \pi$. Thanks to our assumption on the $z_i$,
this pair is necessarily unique. We then set
\begin{equ}
K(\pi) = \bigl(\pi \setminus \{A,B\}\bigr) \cup \{A \cup B\}\;,
\end{equ}
i.e.\ $K(\pi)$ is obtained by coalescing the two sets $A$ and $B$ in the partition $\pi$.
The vertices of $T$ are then given by $V_T = \bigcup_{n \ge 0} K^n(\{\{1\},\ldots,\{p\}\})$, i.e.
$V_T$ consists of all the blocks of those partitions.
The set $V_T$ comes with a natural partial order given by inclusion: $A \le B$ if and 
only if $A \supset B$. The (directed) edge set $E_T \in V_T\times V_T$ of $T$ is then given by the 
Hasse diagram of $(V_T,\le)$: $(A,B) \in E_T$ if and only if $A < B$ and there is no $C \in V_T$ such that
$A < C < B$. It is easy to verify that $T$ is a binary tree and that its leaves are
precisely given by the singletons. 
It will be convenient to also add to $V_T$ a 
``point at infinity'' $\logof$ which is connected to $\Omega_p$ by an edge $(\Omega_p,\logof)$
and to view $\logof$ as the minimal element of $V_T$.

We write $\mathring V_T = V_T \setminus \{\{1\},\ldots,\{p\},\logof\}$ for the interior nodes.
Each interior node $A \in \mathring V_T$ has exactly two children $A_1$ and $A_2$ such that 
$(A,A_i) \in E_T$ for $i = 1,2$.
We then define an integer labelling $\scale \colon \mathring V_T \to \Z$ 
by $\scale(A) = -\lceil \log_2 d_z(A_1,A_2) \rceil$.
We will always view
$\scale$ as a function on all of $V_T$ with values in $\Z \cup \{\pm \infty\}$
by setting $\scale(\logof) = -\infty$ and $\scale(\{i\}) = +\infty$ for
$i=1,\ldots,p$.
Note now that if $A$, $B$ and $C$ are three disjoint sets, then 
\begin{equ}
d_z(A,B) \le \min\{d_z(A,C), d_z(B,C)\} \quad\Rightarrow\quad d_z(A,B) \le d_z(A\cup B, C)\;. 
\end{equ}
As a consequence, the map $\scale$ is monotone increasing on $V_T$.
Furthermore, as in \cite[Eq.~A.15]{Jeremy}, there exist constants $c, C$ depending only on $p$ such that 
\begin{equ}
c 2^{-\scale(\{i\} \wedge \{j\})} \le \|z_i - z_j\| \le C 2^{-\scale(\{i\} \wedge \{j\})}\;,
\end{equ}
for all $i,j$. 

Given a configuration of points $z = (z_1,\ldots,z_p) \in (\R^{d+1})^p$, we now write
$\tree_z = (T,\scale)$ for the corresponding data constructed as above.
We furthermore define a function $\rho \colon \R_+\to \R_+$ by
\begin{equ}
\rho(r) = r^{-\overline c} \wedge r^{-\underline c}\;.
\end{equ}
(Beware that $\rho$ is an upper bound for the \textit{square root} of the covariance between
two points.)
We then assume that the following bound holds.

\begin{assumption}\label{ass:kappa}
With the notations as above, for any $p \ge 2$ and any $\{k_i\}_{i=1}^p \subset \Z_+^{d+1}$, the $p$th joint cumulant for $\eta$ satisfies the bound
\begin{equ}[e:boundkappa]
\Big|\Big(\prod_{i=1}^p D_i^{k_i}\Big)\kappa_p(z_1,\ldots,z_p)\Big| \lesssim \rho\big(2^{-\scale(\Omega_p)}\big) \prod_{A \in \mathring V_T}  \rho\big(2^{-\scale(A)}\big)\prod_{i=1}^p 2^{|k_i| \,\scale(i^\uparrow)}\;,
\end{equ}
uniformly over all $z \in (\R^{d+1})^p$.
(Recall that $\Omega_p$ is the root of the tree $T$.)
Here and below, the length of the multiindex $k$ should be interpreted in the parabolic sense, namely
$|k| = 2k_0 + \sum_{i=1}^d k_i$.

In dimensions $d \in \{2,3\}$, we furthermore assume that $\eta \colon \Omega \times \R^{d+1} \to \R$ 
is a measurable function with $\E |\eta(0)|^p <\infty$ for $p = (d+2)/{\underline c}$.
\end{assumption}

\begin{remark}
The additional condition that $\eta$ takes values in $L^p$ for sufficiently high $p$ is mainly technical
and could probably be dropped with some additional effort. It will be used to bound
$\Rteps$ in the proof of Proposition~\ref{prop:localSol} below. The exponent $(d+2)/\underline c$
is consistent with the condition on the correlation function in the sense that 
this is the lowest value of $p$ for which $L^p_{\mathrm{loc}} \subset \CC^{-\underline c}$.
\end{remark}

Note also that the cumulants $\kappa_p^{(\eps)}$
of the rescaled process $\eta_\eps$ satisfy
\begin{equ}[e:scaleKappa]
\kappa_p^{(\eps)}(z_1,\ldots,z_p) = 
\eps^{-p} \kappa_p(S_\eps z_1,\ldots,S_\eps z_p)\;,
\end{equ}
where $S_\eps(t,x) = (t/\eps^2,x/\eps)$.

\subsection{Justification}

We claim that the assumption on the noise is rather weak on the ground that many natural
constructions yield stationary random processes that satisfy it. We provide details for the
following example.

\begin{proposition}\label{prop:justif}
Let $\theta \colon \R^{d+1} \to \R$ be smooth away from $0$ and such that for all $k\in\Z_+^d$, $|D^k\theta(z)| \lesssim \|z\|^{-2\overline c-|k|}$ for $\|z\| > 1$
and $|D^k\theta(z)| \lesssim \|z\|^{-\underline c-|k|}$ for $\|z\| \le 1$.
Let $\mu$ be a Poisson point measure over $\R^{d+1}$ with intensity $1$ 
and set $\eta = \mu \star \theta$, then
$\eta$ satisfies the above assumption.
\end{proposition}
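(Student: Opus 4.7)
The plan has three parts: compute the joint cumulants of $\eta$ explicitly from the Poisson structure; reduce the verification of \eqref{e:boundkappa} to a multi-scale integral estimate organised along the coalescence tree; and separately verify the moment condition.

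For a Poisson point measure $\mu$ of unit intensity on $\R^{d+1}$, the joint cumulants of $\mu(g_1),\ldots,\mu(g_p)$ are $\int\prod_i g_i(y)\,dy$. Writing $\eta(\phi)=\mu(g_\phi)$ with $g_\phi(y)=\int\theta(z-y)\phi(z)\,dz$, differentiation under the integral gives
\begin{equ}
\Bigl(\prod_{i=1}^p D^{k_i}_{z_i}\Bigr)\kappa_p(z_1,\ldots,z_p) = \int_{\R^{d+1}} \prod_{i=1}^p (D^{k_i}\theta)(z_i-y)\,dy\;.
\end{equ}
The assumed bounds on $\theta$ yield the uniform estimate $|D^k\theta(z)|\lesssim \rho(\|z\|)^2\|z\|^{-|k|}$ on $\R^{d+1}\setminus\{0\}$: for $\|z\|\le 1$ one uses $\|z\|^{-\underline c}\le\|z\|^{-2\underline c}=\rho(\|z\|)^2$, and for $\|z\|>1$ the assumed decay $\|z\|^{-2\bar c}$ equals $\rho(\|z\|)^2$ exactly. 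Thus \eqref{e:boundkappa} reduces to the multi-scale inequality
\begin{equ}
\int_{\R^{d+1}} \prod_{i=1}^p \frac{\rho(\|z_i-y\|)^2}{\|z_i-y\|^{|k_i|}}\,dy \lesssim \rho\bigl(2^{-\scale(\Omega_p)}\bigr)\prod_{A\in\mathring V_T}\rho\bigl(2^{-\scale(A)}\bigr)\prod_{i=1}^p 2^{|k_i|\scale(i^\uparrow)}\;.
\end{equ}

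I would establish this inequality by induction on $p$, peeling off at each step the pair of leaves joined at the largest scale of $T$. Let $\{i\},\{j\}$ be that pair, with common parent $B$, so that $\|z_i-z_j\|\asymp 2^{-\scale(B)}$. Split the $y$-integration into the two balls of radius $\asymp 2^{-\scale(B)}$ around $z_i,z_j$ and their complement. On the ball around $z_i$ the factor $\rho(\|z_j-y\|)$ is bounded by $\rho(2^{-\scale(B)})$, while $\rho(\|z_i-y\|)^2\|z_i-y\|^{-|k_i|}$ integrates locally against $dy$ to produce the derivative factor $2^{|k_i|\scale(B)}$ and an additional $\rho(2^{-\scale(B)})^{-1}$ (by a calculation requiring $2\underline c<d+2$). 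On the complementary region $\rho(\|z_i-y\|)\asymp\rho(\|z_j-y\|)$ and the two factors merge naturally at the scale of $B^\uparrow$. In each region one extracts a single factor $\rho(2^{-\scale(B)})$, and the remaining integral is of the same form as the inductive hypothesis applied to the tree with $\{i\},\{j\}$ contracted to a single leaf at scale $2^{-\scale(B)}$. The base case $p=2$ is a direct computation over the balls of radius $\|z_1-z_2\|/2$ around $z_1,z_2$, an intermediate annulus, and a far field, using $2\underline c<d+2<2\bar c$ for integrability.

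Finally, the moment assumption $\E|\eta(0)|^{(d+2)/\underline c}<\infty$ (for $d\in\{2,3\}$) is independent of the cumulant bound and follows from standard $L^q$-estimates for Poisson integrals, reducing to $\int|\theta|^q\,dy<\infty$; the singularity is integrable since $q\underline c<d+2$ and the tail is light since $2q\bar c\gg d+2$. The \emph{main obstacle} is the multi-scale estimate above: the delicate bookkeeping consists in checking that, at each inductive step, the partition of the $y$-integration is compatible with the cluster hierarchy encoded by $T$, so that exactly $|\mathring V_T|+1$ factors of $\rho$ are produced and the derivative penalties match $2^{|k_i|\scale(i^\uparrow)}$. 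This is close in spirit to the cumulant estimates of \cite[Appendix~A]{Jeremy} and can be adapted.
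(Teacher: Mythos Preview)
Your reduction to the multi-scale integral
\begin{equ}
\int_{\R^{d+1}} \prod_{i=1}^p \frac{\rho(\|z_i-y\|)^2}{\|z_i-y\|^{|k_i|}}\,dy
\end{equ}
has a genuine gap for the derivative bounds. Near $y=z_i$ the integrand behaves like $\|z_i-y\|^{-2\underline c-|k_i|}$, which is locally integrable (in parabolic dimension $d+2$) only when $|k_i|<d+2-2\underline c=d+1+2\delta$. The parenthetical ``by a calculation requiring $2\underline c<d+2$'' is the condition for $|k_i|=0$; for large $|k_i|$ the integral simply diverges, so the bound \eqref{e:boundkappa} cannot be obtained from this majorant. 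The paper resolves this by a different mechanism (its ``Case~2''): one introduces a smooth partition of unity with bumps $\chi_j$ of radius $\sim 2^{-\scale(j^\uparrow)}$ around each $z_j$, and on the support of $\chi_j$ integrates by parts to transfer all $D^{k_j}$ derivatives from $\theta(z_j-\cdot)$ onto $\chi_j$ and the remaining factors $\theta(z_i-\cdot)$, $i\neq j$. Since those factors are evaluated at distance $\gtrsim 2^{-\scale(i^\uparrow)}\wedge 2^{-\scale(j^\uparrow)}$, each derivative costs exactly the factor $2^{|k_j|\scale(j^\uparrow)}$ demanded by \eqref{e:boundkappa}, and one is reduced to the undecorated integral $\int\prod_i\bar\rho(\|z_i-y\|)\,dy$. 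Your approach misses this integration-by-parts step.

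For the case $k_i=0$, your inductive scheme is plausible in spirit but the bookkeeping is incomplete: on the ball around $z_i$ you have already performed the $y$-integral, so what remains is a product of constants $\prod_{\ell\neq i,j}\rho(\|z_\ell-z_i\|)^2$, not an integral of the same form; applying the ``inductive hypothesis'' to this product requires a separate combinatorial lemma that you do not state. The paper avoids induction entirely: it decomposes $\R^{d+1}$ into regions $D_{(e,n)}$ indexed by edges $e$ of $T$ and dyadic scales $\scale(\underline e)<n<\scale(\bar e)$, on each of which every distance $\|z_i-y\|$ is pinned to a specific dyadic scale determined by the tree, and then sums a geometric series over $n$. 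This yields the $k_i=0$ bound directly and transparently. Your verification of the moment condition $\E|\eta(0)|^{(d+2)/\underline c}<\infty$ is correct and is a point the paper leaves implicit.
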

Before proving this proposition, let us first establish a property of joint cumulants of
integrals of deterministic functions with respect to a Poisson point measure.
\begin{lemma}\label{lem:Poisson}
Let $p\ge1$ and let $f_1,\ldots,f_p$ be elements of $L^1(\R^{d+1})\cap L^p(\R^{d+1})$, and again $\mu$ be a 
Poisson point measure over $\R^{d+1}$ with intensity $1$. Then the joint cumulant
$\kappa_p(\mu(f_1),\ldots,\mu(f_p))$ of the random variables $\mu(f_1),\ldots,\mu(f_p)$ satisfies
\[ \kappa_p(\mu(f_1),\ldots,\mu(f_p))=\int_{\R^{d+1}}f_1(z)\times\cdots\times f_p(z)\; dz.\]
\end{lemma}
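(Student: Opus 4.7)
The plan is to compute the joint cumulant directly from the well-known explicit form of the Laplace (or characteristic) functional of a Poisson point measure. Recall that for a Poisson point measure $\mu$ with Lebesgue intensity and for measurable $g\colon\R^{d+1}\to\R$ such that $e^{ig}-1 \in L^1$, one has
\begin{equ}
\E \exp\Bigl(i \mu(g)\Bigr) = \exp\Bigl(\int_{\R^{d+1}} \bigl(e^{ig(z)}-1\bigr)\,dz\Bigr)\;.
\end{equ}
Applying this with $g = \sum_{j=1}^p t_j f_j$ and taking logarithms, the joint cumulant generating function of the random vector $(\mu(f_1),\ldots,\mu(f_p))$ is
\begin{equ}[e:cumgen]
\Phi(t_1,\ldots,t_p) := \log \E \exp\Bigl(i\sum_j t_j \mu(f_j)\Bigr) = \int_{\R^{d+1}} \Bigl(\exp\bigl(i\textstyle\sum_j t_j f_j(z)\bigr)-1\Bigr)\,dz\;.
\end{equ}

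The joint cumulant is by definition $\kappa_p(\mu(f_1),\ldots,\mu(f_p)) = (-i)^p\, \partial_{t_1}\cdots\partial_{t_p}\Phi(t)\big|_{t=0}$. Formally differentiating under the integral in \eqref{e:cumgen} and evaluating at $t=0$ (where the $-1$ is killed) gives $i^p \int f_1\cdots f_p$, so that after multiplication by $(-i)^p$ one obtains the claimed identity.

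The only substantive step is to justify the interchange of partial derivatives and the $z$-integral. To this end I would fix $p-1$ of the variables, say $t_2,\ldots,t_p$, in a small complex neighbourhood of $0$ and bound the $k$th order partial derivatives of the integrand pointwise by expressions of the form $|f_{i_1}(z)|\cdots |f_{i_k}(z)|$ times a constant, for $k \le p$. By the generalised H\"older inequality applied to the factorisation $L^1 \cap L^p \subset L^q$ for every $q \in [1,p]$, any product $|f_{i_1}|\cdots |f_{i_k}|$ with $k\le p$ lies in $L^1(\R^{d+1})$, providing an integrable dominating function. Dominated convergence then legitimises differentiating under the integral $p$ times in succession. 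Iterating this argument for each of the $p$ derivatives concludes the proof.

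The main (mild) obstacle is this integrability check; everything else is essentially the Campbell formula for Poisson measures. Note also that once the formula is established for $L^1\cap L^p$ functions, the cumulant makes sense as a locally integrable function of $(z_1,\ldots,z_p)$ by viewing the right-hand side as the integral against a delta distribution concentrated on the diagonal, which is what is implicitly used when plugging this into the framework of Assumption~\ref{ass:kappa} in the proof of Proposition~\ref{prop:justif}.
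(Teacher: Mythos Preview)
Your proof is correct and proceeds along a genuinely different route from the paper. The paper argues by reduction to simple functions $f_i = \sum_j a_{i,j}\mathbf 1_{A_j}$ with disjoint $A_j$: multilinearity of the joint cumulant together with the vanishing of cumulants for independent subcollections reduces the claim to the single-variable fact that all cumulants of a Poisson($|A_j|$) random variable equal $|A_j|$; the general case is then obtained by approximation (left implicit). Your approach instead reads the cumulant generating function directly off the L\'evy--Khintchine form of the Poisson characteristic functional and differentiates under the integral.

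Both arguments are short. Yours is more analytic and self-contained in that the integrability check (via interpolation $L^1\cap L^p\subset L^k$ for $k\le p$ and H\"older) is made explicit, whereas the paper leaves the passage from simple functions to $L^1\cap L^p$ to the reader. The paper's argument, on the other hand, makes the structural reason for the identity---independence across disjoint regions forcing the cumulant onto the diagonal---more transparent, and avoids any appeal to the explicit form of the characteristic functional. Your final remark about the delta on the diagonal is tangential to the lemma itself but correctly anticipates how the result feeds into the proof of Proposition~\ref{prop:justif}.
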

\begin{proof}
It is sufficient to prove the result in case there exist  disjoint Borel subsets $A_1,\ldots,A_k$ of 
$\R^{d+1}$ with finite Lebesgue measure such that for $1\le i\le p$, 
\[ f_i(z)=\sum_{j=1}^k a_{i,j}{\bf1}_{A_j}(z).\]
But in that case the result follows readily from the fact that the joint cumulant is $p$-linear,
and that the joint cumulant of a collection of random variables which can be split into two mutually independent subcollections vanishes, see e.g.\ property (iii) in \cite[p.~32]{PeccatiTaqqu}.
\end{proof}

\begin{proof}[of Proposition~\ref{prop:justif}]
It follows from Lemma \ref{lem:Poisson} that
\begin{equ}[e:cumulant]
\kappa_p(z_1,\ldots,z_p)=\int_{\R^{d+1}} \theta(z_1-z)\cdot\ldots\cdot\theta(z_p-z)\,dz\;,
\end{equ}
so it remains to obtain a bound on this integral.
We now consider $z_1,\ldots, z_p$ to be fixed and we shall make use of the labelled tree 
$(T,\scale)$ built from these points as above.
We are first going to treat the simpler case with all $k_i$ vanishing and then show how the argument 
can be modified to deal with the general case.

\smallskip\noindent\textbf{Case 1.} 
The case with all $k_i=0$.  For every edge $e = (\underline e,\bar e) \in E_T$ and 
every $n \in \Z$ with $\scale(\underline e) < n < \scale(\bar e)$, we 
define the domain
\begin{equ}
D_{(e,n)} = \big\{z\in \R^{d+1} \,:\, c^{-1}2^{-n_i} \le \|z- z_i\| \le c2^{-n_i},\;\forall i\in \{1,\ldots,p\} \big\}\;,
\end{equ}
where
\begin{equ} 
n_i = 
\left\{\begin{array}{cl}
	n & \text{if $\{i\} \ge \bar e$,} \\
	\scale(\{i\}\wedge \bar e) & \text{otherwise.}
\end{array}\right.
\end{equ}
It is possible to convince oneself that, provided that the constant $c$ appearing in the
definition of $D_{(e,n)}$ is sufficiently large, one has
\begin{equ}
\bigcup_{e \in E_T} \bigcup_{\scale(\underline e) < n < \scale(\bar e)} D_{(e,n)} = \R^{d+1}\setminus\{z_1,\ldots, z_p\}\;,\qquad
|D_{(e,n)}| \le (2c)^{d+2} 2^{-(d+2)n}\;.
\end{equ}
As a consequence, the integral appearing in \eqref{e:cumulant} is bounded by
some constant times
\begin{equ}[e:intsum]
\sum_{e \in E_T} \sum_{\scale(\underline e) < n < \scale(\bar e)}
2^{-(d+2)n} \prod_{i=1}^p \bar\rho(2^{-n_i})\;,\quad
\bar \rho(r) = r^{-\underline c} \wedge r^{-2\overline c}\;.
\end{equ}
We first use the fact that
$\bar \rho$ is decreasing to conclude that, for $\scale(\underline e) < n < \scale(\bar e)$,
one has the bound
\begin{equ}[e:boundcum]
2^{-(d+2)n} \prod_{i=1}^p \bar\rho(2^{-n_i})
\le 2^{-(d+2)n} \bar\rho(2^{-n}) \prod_{v \in \mathring V_T} \bar \rho(2^{-\scale(v)})\;.
\end{equ}
This can be seen as follows. Write $\{v_1,\ldots,v_k\}$ for the (possibly empty)
set of nodes in $\mathring V_T$ lying on the shortest path joining $\bar e$ to $\logof$
(not including $\bar e$ and $\logof$ themselves). We then have, for every $j=1,\ldots,k$,
\begin{equ}
\prod_{i\,:\, \{i\}\wedge \bar e = v_j} \bar \rho(2^{-n_i})
= \prod_{i\,:\, \{i\}\wedge \bar e = v_j} \bar \rho(2^{-\scale(v_j)}) \le 
\prod_{v \in \mathring V_T\,:\, v\wedge \bar e = v_j}   \bar \rho(2^{-\scale(v)})\;,
\end{equ}
since the number of factors appearing in each term is the same.
Similarly, we have 
\begin{equ}
\prod_{i\,:\, \{i\} \ge \bar e} \bar \rho(2^{-n})
\le \bar \rho(2^{-n}) 
\prod_{v\in \mathring V_T\,:\, v \ge \bar e} \bar \rho(2^{-\scale(v)})\;,
\end{equ}
hence \eqref{e:boundcum}.
Since $\bar \rho \le \rho \wedge \rho^2$, it follows from \eqref{e:boundcum} that
\begin{equ}
 2^{-(d+2)n} \prod_{i=1}^p \bar\rho(2^{-n_i})
 \le
2^{-(d+2)n} \bar\rho(2^{-n}) \rho(2^{-\scale(\Omega_p)}) \prod_{v \in \mathring V_T} \rho(2^{-\scale(v)})\;.
\end{equ}
It remains to observe that $\sum_{n \in \Z} 2^{-(d+2)n} \bar\rho(2^{-n})
= \sum_{n \in \Z} 2^{(\underline c-d-2)n} \wedge 2^{(2\overline c - d-2)n} < \infty$,
so that \eqref{e:intsum} is indeed bounded by the required expression.

\smallskip\noindent\textbf{Case 2.} 
Note that we actually showed that the expression \eqref{e:cumulant} with $\theta$ replaced by $\bar \rho$
is bounded by the right hand side of \eqref{e:boundkappa} with $k_i = 0$.
To obtain the general case, it therefore suffices to show that 
\begin{equ}[e:requiredDer]
\int_{\R^{d+1}}\prod_{i=1}^dD^{k_i}\theta(z_i-z)\,dz \lesssim \prod_{i=1}^p 2^{|k_i| \,\scale(i^\uparrow)} \int_{\R^{d+1}}\prod_{i=1}^d \bar \rho(z_i-z)\,dz\;.
\end{equ}
Let $\chi\in C^\infty(\R^{d+2})$ be such that
\begin{equ}
\chi(z)=\begin{cases} 1,&\text{on $B(0,1/4)$};\\
0,&\text{on $B(0,1/2)^c$.}
\end{cases}
\end{equ}
For $1\le j\le p$, we define $\chi_j(z)=\chi\big(2^{\scale(j^\uparrow)}(z-z_j)\big)$, and $\chi_0(z)=1-\sum_{j=1}^p\chi_j(z)$.
It is clear that
\begin{equs}
\int_{\R^{d+1}}\prod_{i=1}^dD^{k_i}\theta(z_i-z)\,dz&=\sum_{j=0}^p\int_{\R^{d+1}}\chi_j(z)\prod_{i=1}^dD^{k_i}\theta(z_i-z)\,dz\\
&=\int_{\R^{d+1}}\chi_0(z)\prod_{i=1}^dD^{k_i}\theta(z_i-z)\,dz\\ &\quad +
\sum_{j=1}^p\int_{\R^{d+1}}\theta(z_j-z)D^{k_j}\Big(\chi_j(\cdot)\prod_{i\not=j}D^{k_i}\theta(z_i-\cdot)\Big)(z)\,dz\,.
\end{equs}
We note that for $z$ in the support of $\chi_0$, for $1\le i\le p$,  $2\| z_i-z\|\ge2^{-\scale(i^\uparrow)}$,
\begin{equs}
|D^{k_i}\theta(z_i-z)|&\lesssim\bar \rho(\|z_i-z\|)\cdot\| z_i-z\|^{-|k_i|} \\
&\lesssim \bar\rho(\|z_i-z\|)\cdot 2^{|k_i|\scale(i^\uparrow)},
\end{equs}
thus yielding \eqref{e:requiredDer} as required.

To bound the final term, we note that its integrand can be written as a finite sum of terms of the form
\begin{equ}
M(z) = \theta(z_j-z)D^k\chi_j(z)\prod_{i\not=j}D^{k_i+k_{j,i}}\theta(z_i-z),
\end{equ}
where $k, k_{j,i}\in\Z_+^{d+1}$ and $k+\sum_{i\not=j}k_{j,i}=k_j$. Each of these terms is bounded above by the 
indicator function of the support of $\chi_j$ times
\begin{equ}
\bar\rho(\|z-z_j\|) 2^{|k|\scale(j^\uparrow)} \prod_{i\not=j}\bar\rho(\|z-z_i\|) \|z-z_i\|^{-|k_i|-|k_{j,i}|}\;.
\end{equ}
Since for $z$ in the support of $\chi_j$ and $i\not=j$, 
$2\|z-z_j\|\le\|z_i-z_j\|$, so that       
\begin{equ}
\|z_i-z_j\|\le\|z_i-z\|+\|z-z_j\| \le\|z_i-z\|+\frac12\|z_j-z_i\|,
\end{equ}
one has $2\|z-z_i\| \ge\|z_j-z_i\|\ge2^{-\scale(i^\uparrow)}\wedge2^{-\scale(j^\uparrow)}$.
Combining all of these bounds, we finally obtain 
\begin{equs}
|M(z)| &\le \bar\rho(\|z-z_j\|) 2^{|k|\scale(j^\uparrow)} \prod_{i\not=j}\bar\rho(\|z-z_i\|)2^{-|k_i| \scale(i^\uparrow)} 2^{-|k_{j,i}| \scale(j^\uparrow)} \\
&\le \prod_i \bar\rho(\|z-z_i\|) 2^{|k_i|\scale(i^\uparrow)}\;,
\end{equs}
at which point we apply again \eqref{e:requiredDer} to obtain the required bound.
\end{proof}

\section{Law of Large Numbers}
\label{sec:LLN}

The aim of this section is to use a simplified\footnote{except for the treatment of the boundary conditions which leads to
non-trivial complications} variant of the arguments in \cite{HP}
to show that Theorem~\ref{theo:LLN} holds.
Although it would probably not be much more involved to obtain this proof by usual
techniques, we give a proof using regularity structures. 
The main reason is that this allows us to introduce in a simpler setting a number of notions
and notations that will be useful in the proof of our main result later on.

Before we turn to the proof proper, let us comment on the way in which we deal with the
Neumann boundary conditions. Writing $P_\Neu$ for the Neumann heat kernel and using the
notation $z=(t,x)$ (and similarly for $z'$), we rewrite \eqref{e:mainLLN}
as an integral equation:
\begin{equ}
u_\eps(z) = \int_{D_t} P_\Neu(z,z') \bigl(H(u_\eps(z')) + G(u_\eps(z'))\eta_\eps(z')\bigr)\,dz'
+ \int_{D} \!\!\!  P_t^\Neu(x,x')u_0(x')\,dx'\;,
\end{equ}
where $D_t = [0,t]\times D$
and $P_t^\Neu(x,x') = P_\Neu((t,x),(0,x'))$. We also fix an arbitrary time horizon $T \le 1$ which is not
a restriction since the argument can be iterated.

Following \cite{Mate}, we then construct two functions $K$ on $\R^{d+1}$ and $K_\d$ on $\R^{d+1}\times \R^{d+1}$ 
such that $K$ is compactly supported, $K_\d$ is supported on a strip of finite width around the diagonal,
and the identity
\begin{equ}
P_\Neu(z,z') = K_\Neu(z,z')\;,
\end{equ}
holds for $z,z' \in [0,1] \times D$, where we set
\begin{equ}[e:decompose]
K_\Neu(z,z') = K(z-z') + K_\d(z,z')\;.
\end{equ}
See Appendix~\ref{app:extHeat} for more details on the 
construction of these kernels and a proof that this can be done in a way that is compatible with the
results of \cite{regularity,Mate} that we will use in our argument.

\begin{remark}
We make no claim on the values of $K$ and $K_\d$ for arguments outside of
$[0,1] \times D$. This is because these will always be integrated against functions that are 
supported on $[0,1] \times D$ and only the values of the result inside the domain will matter. 
\end{remark}

We choose $K$ in such a way that it coincides with the whole space heat kernel $P$ on the (parabolic) ball of radius $1$ and
is compactly supported in the ball of radius $2$. 
We furthermore choose $K$ in such a way that it annihilates polynomials of degree up to $3$, is invariant
under the transformation $(t,x) \mapsto (t,-x)$, and is such that the sum of its reflections agrees with the
Neumann heat kernel on $[0,1] \times D$. (See Appendix~\ref{app:extHeat} for more details.)
For example, we can choose $K$ as in \cite{HP}. 
The kernel $K_\d$ is a correction term
that encodes the effect of the boundary condition.  
Regarding our regularity structure, we then proceed as if there was no boundary condition whatsoever:
we construct models defined on the whole
space that are translation invariant and we use convolution with $K$ as our integration operator.
We then define an operator $\CP_\Neu$ on modelled distributions by setting
\begin{equ}[e:defCP]
\CP_\Neu = \CK + \tilde \CK_\d\;,\quad\text{where}\quad \tilde \CK_\d = \CL_2 K_\d \CR\;,
\end{equ}
and $\CK$ is built in exactly the same way as in \cite[Sec.~4]{regularity}.
Note that $\tilde \CK_\d$ encodes the effect of the boundary condition. 
(There is a completely analogous definition in the case of Dirichlet boundary conditions.)

Here, $\CL_\gamma \colon \CC^\gamma \to \CD^\gamma$ denotes the ``Taylor lift'' given by 
\begin{equ}[e:Taylor]
(\CL_\gamma f)(z) = 
\sum_{|k| \le \gamma} {f^{(k)}(z) \over k!} \X^k\;,
\end{equ}
where $z = (t,x)$ and $k$ denotes a multiindex in $\N^{1+d}$.

We now have the preliminaries in place to turn to the proof of Theorem~\ref{theo:LLN}.

\begin{proof}[of Theorem~\ref{theo:LLN}]
We use a strategy similar to that in \cite{HP,Jeremy}, combining this with results from \cite{Mate}
to deal with the boundary conditions. We refer to \cite{IntroReg,RP,IntroAjay} for introductions to the theory of regularity 
structures, as well as to \cite{regularity} for details.
In our present context, we use the regularity structure obtained by extending the usual polynomial structure
with parabolic scaling with a
symbol $\Eta$ of degree $-1-\kappa$ representing
the driving noise $\eta_\eps$, as well as an abstract integration operator $\CI$ of order $2$ representing convolution
with the (singular part of) heat kernel. 
As usual, we will often use graphical representations for the basis vectors in our regularity structure(s),
and we decree that $\<Xi1>$ is our symbolic representation for $\Eta$. (The reason for
introducing the ``accent'' representing the index ``$1$'' will become clear later on where more general
notations of this type are needed.)
Although our goal is to consider \eqref{e:mainLLN} on the bounded
domain $D\subset\R^d$, we construct the models for our regularity structure on the whole of $\R\times \R^d$.

With notations almost identical to those 
in \cite{HP} and the formula (3.19) there, it would then be natural to consider a fixed point 
problem of the type
\begin{equ}[e:abstrLLN]
U = \CP_\Neu \one_+^D \bigl(\hat H_\eta(U) + \hat G(U)\<Xi1>\bigr) + P_\Neu u_0\;,
\end{equ}
where $\one_+^D$ denotes the indicator function of the space-time domain $\R_+\times D$.
Leaving considerations regarding the precise spaces of modelled distributions in which this
equation makes sense aside for the moment, 
it is straightforward to see as in \cite{regularity} that if we solve \eqref{e:abstrLLN} for the 
renormalised lift of $\eta_\eps$, i.e. the admissible model such that
\begin{equ}[e:renormModel]
\hPeps \<Xi1> = \eta_\eps\;,\qquad 
\hPeps\<Xi1IXi1> = \eta_\eps (K\star \eta_\eps) - \<XiIXic> \;,
\end{equ}
then the function $u_\eps = \CR_\eps U$ actually solves \eqref{e:mainLLN}.

Indeed, iterating \eqref{e:abstrLLN}, we see that any solution $U$ to such a fixed point problem
is necessarily of the form
\begin{equ}
U=u\1+G(u)\, \<IXi1> +\nabla u\, \X\;,
\end{equ}
for some continuous functions $u$ and $\nabla u$. (This is purely notational, $\nabla u$ is \textit{not}
the gradient of $u$, but can be interpreted as a kind of ``renormalised gradient''.) 
In particular, the factor multiplying $\CP_\Neu \one_+^D$ in the right hand side of 
\eqref{e:abstrLLN} is given by
\begin{equ}[e:RHSLLN]
L \eqdef H_\eta(u)\1 + G(u)\, \<Xi1> + G'(u)G(u)\,\<Xi1IXi1> + G'(u)u'\,\<Xi1X>\;,
\end{equ}
where we projected onto terms of negative (or vanishing) degree.
At this point, it then suffices to note that the application of the reconstruction operator
to $L$ yields
\begin{equs}
(\CR_\eps L)(z) &= \bigl(\hat \Pi_z^\eps L(z)\bigr)(z) = H_\eta(u(z)) + G(u(z))\eta_\eps(z)
- (G'G)(u(z))\<XiIXic>\\
&= H(u(z)) + G(u(z))\eta_\eps(z)\;,
\end{equs} 
as required.

The problem with the argument outlined above is that since $\deg \<Xi1> < -1$, the behaviour of the modelled 
distribution $\one_+^D L$ near $\d D$ is such that the reconstruction operator is not a priori well-defined
on it, see \cite[Secs~4.1 \& 4.2]{Mate}. This is for precisely the same reason why the restriction of a generic distribution 
$\zeta \in \CC^\alpha$ to a ``nice'' domain $D$ is only well-defined if $\alpha > -1$. (For $\alpha \le -1$ there are non-zero distributions with support contained in $\d D$.)

Before we tackle this problem, recall the definition of the spaces $\CD^{\gamma,\eta}$ as in 
\cite[Sec.~6]{regularity} (the hyperplane $P$ being given by the time slice $t=0$)
as well as the spaces $\CD^{\gamma,w}$ as in \cite[Sec.~4]{Mate} (in which
case $P_0$ is again the time $0$ slice while $P_1 = \R \times \d D$).
These two spaces are distinguished by the fact that $\eta$ is a real exponent while
$w$ denotes a triple of exponents describing the singular behaviour near $t=0$, $\d D$ and
the intersection of both regions respectively.
It will be convenient to use the notation $(\alpha)_3$ with $\alpha \in \R$ as a 
shorthand for the triple $(\alpha,\alpha,\alpha)$.

The idea then is the following. 
First, we introduce a new symbol $\<XiS1>$, also of degree $-1-\kappa$,
but representing the function $\eta_\eps  \one_{\R\times D}$ instead of
representing $\eta_\eps$ and we add to our regularity structure the symbols
$\X \<XiS1>$ and $\<XiS1IXi1>$. 
Write then $V$ for the sector spanned by $\<Xi1>$, $\<Xi1IXi1>$, and
$\X \<Xi1>$, $\hat V$ for the sector spanned by $\<XiS1>$, $\<XiS1IXi1>$, and
$\X \<XiS1>$, and $\iota \colon V \to \hat V$ for the linear map with $\iota \<Xi1> = \<XiS1>$
and similarly for the remaining basis vectors. 
We will furthermore only ever consider models $\PPi$ with the property that
\begin{equ}[e:consistency]
\bigl(\PPi \iota\tau\bigr)(\phi) = 
\left\{\begin{array}{cl}
	0 & \text{if $\supp \phi \subset \R\times D^c$,} \\
	\bigl(\PPi \tau\bigr)(\phi) & \text{if $\supp \phi \subset \R\times D$,}
\end{array}\right.
\end{equ}
for all $\tau \in V$.
Since $\iota$ commutes with the structure group and preserves degrees,
it follows that $F \mapsto \iota F$ is continuous from
$\CD^{\gamma,w}(V)$ to $\CD^{\gamma,w}(\hat V)$ for all choices of exponents $\gamma$ and $w$
and, for $\gamma > 0$, the local reconstruction  operator $\tilde \CR$ (which yields a distribution on
$\R\times (\R^d \setminus \d D)$ and is always well-defined)
 satisfies
\begin{equ}
\bigl(\tilde\CR \iota F\bigr)(\phi) = 
\left\{\begin{array}{cl}
	0 & \text{if $\supp \phi \subset \R\times D^c$,} \\
	\bigl(\tilde\CR F\bigr)(\phi) & \text{if $\supp \phi \subset \R\times D$.}
\end{array}\right.
\end{equ}

The reason for the introduction of these extra symbols is that we would like to interpret \eqref{e:abstrLLN}
as a fixed point problem in the space $\CD^{1+2\kappa,(2\kappa)_3}$ with values in $V_0$ (for small enough $\kappa$),
where $V_0$ is spanned by $\1$, $\<IXi1>$, and
$\X$. The problem now is that, for $F \in \CD^{1+2\kappa,(2\kappa)_3}$, we have 
$F\<Xi1> \in \CD^{\kappa,(\kappa-1)_3}$, but we lose some regularity at the boundary when multiplying by 
the indicator function of our domain (see \cite{Mate}), so that we only have 
$\one_+^D F\<Xi1> \in \CD^{\kappa,(-\kappa-1)_3}$. Since the boundary index is now below $-1$,
it follows that the reconstruction operator of \cite{Mate} is not well-defined on $\one_+^D F\<Xi1>$.
By Theorem~\ref{thm:reconstructDomain} below, it is however perfectly well-defined on $\one_+ F\<XiS1> = \one_+ \iota (F\<Xi1>)$ since 
only the temporal singularity index is below $-1$ (but above $-2$). Furthermore, one
has the identity $\CR (\one_+ F\<XiS1>) = \tilde \CR (\one_+^D F\<Xi1>)$ for test functions whose support
does not intersect the boundary of $D$.

Recall now that, given a modelled distribution $F$ and a distribution $\zeta$ agreeing with $\tilde \CR F$ outside
the boundary of $D$, \cite[Lem~4.12]{Mate} defines a modelled distribution $\CK^\zeta F$ with improved regularity 
and such that $\CR \CK^\zeta F = K \star \zeta$. Furthermore, the map $(\zeta,F) \mapsto \CK^\zeta F$ is Lipschitz 
continuous in the natural topologies.
This allows us to define an operator 
$\CP_D\colon \CD^{\kappa,(\kappa-1)_3}(V) \to \CD^{\kappa+2,(1-\kappa)_3}(V)$ by setting
\begin{equ}
\CP_D\colon F \mapsto \CK^{\CR \iota F} \one_+^D F + \tilde \CK_\d \one_+ \iota F\;.
\end{equ}
Our discussion suggests that,  instead of \eqref{e:abstrLLN}, we should consider the fixed point problem
\begin{equ}[e:abstrLLN2]
U = \CP_\Neu \one_+^D \hat H_\eta(U) + \CP_D \hat G(U)\<Xi1> + P_\Neu u_0\;,
\end{equ}
which admits unique local solutions in $\CD^{1+2\kappa,(2\kappa)_3}(V)$ by \cite{Mate}, which are
continuous with respect to admissible models on the full regularity structure satisfying 
furthermore the consistency condition \eqref{e:consistency}.
Here, we need to choose $\kappa$ small enough to guarantee that $P_\Neu u_0$ does indeed belong to 
the space $\CC^{\kappa+2,(1-\kappa)_3}$, which is possible thanks to our assumption that $u_0$ itself
is H\"older continuous for some positive exponent.

Retracing the discussion given at the beginning of the proof, but now with 
the renormalised model $\hPeps$ such that, in addition to \eqref{e:renormModel},
one has $\hPeps \iota \tau = \one_{\R \times D} \hPeps \tau$ for $\tau \in V$, we conclude that for $\eps > 0$, 
solutions to \eqref{e:abstrLLN2} coincide with those of \eqref{e:mainLLN}.
We now refer to Theorem~\ref{theo:model} below which shows that the sequence of models $\hPeps$
converges, as $\eps\to0$, to a limiting model $\hat\PPi$ such that 
$\hat \PPi \<Xi1> = \hat \PPi \<XiS1> = 0$, extended canonically to the whole regularity structure.
It follows immediately that the solution $\bar U$ to \eqref{e:abstrLLN} with the model $\hat\PPi$
is such that  $\u = \CR \bar U$ does indeed solve \eqref{e:ubar} as claimed.
\end{proof}

\begin{remark}
Note that \eqref{e:consistency} does not \textit{force} us to set 
$\hPeps \<XiS1> = \one_{\R\times D} \hPeps \<Xi1> = \one_{\R\times D} \eta_\eps$, but we could have added a sufficiently
regular distribution supported on $\d D$. This however would break the identity
\begin{equ}
\CR_\eps \CP_D G = P_\Neu \one_+^D \CR_\eps G\;,
\end{equ}
on $[0,1]\times D$ and would therefore modify the boundary condition of the resulting solution.
\end{remark}

\section{Central Limit Theorem}
\label{sec:CLT}

We now turn to the proof of the main result of this article, Theorem~\ref{theo:main}.
We will mainly focus on the case of Neumann boundary conditions in dimension $d=3$, which is the 
most interesting (and technically most difficult) case. 
We set
\begin{equ}[e:defveps]
v_\eps=\frac{u_\eps-\u - \eps \uu}{\eps^{d/2}}\;,\qquad
\xi_\eps=\eps^{-d/2}\eta_\eps\;,\quad
\sigma_\eps=\eps^{d/2}\eta_\eps\;.
\end{equ}
With this notation, we then have in the case $d\in \{2,3\}$
\begin{equs}
\partial_t v_\eps
&=\Delta v_\eps+H_\eta'(\u) v_\eps + \eps^{-{d\over 2}} \bigl(H(u_\eps) - H_\eta(\u) - H_\eta'(\u)(u_\eps-\u)\bigr) \\
&\quad + G(u_\eps) \xi_\eps - \eps^{1-{d\over 2}}\Psi(\u, \nabla \u)\\
&=\Delta v_\eps+H_\eta'(\u) v_\eps + G(\u) \xi_\eps \\
&\quad+ \eps^{-{d\over 2}} \bigl(H_\eta(u_\eps) - H_\eta(\u) - H_\eta'(\u)(u_\eps-\u)\bigr) - \eps^{-{d\over 2}} \<XiIXic> (GG')(\u) \\
&\quad - \eps^{-{d\over 2}} \<XiIXic> \bigl((G')^2 + GG''\bigr)(\u) (u_\eps - \u)
+ G'(\u) (u_\eps - \u)\xi_\eps \\
&\quad + {1\over 2}G''(\u) (u_\eps - \u)^2 \xi_\eps - \eps^{1-{d\over 2}}\Psi(\u, \nabla \u)  + R_{1,\eps}\;,
\end{equs}
where, setting
\begin{equ}
w_\eps = \eps^{-{d\over 2}}(u_\eps-\u) = v_\eps+ \eps^{1-{d\over 2}} \uu\;,
\end{equ}
we have the explicit expression for the remainder term
\begin{equs}
R_{1,\eps} &= -\eps^{d/2}w_\eps^2 \int_0^1 \<XiIXic> (GG')''(\u + s\eps^{d/2}w_\eps )(1-s)\,ds\\
&\quad + \frac{\eps^{{3d\over 2}}}{2}w_\eps^3 \int_0^1 G^{(3)}(\u + s\eps^{d/2}w_\eps )(1-s)^2\,ds\, \xi_\eps  \;.
\end{equs}
Furthermore, due to the non-vanishing boundary condition of $\uu$ in the Neumann case,
$v_\eps$ is then endowed with the inhomogeneous boundary condition
\begin{equ}
\scal{\nabla v_\eps, n(x)} = -\eps^{1-d/2} c(x) GG'(\u(t,x))\;. 
\end{equ}
We now also incorporate the first part of the second line into the remainder, so that we can write
\begin{equs}
\partial_t v_\eps
&=\Delta v_\eps+H_\eta'(\u) v_\eps + G(\u) \xi_\eps - \eps^{-{d\over 2}} \<XiIXic> (GG')(\u) \label{e:CLT}\\
&\quad -  \<XiIXic> (GG')'(\u) (v_\eps+ \eps^{1-{d\over 2}} \uu)
+ G'(\u) (v_\eps+ \eps^{1-{d\over 2}} \uu)\eta_\eps \\
&\quad + {1\over 2}G''(\u) (v_\eps+ \eps^{1-{d\over 2}} \uu)^2 \sigma_\eps - \eps^{1-{d\over 2}}\Psi(\u, \nabla \u)+ \Rteps(v_\eps, \eps^{\alpha}\xi_\eps) \;, 
\end{equs}
where the remainder term $\Rteps$ is given by
\begin{equs}
\Rteps(v,\varsigma) &=\Reps(v + \eps^{1-d/2}\uu,\varsigma)\;,\\
\Reps(w,\varsigma) &= \eps^{d/2}w^2 \int_0^1 H''(\u + s\eps^{d/2}w )(1-s)\,ds \label{e:defReps}\\
&\qquad+\frac{\eps^{{3d\over 2}-\alpha}}{2}w^3 \int_0^1 G^{(3)}(\u + s\eps^{d/2}w )(1-s)^2\,ds\, \varsigma  \;.
\end{equs}
Note that here we have $H$ appearing in \eqref{e:defReps} rather than $H_\eta$ and that the two are related by
\eqref{e:Heta}.
In dimension $d=1$, we set $\uu = 0$ so that $\Rteps = \Reps$
and $v_\eps = (u_\eps - \u)/\sqrt \eps$, and we obtain in the same way
the slightly simpler expression
\begin{equs}
\partial_t v_\eps
&=\Delta v_\eps+H_\eta'(\u) v_\eps + G(\u) \xi_\eps - \eps^{-{1\over 2}} \<XiIXic> (GG')(\u) \label{e:CLT1d}\\
&\quad -  \<XiIXic> (GG')'(\u) v_\eps
+ G'(\u) v_\eps\eta_\eps  + {1\over 2}G''(\u) v_\eps^2 \sigma_\eps+ \hat R^{(1)}_\eps(v_\eps, \eps^{\alpha}\xi_\eps) \;. 
\end{equs}
(The reason why the term containing $\Psi$ does not appear in this expression is because this
was generated by $\d_t \uu$ which vanishes by definition in dimension one.)
The exponent $\alpha$ appearing in this expression is of course arbitrary, but allowing to tune it will
be convenient when expressing this as a fixed point problem.

\subsection{Decomposition of the solution}
\label{sec:decomp}

In order to show that $v_\eps$ converges to a limit, it will be convenient to break it into a sum of three terms.
The first term will be a straightforward approximation to the stochastic heat equation with noise strength $G(\u)$
and homogeneous boundary condition. The second term will converge to $0$, but incorporates the diverging
boundary condition, which is used to compensate a resonance appearing in its right hand side. The final term will be a 
remainder that is sufficiently regular to be dealt with by the techniques of \cite{Mate}.
For this, we write
\begin{equ}
v_\eps = \v + \vv + \vvv\;,
\end{equ}
and, with the convention that $G$ and its derivatives are always evaluated at $\u$, we set
\begin{equs}
\d_t \v &=\Delta \v + G \xi_\eps\;,\label{e:defv0}\\
\d_t \vv &=\Delta \vv + G' \Bigl(\v \eta_\eps  - \eps^{-{d\over 2}} \<XiIXic> G - \eps^{1-{d\over 2}} G' \,\scal{\<XiIXiXc>,\nabla \u}\Bigr) \label{e:defv1} \\
&\quad + {1\over 2} G''\Bigl((\v)^2 \sigma_\eps - 2\<XiIXic>G\,\v- \eps^{1-{d\over 2}} \<XiIXi^2c>G^2\Bigr)\;,
\end{equs}
endowed with the boundary conditions on $\d D$
\begin{equ}
\scal{\nabla \v, n} =0\;,\qquad 
\scal{\nabla \vv, n}=-\eps^{1-{d\over 2}} c GG'\;,
\end{equ}
as well as vanishing initial conditions.
The reason why $\vv$ will actually converge to $0$ despite the diverging boundary condition when $d \ge 2$ is the following.
Consider the function $\PPi_\eps\<IXib>$ defined by
\begin{equ}[e:defPPiepscorr]
\bigl(\PPi_\eps\<IXib>\bigr)(z) = \int_{\R \times D} K_\Neu(z,z') \xi_\eps(z')\,dz' - \int_{\R^{d+1}} K(z-z') \xi_\eps(z')\,dz'\;,
\end{equ}

Then, we will see in \eqref{e:defV0tilde} below that the behaviour
of $\v$ is locally very well described by that of
\begin{equ}[e:v0]
G(\u) \bigl(\PPi_\eps\<IXi> + \PPi_\eps\<IXib>\bigr)\;,
\end{equ}
where $\PPi_\eps\<IXi> = K \star \xi_\eps$ as usual. This implies that the
behaviour of the term 
$G'(\u) \v \eta_\eps$ appearing in the right hand side of the equation for $\vv$ is well described
locally by that of 
\begin{equ}[e:rhs]
\bigl(GG'\bigr)(\u) \bigl(\PPi_\eps\<Xi1IXi> + \PPi_\eps\<Xi1IXib>\bigr)\;,
\end{equ}
Where $\<Xi1>$ denotes multiplication by $\eta_\eps$ as previously, while
$\<XiS1>$ denotes multiplication by $\eta_\eps \one_{\R \times D}$. (This will be formalised later on.)
We will see that, up to vanishingly small errors, 
$\PPi_\eps\<Xi1IXi> \approx \eps^{-{d\over 2}}\<XiIXic>$, while 
$\PPi_\eps\<Xi1IXib> \approx \eps^{1-{d\over 2}}c \delta_{\d D}$ for a suitable constant $c$
(in fact a different constant for each face of $\d D$ in general), so that the first term in 
\eqref{e:rhs} is cancelled up to small errors by the term $- \eps^{-{d\over 2}} \<XiIXic> (GG')(\u)$
appearing in the equation for $\vv$, while the second term in \eqref{e:rhs} is cancelled by the
term $-\eps^{1-{d\over 2}} c GG' \delta_{\d D}$ created by the boundary condition.

Note that this argument does not see much of a difference between the Neumann and Dirichlet cases.
Indeed, if we want the right hand side of the equation for $\vv$ to converge to a limiting distribution for the 
latter, we also need to add a diverging (when $d=3$) term proportional to $\delta_{\d D}$. The difference is that 
$K_\Dir(z,z')$ vanishes for $z' \in \d D$, so that this term has no influence on $\vv$ in the Dirichlet case.

The idea now is to proceed as follows.
\begin{claim}
\item In a first step, we describe in Section~\ref{sec:structure} a regularity structure that is sufficiently rich to allow us to give precise
control on the behaviour of $\v$, $\vv$ and $v_\eps$. As already alluded to, this will in particular include
symbols representing \textit{non-stationary} space-time stochastic processes, but we will try to keep these
to an absolute  minimum. We also describe there the renormalisation procedure which allows us to construct a suitable (random) model.
\item We then make precise the description \eqref{e:v0} for $\v$ by expressing it as a modelled distribution
in this regularity structure, which in particular contains the two symbols $\<IXi>$ and $\<IXib>$. The main conclusion in \eqref{e:defV0tilde}
will be
that the presence of $\<IXib>$ allows us to express $\v$ as a modelled distribution with a good behaviour near $\d D$.
(At this stage, we could of course also use one symbol only and define our model using the Neumann heat kernel only,
but the decompositions \eqref{e:v0} and \eqref{e:rhs} are convenient for the remainder of the argument and to be able to reuse existing results.)
\item In a second step, we show that if we set
\begin{equ}
\hat \PPi_\eps\<Xi1IXi> = \PPi_\eps\<Xi1IXi> - \eps^{-{d\over 2}}\<XiIXic>\;,\quad
\hat \PPi_\eps\<Xi1IXib> = \PPi_\eps\<Xi1IXib> - \eps^{1-{d\over 2}}c\delta_{\d D}\;,
\end{equ}
then $\hat \PPi_\eps$ converges to a limiting model as $\eps \to 0$, which furthermore has good
restriction properties to $D$, uniformly in $\eps$. 
Furthermore, since $\hat \PPi_\eps\<Xi1IXib>$ is only singular near the
boundary of $D$, we can describe $\v$ by another modelled distribution with worse behaviour near the boundary, but which only
uses ``translation invariant symbols'' in its description, see Lemma~\ref{lem:V0}, which allows us to give a description of $\vv$ 
in terms of such symbols in Lemma~\ref{lem:V1}.
\item We set up a sufficiently large regularity structure so that we can formulate a fixed point
problem for the remainder $\vvv$ and control its behaviour as $\eps \to 0$, see Propositions~\ref{prop:localSol}
and~\ref{prop:idenSol}. Combining this with the convergence of the corresponding renormalised model which is 
performed in Theorem~\ref{theo:model} but relies crucially on the next section, we are finally able to conclude.  
\end{claim}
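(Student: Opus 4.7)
The plan is to reduce Theorem~\ref{theo:main} to a fixed point problem for the remainder $\vvv$, following the decomposition $v_\eps = \v + \vv + \vvv$ introduced above. Substituting this decomposition into \eqref{e:CLT} and cancelling the divergent counterterms $\eps^{-d/2}\<XiIXic>(GG')(\u)$ and the $\delta_{\d D}$-type boundary divergence against the corresponding terms built into \eqref{e:defv1} and the non-homogeneous boundary condition of $\vv$, I would obtain a closed equation for $\vvv$ whose right hand side is polynomial in $\vvv$ with coefficients involving $\v$, $\vv$, smooth functions of $\u$, and products with $\xi_\eps$ or $\sigma_\eps$, plus the remainder $\Rteps$ that is small in $\eps$. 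The plan is then to formulate this equation in a space $\CD^{\gamma,w}(V)$ of modelled distributions on a regularity structure rich enough to carry all subcritical products appearing, and to solve it by a Banach contraction that is continuous in the underlying model; the relevant analytic framework is essentially that of Section~\ref{sec:LLN}, augmented with the extra symbols needed to describe $\v$ and $\vv$.

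The central probabilistic task will be the construction and $\eps \to 0$ convergence of a renormalised model $\hat\PPi_\eps$ on this regularity structure. I would exploit the cumulant bounds of Assumption~\ref{ass:kappa} together with the coalescence tree structure of Section~\ref{sec:coal} to obtain uniform estimates on each tree symbol. Non-Gaussian contributions would be handled through the decomposition into joint cumulants of the factors, and the scaling \eqref{e:scaleKappa} of $\kappa_p^{(\eps)}$ should produce precisely the four explicit constants $\<XiIXic>$, $\<XiIXiXc>$, $\<XiIXi^2c>$, and $\<XiIXiIXic>$ as the only divergent pieces to be subtracted in order to obtain a finite limit. The limiting model will be translation-invariant and Gaussian, driven by a space-time white noise $\xi$ whose covariance matches the normalisation \eqref{e:normalisation}.

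The hard part will be the boundary analysis in dimension $d=3$. The symbol $\<Xi1IXib>$ picks up a divergence of order $\eps^{1-d/2}$ concentrated in a boundary layer of width $\CO(\eps)$ around $\d D$, and it must be subtracted by a surface distribution $\eps^{1-d/2} c\,\delta_{\d D}$ whose constant $c$ is face-dependent. Showing that this really is the leading-order boundary contribution requires a careful asymptotic analysis of the kernel $K_\d$ in a neighbourhood of $\d D$ paired against the covariance of $\eta_\eps$, and it is precisely this effect that forces the non-homogeneous boundary condition on $\uu$ in the Neumann case. Moreover, since $\<Xi1>$ has degree strictly below $-1$, the standard reconstruction theorem is inadequate for modelled distributions of the form $\one_+^D F\<Xi1>$; this is what justifies the introduction of $\<XiS1>$, the consistency condition \eqref{e:consistency}, and the refined reconstruction theorem of Appendix~\ref{sec:reconstruction}.

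Once these pieces are in place, identification of the limit becomes routine. The modelled distribution representing $\vvv$ will have its leading components along $\<IXi>$ and $\<IXib>$ vanishing in the limit, while the component of $\v$ along $\<IXi>$ reconstructs to $G(\u)$ times the stochastic convolution of $\xi$. Combined with the convergence of $\vv$ to zero away from $\d D$, this yields the mild formulation of \eqref{e:deflimitv}, and uniqueness of its solution closes the argument.
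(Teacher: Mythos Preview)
Your proposal captures the same overall strategy as the paper's outline: the decomposition $v_\eps = \v + \vvv + \vvv$, the regularity structure with both translation-invariant and boundary-aware symbols, the renormalised model with boundary subtraction $\eps^{1-d/2}c\,\delta_{\d D}$, the role of $\<XiS1>$ and the refined reconstruction theorem of Appendix~\ref{sec:reconstruction}, and the fixed point argument for $\vvv$. One correction: you write that the scaling ``should produce precisely the four explicit constants $\<XiIXic>$, $\<XiIXiXc>$, $\<XiIXi^2c>$, $\<XiIXiIXic>$ as the only divergent pieces''; in fact the list \eqref{e:listBasis} of negative-degree symbols in $\CT_-$ is considerably longer in $d=3$, and several further constants (such as $\<IXi^2c>$, $\<IXiXIXic>$, $\<IXiIXiIXic>$, $\<XiIIXi^2c>$) must also be subtracted to obtain a convergent model --- the four you list are merely the ones that survive in the equation for $\uu$.
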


\subsection{Definition of the ambient regularity structure}
\label{sec:structure}

We start by defining a regularity structure that is sufficiently large to allow us to
perform the steps mentioned above and in particular to formulate \eqref{e:CLT} as a 
fixed point problem for a modelled distribution $V$. This fixed point problem will be
chosen precisely in such a way that if we take as our model the \textit{renormalised}
lift of the noise $\eta_\eps$, then the corresponding counterterms are precisely such that 
if $V$ solves the fixed point problem, then $\CR V$ solves \eqref{e:CLT}.

Let  $\XX ij$ be new symbols representing $\xi_\eps \eps^{\alpha(i,j)}$ with
\begin{equ}[e:defalphaij]
\alpha(i,j) \eqdef \Big(1-{d\over 2}\Big)j + {d\over 2}i\;,\qquad 0 \le j \le i \le 2\;.
\end{equ}
(Beware that  $j$ is simply a superscript in $\XX ij$, not a power.) 
In the graphical notation analogous to
\cite{HP}, we will use ``accents'' to denote the upper and lower indices on $\sXi = \XX 00 = \<Xi>$,
so for example $\XX 2{} = \<Xi2>$, $\XX 11 = \<Xi11>$, etc. We will also sometimes write
$\<Xi>_i^j$ instead of $\XX ij$.
The degree of these symbols is given by
\begin{equ}[e:degXX]
\deg \XX ij = {\delta - 1\over 2} \wedge \Bigl(\alpha(i,j) - {d+2\over 2}-\kappa\Bigr)\;.
\end{equ}
The main reason why we never consider these ``noises'' as having degree larger than 
${\delta - 1\over 2}$ is that we want to view them as ``noise types'' and so the structure 
group should act trivially on them.
A byproduct of this choice is that 
it allows us to deal with driving noises $\eta$ that are themselves unbounded.

We now build a regularity structure extending the one built in Section~\ref{sec:LLN},
in which \eqref{e:absCLT} can be formalised, by applying the framework of \cite[Sec.~5]{Lorenzo}.
We take as our basic ``noise types'' the noises $\<Xi>_i^j$ with $0 \le j \le i \le 2$,
as well as an additional noise type $\<XiS>$ which will be used to represent the
noise $\xi_\eps$, restricted to $\R \times D$.
We also introduce two ``edge types''
$\CI$  representing convolution by a suitable cutoff $\CK$ of the standard heat kernel in the whole space
and $\dCI$ representing the integral operator with kernel $K_\Neu$ as in \eqref{e:decompose} (see Appendix~\ref{app:extHeat} for a
precise definition),
both having degree $2$. It will be convenient to also introduce edge types $\CI_i$ and $\dCI_i$
 with $i=1,\ldots,d$ of degree $3$ representing the integral operators with 
kernel $K_i(x,y) =  (y_i-x_i)K(y-x)$ and $K_{\Neu,i}(x,y) =  (y_i-x_i) K_\Neu(y-x)$ respectively. Finally, we 
introduce a ``virtual'' edge types $\hat \CI$ of degree $2$ 
which will allow  us to produce a rule (in the technical sense of \cite[Sec.~5]{Lorenzo})
generating the relevant trees containing non-translation invariant symbols $\dCI$ and $\<XiS>$, but 
without cluttering our regularity structure with unneeded symbols.

The rule used to generate the regularity structure is then given by $R(\<Xi>_i^j) = \{\1\}$ and
\begin{equs}[e:defRule]
R(\CI) &= \{\1,\CI, \CI_i,  \CI_i \<Xi1>\} \cup \{ \CI^k\<Xi>_i^j \,:\, k \le i-j\}\;,\\
R(\CI_i) &= \{\1,\<Xi>\}\;,\qquad 
R(\dCI) = R(\dCI_i) = \{\1, \<XiS>\}\;,\\
R(\hat \CI) &= \{\1, \<XiS>,  \dCI \<XiS1>,  \dCI, \dCI_i, \<XiS1> , \<XiS11>,\CI, \CI \<XiS1>, \CI_i \<XiS1>\}\;.
\end{equs}
Recall that, given a collection $\Lab$ of ``edge types'' (in our case, these are the
``integration'' types $\CI$, $\dCI$, etc as well as the ``noise'' types $\<Xi>_i^j$ and 
$\<XiS>_i^j$ which are also interpreted as edges for the purpose of this discussion), 
a ``rule'' is a map $R \colon \Lab \to \CP(\hat \CP(\Lab)) \setminus \{\emptyset\}$, where $\CP(A)$ denotes the 
powerset of a set $A$, $\hat \CP(\Lab)$ denotes the set of non-empty multisets with 
elements in $\Lab$.\footnote{We ignore the possibility of having ``derivatives'' on our edges
as in \cite{Lorenzo}, as these do not occur in this article.} In other words, an element $R(\mft)$ is a collection of ``node types'', 
with each node type being a collection of edge types, with repetitions allowed.
In \eqref{e:defRule}, we use the identification between a multiset and a formal monomial, i.e.\ 
$\<Xi>\CI^2$ denotes the multiset with one copy of $\<Xi>$ and two copies of $\CI$.

The basis vectors for our regularity structure are rooted trees $(V, E, \rho)$ with edges $e \in E$ labelled by $\Lab$
and nodes $v \in V$ labelled by $\N^d$, denoting polynomial factors, with the convention that $0 \in \N$.
By convention, edges are oriented away from the root. Any node $v\in V$ then has a ``type''
$\CN(v) \in \hat \CP(\Lab)$ given by the collection of the types of the outgoing edges adjacent to $v$.
If $v \in V \setminus \{\rho\}$, there is a unique edge coming into $v$ and we write $\mft(v) \in \Lab$ for the type of the incoming edge.
A tree is said to ``conform'' to a rule $R$ if, whenever $v \in V \setminus \{\rho\}$, one has $\CN(v) \in R(\mft(v))$,
while $\CN(\rho) \in \bigcup_{\mft \in \Lab} R(\mft)$.

\begin{remark}
Note that $\hat \CI$ never appears inside any $R(\mft)$, so that a tree conforming to the rule $R$
is not allowed to have any edge of type $\hat \CI$. The only reason for its presence is to allow the root
of a conforming tree to be of type $\CN(\rho) \in R(\hat \CI)$, since otherwise the tree 
$\<XiSdIXiS>$ which will be used later on would not be conforming to our rule.
Actually, the symbol $\<Xi1IXib>$, which was introduced in a completely ad hoc manner so far, will then be 
interpreted as $\<Xi1IXib>=\<XiSdIXiS> - \<XiSIXi>$
\end{remark}

We assign degrees to the components appearing here by \eqref{e:degXX} as well as
\begin{equs}[2]
\deg \<XiS> &= \deg \<Xi>\;,\quad&\quad
\deg \<XiS1> &= \deg \<Xi1>\;,\\ 
\deg \CI &= \deg \hat \CI = \deg \dCI = 2\;,\quad&\quad
\deg\CI_i &= \deg \dCI_i = 3\;.
\end{equs}
It is straightforward to verify that this rule is subcritical and complete.
We henceforth denote by $(\TT,\GG)$ the (reduced in the terminology of \cite[Sec.~6.4]{Lorenzo}) 
regularity structure generated by the rule $R$.

\subsection{Description of the models}
\label{sec:defModel}

Throughout this article, we will use the notation $\PPi$ (possibly with additional decorations) for a 
continuous linear map $\PPi \colon \CT \to \CD'(\R^{d+1})$ such that there 
exists a (necessarily unique) admissible model $(\Pi,\Gamma)$ 
related to $\PPi$ by \cite[Sec.~8.3]{regularity}. 
Here, we associate the kernel $K$ to $\CI$ and $\hat \CI$, $K_i$ 
to $\CI_i$, and $K_\Neu$, $K_{\Neu,i}$  to 
$\dCI$, $\dCI_i$ respectively.

We henceforth make a slight abuse of terminology and
call $\PPi$ itself a model.
The following notion of an ``admissible'' model is a slight strengthening of the usual one to our context which essentially
states that our ``square'' symbols are the restrictions of the ``round'' symbols to $\R \times D$.

\begin{definition}\label{def:admissible}
A model $\PPi$ for $(\TT,\GG)$ is \textit{admissible} if it is admissible in the sense of 
\cite[Def.~8.29]{regularity} for the kernels listed above
  and furthermore, for any $\tau \in \TT$ and $i,j$ such that $\<Xi>_i^j\tau, \<XiS>_i^j\tau \in \TT$,
one has
\begin{equ}
\bigl(\PPi \<XiS>_i^j\tau\bigr)(\phi) = \bigl(\PPi \<Xi>_i^j\tau\bigr)(\phi)\;,
\end{equ}
for test functions $\phi$ with $\supp \phi \subset \R \times D$ and 
$\bigl(\PPi \<XiS>_i^j\tau\bigr)(\phi) = 0$
for test functions with $\supp \phi \subset \R \times D^c$.
\end{definition}

\begin{remark}
Note furthermore that all basis vectors $\hat \tau \in T$ with $\deg\hat \tau < -1$
are of the form $\hat \tau = \<Xi>_i^j\tau$ (or similar with $\<Xi>$ replaced by $\<XiS>$) 
for some $i,j$ and $\tau$ as in Definition~\ref{def:admissible}. As a consequence, admissible
models in our sense admit a canonical decomposition $\PPi = \PPi^+ + \PPi^-$
as in Assumption~\ref{as:decomposition} below by setting $\PPi^+ \<Xi>_i^j\tau = \PPi \<XiS>_i^j\tau$.
This is a crucial fact which allows us to use Theorem~\ref{thm:reconstructDomain}. 
\end{remark}
We also define the following notion:

\begin{definition}\label{def:ruleInvariant}
Consider the rule $R_{\invariant}$ given by $R_{\invariant}(\<Xi>_i^j) = R_{\invariant}(\dCI) = R(\dCI_i) =  \{\1\}$ and
\begin{equs}
R_{\invariant}(\CI) &= \{\1,\CI, \CI_i,  \CI_i \<Xi1>\} \cup \{ \<Xi>_i^j \CI^k\,:\, k \le i-j\}\;,\\
R_{\invariant}(\CI_i) &= \{\1,\<Xi>\}\;,\quad 
R_{\invariant}(\hat \CI) = \{\1,\CI\}\;.
\end{equs}
We write $\CT_{\invariant} \subset \CT$ for the sector spanned by trees conforming to $R_{\invariant}$
and we call $\CT_{\invariant}$ the \textit{translation invariant sector}.
\end{definition}

Given any admissible model for $(\TT,\GG)$, we then write $\CK$, $\CK_i$, $\CK_\Neu$, $\CK_{\Neu,i}$ for the corresponding
integration operators as defined in \cite[Sec.~5]{regularity}. 
We now provide a complete description of the renormalised models $\hPeps$ we will use for this
regularity structure. From now on, whenever we do not explicitly mention a model on $(\CT,\CG)$,
we assume that we talk about the specific (random) model $\hPeps$, and not a general admissible model.
In particular, all the variants of the spaces $\CD^\gamma$ used below will be the spaces associated to
this model. In order to specify the $\eps$-dependence, we will write $\CR_\eps$ for the reconstruction
operator associated to the model $\hPeps$. Theorem~\ref{theo:model} below shows that $\hPeps$ converges to some limiting model
$\PPi$ as $\eps \to 0$, giving rise to a reconstruction operator $\CR_0$. Whenever we simply write $\CR$, 
it denotes the reconstruction operator for a generic admissible model.

Writing $\one^D$ for the indicator function of $\R\times D$,
we first define a family of non-renormalised (random) models $\PPi_\eps$ as the canonical lift for the 
``noise'' given by
\begin{equ}
\PPi_\eps \XX ij = \xi_\eps \eps^{\alpha(i,j)}\;,\quad 
\PPi_\eps \<XiS> = \one^D\xi_\eps \;,\quad 
\PPi_\eps \<XiS1> = \one^D\eta_\eps\;,
\end{equ}
with $\alpha(i,j)$ as in \eqref{e:defalphaij}. 
We then define an intermediate model $\tilde \PPi_\eps$ related to $\PPi_\eps$ by
\begin{equs}[e:renormalisedModel]
\tilde \PPi_\eps \<XiSdIXiS>  &= \PPi_\eps \<XiSdIXiS> - \eps^{-\f d2} \<XiIXic> \one^D - \sum_{i=1}^d \eps^{1-{d\over 2}} \bigl(c_{i,0} \delta_{\d_{i,0} D}
+c_{i,1} \delta_{\d_{i,1} D}\big)\;,\\
\tilde \PPi_\eps \<XiSIXi> &= \PPi_\eps \<XiSIXi> - \eps^{-\f d2} \<XiIXic>\one^D \;,\\
\tilde \PPi_\eps \<XiS1IXiX> &= \PPi_\eps \<XiS1IXiX>- \eps^{-\f d2}\<XiIXic> \one^D \PPi_\eps \X - \eps^{1-\f d2} \<XiIXiXc> \one^D  \;,
\end{equs}
as well as $\tilde \PPi_\eps\<XiS1> \CI_i(\<Xi>) = \tilde \PPi_\eps (\<XiS1IXiX>-\<XiXS1IXi>)$.
We furthermore impose admissibility which forces us to set 
$\tilde \PPi_\eps \X\tau = \PPi_\eps\X\cdot \tilde \PPi_\eps \tau$.
 One can verify that the only remaining basis vector of $\CT$ of negative degree and 
not belonging to the translation invariant sector is $\<XiSdIXiSX>$. However,
this element will never be needed for our considerations, so we do not need to
specify the action of $\tilde \PPi_\eps$ on it.

We now construct a renormalised model $\hPeps$ from $\tilde \PPi_\eps$ by applying a slight modification of the BPHZ
renormalisation procedure \cite{Ajay,Lorenzo} to the translation invariant sector (which can be viewed
as a regularity structure in its own right, generated by the rule $R_{\invariant}$).
Writing $\CT_-$ for the subspace of $\CT_{\invariant}$ consisting of symbols of strictly negative degree,
we will define $\hPeps$ by an expression of the form
\begin{equ}
\hPeps \tau = \bigl(g_\eps \otimes \tilde \PPi_\eps\bigr)\Deltam\tau\;,
\end{equ}
where $\Delta^-$ is a certain linear operator from $\CT$ into $\Alg \CT_-\otimes  \CT$, with $\Alg \CT_-$
the free unital algebra generated by $\CT_-$, and $g_\eps$ is a character of $\Alg \CT_-$, which is 
canonically identified with an element of the dual space $\CT_-^*$.
In the BPHZ renormalisation procedure, one should choose $g_\eps$ of the form 
\begin{equ}[e:choicegeps]
g_\eps^\BPHZ = (\E \tilde \PPi_\eps) \circ \CA\;,
\end{equ}
where $\CA \colon \Alg \CT_- \to \Alg \CT$ is the ``twisted antipode'' \cite{Lorenzo} and
$\E \tilde \PPi_\eps$ is the character of $\Alg \CT$ determined by $(\E \tilde \PPi_\eps)(\tau) = \E (\tilde \PPi_\eps\tau)(\phi)$,
for any fixed test function $\phi$ with $\int z^k \phi(z)\,dz = \delta_{k,0}$ for $|k|$ small enough. (In our case $|k| \le 1$ suffices.) 

Recall that $\Delta^-$ is an ``extraction / contraction'' operator which iterates over all possible ways of extracting 
divergent subsymbols of its argument, so for example
\begin{equ}
\Delta^- \<Xi2IXiIXi11> = \<Xi2IXiIXi11> \otimes \one + \one \otimes \<Xi2IXiIXi11>
+ \<IXiIXi11> \otimes \<Xi2> + \<Xi2IXi> \otimes \<IXi11>\;.
\end{equ}
The twisted antipode behaves in a somewhat similar fashion, in this case
\begin{equ}
\CA \<Xi2IXiIXi11> = - \<Xi2IXiIXi11> 
+ \<IXiIXi11>\, \<Xi2> + \<Xi2IXi> \, \<IXi11>\;.
\end{equ}

\begin{remark}
Inspection of the rule $R_{\invariant}$ and our degree assignment shows that the
basis vectors of $\CT_{-}$ are given by 
\begin{equs}
\CT_{-} = \Vec\Big\{&\<Xi>, \<XiX>, \<Xi> \X^2, \<Xi1>, \<Xi1X>, \<Xi11> , 
\<Xi11X>, \<Xi2> , \<Xi22> , \<Xi21>,  \<IXi>, \<Xi1IXi>,  \<Xi1IXiX>, \<Xi1XIXi> 
,\<Xi1IXi11> , \<Xi1IXi1>,  \<Xi21IXi>, \label{e:listBasis}\\
& \<Xi21XIXi>,  \<Xi21IXiX>, \<Xi2IXi>, \<Xi21IXi11>, \<IXi^2>, \<IXiIXiX>, \<IXiIXi11>, \<Xi2IXi^2>,  \<Xi2IXiIXiX>, \<Xi2IXiIXi11>,\<Xi21IXi1IXi>,\<Xi1IXi1IXi>, \<Xi2IXiIXi1IXi>, \<Xi1IXi2IXi^2>, \<IXiIXi1IXi>, \<Xi1IIXi^2>\Big\}\;.
\end{equs}
\end{remark}

It will be convenient to have an alternative degree assignment $\degb$ on $\CT_{\invariant}$  which better reflects
the self-similarity properties of our objects given by setting 
\begin{equ}
\degb\XX ij = \alpha(i,j) - {d+2\over 2}\;,
\end{equ}
and then extending it as usual.
Instead of choosing the character $g_\eps$ as in the BPHZ specification \eqref{e:choicegeps}, it
will be convenient to choose it in a way such that, for some constants $C(\tau)$ that are \textit{independent} of $\eps$,
one has
\begin{equ}
g_\eps(\tau) = 
\left\{\begin{array}{cl}
	0 & \text{if $\degb\tau > 0$,} \\
	-\eps^{\degb \tau} C(\tau) & \text{if $\degb\tau < 0$.}
\end{array}\right.
\end{equ}
For $\degb\tau = 0$, we choose $g_\eps(\tau) = -C(\tau)$ whenever the symbol $\tau$ contains an ``accent'',
i.e.\ one of the noises $\XX ij$ with $i+j > 0$. The only symbols of negative degree without accents that 
appear in the translation invariant sector of our regularity structure and that contain at least two copies
of the noise $\<Xi>$ are $\<IXi^2>$ and $\<IXiXIXi>$, which are of vanishing degree $\degb$ 
in dimensions $2$ and $3$ respectively, and will be considered separately below.

The constants $C(\tau)$ themselves are chosen to coincide with the ones appearing in 
Theorems~\ref{theo:LLN} and~\ref{theo:main} with the convention that for any symbol $\tau$
the constant $C(\tau)$ is also written as the same symbol $\tau$, but drawn in red and with its ``accents'' stripped.
For example, we set
\begin{equ}
C(\<Xi1IXi>) = C(\<Xi2IXi>) = C(\<Xi1IXi1>) = C(\<Xi1IXi11>) = \<XiIXic>\;.
\end{equ}
We also set $C(\tau) = 0$ whenever $\tau$ contains only one instance of the noise, namely we set
\begin{equ}[e:zerotypes]
\<Xic> = \<IXic> = 0\;.
\end{equ} 
(In general, we should also set $C(\tau) = 0$ if $\tau$
is of the form $\tau = \CI(\tau')$ for some $\tau'$, but the only symbol of negative
degree of this type appearing in this work is $\<IXi>$ which is already covered by \eqref{e:zerotypes}.)
Other constants that will be relevant for our analysis (in dimension $d=3$) are given by
$\<XiXIXic> = 0$ as well as
\begin{equs}
\<IXi^2c> &= \int P(z)P(z')\kappa_2(z,z')\,dz\,dz',\quad
\<IXiXIXic> = \int x\,P(z)P(z')\kappa_2(z,z')\,dz\,dz',\\
\multicol{2}{\<IXiIXiIXic> = \int P(-z)P(-z')P(z-z'')\kappa_3(z,z',z'')\,dz\,dz'\,dz'',}\\
\multicol{2}{\<XiIIXi^2c> = \int P(-z)P(-z')P(z'')\kappa_3(z,z',z'')\,dz\,dz'\,dz''.}
\end{equs}
The convergence of integrals corresponding to $\<IXi^2c>$, $\<IXiIXiIXic>$ and $\<XiIIXi^2c>$
in dimension $3$ can easily be verified by using our assumption on the 
cumulants and the self-similarity of the heat kernel. The convergence of the integral for $\<IXiXIXic>$
is more subtle since $\degb \<IXiXIXi> = 0$. As a consequence, although $\kappa_2(z,z')$ decays fast enough when $\|z-z'\|$ is large,
the function $z \mapsto x P^2(z)$ is homogeneous of (parabolic) degree $-5$ and is therefore not 
absolutely integrable at large scales. However, since it is odd under $(t,x) \mapsto (t,-x)$, additional
cancellations occur and the integral should be interpreted as
\begin{equ}[e:integralX]
{}\<IXiXIXic> = {1\over 2}\int x\,\bigl(P(z')-P(z)\bigr)P(z) \kappa_2(z,z')\,dz\,dz'\;,
\end{equ}
which does converge absolutely, so we set $g_\eps(\<IXiXIXi>) = - \<IXiXIXic>$ in dimension $3$.

In dimension $2$, $\degb\<IXiXIXi> > 0$, but $\degb\<IXi^2> = 0$ and the expectation of 
$\tilde \PPi_\eps \<IXi^2>$ diverges logarithmically and the expression $\<IXi^2c>$ given above
fails to converge. We then have no choice but to set
\begin{equ}
g_\eps(\<IXi^2>) = -\<IXi^2c>_{\beps}\;,\quad \<IXi^2c>_{\beps} \eqdef \int K_\eps(z)K_\eps(z')\kappa_2(z,z')\,dz\,dz'\;,
\end{equ} 
where $K_\eps(t,x) = \eps^2 K(\eps^2 t, \eps x)$. We then have the following preliminary result.

\begin{proposition}\label{prop:convTransl}
The model $\hPeps$ restricted to the translation invariant sector converges as $\eps \to 0$ to the
BPHZ model $\hat \PPi$ such that $\hat \PPi \<Xi> = \xi$ and $\hat \PPi \XX ij = 0$ for $i+j > 0$.
\end{proposition}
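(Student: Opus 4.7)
The strategy is to view the restriction of $\hPeps$ to the translation invariant sector $\CT_{\invariant}$ as a BPHZ-renormalised model for the regularity structure generated by $R_{\invariant}$, and to apply the general convergence theorem for such models from \cite{Ajay}, using the symbolic formalism of \cite{Lorenzo}. This reduces the claim to two verifications: that the cumulants of the noises driving $\hPeps$ satisfy the coalescence-tree bound required by that theorem, and that our chosen character $g_\eps$ matches the genuine BPHZ character $g_\eps^\BPHZ$ of \eqref{e:choicegeps} to leading order as $\eps\to 0$.

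The cumulant bound follows from Assumption~\ref{ass:kappa} combined with the scaling identity \eqref{e:scaleKappa}, which transfers \eqref{e:boundkappa} directly to the cumulants of $\xi_\eps$ and hence also to the ``accented'' noises $\PPi_\eps \XX ij = \eps^{\alpha(i,j)}\xi_\eps$. The prefactors $\eps^{\alpha(i,j)}$ compensate exactly the $\alpha$-weight built into $\deg \XX ij$ in \eqref{e:degXX}, ensuring that the stochastic bounds on $(\tilde\PPi_\eps\tau)(\phi_z^\lambda)$ are uniform in $\eps$. For the character comparison, one proceeds symbol by symbol on the list \eqref{e:listBasis}: for $\tau$ containing only bare noises $\<Xi>$ and of strictly negative $\degb$-degree, a standard cumulant expansion of $\E (\tilde\PPi_\eps \CA \tau)(\phi)$, followed by the parabolic change of variables $z\mapsto S_\eps z$ and dominated convergence using \eqref{e:boundkappa}, gives
\begin{equ}
g_\eps^\BPHZ(\tau) = -\eps^{\degb\tau}\bigl(C(\tau) + o(1)\bigr)\;,
\end{equ}
where $C(\tau)$ is precisely the integral represented by the red version of $\tau$. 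The two borderline cases $\<IXi^2>$ in $d=2$ and $\<IXiXIXi>$ in $d=3$ have $\degb\tau = 0$ and need separate care: the symmetric reading \eqref{e:integralX} of $\<IXiXIXic>$ is the one naturally produced by the antipode $\CA \<IXiXIXi>$, while the $\eps$-dependent constant $\<IXi^2c>_{\beps}$ captures the logarithmic divergence exactly. For symbols carrying accents, every factor $\XX ij$ with $i+j>0$ contributes $\eps^{\alpha(i,j)}$ that cancels the extra weight of the corresponding noise, keeping $g_\eps(\tau)$ finite and $\eps$-independent at leading order.

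Having verified both hypotheses, the BPHZ theorem delivers convergence of $\hPeps$ on $\CT_{\invariant}$ to an admissible limiting model $\hat\PPi$, and its action on individual noise symbols is identified by direct computation: the standard central limit theorem for $\xi_\eps$ under diffusive rescaling yields $\hPeps\<Xi> \to \xi$ in a space of the form $\CC^{-(d+2)/2-\kappa}$, while the explicit prefactor $\eps^{\alpha(i,j)}$ with $\alpha(i,j)>0$ forces $\hPeps\XX ij \to 0$ in the same space for $i+j>0$. The main obstacle is the non-standard degree assignment \eqref{e:degXX}, under which every noise is frozen at the common degree ${\delta-1\over 2}$ so that the structure group acts trivially on them; one has to check that the cited theorem remains applicable under this bookkeeping, essentially because $\degb$ furnishes a consistent ``true'' degree with respect to which the extraction/contraction $\Deltam$, the antipode $\CA$, and the resulting stochastic bounds all behave correctly.
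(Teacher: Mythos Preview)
Your overall architecture is the same as the paper's: show that the genuine BPHZ model $\PPi_\eps^\BPHZ$ converges by appealing to \cite[Thm~2.33]{Ajay} (with the cumulant homogeneity of Section~\ref{sec:coal} verifying the hypotheses via Proposition~\ref{prop:conditionAjay}), and then argue that $\hPeps$ is close enough to $\PPi_\eps^\BPHZ$ that it inherits the same limit. Two genuine pieces are missing, however.

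First, the step from ``$g_\eps$ matches $g_\eps^\BPHZ$ to leading order'' to ``$\hPeps$ converges'' is not automatic. One has $\hPeps = (g_\eps\otimes\PPi_\eps)\Deltam$ and $\PPi_\eps^\BPHZ = (g_\eps^\BPHZ\otimes\PPi_\eps)\Deltam$, so continuity of the character action gives convergence of $\hPeps$ only once you know that $\hPeps = (\delta g_\eps\otimes\PPi_\eps^\BPHZ)\Deltam$ for some $\delta g_\eps\to 0$. In the renormalisation group this means computing $g_\eps\circ (g_\eps^\BPHZ)^{-1}$ in the Hopf convolution product, which is in general \emph{not} $g_\eps - g_\eps^\BPHZ$. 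The paper isolates this as a separate lemma (Lemma~\ref{lem:additive}): because every proper negative-degree subsymbol of any $\tau\in\CT_-$ contains only one noise, and both characters vanish on such symbols by \eqref{e:zerotypes}, the coproduct $\Deltam$ on $\Alg\CT_-$ is effectively primitive and the group product collapses to addition. You need this observation explicitly.

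Second, the symbol-by-symbol verification that $\delta g_\eps(\tau)\to 0$ requires substantially more than ``dominated convergence using \eqref{e:boundkappa}''. The paper first observes that $g_\eps^\BPHZ(\tau)$ is obtained from $g_\eps(\tau)$ by replacing every heat kernel $P$ with its truncation $K_\eps$, and then bounds the resulting differences case by case: the two-point symbols use the large-scale decay of $\kappa_2$ directly, $\<IXi^2>$ uses a convolution estimate from \cite{Jeremy}, while the three-point symbols $\<Xi1IXi1IXi>$, $\<IXiIXi1IXi>$, $\<Xi1IIXi^2>$ require the Feynman-diagram integrability criteria of \cite{Feynman} with explicit small- and large-scale degree assignments. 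Finally, the three symbols with $\degb\tau>0$ (namely $\<Xi21IXi1IXi>$, $\<Xi2IXiIXi1IXi>$, $\<Xi1IXi2IXi^2>$) have $g_\eps(\tau)=0$ by definition, so one must show $g_\eps^\BPHZ(\tau)\to 0$; the paper does this by embedding $\tau$ into an enlarged structure where it has positive degree and invoking uniform boundedness of the BPHZ model there. Your sentence about ``accents contributing $\eps^{\alpha(i,j)}$ that cancels the extra weight'' gestures at this but does not carry the argument, and your remark about the antipode producing the symmetric reading \eqref{e:integralX} is not how the paper handles $\<IXiXIXi>$: absolute convergence of \eqref{e:integralX} is established directly from the oddness of $xP^2(z)$.
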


\begin{proof}
Consider the ``BPHZ model'' $\PPi_\eps^\BPHZ$ on $\CT_{\invariant}$ given by 
\begin{equ}
\PPi_\eps^\BPHZ \tau = (g_\eps^\BPHZ \otimes \PPi_\eps)\Deltam \tau\;,
\end{equ}
with $g_\eps^\BPHZ$ defined as in \eqref{e:choicegeps}.
Thanks to Proposition~\ref{prop:conditionAjay} below, our noises are such that the
norm of \cite[Def.~A.18]{Ajay} is finite, uniformly in $\eps$, for the cumulant homogeneity
described in \eqref{e:defCum}.

It therefore follows from \cite[Thm~2.33]{Ajay} that $\PPi_\eps^\BPHZ$ converges to $\hat \PPi$.
Since furthermore the action of the character group of $\CT_-$ on the space of
admissible models is continuous \cite{Lorenzo}, it suffices to show that one can write 
\begin{equ}[e:deltageps]
\hPeps = (\delta g_\eps \otimes \PPi_\eps^\BPHZ)\Deltam \;, 
\end{equ}
for some $\delta g_\eps \in \CT_-^*$ with $\lim_{\eps \to 0} \delta g_\eps = 0$.
For this, the following result is useful.

\begin{lemma}\label{lem:additive}
Let $g,\bar g \in \CT_-^*$ such that furthermore $g(\tau) = \bar g(\tau) = 0$ for 
every $\tau$ of the type \eqref{e:zerotypes}. Then, one has the identity
\begin{equ}
\big(g \otimes (\bar g \otimes \id)\Deltam\big)\Deltam \tau
= \big((g+\bar g) \otimes \id\big)\Deltam \tau\;,
\end{equ}
for all $\tau \in \CT$.
\end{lemma}
\begin{proof}
It was shown in \cite{regularity,Lorenzo} that 
\begin{equ}
\big(g \otimes (\bar g \otimes \id)\Deltam\big)\Deltam \tau
= \big((g \otimes \bar g)\Deltam \otimes \id\big)\Deltam \tau\;,
\end{equ}
where $\Deltam \colon \Alg \CT_- \to \Alg \CT_- \otimes \Alg \CT_-$ is an extraction / contraction
operator defined just like above, but extended multiplicatively to $\Alg \CT_-$ and such that 
only those terms are kept that actually belong to $\Alg \CT_- \otimes \Alg \CT_-$ (i.e.\ every factor
needs to be of negative degree on both sides of the tensor product).
Inspection of the list \eqref{e:listBasis} reveals that in our case, the only situation
in which we have a ``subsymbol'' of negative degree appearing in any of our symbol in such a way that the 
contracted symbol is still of negative degree is when the subsymbol contains only one noise.
We conclude that 
\begin{equ}
(g \otimes \bar g)\Deltam \tau
= (g \otimes \bar g)(\tau \otimes \one + \one \otimes \tau) = g(\tau) + \bar g(\tau)\;,
\end{equ}
and the claim follows.
\end{proof}

We conclude from Lemma~\ref{lem:additive} that \eqref{e:deltageps} holds with 
$\delta g_\eps = g_\eps - g_\eps^\BPHZ$, so that it remains to show that 
$\lim_{\eps \to 0} \delta g_\eps(\tau) = 0$ for every $\tau \in \CT_-$. 
For elements $\tau$ of the form $\tau = X^k \Xi_i^j$ we have 
$g_\eps(\tau) =  g_\eps^\BPHZ(\tau) = 0$. For all other elements $\tau$ with
$\degb \tau \le 0$, a simple scaling argument shows that $g_\eps^\BPHZ(\tau)$ is given 
by the exact same formula as $g_\eps(\tau)$, except that all instances of the heat kernel $P$ are
replaced by $K_\eps$, where
\begin{equ}
K_\eps(t,x) \eqdef \eps^{d} K(\eps^2t,\eps x)\;.
\end{equ}
Note that $K_\eps$ coincides with $P$ in a parabolic ball of radius $\CO(1/\eps)$
around the origin and vanishes outside of another ball of radius $\CO(1/\eps)$.

This in particular shows that 
\begin{equ}[e:deltag1]
\delta g_\eps(\<Xi1IXi>) = \eps^{-d/2} \int \bigl(P(z) - K_\eps(z)\bigr)\,\kappa_2(0,z)\,dz\;.
\end{equ}
Since $\kappa_2(0,z)$ decreases like $\|z\|^{-2\bar c} = \|z\|^{-(d+2+2\delta)}$ for large $z$
and $P$ decreases like $\|z\|^{-d}$, it follows that 
\begin{equ}
|\delta g_\eps(\<Xi1IXi>)| \lesssim \eps^{d/2 + 2\delta}\;,
\end{equ}
which of courses converges to $0$. The symbols $\tau$ differing from 
$\<Xi1IXi>$ only by the placement of their accents then also converge since 
$\delta g_\eps(\tau)$ is given by the same expression as \eqref{e:deltag1}, except for
being multiplied by a higher power of $\eps$.

Turning now to $\<IXi^2>$ (which only appears when $d \in \{2,3\}$), it follows from 
\cite[Lem.~6.8]{Jeremy} that $\delta g_\eps(\<IXi^2>)$ is a sum of 
terms of the form
\begin{equ}
\eps^{2-d} \int  \bigl(P(z) - K_\eps(z)\bigr) G(z)\,dz\;,
\end{equ}
where $|G(z)| \lesssim (1+\|z\|)^{2-2\bar c} =  (1+\|z\|)^{-(d+2\delta)}$.
It follows that $|\delta g_\eps(\<IXi^2>)| \lesssim \eps^{2\delta}$ as desired, and 
$\delta g_\eps(\<IXiIXi11>)$ is controlled in the same way by a higher power.
In dimension $3$, $\degb \<IXiIXiX> = 0$ and it was shown in 
\eqref{e:integralX} that $\<IXiIXiXc>$ converges absolutely, which immediately implies
that $\delta g_\eps(\<IXiIXiX>) \to 0$. 

To deal with the symbol $\<Xi1IXi1IXi>$ (again with $d \in \{2,3\}$), 
we first note that Assumption~\ref{ass:kappa} implies the bound 
\begin{equ}
|\kappa_3(z_1,z_2,z_3)| \lesssim \rho(\|z_1-z_2\|)\,\rho(\|z_2-z_3\|)\,\rho(\|z_1-z_3\|)\;.
\end{equ}
We also note that for any $\kappa \in [0,d]$ one has the bound
\begin{equ}[e:boundDiffPK]
|P(z) - K_\eps(z)| \lesssim \eps^{\kappa} \big(1 \wedge \|z\|^{\kappa-d}\big)\;.
\end{equ}
This allows us to make use of \cite[Thm~4.3]{Feynman}. Since $\degb\<Xi1IXi1IXi> = {2-d\over 2}$
we apply the bound \eqref{e:boundDiffPK} with $\kappa = \delta + {d-2\over 2}$
which, in the notation of \cite{Feynman}, yields a bound of the type
\begin{equ}
|\delta g_\eps(\<Xi1IXi1IXi>)| \le \eps^\delta |\Pi^{\bar K,\bar R} \Gamma|\;,
\end{equ}
for some $\eps$-dependent kernel assignment $(\bar K,\bar R) \in \CK_0^- \times \CK_0^+$
with bounds that are independent of $\eps$ and the Feynman diagram
\begin{equ}
\Gamma 
= 
\begin{tikzpicture}[style={thick},baseline=-0.1cm]
\node[dot] (l) at (0,0) {};
\node[dot] (r) at (3.5,0.6) {};
\node[dot] (d) at (3.5,-0.6) {};
\draw (l) -- (r) node [midway, above, sloped] {\small$(\delta - {1\over 2}, - d -2)$};
\draw (l) -- (d) node [midway, below, sloped] {\small$(\delta - {1\over 2}, -\delta - {d+2\over 2})$};
\draw (d) -- (r) node[midway,right=-0.1] {\small$(\delta -d-{1\over 2}, -\delta - {3d+2\over 2})$};
\draw[thick,red] (l) -- ++(180:0.5);
\end{tikzpicture} \;.
\end{equ}
Here, the first coordinate of the label for each edge denotes its 
small-scale degree assignment while the second coordinate denotes its large-scale
degree assignment. It is straightforward to verify that the small-scale degree assignment
for this diagram satisfies the assumption of \cite[Prop.~2.3]{Feynman}, so that it does
not require renormalisation. Furthermore, the large-scale degree assignment is seen to satisfy
the assumption of \cite[Thm~4.3]{Feynman}, which guarantees 
that the integral converges absolutely and is bounded independently of $\eps$,
so that $|\delta g_\eps(\<Xi1IXi1IXi>)| \le \eps^\kappa$. 

The symbols
$\<IXiIXi1IXi>$ and $\<Xi1IIXi^2>$ (in dimension $3$) which have vanishing degree $\degb$, 
can be dealt with using the same technique,
leading to the bound $|\delta g_\eps(\<IXiIXi1IXi>)| + |\delta g_\eps(\<Xi1IIXi^2>)|\lesssim \eps^{1/2}$ by using the Feynman diagrams 
\begin{equ}
\begin{tikzpicture}[style={thick},baseline=-0.1cm]
\node[dot] (l) at (0,0) {};
\node[dot] (r) at (2,0.6) {};
\node[dot] (d) at (2,-0.6) {};
\node[dot] (rr) at (4,0) {};
\draw (l) -- (r) node [midway, above, sloped] {\scriptsize$(-3,-3)$};
\draw (l) -- (d) node [midway, below, sloped] {\scriptsize$(-3,-3)$};
\draw (d) -- (r) node[midway] {\scriptsize$(\delta-{1\over 2}, -\delta - {5\over 2})$};
\draw (r) -- (rr) node [midway, above, sloped] {\scriptsize$(\delta-{1\over 2}, -\delta - {5\over 2})$};
\draw (d) -- (rr) node [midway, below, sloped] {\scriptsize$(\delta-{1\over 2},  -\delta - 5)$};
\draw[thick,red] (l) -- ++(180:0.5);
\end{tikzpicture} \;,\quad
\begin{tikzpicture}[style={thick},baseline=-0.1cm]
\node[dot] (l) at (0,0) {};
\node[dot] (r) at (2,0.6) {};
\node[dot] (d) at (2,-0.6) {};
\node[dot] (rr) at (4,0) {};
\draw (l) -- (r) node [midway, above, sloped] {\scriptsize$(-3,-3)$};
\draw (l) -- (d) node [midway, below, sloped] {\scriptsize$(0,-{5\over 2})$};
\draw (d) -- (r) node[midway] {\scriptsize$(\delta-{1\over 2}, -\delta - {5\over 2})$};
\draw (r) -- (rr) node [midway, above, sloped] {\scriptsize$(\delta-{1\over 2}, -\delta - {5\over 2})$};
\draw (d) -- (rr) node [midway, below, sloped] {\scriptsize$(\delta-{7\over 2}, -\delta - {11\over 2})$};
\draw[thick,red] (l) -- ++(180:0.5);
\end{tikzpicture} \;,
\end{equ}
for bounding $\<IXiIXi1IXi>$ and
\begin{equ}
\begin{tikzpicture}[style={thick},baseline=-0.1cm]
\node[dot] (l) at (0,0) {};
\node[dot] (r) at (3,1) {};
\node[dot] (d) at (3,-1) {};
\node[dot] (c) at (2,0) {};
\draw (l) -- (r) node [midway, above, sloped] {\scriptsize$(\delta-{1\over 2}, -\delta - {5\over 2})$};
\draw (l) -- (d) node [midway, below, sloped] {\scriptsize$(\delta-{1\over 2}, -\delta - {5\over 2})$};
\draw (l) -- (c) node[near end,above=-0.1cm] {\scriptsize$(0,-{5\over 2})$};
\draw (c) -- (r) node [midway, below, sloped] {\tiny$(-3,-3)$};
\draw (c) -- (d) node [near start, below, sloped] {\tiny$(-3,-3)$};
\draw (d) -- (r) node [midway, right] {\scriptsize$(\delta-{1\over 2}, -\delta - {5\over 2})$};
\draw[thick,red] (l) -- ++(180:0.5);
\end{tikzpicture} \;,
\end{equ}
for bounding $\<Xi1IIXi^2>$. All three are easily seen to satisfy both the
small-scale and large-scale integrability conditions. 

The remaining three symbols $\symbol\tau \in \{\<Xi21IXi1IXi>,\<Xi2IXiIXi1IXi>, \<Xi1IXi2IXi^2>\}$ in the list \eqref{e:listBasis} are all such
that $\degb \symbol\tau > 0$, so we need to show that $g_\eps^\BPHZ(\symbol\tau) \to 0$.
This can in principle be shown again by using the bounds from \cite{Feynman}. 
A ``cheaper'' way of showing that $g_\eps^\BPHZ(\symbol\tau) \to 0$ is to note that in all three cases 
we can make use of a combination of Proposition~\ref{prop:conditionAjay} (used in the same
way as in the proof of Proposition~\ref{prop:boundsUseful}) and \eqref{e:choicegeps} to conclude that 
one can build a regularity structure 
$\hat \CT$ extending $\CT_\invariant$ (by adding additional ``noises'' representing $\eta_\eps^{(\alpha)}$
for suitable choices of $\alpha$) such that, for every $\symbol\tau$, one can find $\kappa > 0$ and a symbol 
$\symbol{\hat\tau} \in \hat \CT$ such that $\deg \symbol{\hat\tau} > 0$ and such that 
\begin{equ}
g_\eps^\BPHZ(\symbol\tau) = \eps^\kappa \E \bigl(\PPi_\eps^\BPHZ \symbol{\hat\tau}\bigr)(\phi)\;,
\end{equ}
for some suitable fixed test function $\phi$. Since we know from \cite[Thm~2.33]{Ajay} 
that the BPHZ renormalised model $\PPi_\eps^\BPHZ$ converges
(so in particular remains uniformly bounded), we conclude that $g_\eps^\BPHZ(\symbol\tau) \to 0$ as required.
\end{proof}

\begin{theorem}\label{theo:model}
The random models $\hPeps$ converge weakly to a limiting admissible model $\hat \PPi$ 
which, on the translation invariant sector, is given by the BPHZ lift of
\begin{equ}
\hat \PPi \<Xi> = \xi\;,\quad
\hat \PPi \sXi_i^j = 0\;, \quad i+j > 0\;.
\end{equ}
For the remaining symbols, it is given by the unique admissible model such that
\begin{equ}
\hat \PPi \<XiS> = \xi \one_{\R \times D}\;,\quad
\hat \PPi \<XiS1> = 0\;,
\end{equ}
as well as $\hat \PPi \tau = 0$ for any symbol $\tau$ containing the noise $\<XiS1>$.
\end{theorem}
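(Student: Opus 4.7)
The plan is to combine the convergence on the translation invariant sector, already established in Proposition~\ref{prop:convTransl}, with a direct analysis of the remaining ``square-noise'' symbols, and then to use admissibility to extend the result to all of $(\CT,\GG)$. By Definition~\ref{def:admissible} together with the rule $R$ of \eqref{e:defRule}, once we know the action of $\hPeps$ on the noise symbols $\<XiS>$, $\<XiS1>$ and on the negative-degree base symbols $\<XiSIXi>$, $\<XiSdIXiS>$, $\<XiS1IXiX>$ renormalised in \eqref{e:renormalisedModel}, the values on all other trees in the non-translation-invariant part of $\CT$ are determined by admissibility (convolution with $K$, $K_\Neu$, $K_i$, $K_{\Neu,i}$), by the multiplicative structure of the tree expansion, and by the compatibility $\hPeps\iota\tau = \one_{\R\times D}\hPeps\tau$ on the translation-invariant symbols on test functions supported inside or outside $D$. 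So it suffices to verify convergence, with the right limits, on this finite list.

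For the base noises, $\PPi_\eps\<XiS> = \one^D \xi_\eps$ converges to $\xi\one_{\R\times D}$ in $\CC^{-(d+2)/2-\kappa}$ by the same argument used for $\xi_\eps$ in the translation invariant sector, noting that multiplication by $\one^D$ is continuous in this scale since test functions can be split into parts supported in $\R\times D$, in $\R\times D^c$, and across the boundary (the last contributing a vanishing term by the explicit $\eps$-scaling). Meanwhile $\PPi_\eps\<XiS1> = \one^D\eta_\eps = \eps\,\one^D \xi_\eps$ converges to $0$ because of the explicit factor $\eps$. More generally, any symbol $\tau$ containing at least one edge labelled $\<XiS1>$ carries an explicit $\eps$ per occurrence; combined with the uniform moment bounds one obtains from Proposition~\ref{prop:conditionAjay} applied to the enlarged regularity structure (treating each ``square'' noise as its own noise type in the sense of \cite{Ajay,Feynman}), this factor is enough to push $\tilde\PPi_\eps\tau$ to $0$ after renormalisation. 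In particular, the counterterms for $\<XiS1IXiX>$ in \eqref{e:renormalisedModel} are calibrated so that the leading divergences cancel, leaving a remainder of order $\CO(\eps^\kappa)$ in the ambient norm; $\<XiSIXi>$ is handled the same way, giving $\hat\PPi\<XiSIXi> = 0$ in agreement with the identification $\hat\PPi\<XiS1IXi> = \hat\PPi\<XiS1>\cdot(K\star\xi) = 0$.

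The genuinely delicate case is $\<XiSdIXiS>$, where the renormalisation must absorb a bulk counterterm $\eps^{-d/2}\<XiIXic>\one^D$ and boundary counterterms $\eps^{1-d/2}c_{i,j}\delta_{\d_{i,j}D}$. Here we decompose $K_\Neu = K + K_\d$ via \eqref{e:decompose}. The $K$-part is translation invariant, and its bulk divergence is absorbed by the same computation as for $\<Xi1IXi>$ in Proposition~\ref{prop:convTransl}, generating precisely the constant $\<XiIXic>$. The $K_\d$-part, which encodes the mirror images of the heat kernel across each face of $\d D$, produces an expectation
\[
\E\bigl(\eta_\eps(z)(K_\d(z,\cdot)\star \xi_\eps)\bigr)
\]
that, after a change of variables $\bar z = \eps^{-1}(z - z_\d)$ with $z_\d$ the foot of the perpendicular from $z$ to the nearest face of $\d D$, localises in an $\CO(\eps)$-layer around $\d_{i,j}D$ and converges, when tested against a smooth $\phi$, to $\eps^{1-d/2} c_{i,j}\delta_{\d_{i,j}D}(\phi)$, with $c_{i,j}$ given by the explicit half-space integral of $P$ against $\kappa_2$; this is precisely \eqref{e:intQ}. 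The fluctuations about this mean are controlled by a variance computation (and higher cumulants thereof) using Assumption~\ref{ass:kappa} together with the Feynman-diagram bounds of \cite{Ajay,Feynman}, adapted to the non-translation-invariant kernel $K_\d$ via Appendix~\ref{sec:almostComm}. The main obstacle is exactly this last step: obtaining uniform in $\eps$ moment estimates for cumulant integrals involving $K_\d$, whose off-diagonal decay away from $\R\times\d D$ plays the role normally served by translation-invariant singular-kernel bounds, and then verifying that after subtracting the $c_{i,j}$-counterterm the remainder actually lies in a space of distributions strictly more regular than $\CC^{-1-\kappa}$, so that convergence holds in the topology required by the model spaces of Appendix~\ref{sec:reconstruction}.
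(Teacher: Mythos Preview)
Your overall strategy is aligned with the paper's: reduce to the translation-invariant sector via Proposition~\ref{prop:convTransl} and then handle the finitely many non-translation-invariant symbols of negative degree directly. However, there are two genuine gaps.

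First, your proposal to ``treat each square noise as its own noise type'' and invoke \cite{Ajay} does not work: the noises $\<XiS>=\one^D\xi_\eps$ and $\<XiS1>=\one^D\eta_\eps$ are \emph{not stationary}, and stationarity is essential to the framework of \cite{Ajay}. The paper's argument for symbols like $\<XiS1IXi>$, $\<XiS1IXi1>$, $\<XiS1IXiX>$, $\<XiXS1IXi>$ is different and more delicate: it observes that the bounds from \cite{Ajay} for the corresponding \emph{round} symbols $\<Xi1IXi>$, etc., happen not to require any derivative of the test function (because none of these trees has a connected subtree containing the root of degree below $-(d+2)/2$, cf.\ the discussion around \cite[Eq.~A.29]{Ajay}). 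Hence one may simply replace the test function $\phi$ by $\one_{\R\times D}\phi$ and transfer the bound verbatim. Your ``explicit $\eps$ per occurrence'' heuristic is also misleading here: the renormalisation constant for $\<XiS1IXi>$ diverges like $\eps^{-d/2}$, which exactly absorbs the scaling gain from $\eta_\eps=\eps^{d/2}\xi_\eps$; convergence to zero comes instead from the already-established convergence of $\hat\PPi_\eps\<Xi1IXi>$ on the translation-invariant side.

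Second, for $\<XiSdIXiS>$ you correctly identify the boundary-layer mechanism and the constants $c_{i,j}$, but the control of the fluctuations remains only a programme. The citation of Appendix~\ref{sec:almostComm} is misplaced (that appendix is about commuting multiplication by a smooth function past a singular integral, not about adapting Feynman bounds to $K_\d$). The paper's actual argument decomposes $\<XiSdIXiS>=\<Xi1IXib>+\<XiS1IXi>$ via \eqref{e:deffunnysymbol}; the second summand is handled as above, and for $\<Xi1IXib>$ the expectation bound (Proposition~\ref{prop:correctionTerm}) uses a case-by-case multiscale decomposition according to how many faces of $\d D$ the support of the test function meets, reducing inductively from codimension $3$ down to $0$, while the $L^p$ bound for the centred part (Theorem~\ref{theo:correctionTermLp}) writes the reflected contribution as $K(z-Rz')$, bounds it pointwise by $\|z-z'\|^{-d}$, and then applies the absolute-value form of the \cite{Ajay} estimates, again exploiting that no test-function derivatives are needed. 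This is substantial work that your outline does not supply.
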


\begin{remark}
Note that only symbols $\sXi_i^j$ with $i \ge j$ appear in our regularity structure.
\end{remark}

\begin{proof}
Convergence on the translation invariant sector was already shown in Proposition~\ref{prop:convTransl},
so it only remains to consider the non-translation invariant symbols of negative degree.
In dimension $3$, these are $\<XiS>$, $\<XiS1>$, $\<XiS11>$, $\<dIXiS>$, $\<XiSdIXiS>$, $\<XiS1IXi>$, $\<XiS1IXi1>$, $\<XiS1IXiX>$
and $\<XiXS1IXi>$. (There is also the symbol $\<XiS1> \CI_i(\<Xi>)$,
but applying the model to it yields the
exact same distribution as when applying it to $\<XiS1IXiX> - \<XiXS1IXi>$.)

The convergence on the remainder of the regularity structure is shown in the next section, but we collect 
the various parts of the proof here.
Convergence of $\hPeps\<XiS>$, $\hPeps\<XiS1>$ and $\hPeps\<XiS11>$ to $\xi \one^D$ and $0$ in 
$\CC^{-\f 52-\kappa}$, $\CC^{-1-\kappa}$ and  $\CC^{-{3\over 2}-\kappa}$ respectively follows from
Corollary~\ref{cor:tightSimple} and Corollary~\ref{cor:tight}.

Convergence of $\hPeps\<XiS1IXi>$ essentially follows from \cite[Thm~2.31]{Ajay}, noting that the bound
for $\tau = \<Xi1IXi>$ does not require any derivative of the test function in this case, so that we immediately obtain 
the required bound by noting that  $\bigl(\hPeps\<XiS1IXi>\bigr)(\phi) = \bigl(\hPeps\<Xi1IXi>\bigr)(\one_{\R \times D}\phi)$.
The reason why this is so is that the only point in the proof where derivatives of the test function
could potentially appear is in the bound \cite[Eq.~A.29]{Ajay} in the proof of \cite[Thm~A.32]{Ajay}. 
By the definition of $R(\mathbf{S})$, these derivatives can only hit a test function in a situation where 
$\tau$ contains a connected subtree containing its root and of degree less than $-{d+2\over 2}$. This
is not the case for $\<Xi1IXi>$.

Convergence of $\hPeps\<XiS1IXi1>$, $\hPeps\<XiS1IXiX>$ and $\hPeps\<XiXS1IXi>$  also follows in the same way.
Regarding $\<XiSdIXiS>$, we note that $\<XiSdIXiS> = \<Xi1IXib> + \<XiS1IXi>$ by \eqref{e:deffunnysymbol}
and we already obtained convergence of $\hPeps\<XiS1IXi>$. Convergence of $\hPeps\<Xi1IXib>$ is the content of
Theorem~\ref{theo:correctionTermLp} below.
The convergence of $\hPeps\<dIXiS>$ follows from Corollary~\ref{cor:boundaryTerm}, combined
with the usual Schauder estimates for integration against $K$.
\end{proof}

Most of Section~\ref{sec:model} is devoted to filling in the missing parts in 
the proof of Theorem~\ref{theo:model}, namely the proofs of Theorem~\ref{theo:main1}
and Theorem~\ref{theo:correctionTermLp}.

\subsection{Description of \texorpdfstring{$\v$}{v0}}

We now have the notation in place to formalise the discussion 
given above regarding the local behaviour of $\v$ and $\vv$. Recall the definition 
\eqref{e:defv0} of $\v$ which we rewrite in integral form as
\begin{equ}[e:defveps0]
\v = K_\Neu \bigl(G(\u) \, \one_+^D\xi_\eps\bigr)\;.
\end{equ}
Depending on context, we 
will model $\v$ by
three different modelled distributions $V_\eps^{(0)}$, $\tilde V_\eps^{(0)}$ and $\hat V_\eps^{(0)}$.

Regarding $V^{(0)}_\eps$, we use Proposition~\ref{prop:commuteSpecialCase} 
which guarantees that one can find
$\Phi_\eps \in \CC^{{3\over 2}-\kappa,-{1\over 2}-\kappa}$ such that, setting\footnote{with the convention $\CL = \CL_1$ for $\CL_1$ the Taylor lift \eqref{e:Taylor}}
\begin{equ}[e:defV0]
V^{(0)}_\eps = \Phi_\eps + \CL G(\u) \,\CK_\Neu(\one_+ \<XiS>)
+ G'(\u) \d_i \u \,\CK_{\Neu,i}( \one_+ \<XiS>)\;,
\end{equ}
where the integration operators $\CK_\Neu$, and $\CK_{\Neu,i}$ should be interpreted as the 
natural extensions of the corresponding integration operators described in Proposition~\ref{prop:commuteSpecialCase}, 
one has
\begin{equ}
V^{(0)}_\eps \in \CD^{{3\over 2}-\kappa,-{1\over 2}-\kappa}\;,\qquad \CR_\eps V^{(0)}_\eps \restr \R_+\times D = \v\;,
\end{equ}
provided that we consider admissible models $\hat\PPi_\eps$ with 
$\hat\PPi_\eps \<XiS> = \xi_\eps \one_{\R \times D}$. Note that $V^{(0)}_\eps$ depends on $\eps$ via
the choice of model $\hat\PPi_\eps$. 
Furthermore, Theorem~\ref{theo:model} and Lemma~\ref{lem:convergeRestrict}
guarantee that the bounds on $V^{(0)}_\eps$ are uniform over $\eps$ and
that one has $V^{(0)}_\eps \to V^{(0)}$ in $\CD^{{3\over 2}-\kappa,-{1\over 2}-\kappa}$
with respect to the renormalised models $\hat \PPi_\eps$ and the limiting model $\hat \PPi$.

It is natural at this stage, as already mentioned earlier, to define the symbol $\<IXib>$ as the element of $\CT$
of degree $-{1\over 2}-\kappa$ given by
\begin{equ}[e:deffunnysymbol]
\<IXib> = \<dIXiS>  - \<IXi>\;,
\end{equ}
which is indeed consistent with \eqref{e:defPPiepscorr}.
With this notation, Propositions~\ref{prop:diffKernel} and~\ref{prop:R+} combined with \eqref{e:defV0} 
guarantee the existence of 
a function $\tilde \Phi_\eps \in \CC^{{3\over2}-\kappa,w}$ with $w = \bigl(-{1\over 2}-\kappa,{1\over 2}-\kappa,-{1\over 2}-\kappa\bigr)$ in the sense of \cite[Def.~3.2]{Mate} such that if we set
$\tilde V^{(0)}_\eps = \tilde V^{(0,1)}_\eps + \tilde V^{(0,2)}_\eps$ with 
\begin{equs}[e:defV0tilde]
\tilde V^{(0,1)}_\eps &= \CL G(\u) \<IXib>\;, \\
\tilde V^{(0,2)}_\eps &= \CL G(\u) \<IXi>
+ G'(\u) \d_i\u \,\CK_i(\<Xi>) + \CL \tilde \Phi_\eps\;,
\end{equs}
then we have $\tilde V^{(0,1)}_\eps \in  \CD^{\f32-\kappa}$, $\tilde V^{(0,2)}_\eps \in  \CD^{\f32-\kappa,w}$ and $\bigl(\CR_\eps \tilde V^{(0)}_\eps\bigr)(\phi) = \bigl(\CR_\eps V^{(0)}_\eps\bigr)(\phi)$ for all test functions
$\phi$ supported in $\R_+ \times D$. As before, all these objects converge in the limit $\eps \to 0$ provided that the
underlying models converge.
To see this, we note that we can choose
\begin{equs}
\tilde \Phi_\eps &= \Phi_\eps + \one_+ G(\u)K_\Neu((\one_+ - 1)\one_D \xi_\eps) \\
&\quad + \one_+^D G'(\u) \d_i\u\, K_i * \bigl((\one_+^D - 1) \xi_\eps\bigr) \\
&\quad + \one_+^D G'(\u) \d_i\u \, K_{\d,i} \bigl(\one_+^D\xi_\eps\bigr)\;,
\end{equs}
and then apply Proposition~\ref{prop:R+} to bound the first term and Proposition~\ref{prop:diffKernel}
to bound the remaining two terms.

On the other hand, we define $\hat V^{(0)}_\eps$ by setting
\begin{equ}
\hat V^{(0)}_\eps = \CK \bigl(\CL_2 G(\u) \one_+\<Xi>\bigr) + \CL \one_+^D\Bigl(K_\d \bigl(G(\u)\xi_\eps\one_+^D\bigr)
- K \bigl(G(\u)\xi_\eps\one_{+}^{D^c} \bigr)\Bigr)\;,
\end{equ}
where we use the convention that $\u$ is extended outside of $\R_+ \times D$ in any way that makes it
globally $\CC^{3}$. (For positive times, this is possible since the 
extension of $u_0$ to the whole space by suitable reflections is of class $\CC^{3}$ by our assumptions.
For negative times, this is possible by Whitney's extension theorem \cite{Whitney}.) 
Here we made a slight abuse of notation: the operator $\CK$ should be interpreted in the sense of 
\cite[Sec.~4.5]{Mate} with $\hat \CR \bigl(\CL_2 G(\u) \one_+\<Xi>\bigr) = G(\u) \one_+\xi_\eps$, which 
converges as $\eps \to 0$ by an argument very similar to that of Proposition~\ref{prop:commuteSpecialCase}, 
combined with the fact that $G(\u)$ is $\CC^3$.

Note that we have
\begin{equ}
\CR_\eps \hat V^{(0)}_\eps = \CR_\eps \tilde V^{(0)}_\eps = \CR_\eps V^{(0)}_\eps = \v \,\qquad \text{on $\R_+ \times D$}\;.
\end{equ}
The modelled distribution $\hat V^{(0)}_\eps$ 
exhibits rather singular behaviour near the boundary of the domain, but by Proposition~\ref{prop:diffKernel}
 it converges as $\eps \to 0$ in 
$\CD^{1,w}$ with $w = \big(-{1\over 2} - \kappa\big)_3$ (we use again the notation $(\eta)_3 = (\eta,\eta,\eta)$). 
It has the advantage however of not involving the integration map $\dCI$ and the restricted
noise $\<XiS>$, so it
is purely described in terms of the ``translation invariant'' part of the regularity structure, i.e.\
the sector generated by $\CI$ and $\<Xi>$. We summarise the above discussion with the following statement.

\begin{lemma}\label{lem:V0}
We have $\hat V^{(0)}_\eps \in \CD^{1,(-\f12-\kappa)_3}_{-\f12-\kappa}$, 
$\CR_\eps \hat V^{(0)}_\eps = \v$ on $\R_+ \times D$, and $\hat V^{(0)}_\eps$ is of the form
\begin{equ}
\hat V^{(0)}_\eps = \one_+ \big(G(\u) \<IXi> + G'(\u)\,\nabla \u \<IXiX>\big) + \Phi^{(0)}_\eps \1\;, 
\end{equ}
for some continuous function $\Phi^{(0)}_\eps$. Furthermore, 
$\lim_{\eps \to 0} \hat V^{(0)}_\eps = \hat V^{(0)} \in \CD^{1,(-\f12-\kappa)_3}$
with $\CR_0 \hat V^{(0)} = P_\Neu \bigl(\one_+^D G(\u)\xi\bigr)$. 
\end{lemma}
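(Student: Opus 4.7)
The plan is to treat the two summands defining $\hat V^{(0)}_\eps$ separately. For the first summand $\CK(\CL_2 G(\u)\one_+\<Xi>)$, I would apply the extended abstract integration of \cite[Sec.~4.5]{Mate}. Since $G \in \CC^5$ and our assumption on $u_0$ together with standard parabolic regularity for \eqref{e:ubar} yield $\u \in \CC^3$ globally (after reflection), the Taylor lift $\CL_2 G(\u)\one_+\<Xi>$ belongs to an appropriate $\CD^{\gamma',w'}$ with controlled behaviour at $\{t=0\}$, and the abstract Schauder estimate places its $\CK$-image in $\CD^{1,(-\f12-\kappa)_3}_{-\f12-\kappa}$ with reconstruction $K(G(\u)\xi_\eps\one_+)$. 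Expanding in our basis yields $\one_+(G(\u)\<IXi> + G'(\u)\nabla\u\<IXiX>)$ at leading singularity, where $\<IXiX>$ arises because $\CI_i$ is defined via the kernel $K_i(x,y) = (y_i-x_i)K(y-x)$ and $\nabla G(\u) = G'(\u)\nabla \u$, plus a polynomial contribution $\phi^{(1)}_\eps\1$ coming from the higher Taylor terms and from the polynomial corrections built into $\CK$.

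The second summand is the Taylor lift of the honest function
\begin{equ}
\Psi_\eps \;=\; \one_+^D\bigl(K_\d(G(\u)\xi_\eps\one_+^D) - K(G(\u)\xi_\eps\one_+^{D^c})\bigr)\;.
\end{equ}
Although $K$ and $K_\d$ individually produce only distributions of negative Hölder regularity when applied to $\xi_\eps$, the specific combinations occurring here yield continuous functions: the $K$-term involves input supported \emph{outside} $\R_+\times D$, so its restriction to $D$ is smooth thanks to the off-diagonal smoothness of $K$, while the $K_\d$-term is the boundary correction whose regularising action is controlled by Proposition~\ref{prop:diffKernel}. Adding $\Psi_\eps\1$ to $\phi^{(1)}_\eps\1$ gives the claimed representation with continuous $\Phi^{(0)}_\eps$. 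The reconstruction identity then follows immediately from $K_\Neu = K + K_\d$ and $\one_+ = \one_+^D + \one_+^{D^c}$:
\begin{equ}
\v = K_\Neu(G(\u)\xi_\eps\one_+^D) = K(G(\u)\xi_\eps\one_+) - K(G(\u)\xi_\eps\one_+^{D^c}) + K_\d(G(\u)\xi_\eps\one_+^D)\;,
\end{equ}
the three pieces of which are precisely the reconstructions of the corresponding parts of $\hat V^{(0)}_\eps$ on $\R_+\times D$.

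For the convergence as $\eps \to 0$, I would combine continuity of the abstract Schauder map in the underlying model with the convergence of $\hPeps$ on the translation-invariant sector provided by Theorem~\ref{theo:model}; this takes care of the first summand. Convergence of $\Psi_\eps$ (and hence of $\Phi^{(0)}_\eps$) follows from the same off-diagonal kernel estimates, propagating the convergence of $\xi_\eps$ through these regularising kernel combinations via Proposition~\ref{prop:diffKernel}. The limit satisfies $\CR_0 \hat V^{(0)} = K_\Neu(\one_+^D G(\u)\xi) = P_\Neu(\one_+^D G(\u)\xi)$ on $[0,T]\times D$, the latter identity by our construction of $K_\Neu$. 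The main obstacle, in my view, is establishing the uniform-in-$\eps$ bounds for $\Psi_\eps$ in the boundary-weighted space with weight $w=(-\f12-\kappa)_3$: neither $K_\d(G(\u)\xi_\eps\one_+^D)$ nor $K(G(\u)\xi_\eps\one_+^{D^c})$ alone remains well-behaved as $\eps\to 0$ in the boundary layer, so one must exploit the specific cancellation built into their difference. This is precisely what the boundary-adapted kernel machinery of \cite{Mate}, sharpened via Appendix~\ref{sec:almostComm}, is designed to deliver.
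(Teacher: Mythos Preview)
Your proposal is correct and follows essentially the same route as the paper: the first summand is handled via the abstract Schauder estimate of \cite[Sec.~4.5]{Mate} (the paper notes this is ``an argument very similar to that of Proposition~\ref{prop:commuteSpecialCase}''), and the second summand via Proposition~\ref{prop:diffKernel}, with the reconstruction identity coming from $K_\Neu = K + K_\d$ and $\one_+ = \one_+^D + \one_+^{D^c}$.

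One correction to your closing paragraph: there is no cancellation to exploit between $K_\d(G(\u)\xi_\eps\one_+^D)$ and $K(G(\u)\xi_\eps\one_+^{D^c})$. Proposition~\ref{prop:diffKernel} bounds each of these \emph{individually} in $\CC^{\gamma,(-\f12-\kappa)_3}$ on $\R_+\times D$, uniformly in $\eps$, because the first involves only the reflection pieces of the Neumann kernel (which are off-diagonal for $z\in D$) and the second has its input supported away from $D$. So the ``obstacle'' you flag is not present, and Appendix~\ref{sec:almostComm} is not needed for this part.
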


\subsection{Description of \texorpdfstring{$\vv$}{v1}}

Recall that $\vv$ was defined in \eqref{e:defv1} as the solution to an inhomogeneous linear equation
with \textit{inhomogeneous} boundary conditions (et least in the Neumann case).
Regarding $\vv$, we would like to describe it by a modelled distribution 
$V^{(1)}_\eps$ given by 
\begin{equ}[e:wantedV1]
V^{(1)}_\eps = \one_+^D \CP_\Neu\one_+ \bigl(\CL(G'(\u)) \tilde V_\eps^{(0)} \<XiS1>\bigr) + \w\star\1\;,
\end{equ}
with $\CP_\Neu$ as in \eqref{e:defCP}
and where $\w\star$ would be given by the solution to
\begin{equs}[e:defw2]
\d_t \w\star &= \Delta \w\star + {1\over 2} G''(\u)\, W_{1,\eps}\;,\\
W_{1,\eps} &= (\v)^2 \sigma_\eps  - 2\<XiIXic>G(\u)\v- \eps^{1-{d\over 2}} \<XiIXi^2c>G^2(\u)\;,
\end{equs}
endowed with \textit{homogeneous} Neumann boundary conditions.
The reason why the identity $\CR_\eps V^{(1)}_\eps = \vv$ holds (on $\R_+\times D$ as usual) is as follows. By \eqref{e:defV0tilde},
\begin{equ}[e:tVXiS]
 \tilde V_\eps^{(0)} \<XiS1>
 = \CL(G(\u)) \bigl(\<Xi1IXib> + \<XiS1IXi>\bigr)
 + G'(\u)\d_i \u \<XiS1>\CK_i(\<Xi>) + \CL \tilde \Phi_\eps \<XiS1>\;.
\end{equ} 
It then follows from the definition \eqref{e:renormalisedModel} of the renormalised model and the fact that
$\CR_\eps(\tilde V_\eps^{(0)}) = \v$ that, applying the reconstruction operator to this expression yields
\begin{equs}
 \CR_\eps \bigl(\tilde V_\eps^{(0)} \<XiS1>\bigr)
&=  \one^D \v  \eta_\eps - \sum_{i=1}^d G(\u) \eps^{1-{d\over 2}} \bigl(c_{i,0} \delta_{\d_{i,0} D} 
+ c_{i,1} \delta_{\d_{i,1} D} \bigr) \\
&\quad  - \eps^{-{d\over 2}} \<XiIXic> \one^D G(\u) - \eps^{1-{d\over 2}} \one^D G'(\u) \,\scal{\<XiIXiXc>,\nabla \u}\;.
\end{equs}
We then note that the two terms appearing on the second line, when multiplied by another factor
of $G(\u)$, are exactly the additional two terms appearing on the first line of \eqref{e:defv1},
while the singular term involving Dirac masses on the boundary of $D$, when hit by $K_\Neu$, is responsible
for the non-homogeneous boundary conditions.

The problem with such a definition is that it yields a description of 
$V^{(1)}_\eps$ in terms of symbols involving $\<XiS1>$, while the general convergence results
of \cite{Ajay} require translation invariance of the noise objects, which is not the case here.
If we were to try to improve the situation by replacing $\one_+ \<XiS1>$ by $\one_+^D\<Xi1>$ in
\eqref{e:wantedV1}, then we immediately run into the problem that the behaviour of this modelled
distribution on the boundary of $D$ is too singular for the general results of
\cite{Mate} to apply. This is not just a technicality: this singular behaviour is precisely what is responsible
for the additional boundary renormalisation!

Instead, we define $V^{(1)}_\eps$ as a sum of terms that is ``equivalent'' to the
definition \eqref{e:wantedV1} in the sense that they reconstruct the same function, 
but such that each of the terms can be controlled 
in a slightly different, situation-specific, way.

We first deal with the boundary correction by setting
\begin{equ}[e:exprV10]
V^{(1,0)}_\eps = \one_+^D\CL\,K_\d \bigl(\one_+ G'(\u) \CR_\eps\bigl(\tilde V_\eps^{(0)} \<XiS1>\bigr)\bigr)\;.
\end{equ}
Since $\tilde V_\eps^{(0)} \<XiS1>\in \CD^{\f12-2\kappa, w}$ with $w = (-\f32-2\kappa,-\f12-2\kappa,-\f32-2\kappa)$
and since it belongs to a sector of regularity $-\f32-2\kappa$, its reconstruction belongs to $\CC^{-\f32-2\kappa}$.
It then follows from Proposition~\ref{prop:diffKernel} that $V^{(1,0)}_\eps  \in \CD^{2,(0)_3}$.

We now break up $\tilde V_\eps^{(0)}$ in \eqref{e:wantedV1} as in \eqref{e:defV0tilde} and deal with the first term. 
By Proposition~\ref{prop:specialSymbol}, we can find $V^{(1,1)}_\eps \in \CD^{2-2\kappa,\bar w}$ with
$\bar w = \big({1\over 2}-2\kappa,{1\over 2}-2\kappa,0\big)$ of the form
\begin{equ}[e:exprV11]
V^{(1,1)}_\eps = \one_+^D\,G'G(\u) (\hat\PPi_\eps \<IXib>) \<IXi1> + \Phi_\eps^{(1,1)}\;,
\end{equ}
with $\Phi_\eps^{(1,1)}$ taking values in the classical
Taylor polynomials, and such that 
\begin{equ}
\CR_\eps V^{(1,1)}_\eps = K \big((G'G)(\u)\one_+ \hat \PPi_\eps \<Xi1IXib>\big) = 
\CR_\eps \CK\one_+ \bigl(\CL(G'(\u)) \tilde V_\eps^{(0,1)} \<XiS1>\bigr)\;.
\end{equ}

The second term is dealt with similarly. As a consequence of Proposition~\ref{prop:specialSymbol2} with
$g_1 = G'G(\u)$, $g_{2,i} = G'(\u)^2 \d_i \uu$ and $g_3 = G'(\u) \tilde \Phi_\eps$ (with $\tilde \Phi_\eps$
as in \eqref{e:wantedV1}),
we can find $V^{(1,2)}_\eps \in \CD^{2-2\kappa,\bar w}$ such that
\begin{equ}
\CR_\eps V^{(1,2)}_\eps = K \one_+ \bigl(G'G(\u) \hat \PPi_\eps \<XiS1IXi>
+  G'(\u)^2 \d_i \uu \CR_\eps(\<XiS1> \CK_i(\<Xi>)) + G'(\u)\tilde \Phi_\eps \hat \PPi_\eps \<XiS1>\bigr)\;,
\end{equ}
and such that furthermore $V^{(1,2)}_\eps$ takes values in the translation invariant sector and is of the form
\begin{equ}[e:exprV12]
V^{(1,2)}_\eps = \one_+^D \big((G'G)(\u) \<IXi1IXi> +  \tilde \Phi_\eps \<IXi1>\big) + \Phi^{(1,2)}_\eps\;,
\end{equ}
for some $\Phi^{(1,2)}_\eps$ taking values in the Taylor polynomials.
In order to define $V^{(1,3)}_\eps$, we make use of the following lemma.

\begin{lemma}\label{lem:boundPhi}
Let $\phi_\eps$ be such that on $\R_+ \times D$ one has the identity
\begin{equ}
\v = G(\u)\, \hat \PPi_\eps \<IXi> + \phi_\eps\1\;,
\end{equ}
and one has $\phi_\eps(t,x) = 0$ for $t < 0$ or $x \not \in D$.
Then, for any $\alpha \in [0,1)$, one has the bound $\E \|\phi_\eps\|_{\alpha+\f12-\kappa,w} \lesssim \eps^{-\alpha}$ 
with $w = \bigl(\alpha-\f12-\kappa\bigr)_3$. 
\end{lemma}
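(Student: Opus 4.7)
The starting point is an explicit formula for $\phi_\eps$. Using $\v = K_\Neu(\one_+^D G(\u)\xi_\eps)$ together with $\hat\PPi_\eps\<IXi> = K\star\xi_\eps$ and the decomposition $K_\Neu(z,z') = K(z-z')+K_\d(z,z')$ valid on the relevant region, one obtains on $\R_+\times D$
\begin{equs}
\phi_\eps(z)
&= \int K(z-z')\,\one_+^D(z')\big[G(\u(z')) - G(\u(z))\big]\xi_\eps(z')\,dz' \\
&\quad - G(\u(z))\int K(z-z')\big[1-\one_+^D(z')\big]\xi_\eps(z')\,dz' \\
&\quad + \int K_\d(z,z')\,\one_+^D(z')\,G(\u(z'))\xi_\eps(z')\,dz'\;,
\end{equs}
and my plan is to estimate each of these three contributions separately.

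For the first, a commutator-type term, I would Taylor-expand $G(\u(z'))-G(\u(z))$ to first order in $z'-z$ (using $G\circ \u \in \CC^3$), which replaces the integrand by a sum of convolutions of $\xi_\eps$ with kernels of the form $(z'-z)^k K(z-z')$ of improved parabolic order $|k|$, plus a well-behaved higher-order remainder. Combining Propositions~\ref{prop:commuteSpecialCase} and~\ref{prop:R+} (used essentially as in the construction of $V_\eps^{(0)}$) with a Kolmogorov-type argument that exploits the cumulant bounds of Assumption~\ref{ass:kappa} and the $L^p$ integrability of $\eta$, this yields a uniform bound in $\CC^{\f12-\kappa,(-\f12-\kappa)_3}$. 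The third term has the same structure after writing $G(\u(z')) = G(\u(z)) + [G(\u(z'))-G(\u(z))]$: the commutator piece is controlled by Proposition~\ref{prop:diffKernel} applied to $K_\d$, while the constant-in-$z'$ piece combines with the second term to reproduce $G(\u(z))\bigl(\hat\PPi_\eps\<IXib>\bigr)(z)$, up to a contribution supported at $t \le 0$ that is harmless since $K$ is forward in time. The uniform control on this reconstruction is exactly Theorem~\ref{theo:correctionTermLp}, and together these estimates give the claim for $\alpha = 0$.

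To obtain the $\eps^{-\alpha}$ scaling for $\alpha\in(0,1)$, I would prove a parallel bound at $\alpha$ close to $1$ at the cost of an extra factor $\eps^{-1}$. This factor appears when one takes a further derivative of the kernel $(z'-z)^k K(z-z')$ and absorbs it into the convolution against $\xi_\eps$: because the small-scale mollification of $\xi_\eps$ lives at width $\eps$, each such derivative costs exactly $\eps^{-1}$ in the Kolmogorov estimate, split into the regimes $\lambda \lesssim \eps$ and $\lambda \gtrsim \eps$ for the scale $\lambda$ of the local test function. Real interpolation in the scale of $\CC^{\gamma,w}$-spaces developed in \cite{Mate} then produces the claimed bound at intermediate $\alpha$.

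The main obstacle is matching the three-component weight $w = (\alpha-\f12-\kappa)_3$ simultaneously at $\{t=0\}$, $\R \times \d D$, and their intersection. The polynomial expansion of $G(\u(z'))-G(\u(z))$ needs to be carried out after extending $G(\u)$ smoothly across $\d D$ (compatibly with the extension used in the definition of $\hat V^{(0)}_\eps$), so as not to create spurious boundary singularities, and one must verify that the improved-regularity estimates of Proposition~\ref{prop:diffKernel} for the $K_\d$-piece are compatible with the extra $\eps^{-\alpha}$ scaling produced by interpolation. This is morally the same kind of bookkeeping as in Appendix~\ref{sec:almostComm}, but with the additional complication that the boundary weights near $\{t=0\}$ and $\R\times\d D$ must be tracked separately and together at the corner.
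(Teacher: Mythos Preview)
Your three-term decomposition is essentially the paper's, but the way you extract the $\eps^{-\alpha}$ dependence is more convoluted than necessary, and one of your citations is to the wrong object.

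The paper handles all $\alpha\in[0,1)$ in a single stroke rather than proving the endpoints and interpolating. The key input is Corollary~\ref{prop:boundXieps}, which gives directly
\[
\E\|\one_A\xi_\eps\|_{\CC^{\alpha-\f52-\kappa}}^p \lesssim \eps^{-p\alpha}
\]
for every $\alpha\in[0,1)$ and any Borel set $A$. Once you view $\one_+^D\xi_\eps$ and $(1-\one_+^D)\xi_\eps$ as elements of $\CC^{\alpha-\f52-\kappa}$ of size $\eps^{-\alpha}$, the first two terms of the decomposition are immediately controlled by Proposition~\ref{prop:diffKernel} (which gains exactly two powers, landing in $\CC^{\gamma,(\alpha-\f12-\kappa)_3}$), and the commutator term is controlled by Corollary~\ref{cor:commute} with $\chi=\alpha-\f52-\kappa$ and $\theta=\kappa$, which places
\[
K(\one_+^D G(\u)\xi_\eps) - G(\u)\,K(\one_+^D\xi_\eps) - \sum_i G'(\u)\d_i\u\,K_i(\one_+^D\xi_\eps)
\]
in $\CC^{\kappa+2}$ with norm $\lesssim\eps^{-\alpha}$; the leftover $K_i(\one_+^D\xi_\eps)$ terms are fine since $K_i$ gains three derivatives. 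No interpolation of weighted spaces is needed (and none is developed in \cite{Mate}), and the ``each derivative costs $\eps^{-1}$'' heuristic is replaced by the clean scaling statement for $\xi_\eps$ itself.

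Separately, your appeal to Theorem~\ref{theo:correctionTermLp} is misplaced: that result bounds $\hat\PPi_\eps\<Xi1IXib>$, the object with an extra factor of $\eta_\eps$, whereas what appears in $\phi_\eps$ is $G(\u)\,\hat\PPi_\eps\<IXib>$ with no such factor. The latter is simply $G(\u)\bigl(K_\d(\one^D\xi_\eps)-K((1-\one^D)\xi_\eps)\bigr)$, and its control is Proposition~\ref{prop:diffKernel} again, not the boundary-renormalisation estimate.
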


\begin{proof}
We decompose $\phi_\eps$ as
\begin{equs}
\phi_\eps &= K_\d \bigl(\one_+^D G(\u) \xi_\eps\bigr)+  G(\u) K \bigl((1-\one_+^D) \xi_\eps\bigr)\\
&\quad + \Big(K \bigl(\one_+^D G(\u) \xi_\eps\bigr) - G(\u) K \bigl(\one_+^D \xi_\eps\bigr)\Big) \;,
\end{equs}
and we treat the three terms separately. The first two terms are estimated by 
combining Proposition~\ref{prop:diffKernel} with Corollary~\ref{prop:boundXieps}.

The bound on the last term follows from combining Proposition~\ref{prop:boundsUseful} with  Corollary~\ref{cor:commute}.
To apply the latter, we set $\theta = \kappa$ (small enough) and $\chi = \alpha - {5\over 2}-\kappa$, which yields a 
bound in $\CC^{\kappa+2}$ on
\begin{equ}
K \bigl(\one_+^D G(\u) \xi_\eps\bigr) - G(\u) K \bigl(\one_+^D \xi_\eps\bigr)
- \sum_i G'(\u)\d_i \u K_i \bigl(\one_+^D \xi_\eps\bigr)\;.
\end{equ}
Since $K_i$ gains three derivatives, the term $K_i \bigl(\one_+^D \xi_\eps\bigr)$ itself satisfies the required bound
and we are done.
\end{proof}

Recalling $W_{1,\eps}$ as defined in \eqref{e:defw2}, it follows from Lemma~\ref{lem:boundPhi} combined with the
definition of the renormalised model that we can rewrite it as
\begin{equ}[e:writeW1]
W_{1,\eps} = G^2(\u) \hat \PPi_\eps \<Xi2IXi^2> + 2 G(\u) \phi_\eps  \hat \PPi_\eps \<Xi2IXi>
+ \phi_\eps^2 \hat \PPi_\eps \<Xi2>\;.
\end{equ}
As a consequence of Proposition~\ref{prop:boundsUseful} below (with $\alpha = \kappa$), combined with
Lemma~\ref{lem:boundPhi} (with $\alpha = \f12$), we conclude that one has
\begin{equ}[e:limWeps]
\lim_{\eps \to 0} \|W_{1,\eps}\|_{-{1\over 2}-3\kappa} = 0\;.
\end{equ}
Indeed, the reconstruction theorem \cite[Thm~4.9]{Mate} and the multiplication rules \cite[Lem.~4.3]{Mate} imply 
that if $g \in \CC^{\gamma,(\eta)_3}$ for $\eta \le 0$ and $\gamma > 0$, 
and $\zeta \in \CC^{\beta}$ with $\beta \le 0$ and $\gamma + \beta > 0$ then, provided that 
$\eta + \beta > -1$, one has $g \zeta \in \CC^{\eta + \beta}$.
(View $\zeta$ as the constant function in a regularity structure containing only one symbol of 
degree $\beta$ and apply the reconstruction theorem to $g\zeta$.)

Since $-\f12 - 3\kappa > -1$, we can multiply such a distribution by the indicator
function of $\R_+ \times D$. It follows that, setting
\begin{equ}[e:exprV13]
V^{(1,3)}_\eps \eqdef \f12 \CL K_\Neu \bigl(\one_{D}^+ G''(\u)\, W_{1,\eps}\bigr)\;,
\end{equ}
we have $\lim_{\eps \to 0} V^{(1,3)}_\eps = 0$ in $\CD^{\f32-3\kappa}$
and furthermore $\CR_\eps V^{(1,3)}_\eps = \w\star$.
Combining these definitions, we set
\begin{equ}
\hat V^{(1)}_\eps = V^{(1,0)}_\eps+V^{(1,1)}_\eps+V^{(1,2)}_\eps+V^{(1,3)}_\eps\;.
\end{equ}
Summarising this discussion, one has the following result.

\begin{lemma}\label{lem:V1}
We have $\hat V^{(1)}_\eps \in \CD^{\f32-3\kappa,(0)_3}$, 
$\CR_\eps \hat V^{(1)}_\eps = \vv$ and $\hat V^{(1)}_\eps$ is of the form
\begin{equ}[e:exprV1]
\hat V^{(1)}_\eps = \one_+^D \,G'G(\u) \<IXi1IXi> + \one_+^D\,G'(\u) \Phi^{(0)}_\eps \<IXi1>+ \Phi^{(1)}_\eps\;, 
\end{equ}
for some $\Phi^{(1)}_\eps$ taking values in the Taylor polynomials, and where $\Phi^{(0)}_\eps$ is as in Lemma~\ref{lem:V0}.
Furthermore, $\lim_{\eps \to 0} \hat V^{(1)}_\eps = \hat V^{(1)} \in \CD^{\f32-3\kappa,(0)_3}$
with $\CR_0 \hat V^{(1)} = 0$. 
\end{lemma}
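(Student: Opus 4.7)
The plan is to verify, piece by piece, that the candidate $\hat V^{(1)}_\eps = V^{(1,0)}_\eps + V^{(1,1)}_\eps + V^{(1,2)}_\eps + V^{(1,3)}_\eps$ assembled in the preceding paragraphs has all the claimed properties. Membership in $\CD^{\f32-3\kappa,(0)_3}$ is essentially by inspection: $V^{(1,3)}_\eps \in \CD^{\f32-3\kappa}$ has the worst degree, while $V^{(1,0)}_\eps \in \CD^{2,(0)_3}$ and $V^{(1,1)}_\eps, V^{(1,2)}_\eps \in \CD^{2-2\kappa,\bar w}$ with $\bar w = \bigl(\f12-2\kappa,\f12-2\kappa,0\bigr) \ge (0)_3$, so each summand embeds into the target space.

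For the reconstruction identity $\CR_\eps \hat V^{(1)}_\eps = \vv$, I combine the reconstruction formulas already established for each piece. By construction, $\CR_\eps V^{(1,1)}_\eps + \CR_\eps V^{(1,2)}_\eps = K\bigl(\one_+ G'(\u) \CR_\eps(\tilde V^{(0)}_\eps\<XiS1>)\bigr)$, while $\CR_\eps V^{(1,0)}_\eps = K_\d\bigl(\one_+ G'(\u) \CR_\eps(\tilde V^{(0)}_\eps\<XiS1>)\bigr)$ by \eqref{e:exprV10}, so their sum equals $K_\Neu$ applied to $\one_+ G'(\u)\CR_\eps(\tilde V^{(0)}_\eps\<XiS1>)$. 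Expanding $\tilde V^{(0)}_\eps\<XiS1>$ via \eqref{e:tVXiS} and inserting the renormalisation identities \eqref{e:renormalisedModel} rewrites $\CR_\eps(\tilde V^{(0)}_\eps\<XiS1>)$ as $\one^D \v\eta_\eps$ minus the bulk counterterms appearing on the right-hand side of \eqref{e:defv1} (with a factor of $G'(\u)$ stripped off) minus the singular boundary contribution $\sum_i \eps^{1-d/2}G(\u)\bigl(c_{i,0}\delta_{\d_{i,0}D}+c_{i,1}\delta_{\d_{i,1}D}\bigr)$. Convolution of the bulk part with $K_\Neu$ produces the bulk forcing in \eqref{e:defv1}, while convolution of the boundary Dirac masses with $K_\Neu$ produces precisely the inhomogeneous Neumann boundary condition via Remark~\ref{rem:defSol}. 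Adding $\CR_\eps V^{(1,3)}_\eps = \w\star$ from \eqref{e:exprV13} then reconstructs $\vv$.

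The explicit shape \eqref{e:exprV1} requires only a careful accounting of the symbolic content of each piece. The symbol $\<IXi1IXi>$ appears solely in $V^{(1,2)}_\eps$, with prefactor $\one_+^D (G'G)(\u)$. The coefficient of $\<IXi1>$ in the sum $V^{(1,1)}_\eps+V^{(1,2)}_\eps$ is $\one_+^D G'(\u)\bigl(G(\u)\hat\PPi_\eps \<IXib> + \tilde\Phi_\eps\bigr)$, and the identification $\Phi^{(0)}_\eps = G(\u)\hat\PPi_\eps\<IXib> + \tilde\Phi_\eps$ on $\R_+\times D$ follows from comparing the two representations of $\v$ afforded by Lemma~\ref{lem:V0} and \eqref{e:defV0tilde}, both of which reconstruct to $\v$ on $\R_+\times D$. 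All other contributions in the four summands lie in the Taylor polynomial sector and are absorbed into $\Phi^{(1)}_\eps$.

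Finally, the convergence $\hat V^{(1)}_\eps \to \hat V^{(1)}$ in $\CD^{\f32-3\kappa,(0)_3}$ should follow summand by summand from the continuity of the constructions in Propositions~\ref{prop:diffKernel}, \ref{prop:specialSymbol}, \ref{prop:specialSymbol2} combined with the model convergence established in Theorem~\ref{theo:model}, together with the fact that $W_{1,\eps}\to 0$ in $\CC^{-\f12-3\kappa}$ by \eqref{e:limWeps} for the last summand. The reconstruction $\CR_0 \hat V^{(1)}$ then vanishes because the limit model annihilates every tree containing $\<XiS1>$ or any noise $\sXi_i^j$ with $i+j>0$: this kills $\CR_0 V^{(1,j)}$ for $j=0,1,2$ (whose integrands all involve $\hat\PPi_\eps\<XiS1>$, $\hat\PPi_\eps\<Xi1IXib>$, or $\hat\PPi_\eps\<XiS1IXi>$), while $\CR_0 V^{(1,3)} = 0$ directly from \eqref{e:limWeps}. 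I expect the main obstacle to be the boundary bookkeeping in the reconstruction identity: ensuring that the singular contribution from $V^{(1,0)}_\eps$ combines cleanly with the bulk pieces from $V^{(1,1)}_\eps$ and $V^{(1,2)}_\eps$ to reproduce the correct inhomogeneous Neumann boundary condition for $\vv$, without producing spurious boundary terms that would spoil the stated form.
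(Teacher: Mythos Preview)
Your proposal is correct and follows essentially the same approach as the paper: collecting \eqref{e:exprV10}--\eqref{e:exprV13} to obtain the form \eqref{e:exprV1}, with the identification $\Phi^{(0)}_\eps = G(\u)\hat\PPi_\eps\<IXib> + \tilde\Phi_\eps$ on $\R_+\times D$ coming from comparing Lemma~\ref{lem:V0} with \eqref{e:defV0tilde}. The only minor difference is in the argument for $\CR_0\hat V^{(1)} = 0$: the paper passes through the identity $\CR_\eps \hat V^{(1)}_\eps = \CR_\eps V^{(1)}_\eps$ with $V^{(1)}_\eps$ as in \eqref{e:wantedV1} and then shows $\CR_\eps(\tilde V_\eps^{(0)}\<XiS1>)\to 0$ via Theorem~\ref{theo:model}, whereas you argue directly that $\CR_0 V^{(1,j)} = 0$ for each $j$ separately---both routes rely on the same vanishing of accented symbols under the limiting model and the convergence \eqref{e:limWeps}.
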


\begin{proof}
Collecting \eqref{e:exprV10}, \eqref{e:exprV11}, \eqref{e:exprV12} and \eqref{e:exprV13}, we see that \eqref{e:exprV1} holds, but with $\Phi^{(0)}_\eps$ replaced by
$G(\u) (\hat\PPi_\eps \<IXib>) + \tilde \Phi_\eps$. These two expression are seen to coincide on $\R_+ \times D$ by 
comparing Lemma~\ref{lem:V0} with \eqref{e:defV0tilde}. 

The only statement we haven't shown yet is that  $\CR_0 \hat V^{(1)} = 0$. Since we already know by \eqref{e:limWeps}
that $W_{1,\eps}$ converges to $0$ and since $\CR_\eps \hat V^{(1)}_\eps = \CR_\eps V^{(1)}_\eps$, it remains by \eqref{e:wantedV1}
to show that $\lim_{\eps \to 0} \CR_\eps \bigl(\tilde V_\eps^{(0)} \<XiS1>\bigr) = 0$. This in turn
is immediate from \eqref{e:tVXiS} when combined with Theorem~\ref{theo:model} which guarantees that the 
limiting model vanishes on all accented symbols.
\end{proof}

\subsection{Formulation of the fixed point problem}

Introduce now a modelled distribution $\Veps$ and, using the shorthand
$\VV = \VV^{(0)} + \VV^{(1)} + \Veps$, consider the fixed point problem
\begin{equs}
\Veps &= \CP_\Neu \one_+^D \Big(H_\eta'(\u)\VV +   \CL(G'(\u))(\VV - \VV^{(0)})\<Xi1> +\CL(G'(\u)\uu) \<Xi11>  \\
&\quad + {1\over 2}\CL(G''(\u))(\VV^2-(\VV^{(0)})^2)\<Xi2>+
\CL(G''(\u)\uu) \VV \<Xi21> \label{e:absCLT} \\&\quad
+   {1\over 2}\CL(G''(u^{(0)})(u^{(1)})^2) \<Xi22> \Big)
+\CL P_\Neu \one_+^D\bigl(\Rteps(\CR_\eps \VV, \varsigma) + \tilde R_\eps^{(d)}(\bar \varsigma)\bigr) \;,\quad   
\end{equs}
 where $\tilde R_\eps^{(d)} = 0$ for $d = 1$ and 
\begin{equs}
\tilde R_\eps^{(2)}(\bar \varsigma) &=  \eps^\kappa {1\over 2} G^2G'' \<IXi^2c>_{\beps}\, \bar \varsigma\;, \label{e:deftildeReps}\\
\tilde R_\eps^{(3)}(\bar \varsigma) &=  \eps^\kappa (G''G)\Bigl({1\over 2}\<IXi^2c> G + \eps G' \bigl(\<IXiIXiIXic> G + \scal{\<IXiXIXic>,\nabla \u} + \<IXi^2c>\uu\bigr)\Bigr) \bar \varsigma\;.
\end{equs}

\begin{remark}\label{rem:integration}
We will set this up as a fixed point problem in the space $\CD^{\f32-3\kappa,(0)_3}$. Since 
$\deg \<Xi1>< -1$ and $\deg \<Xi11> < -1$ (in $d \in \{2,3\}$ for the latter), this forces us to rely on Theorem~\ref{thm:reconstructDomain} below for the reconstruction of the
right hand side of \eqref{e:absCLT} and to combine this with \cite[Lem.~4.12]{Mate} to provide an interpretation 
for the integration operator $\CK$ appearing in the definition \eqref{e:defCP} of $\CP_\Neu$.
\end{remark}

\begin{remark}
Recall that the definition \eqref{e:defReps} of the remainder $\Rteps$ involves an arbitrary exponent $\alpha$.
We henceforth fix a choice $\alpha = \alpha(d)$ depending on the dimension, namely
\begin{equ}[e:defAlphaExponents]
\alpha(1) = {5\over 4}\;,\qquad \alpha(2) = {9\over 4}
\;,\qquad \alpha(3) = {11\over 4}\;.
\end{equ} 
All further statements about $\Rteps$ hold for this particular choice.
\end{remark}

We claim that with this definition and provided that we consider the renormalised model constructed in Section~\ref{sec:defModel}, \eqref{e:absCLT} admits a unique solution in $\CD^{\f32-3\kappa,(0)_3}$
and, provided that we set $\varsigma = \eps^\alpha \xi_\eps$ and $\bar \varsigma = \eps^{-1-\kappa} \sigma_\eps$,
one has $v_\eps = \CR_\eps \VV$.
The reason for the appearance of $\tilde R_\eps^{(d)}$ is to cancel out some additional 
unwanted terms arising from the renormalisation procedure. 
Before this, we formulate a technical lemma, where we write $\|\theta\|_\alpha$ for the $\CC^\alpha$ norm
of the function\slash distribution $\theta$ on $D_T = [0,T] \times D$ with $T$ as in Theorem~\ref{theo:main}.

\begin{lemma}\label{lem:boundsR}
Let $w, \bar w$ with $\|w\|_{L^\infty} + \|\bar w\|_{L^\infty} \le \eps^{-d/2}$ on the 
domain $D_T$ and let $\kappa \in (0,{1\over 4})$.
Writing $X = \|\varsigma\|_{-\frac{1}{2} + 2\kappa} + 1$,
one has the bounds
\begin{equs}
\|\Repsone(w,\varsigma)\|_{-\half+2\kappa} &\lesssim \eps^{1/4}\bigl(1 + \|w\|_{\frac{1}{2} - \kappa}\bigr)^3 X \;,\\
\|\Repsone(w,\varsigma) - \Repsone(\bar w,\varsigma)\|_{-\half+2\kappa} &\lesssim \eps^{1/4}\|w-\bar w\|_{\frac{1}{2} - \kappa} \bigl(1 + \|w\|_{\frac{1}{2} - \kappa} + \|\bar w\|_{\frac{1}{2} - \kappa}\bigr)^3 X\;,
\end{equs}
for some proportionality constants depending only on $\u$, $G$ and $H$.
In dimensions $2$ and $3$, we set $X = \|\varsigma\|_{L^p} + 1$ (for any fixed $p \in [1,\infty]$) and
we have the bounds
\begin{equs}
\|\Reps(w,\varsigma)\|_{L^p} &\lesssim \eps^\kappa\bigl(1 + \eps^\beta\|w\|_{L^\infty}\bigr)^3 X \;,\\
\|\Reps(w,\varsigma) - \Reps(\bar w,\varsigma)\|_{L^p} &\lesssim \eps^{\kappa+\beta}\|w-\bar w\|_{L^\infty} \bigl(1 + \eps^\beta\|w\|_{L^\infty} + \eps^\beta\|\bar w\|_{L^\infty}\bigr)^3 X\;,
\end{equs}
with $\beta(2) = {1\over 4} - {\kappa\over 3}$ and $\beta(3) = {7\over 12} - {\kappa\over 3}$
for $\kappa$ sufficiently small.
\end{lemma}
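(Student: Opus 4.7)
The plan is to split the proof into the one-dimensional case (where the claim is a H\"older-space bound) and the case $d\in\{2,3\}$ (where the claim is an $L^p$ bound), and in each case to treat separately the two summands
\begin{equs}
\Reps^{(1)}(w) &= \eps^{d/2} w^2\! \int_0^1 \!\! H''(\u + s\eps^{d/2}w)(1-s)\,ds,\\
\Reps^{(2)}(w,\varsigma) &= \tfrac{1}{2}\eps^{3d/2-\alpha} w^3\! \int_0^1\!\! G^{(3)}(\u + s\eps^{d/2}w)(1-s)^2\,ds \cdot\varsigma\;.
\end{equs}
The choice of exponent $\alpha(d)$ in \eqref{e:defAlphaExponents} has been made precisely so that $\tfrac{3d}{2}-\alpha(d)=3\beta(d)+\kappa$ (with $\beta(d)$ the exponent in the statement), and so that $\tfrac{d}{2}-2\beta(d)\ge\kappa$ as well; I will track the $\eps$-powers throughout to make this transparent.

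\emph{Case $d=1$.} Since $H$ is $\CC^4$ and $G$ is $\CC^5$, the composition $z\mapsto H''(\u(z)+s\eps^{1/2}w(z))$ is bounded by $\|H''\|_\infty$ and has $\CC^{1/2-\kappa}$ seminorm $\lesssim \|\u\|_{\CC^{1/2-\kappa}} + \eps^{1/2}\|w\|_{\CC^{1/2-\kappa}}$, uniformly in $s\in[0,1]$. The product rule on $\CC^{1/2-\kappa}(D_T)$ (a Banach algebra since $\tfrac{1}{2}-\kappa>0$) then yields
$\|\Reps^{(1)}(w)\|_{\CC^{1/2-\kappa}}\lesssim \eps^{1/2}(1+\|w\|_{\CC^{1/2-\kappa}})^3,$
and since $\CC^{1/2-\kappa}\hookrightarrow\CC^{-1/2+2\kappa}$ continuously on $D_T$ this gives $\eps^{1/4}$ to spare. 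For $\Reps^{(2)}$, I use the standard multiplication rule $\CC^{1/2-\kappa}\cdot\CC^{-1/2+2\kappa}\to\CC^{-1/2+2\kappa}$ (which is available since $(1/2-\kappa)+(-1/2+2\kappa)=\kappa>0$) together with the preceding H\"older bound applied to $w^3 G^{(3)}(\cdots)$, obtaining
$\|\Reps^{(2)}(w,\varsigma)\|_{\CC^{-1/2+2\kappa}}\lesssim \eps^{1/4}(1+\|w\|_{\CC^{1/2-\kappa}})^3\|\varsigma\|_{\CC^{-1/2+2\kappa}}.$

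\emph{Case $d\in\{2,3\}$.} The composite function is uniformly bounded (since $H''$ and $G^{(3)}$ are), so
$\|\Reps^{(1)}(w)\|_{L^\infty}\lesssim \eps^{d/2}\|w\|_{L^\infty}^2 = \eps^{\tfrac{d}{2}-2\beta(d)}(\eps^{\beta(d)}\|w\|_{L^\infty})^2 \le \eps^\kappa(1+\eps^{\beta(d)}\|w\|_{L^\infty})^3,$
using the $+1$ and the choice of $\beta(d)$. Similarly, using H\"older's inequality to factor off $\|\varsigma\|_{L^p}$,
$\|\Reps^{(2)}(w,\varsigma)\|_{L^p}\lesssim \eps^{\tfrac{3d}{2}-\alpha(d)}\|w\|_{L^\infty}^3 \|\varsigma\|_{L^p} = \eps^\kappa(\eps^{\beta(d)}\|w\|_{L^\infty})^3 \|\varsigma\|_{L^p},$
which is exactly the claim.

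\emph{Lipschitz bounds.} For these I use the identities $w^k-\bar w^k=(w-\bar w)\sum_{j=0}^{k-1}w^j\bar w^{k-1-j}$ together with the mean value representation
\[
\int_0^1\! f(\u+s\eps^{d/2}w)(1-s)^m ds-\int_0^1 \!f(\u+s\eps^{d/2}\bar w)(1-s)^m ds = \eps^{d/2}(w-\bar w)\!\int_0^1\!\!\int_0^1\! s f'(\cdots)(1-s)^m ds\,d\tau,
\]
(the integrand in the inner integral being $\u + s\eps^{d/2}(\tau w+(1-\tau)\bar w)$). Expanding $\Reps^{(j)}(w,\varsigma)-\Reps^{(j)}(\bar w,\varsigma)$ in this way produces two kinds of terms: those carrying $w-\bar w$ with one less power of $w$ (for which the bound is identical to the direct one after a single application of the elementary inequality used above, since decreasing the $w$-power by one and increasing the $\eps$-power by $\beta(d)$ are compatible by the inequalities $\tfrac{d}{2}-(k-1)\beta(d)\ge\kappa+\beta(d)$ and $\tfrac{3d}{2}-\alpha(d)-(k-1)\beta(d)\ge\kappa+\beta(d)$); and those where the composite function is differentiated, which pick up an additional $\eps^{d/2}$ that is easily absorbed since $d/2\ge\beta(d)$. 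The $d=1$ Lipschitz estimate is analogous, again controlled by the $\eps^{1/2}$ factor coming from differentiation of the composite term.

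The only delicate point is the precise arithmetic of $\eps$ exponents for the composition term in $d\in\{2,3\}$; once one observes that the assignment $\alpha(d)$ was engineered to make $3d/2-\alpha(d)=3\beta(d)+\kappa$, the bounds fall out immediately. No tool beyond multiplication in H\"older spaces (for $d=1$), H\"older's inequality (for $d\in\{2,3\}$), and Taylor's theorem is needed.
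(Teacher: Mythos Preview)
Your proof is correct and follows essentially the same approach as the paper's own proof: for $d\in\{2,3\}$ one uses only uniform ($L^\infty$) bounds on the composite factors and H\"older's inequality to extract $\|\varsigma\|_{L^p}$, while for $d=1$ one uses that composition with a smooth function is locally Lipschitz in $\CC^{1/2-\kappa}$ together with the Bony-type product estimate $\CC^{1/2-\kappa}\times\CC^{-1/2+2\kappa}\to\CC^{-1/2+2\kappa}$. Your explicit tracking of the $\eps$-arithmetic (in particular the identity $3d/2-\alpha(d)=3\beta(d)+\kappa$) is a helpful addition that the paper leaves implicit.
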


\begin{proof}
The case of dimensions $2$ and $3$ is straightforward to verify since all bounds are uniform.
In dimension $1$, the first term of \eqref{e:defReps}
is easy to bound. To bound the second term, we use the fact that composition with a smooth function
is a (locally) Lipschitz continuous operation in $\CC^{\frac{1}{2}-\kappa}$, combined with the fact that the product is 
continuous as a bilinear map from $\CC^{\frac{1}{2}-\kappa} \times \CC^{-{1\over 2}+2\kappa}$ into $\CC^{-\half+2\kappa}$, see \cite{BookChemin}
or \cite[Thm~13.16]{RP}.
\end{proof}

\begin{proposition}\label{prop:localSol}
Fix an initial condition $u_0$, a final time $T < 1$ and nonlinearities $G$ and $H$, all 
as in Theorem~\ref{theo:main}, as well as the random model $\hPeps$ as 
defined in Section~\ref{sec:defModel}.
Choose $\varsigma \in L^p$ with $p = (d+2)/\underline c$ (for $d \in \{2,3\}$)
or $\varsigma \in \CC^{2\kappa-{1\over 2}}$ (for $d=1$), as well
as $\bar \varsigma \in \CC^{-{1\over 2}-2\kappa}$  (for $d \in \{2,3\}$).
Then, the the fixed point problem \eqref{e:absCLT} admits a unique local 
solution $\Veps$ in $\CD^{\f32-3\kappa,(0)_3}$.
Furthermore, bounds on the solution are uniform over $\eps \in [0,1]$ and over  
$\varsigma$, $\bar \varsigma$ in bounded balls in their respective spaces.

Furthermore, for $\eps = 0$ and $\hat \PPi$ as in Theorem~\ref{theo:model}, 
the solution $\V$ is such that $\bar v = \CR_0 \V$ solves 
\begin{equ}[e:limitEquation]
\d_t \bar v = \Delta \bar v + H_\eta'(\u) (\bar v + \CR_0 \hat V^{(0)})\;,
\end{equ}
with homogeneous boundary conditions, where $\hat V^{(0)}$ is as in Lemma~\ref{lem:V0}. 
In particular, $\lim_{\eps \to 0} \VV = \hat V$ is such that 
$\CR_0 \hat V$ coincides with the process $v$  defined in \eqref{e:deflimitv}.
\end{proposition}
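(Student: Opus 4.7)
The plan is to treat \eqref{e:absCLT} as a Banach fixed point problem for $\Veps$ in a small ball of $\CD^{\f32-3\kappa,(0)_3}$ uniformly in $\eps \in [0,1]$, and then pass to the limit $\eps \to 0$ using continuity of the solution map with respect to the underlying model. Writing the right-hand side of \eqref{e:absCLT} as $\CM_\eps(\Veps)$, I would establish estimates of the form $\|\CM_\eps(\Veps)\| \le C_1 + T^\theta C_2 (1+\|\Veps\|)^N$ together with an analogous bound on $\CM_\eps(\Veps_1)-\CM_\eps(\Veps_2)$, with constants independent of $\eps$; for $T$ small this yields a unique fixed point by the contraction mapping principle, and the full-range statement follows by the usual iteration argument.

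The main work is to show that $\CM_\eps$ is well-defined and takes values in $\CD^{\f32-3\kappa,(0)_3}$. Each product $F\cdot \tau$ with $\tau \in \{\<Xi1>,\<Xi11>,\<Xi2>,\<Xi21>,\<Xi22>\}$ appearing in \eqref{e:absCLT} is set up so that the modelled-distribution prefactor $F$ sits in a sector of sufficiently high regularity: the subtraction $\VV - \VV^{(0)}$ in the $\<Xi1>$ term, and the explicit $\uu$ factors in the $\<Xi11>, \<Xi21>, \<Xi22>$ terms, ensure this. The resulting $\one_+^D F\tau$ then lives in a $\CD^{\gamma,w}$ space in which, after multiplication by the indicator $\one_+^D$, the temporal weight dips just below $-1$ while the boundary and corner weights stay above $-1$, which is exactly the regime covered by Theorem~\ref{thm:reconstructDomain}; the hypothesis of that theorem is met because our admissible models admit the canonical splitting $\PPi = \PPi^+ + \PPi^-$ afforded by Definition~\ref{def:admissible} via the ``square'' noise $\<XiS1>$. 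The operator $\CP_\Neu$ then supplies a gain of two parabolic derivatives via \cite[Lem.~4.12]{Mate} for the $\CK$ part and through Proposition~\ref{prop:diffKernel} for $\tilde\CK_\d$. Composition with $G,H$ and their derivatives evaluated at $\u$ is standard given Theorem~\ref{theo:LLN} and the assumption that $u_0$ extends to a $\CC^3$ function, while the remainder $\Rteps(\CR_\eps\VV,\eps^\alpha \xi_\eps)$ is controlled by Lemma~\ref{lem:boundsR}, with the choice of exponents $\alpha(d)$ in \eqref{e:defAlphaExponents} tuned precisely so that the cubic term remains harmless.

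Uniformity in $\eps$ then follows from the uniform model bounds of Theorem~\ref{theo:model} and the uniform bounds on $\VV^{(0)},\VV^{(1)}$ of Lemmas~\ref{lem:V0} and~\ref{lem:V1}, provided $\varsigma,\bar\varsigma$ are kept bounded in the respective spaces. Combined with the standard continuity of the solution to a regularity-structures fixed point problem with respect to the model, this gives $\Veps \to \V$ in $\CD^{\f32-3\kappa,(0)_3}$ and therefore $\VV \to \hat V$ as $\eps \to 0$.

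To identify the limit, I would evaluate \eqref{e:absCLT} at $\eps = 0$ with the limit model $\hat\PPi$. By Theorem~\ref{theo:model}, $\hat\PPi$ vanishes on every symbol containing an accented noise $\sXi_i^j$ with $i+j>0$, so every term multiplied by $\<Xi1>,\<Xi11>,\<Xi2>,\<Xi21>$ or $\<Xi22>$ reconstructs to zero; $\tilde R_\eps^{(d)}$ vanishes thanks to the explicit $\eps^\kappa$ prefactor, and $\Rteps$ vanishes by Lemma~\ref{lem:boundsR}. What survives is $\V = \CP_\Neu\one_+^D H_\eta'(\u)\hat V$ with homogeneous boundary data, i.e.\ $\bar v = \CR_0 \V$ satisfies \eqref{e:limitEquation}. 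Combining this with $\CR_0 \VV^{(0)} \to P_\Neu(\one_+^D G(\u)\xi)$ from Lemma~\ref{lem:V0} and $\CR_0 \VV^{(1)} \to 0$ from Lemma~\ref{lem:V1} gives that $\CR_0 \hat V = \CR_0 \hat V^{(0)} + \bar v$ solves \eqref{e:deflimitv} as claimed. The hardest step is arguably the opening one, namely justifying the reconstruction of the $\one_+^D F\<Xi1>$-type terms below the classical threshold $-1$, which is precisely what forces the whole apparatus of Section~\ref{sec:structure} and Theorem~\ref{thm:reconstructDomain} to be set up in advance.
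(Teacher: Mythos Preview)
Your approach is essentially identical to the paper's: both set up a contraction in $\CD^{\frac{3}{2}-3\kappa,(0)_3}$, handle the sub-$(-1)$ reconstruction via Theorem~\ref{thm:reconstructDomain} together with the admissible-model splitting of Definition~\ref{def:admissible}, feed the result into $\CP_\Neu$ using \cite[Lem.~4.12]{Mate} and Proposition~\ref{prop:diffKernel}, control $\Rteps$ and $\tilde R_\eps^{(d)}$ through Lemma~\ref{lem:boundsR}, and identify the limit by evaluating at $\eps=0$ where all accented noises vanish under $\hat\PPi$. One small correction: in Theorem~\ref{thm:reconstructDomain} the weight triple is $w=(\eta,\sigma,\eta)$, so after applying $\one_+$ the \emph{corner} weight drops with the temporal one (to roughly $-\tfrac32-\kappa$ in $d=3$, not ``just below $-1$''); only the boundary weight $\sigma$ is required to stay above $-1$, and that is precisely what the square-noise substitution buys you.
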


\begin{remark}
In the case $d=1$, there is no condition on $\bar \varsigma$ since the fixed point problem 
does not depend on it.
\end{remark}

\begin{proof}
We first consider the case $d=3$.
Note first that, for any $\CC^4$ function $\tilde G$, we have $\CL(\tilde G(\u,\uu)) \in \CD^{2,(0)_3}$.
Since $\hat V^{(0)}_\eps \in \CD^{1,(-{1\over 2}-\kappa)_3}$ by Lemma~\ref{lem:V0}
and $\hat V^{(1)}_\eps \in \CD^{{3\over 2}-3\kappa,(0)_3}$ by Lemma~\ref{lem:V1},
it then follows from \cite[Lem.~4.3]{Mate} that, for $\Veps \in \CD^{{3\over 2}-3\kappa,(0)_3}$, 
all the terms appearing after $\CP_\Neu \one_+^D$ in the right hand side of \eqref{e:absCLT} belong to 
$\CD^{{1\over 2}-4\kappa,(-{3\over 2}-2\kappa)_3}$, provided that $\kappa$ is sufficiently
small. In particular, the operator $\CK$ (defined as described in Remark~\ref{rem:integration})
maps this continuously into $\CD^{2,(0)_3}$, with arbitrarily small norm for small time intervals.

Furthermore, the reconstruction operator of Theorem~\ref{thm:reconstructDomain} continuously 
maps the space $\CD^{{1\over 2}-4\kappa,(-{3\over 2}-2\kappa)_3}$ into $\CC^{-{3\over 2}-2\kappa}$,
which is then mapped continuously into $\CC^{2,(0)_3}$ by $K_\d$ by 
Proposition~\ref{prop:diffKernel}, and therefore into $\CD^{2,(0)_3}$ by the Taylor lift $\CL$, 
again with arbitrarily small norm for small time intervals as a consequence of 
the bound \eqref{e:wantedBound} which also holds for $K_\d$.

Note now that by Corollary~\ref{cor:tight}, we have
$\E \|\xi_\eps\|_{\kappa-2}\lesssim \eps^{-{1\over 2}-\kappa}$.
As a consequence of \eqref{e:defveps0}, we conclude from this that 
$\E |\v|_{L^\infty} \lesssim \eps^{-{1\over 2}-\kappa}$.
Since $\beta > {1\over 2}$, it follows from Lemma~\ref{lem:boundsR} that,
for $p = (d+2)/\underline c$,
\begin{equ}
\|\Rteps(\CR_\eps \VV, \varsigma) \|_{-\underline c}
\lesssim \|\Reps(\CR_\eps \VV + \eps^{-{1\over 2}}\uu, \varsigma) \|_{L^p}
\lesssim \|\varsigma\|_{L^p}\;,
\end{equ}
uniformly over bounded sets for the model $\hPeps$ and over bounded sets for 
$\Veps +\hat V^{(1)}_\eps$ in $\CD^{{3\over 2}-3\kappa,(0)_3}$.
Since $\u$ and $\uu$ are bounded in $\CC^1$, it is immediate from \eqref{e:deftildeReps} that
one has a bound of the type
\begin{equ}
\|\tilde R_\eps^{(3)}(\bar\varsigma) \|_{-{1\over 2}-2\kappa}
\lesssim \|\bar\varsigma\|_{-{1\over 2}-2\kappa}\;.
\end{equ}
In particular, the argument of $P_\Neu$ appearing in the last term on the right hand 
side of \eqref{e:absCLT} is mapped continuously by $P_\Neu$ into $\CC^{{3\over 2}-3\kappa,(0)_3}$,
again with arbitrarily small norm when considering a short enough time interval.
Furthermore, all of these expressions are locally Lipschitz continuous (with similar bounds)
as a function of $\Veps$ in $\CD^{{3\over 2}-3\kappa,(0)_3}$ and of the model $\hPeps$, uniformly over $\eps \in [0,1]$
which yields the first claim over a short enough time (but bounded from below independently of $\eps$) interval.
This can be maximally extended as usual, and the claim follows from the fact that we know  
a priori that solutions to \eqref{e:limitEquation} do not explode.

The second claim is straightforward by simply setting $\eps = 0$ and applying the reconstruction operator
to both sides of \eqref{e:absCLT}.
The case of $d=2$ is virtually identical, noting in particular that even though $\<IXi^2c>_{\beps}$ diverges
in this case, it only does so logarithmically and is therefore compensated by the factor $\eps^\kappa$ in
\eqref{e:deftildeReps}. We leave the verification of the case $d=1$ to the reader.
\end{proof}

\begin{proposition}\label{prop:idenSol}
Let $\varsigma_\eps = \eps^\alpha \xi_\eps$, $\bar \varsigma_\eps = \eps^{2-d-\kappa} \sigma_\eps$, 
and define $\hPeps$ as in Section~\ref{sec:defModel}.
Then, the assumptions of Proposition~\ref{prop:localSol} are satisfied and we have
$\varsigma_\eps, \bar \varsigma_\eps \to 0$ in their respective spaces.
Furthermore, for any $\eps > 0$, the modelled distribution $\VV$ constructed in 
Proposition~\ref{prop:localSol} is such that $\CR_\eps \VV$ coincides with the process $v_\eps$ 
defined in \eqref{e:defveps}.
\end{proposition}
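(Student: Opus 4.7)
The plan is to address the three assertions in turn.

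Since $\varsigma_\eps = \eps^{\alpha(d)-d/2}\eta_\eps$ and $\bar\varsigma_\eps = \eps^{2-d/2-\kappa}\eta_\eps$, the first two assertions reduce to scaling computations on $\eta_\eps$. For $d\in\{2,3\}$ I would use a change of variables $z\mapsto S_\eps z$, together with the moment bound $\E|\eta(0)|^p<\infty$ from Assumption~\ref{ass:kappa} with $p=(d+2)/\underline c$, to get $\E\|\eta_\eps\|_{L^p(D_T)}^p\lesssim\eps^{-p}$ and hence $\E\|\varsigma_\eps\|_{L^p}\lesssim\eps^{\alpha(d)-d/2-1}\to 0$ by the choices in \eqref{e:defAlphaExponents}. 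The convergence $\bar\varsigma_\eps\to 0$ in $\CC^{-1/2-2\kappa}$ would follow in the same spirit by combining Sobolev-type embedding with the uniform moment bounds on $\eta_\eps$ implicit in Assumption~\ref{ass:kappa}, the positive power $\eps^{2-d/2-\kappa}$ more than compensating for the diverging norm; the case $d=1$ is similar. Once these bounds are in hand, the assumptions of Proposition~\ref{prop:localSol} are satisfied and $\VV$ is well-defined.

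For the identification $\CR_\eps \VV = v_\eps$, I would write $\VV = \VV^{(0)} + \VV^{(1)} + \Veps$ and use Lemmas~\ref{lem:V0} and~\ref{lem:V1}, which give $\CR_\eps\VV^{(0)}=\v$ and $\CR_\eps\VV^{(1)}=\vv$ on $\R_+\times D$, to reduce the claim to $\CR_\eps\Veps=\vvv$ with $\vvv=v_\eps-\v-\vv$. By construction (see Section~\ref{sec:decomp}), $\vvv$ solves the mild equation obtained by subtracting \eqref{e:defv0} and \eqref{e:defv1} from \eqref{e:CLT}, with vanishing initial and (in the Neumann case) homogeneous boundary data, the inhomogeneous Neumann datum of $\vv$ having been chosen specifically to absorb that of $v_\eps$. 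The plan is then to apply $\CR_\eps$ to the right hand side of \eqref{e:absCLT} and to check term by term that it reproduces the mild equation for $\vvv$. Specifically, the reconstruction of $\CL(G'(\u))(\VV-\VV^{(0)})\<Xi1>$ will produce the naïve product $G'(\u)(\CR_\eps\VV-\v)\eta_\eps$ together with counterterms read off the negative-degree subsymbols of $\tilde V_\eps^{(0)}\<XiS1>$ via the renormalised model \eqref{e:renormalisedModel}; these should match exactly the explicit divergent pieces of \eqref{e:defv1} and \eqref{e:CLT}. The same analysis applies to the terms containing $\<Xi11>$, $\<Xi2>$, $\<Xi21>$ and $\<Xi22>$, with the divergent contributions surviving cancellation being precisely those encoded in $\tilde R_\eps^{(d)}$ from \eqref{e:deftildeReps}. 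The remainder $\Rteps$ captures by definition the higher-order Taylor expansions of $H$ and $G$ around $\u$ times $\xi_\eps$. Uniqueness for the resulting linear Schauder-type mild equation then gives $\CR_\eps\Veps=\vvv$.

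The main technical obstacle is the term-by-term verification in this last step, especially in dimension $d=3$: several of the trees contain multiple subsymbols of negative degree to extract, and the resulting constants must combine to give exactly $\tilde R_\eps^{(d)}$. The structural reason why this works is that the renormalised model \eqref{e:renormalisedModel}, the correction \eqref{e:deftildeReps}, and the ``accented'' noise symbols $\sXi_i^j$ of Section~\ref{sec:structure} are all designed to implement the recursive formula for the counterterms $\Upsilon$ discussed in the remark following Theorem~\ref{theo:main}.
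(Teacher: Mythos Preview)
Your proposal is correct and follows essentially the same route as the paper. The paper carries out the term-by-term verification you describe in full detail: it writes out the explicit form of $\hat V_\eps^{(0)}$, $\hat V_\eps^{(1)}$, $\hat V_\eps$ in terms of the basis symbols, expands the argument of $\CP_\Neu\one_+^D$ in \eqref{e:absCLT} up to degree~$0$ (a large display \eqref{e:RHS}), and then invokes the general machinery of \cite{Ilya} to read off the counterterm generated by each $\hat F(\cdot)\symbol\tau$ as $\hat F$ times the corresponding renormalisation constant; the resulting PDE is written out explicitly and matched against \eqref{e:CLT} after the cancellation with $\tilde R_\eps^{(d)}$. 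For $\bar\varsigma_\eps\to 0$ the paper does not use a Sobolev embedding but rather Corollary~\ref{cor:tight} directly, identifying $\bar\varsigma_\eps = \eta_\eps^{(d/2-1+\kappa)}$.
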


\begin{proof}
We first show that the assumptions of Proposition~\ref{prop:localSol} are satisfied.
The fact that the random models $\hPeps$ are uniformly bounded (in probability) as $\eps \to 0$
and converge in probability to $\hat \PPi$ is the content of Theorem~\ref{theo:model}.
By the second part of Assumption~\ref{ass:kappa} combined with stationarity, we 
furthermore see that 
\begin{equ}
\E\|\varsigma_\eps\|_{L^p}^p = \E \|\eps^\alpha \xi_\eps\|_{L^p}^p = \eps^{p\alpha - {(d+2)p\over 2}} T \E |\eta(0)|^p
\lesssim \eps^{p/4}\;,
\end{equ}
when $d \in \{2,3\}$. For $d=1$, we have
\begin{equ}
\E\|\varsigma_\eps\|_{2\kappa-{1\over 2}} = \|\eps_\eps^{(1/4)}\|_{2\kappa-{1\over 2}}
\le \|\eps_\eps^{(1/2-3\kappa)}\|_{2\kappa-{1\over 2}}\;,
\end{equ}
which converges to $0$ in probability by Corollary~\ref{cor:tight}.
We also conclude from Corollary~\ref{cor:tight} and our definitions that, 
for $d \in \{2,3\}$,  $\|\bar \varsigma_\eps\|_{-{1\over 2}-2\kappa} = 
\|\eta_\eps^{({d\over 2}-1+\kappa)}\|_{-{1\over2}-2\kappa} \to 0$
in probability.

It remains to show that solutions coincide with $v_\eps$.
This is a special case of the general result obtained in \cite{Ilya} and could in principle 
also be obtained in a way similar to \cite{HP}. We present a short derivation here in 
order to remain reasonably self-contained.

The powercounting of the various symbols appearing in our structure depends on the dimension, so we
first restrict ourselves to the case $d = 3$, which is the one with the largest number of terms of 
negative degree appearing.
Combining \eqref{e:absCLT} with Lemmas~\ref{lem:V0} and~\ref{lem:V1}, we conclude that if we take for $\Veps$
any solution to \eqref{e:absCLT}, there exist functions $v$ and $\nabla v$ such that, for $\Phi_\eps^{(0)}$ and $\Phi_\eps^{(1)}$ 
as in Lemma~\ref{lem:V1}, the following identities hold on $\R_+\times D$: 
\begin{equs}
\hat V_\eps^{(0)} &= G \<IXi> + \Phi_\eps^{(0)}\1 + G' \nabla \u \<IXiX>,\\
\hat V_\eps^{(1)} &= \Phi_\eps^{(1)}\1 + GG' \<IXi1IXi> + G' \Phi^{(0)}_\eps \<IXi1>,\\
\hat V_\eps &= G \<IXi> + v\1 + GG' \<IXi1IXi> + G' \nabla \u \<IXiX> 
 + G'\uu \<IXi11> 
 + 
  {G''G^2\over 2} \<IXi2IXi^2> + G' v \<IXi1> + \nabla v\,\X\;.
\end{equs}
Developing the argument of 
$\CP_\Neu \one_+^D$ in \eqref{e:absCLT} up to order $0$, we conclude that it is given by
\begin{equs}
H_\eta'&G\<IXi> + H_\eta' v\1 + {1\over 2}G'' \nabla^2\u \,\sXi \X^2 + G' \big(v - \Phi_\eps^{(0)}\big)\<Xi1>+ G(G')^2 \<Xi1IXi1IXi> 
\\ & + (G')^2\nabla \u \<Xi1IXiX> 
  + (G')^2\uu\<Xi1IXi11> + (G')^2v\<Xi1IXi1> +G' \nabla v \<Xi1X> + {1\over 2}G^2 G'G'' \<Xi1IXi2IXi^2>
\\ & + GG''\nabla \u \<Xi1XIXi> + G'' v \nabla \u \<Xi1X> \label{e:RHS}
+ G' \uu \<Xi11> + G' \nabla\uu \<Xi11X>\\ & + G'' \nabla\u \uu \<Xi11X> 
 + {1\over 2} G^2G'' \<Xi2IXi^2> +{1\over 2}G''v^2\<Xi2> + {1\over 2}G'' (\uu)^2 \<Xi22> 
+ GG'' \uu \<Xi21IXi>\\ &+ GG'' \nabla \uu \<Xi21XIXi> + GG''v \<Xi2IXi> + G'G'' (\uu)^2 \<Xi21IXi11>+ G^2 G'G''\<Xi2IXiIXi1IXi>
 + GG'G''\nabla \u  \<Xi2IXiIXiX> \\&
+ GG'G'' \uu \<Xi2IXiIXi11> + G'' \uu v \<Xi21> + GG'G'' \uu \<Xi21IXi1IXi> + G'G'' \uu \nabla \u \<Xi21IXiX>\;.
\end{equs}
At this point, we apply the results of \cite{Ilya}. Comparing \cite[Eq.~2.20]{Ilya} with
\cite[Def.~3.20]{Ilya} and \cite[Thm.~3.25]{Ilya}, we see that each term appearing on the
right hand side generates a counterterm for the renormalised equation. Each of these terms is of
the form $\hat F(v,\nabla v, \u, \nabla \u, \uu, \nabla \uu)\symbol\tau$ for some
function $\hat F$ and some symbol $\symbol \tau$.
The counterterm generated by any such term is then
obtained precisely by simply replacing $\symbol\tau$ by the corresponding
renormalisation constant and by interpreting the first two arguments of $\hat F$ as the 
value and gradient of the actual solution (after reconstruction).

\begin{remark}
One may worry that we are not quite in the framework of \cite{Ilya} because of the 
special treatment of $\hat V_\eps^{(0)}$ and $\hat V_\eps^{(1)}$. This however is due to
purely analytical reasons that only affect the boundary behaviour. The computation of the
renormalisation terms on the other hand is a purely algebraic affair which is 
not affected by this. The boundary conditions of $v_\eps$ however \textit{are} affected 
by our decomposition and need to be determined separately.
\end{remark}

It follows that, in dimension $3$, the solution $v_\eps = \CR_\eps \hat V_\eps$ to the fixed point problem 
with the renormalised model satisfies in $\R_+ \times D$ the PDE
\begin{equs}
\d_t v_\eps &= \Delta v_\eps + H_\eta' v_\eps + G\,\xi_\eps +  (v_\eps + \eps^{-1/2} \uu)G'\eta_\eps
 + {1\over 2} (v_\eps + \eps^{-1/2} \uu)^2G''\sigma_\eps \\
 &- \eps^{-3/2} G'G \<XiIXic> - \eps^{-1/2} G(G')^2 \<XiIXiIXic> - \eps^{-1/2}\scal{\nabla \u, (G')^2 \<XiIXiXc> + GG'' \<XiXIXic>} \\
 &- (v_\eps+\eps^{-1/2}\uu) (GG')' \<XiIXic> - {\eps^{-1/2}\over 2} G^2G'' \<XiIXi^2c>  - {\eps^{-1}\over 2} G^2G'' \<IXi^2c>\, \sigma_\eps\\
 & -G'' \bigl(G^2G' \<IXiIXiIXic> + GG' \scal{\<IXiXIXic>, \nabla \u}  + GG' \uu \<IXi^2c>\bigr)\sigma_\eps \\
 & +\hat R_\eps^{(3)}(v_\eps, \varsigma) + \tilde R_\eps^{(3)}(\bar \varsigma)\;.
\end{equs}
Furthermore, both $\CR_\eps \hat V_\eps$ and $\v = \CR_\eps \hat V_\eps^{(0)}$ have homogeneous boundary conditions, 
so that the boundary conditions of $v_\eps$ coincide with those of $\vv = \CR_\eps \hat V_\eps^{(1)}$. 

By \eqref{e:deftildeReps} and since we chose $\bar \varsigma = \eps^{-1-\kappa} \sigma_\eps$, there is a cancellation between
$\tilde R_\eps(\bar \varsigma)$ and some of the other terms appearing in this equation.
Since furthermore $\<XiXIXic> = 0$, we obtain
\begin{equs}
\d_t v_\eps &= \Delta v_\eps + H_\eta' v_\eps + G\,\xi_\eps +  (v_\eps + \eps^{-1/2} \uu)G'\eta_\eps
 + {1\over 2} (v_\eps + \eps^{-1/2} \uu)^2G''\sigma_\eps \\
 &- \eps^{-3/2} G'G \<XiIXic> - \eps^{-1/2} G(G')^2 \<XiIXiIXic> - \eps^{-1/2}\scal{\nabla \u, (G')^2 \<XiIXiXc>} \\
 &- (v_\eps+\eps^{-1/2}\uu) (GG')' \<XiIXic> - {\eps^{-1/2}\over 2} G^2G'' \<XiIXi^2c>  +\hat R_\eps^{(3)}(v_\eps, \varsigma) \;,
\end{equs}
which, when combining with the definition of $\Psi$ given in \eqref{e:ubar1} and 
the fact that $\varsigma = \eps^\alpha \xi_\eps$, precisely coincides with \eqref{e:CLT}. Since its initial condition and
boundary condition coincide as well, this completes the proof of the claim.

In dimension $2$, a similar argument (but taking less terms into account) yields
\begin{equs}
\d_t v_\eps &= \Delta v_\eps + H_\eta' v_\eps + G\,\xi_\eps +  (v_\eps + \uu)G'\eta_\eps
 + {1\over 2} (v_\eps + \uu)^2G''\sigma_\eps \\
 &- \eps^{-1} G'G \<XiIXic> - G(G')^2 \<XiIXiIXic> - \scal{\nabla \u, (G')^2 \<XiIXiXc> + GG'' \<XiXIXic>} \\
 &- (v_\eps+\uu) (GG')' \<XiIXic> - {1\over 2} G^2G'' \<XiIXi^2c>  - {1\over 2} G^2G'' \<IXi^2c>_{\beps}\, \sigma_\eps
  +\hat R_\eps^{(2)}(v_\eps, \varsigma)  + \tilde R_\eps^{(2)}(\bar \varsigma)\;.
\end{equs}
Again, the term $\tilde R_\eps^{(2)}$ precisely cancels the term proportional to 
$\<IXi^2c>_{\beps}\, \sigma_\eps$, so that this again coincides with \eqref{e:CLT}.
In dimension $1$, an even simpler argument shows that
\begin{equs}
\d_t v_\eps &= \Delta v_\eps + H_\eta' v_\eps + G\,\xi_\eps +  v_\eps G'\eta_\eps
 + {1\over 2} v_\eps^2G''\sigma_\eps \\
 &- \eps^{-1/2} G'G \<XiIXic> - v_\eps\, (GG')' \<XiIXic>
  +\hat R_\eps^{(1)}(v_\eps, \varsigma)\;,
\end{equs}
which again coincides with \eqref{e:CLT}, noting that in this case one has $\uu= 0$.
\end{proof}

\section{Convergence of models}
\label{sec:model}

In order to show convergence of the models, we apply the general result of
\cite[Thm~2.31]{Ajay}. This result shows that if one considers the ``BPHZ lifts''
of a sequence of smooth and stationary stochastic processes $\xi_n$ as given in \cite[Thm~6.17]{Lorenzo}
then, provided that one has uniform bounds of a suitable ``norm'' of $\xi_n$ and under a few 
relatively weak additional algebraic assumptions, the resulting sequence of models converges to a 
limit, provided that $\xi_n \to \xi$ weakly in probability.

\subsection{Cumulant homogeneity assignments}

In this section, we define 
\begin{equ}[e:defeta]
\eta^{(\alpha)}_\eps(t,x) = \eps^{-\alpha} \eta (\eps^{-2}t,\eps^{-1}x)\;,
\end{equ}
and we often use $z = (t,x)$ for space-time coordinates. The exponents $\alpha$ will
always be chosen in $\big(\underline c,{d+2\over 2}\big]$. 
Our aim is to obtain a suitable bound independent of $\eps$ for joint cumulants of
the form
\begin{equ}
\kappa_p\bigl(\eta^{(\alpha_1)}_\eps(z_1),\ldots,\eta^{(\alpha_p)}_\eps(z_p)\bigr)\;.
\end{equ}
Given a finite collection of at least two space-time points 
$z = \{z_a\}_{a \in A}$, we again consider the corresponding
labelled binary tree $\tree_z = (T,\scale)$ as in Section~\ref{sec:coal}, with the leaves of $T$ identified
with the index set $A$. Recall that the nodes $V_T$ of $T$ are given by subsets of $A$,
with inner nodes $\mathring V_T$ given by subsets with at least two elements and the root of $T$
given by $A$ itself.

Recall from \cite[Def.~A.14]{Ajay} the following definition.
\begin{definition}\label{def:consistent}
A ``consistent cumulant homogeneity'' consists, for each 
finite index set $A$, each binary tree $T$ over $A$ as above, and each choice of 
indices $\alpha \colon A \to [\underline c, {d+2\over 2}]$, a function
$\c_T^{(\alpha)}\colon \mathring V_T\to \R_+$ satisfying furthermore
\begin{claim}
\item For every $B\subset A$, $\sum_{v\in \mathring V_T: v \cap B \neq \emptyset} \c_T^{(\alpha)}(v) \ge \sum_{a\in B} \alpha_a$.
\item For every $u \in \mathring V_T$, $\sum_{v\in \mathring V_T: v \subset u} \c_T^{(\alpha)}(v) \le \sum_{a \in u} \alpha_a$.
\item If $|A| \ge 3$, then for every $u \in \mathring V_T$ with $|u| \le 3$, 
one has $\sum_{v\in \mathring V_T: v \subset u}\c_T^{(\alpha)}(v) < (d+2)(|u|-1)$.
\end{claim}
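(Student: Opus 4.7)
The final statement is in fact Definition~\ref{def:consistent}, recalled essentially verbatim from \cite[Def.~A.14]{Ajay}. A definition carries no logical content that can be proved: conditions (i), (ii) and (iii) are simply three axioms that a candidate assignment $\c_T^{(\alpha)}\colon \mathring V_T\to\R_+$ must satisfy in order to qualify as a ``consistent cumulant homogeneity''. So strictly speaking there is nothing for me to prove about the statement as it stands, and I would resist the temptation to dress up the recall of a definition as a theorem.

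What I can sensibly do is flag the role of this definition in the surrounding argument. It is the input needed to apply \cite[Def.~A.18]{Ajay} and \cite[Thm~2.31]{Ajay}, both of which are already invoked in the proof of Proposition~\ref{prop:convTransl} above. The only reason to recall the definition at this point of the paper is that the next concrete piece of mathematics in Section~\ref{sec:model} has to \emph{exhibit} such an assignment for the specific rescaled noises $\eta^{(\alpha)}_\eps$ of \eqref{e:defeta}, with constants uniform in $\eps$, so that \cite[Thm~2.31]{Ajay} can be fed with admissible data; this is the content of the claimed Proposition~\ref{prop:conditionAjay}. Everything that follows in this subsection is bookkeeping toward that verification, not a proof of the definition itself.

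Let me therefore use this paragraph to indicate, rather than to prove, how the three conditions will enter. Conditions (i) and (ii) are two-sided accounting inequalities on the binary tree $T$ and will both follow directly from the cumulant bound \eqref{e:boundkappa} once it is rewritten for $\eta^{(\alpha)}_\eps$ using the scaling identity \eqref{e:scaleKappa}: the exponent associated to each inner node $v$ is read off from $\rho(2^{-\scale(v)})$ and the $\alpha_a$ enter additively through the prefactor $\eps^{-\sum_a \alpha_a}$. Condition (iii) is the genuinely restrictive one; it is a small-scale integrability condition enforced only on subtrees with at most three leaves, and it is what pins down the admissible interval $[\underline c,(d+2)/2]$ for the noise-type indices (and hence the degree assignment chosen in \eqref{e:degXX}). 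That condition, and not (i)--(ii), will be the main obstacle whenever a new noise type is added to the regularity structure.
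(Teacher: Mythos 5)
You are right that the three bullets, read literally, are the defining conditions of Definition~\ref{def:consistent}, recalled from \cite[Def.~A.14]{Ajay}, and one does not prove a definition. So far so good.

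However, the paper does prove something about these conditions immediately afterwards, and I suspect that was the intended target: Proposition~\ref{prop:cumulant} asserts that the explicit formula \eqref{e:defCum}, namely $\c_T^{(\alpha)}(u)=\sum_{a\in A}\alpha_a\,2^{-\DD(u,a)}$ with $\DD$ as in \eqref{e:defDD}, is a consistent cumulant homogeneity, i.e.\ satisfies (i)--(iii). Your response does not attempt that verification, so let me record the argument. The key observation is that, for every $a\in A$, the weights $2^{-\DD(u,a)}$ are nonnegative, vanish for $a\notin u$, and sum to $1$ over $u\in\mathring V_T$ (telescoping along the chain of interior nodes containing $a$, with the extra $-1$ at the root in \eqref{e:defDD} ensuring the total is exactly $1$). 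For (i), each $a\in B$ contributes its full mass $\alpha_a\cdot 1$ from the nodes $v\ni a$, all of which satisfy $v\cap B\neq\emptyset$, and the remaining terms are nonnegative. For (ii), restricting the sum to $v\subset u$ kills all terms with $a\notin u$ and keeps at most the full mass of each $a\in u$, hence the upper bound $\sum_{a\in u}\alpha_a$. For (iii) with $|u|=3$, combine (ii) with $\alpha_a\le(d+2)/2$ to get a bound by $3(d+2)/2<2(d+2)$. For $|u|=2$ the only $v\subset u$ in $\mathring V_T$ is $u$ itself; since $|A|\ge 3$ forces $u\neq A$, we have $\DD(u,a)=1$ for both $a\in u$, so $\c_T^{(\alpha)}(u)=\tfrac12\sum_{a\in u}\alpha_a\le(d+2)/2<d+2$.

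One further correction to your third paragraph: you conflate two distinct steps. The verification that \eqref{e:defCum} is a consistent cumulant homogeneity (Proposition~\ref{prop:cumulant}) is purely combinatorial and does not invoke the cumulant bound \eqref{e:boundkappa} or the scaling \eqref{e:scaleKappa}; those enter only in the separate Proposition~\ref{prop:conditionAjay}, which shows that the cumulants of $\eta_\eps^{(\alpha)}$ are in fact controlled by the homogeneity $\c_T^{(\alpha)}$ so defined, uniformly in $\eps$. Both are needed to feed \cite[Thm~2.31]{Ajay}, and they should not be collapsed into one.
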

\end{definition}

\begin{remark}
Applying the first two conditions with $B = A$ and $u = A$ respectively implies in particular that
$\sum_{v\in \mathring V_T} \c_T^{(\alpha)}(v) = \sum_{a\in A} \alpha_a$.
\end{remark}

We will now display a consistent cumulant homogeneity such that, for every 
finite set $A$, space-time points $\{z_a\}_{a\in A}$, and choices $\alpha\colon A\to [\underline c, {d+2\over 2}]$,
We have the bound
\begin{equ}[e:boundCumScale]
|\kappa_A\bigl(\bigl\{\eta^{(\alpha_a)}_\eps(z_a)\bigr\}_{a\in A}\bigr)|
\lesssim \prod_{u \in \mathring V_T} 2^{\c_T^{(\alpha)}(u) \scale(u)}\;,\qquad (T,\scale) = \tree_z\;.
\end{equ}
We claim that one possible choice is obtained by setting
\begin{equ}[e:defCum]
\c_T^{(\alpha)}(u) = \sum_{a\in A} \alpha_a 2^{- \DD(u,a)}\;,
\end{equ}
where
\begin{equ}[e:defDD]
\DD(u,a) = 
\left\{\begin{array}{cl}
	|\{v \in \mathring V_T\,:\, a \in v \subset u\}| & \text{if $a \in u$ and $u \neq A$,}\\
	|\{v \in \mathring V_T\,:\, a \in v\}| - 1 & \text{if $u=A$,}\\
	+\infty & \text{if $a \not \in u$.} 
\end{array}\right.
\end{equ}

\begin{proposition}\label{prop:cumulant}
The choice \eqref{e:defCum} is a consistent cumulant homogeneity.
\end{proposition}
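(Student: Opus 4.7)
The plan is to rewrite $\c_T^{(\alpha)}(u)=\sum_{a\in u}\alpha_a\, w(u,a)$ with leaf weight $w(u,a)=2^{-\DD(u,a)}$ (so $w(u,a)=0$ whenever $a\notin u$), and then reduce each of the three conditions to a Fubini-style exchange of summation. First I would describe the weights $w(\cdot,a)$ along the path of interior ancestors $\{a\}=v_0\subsetneq v_1\subsetneq\cdots\subsetneq v_{k_a}=A$ of a generic leaf $a$. Reading off the definition of $\DD$ gives $w(v_i,a)=2^{-i}$ for $1\le i<k_a$ and $w(A,a)=2^{-(k_a-1)}$, where the subtraction of $1$ in the defining formula for $\DD(A,a)$ is precisely the correction that makes the resulting geometric series telescope to the key identity
\[
\sum_{v\in\mathring V_T,\; a\in v} w(v,a)\;=\;\sum_{i=1}^{k_a-1}2^{-i}+2^{-(k_a-1)}\;=\;1.
\]

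With this identity in hand, the first two conditions are immediate. For Condition~(i), I would note that whenever $a\in B$ every interior node $v\ni a$ automatically meets $B$, so exchanging summation yields
\[
\sum_{v\cap B\neq\emptyset}\c_T^{(\alpha)}(v)=\sum_a\alpha_a\sum_{\substack{v\ni a\\ v\cap B\neq\emptyset}}w(v,a)\;\ge\;\sum_{a\in B}\alpha_a,
\]
the contributions from $a\notin B$ being nonnegative. For Condition~(ii), if $u=v_j$ for some leaf $a\in u$ then the interior $v\subset u$ with $a\in v$ are exactly $v_1,\ldots,v_j$, and partial telescoping gives $\sum w(v,a)=1-2^{-j}$ when $u\neq A$ and $1$ when $u=A$; summing over $a$ then yields $\sum_{v\subset u}\c_T^{(\alpha)}(v)\le\sum_{a\in u}\alpha_a$ with equality exactly when $u=A$.

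Condition~(iii) is the only place where the bound $\alpha_a\le (d+2)/2$ is used, and I would handle it by a short case analysis on the shape of $u\in\mathring V_T$ with $|u|\le 3$. For $|u|=2$ the telescoping factor $1/2$ forces $\sum\le(\alpha_a+\alpha_b)/2\le(d+2)/2<(d+2)$. For a non-root interior node of size three, the only possible binary shape is one leaf together with a size-two subtree, and a direct evaluation of the telescoping gives $\sum\le\tfrac14(d+2)+\tfrac34(d+2)=(d+2)<2(d+2)$. Finally for $u=A$ with $|A|=3$ one simply uses Condition~(ii) to get $\sum=\sum_a\alpha_a\le \tfrac32(d+2)<2(d+2)$.

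None of these steps should be hard; the whole proposition is essentially combinatorial bookkeeping. The one thing to get right is the special role of the root in the definition of $\DD$: under the ``naive'' convention that would treat $A$ like any other interior node, the sum in the telescoping identity would instead equal $2-2^{-(k_a-1)}>1$, which would immediately break Condition~(ii). The main minor obstacle is therefore simply keeping the case $u=A$ consistently separated from the other cases throughout the argument.
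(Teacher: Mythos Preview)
Your argument is correct and follows essentially the same route as the paper: the key identity $\sum_{v\in\mathring V_T}2^{-\DD(v,a)}=1$ is exactly what the paper invokes, and your treatment of Condition~(iii) differs only in that you compute a sharper bound for the non-root $|u|=3$ case, whereas the paper simply applies Condition~(ii) uniformly to get $\sum\le\sum_{a\in u}\alpha_a\le\tfrac32(d+2)<2(d+2)$.

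One small correction to your closing remark: under the ``naive'' convention (dropping the $-1$ at the root), the telescoping sum would be $\sum_{i=1}^{k_a}2^{-i}=1-2^{-k_a}<1$, not $2-2^{-(k_a-1)}>1$; this would break Condition~(i) (and the equality $\sum_v\c_T^{(\alpha)}(v)=\sum_a\alpha_a$) rather than Condition~(ii).
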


\begin{proof}
The first two conditions of Definition~\ref{def:consistent} follow immediately from the
structure of the formula \eqref{e:defCum}, in particular the facts 
that $2^{- \DD(u,a)}$ is positive, vanishes for $a \not \in u$, and is such that 
$\sum_{u \in \mathring V_T} 2^{- \DD(u,a)} = 1$.

Regarding the last condition, the case $|u| = 3$ follows from the fact that the 
second condition holds and $\alpha_a \le {d+2\over 2}$. The case $|u|=2$
follows from the condition $|A| \ge 3$ which guarantees that the corresponding sum
is bounded by ${d+2\over 2}$ since $\DD(u,a) = 1$ in this case.
\end{proof}

\begin{lemma}\label{lem:range}
Setting $\underline \alpha = \inf_{a\in A}\alpha_a$ and $\overline \alpha = \sup_{a\in A}\alpha_a$,
one has $\c_T^{(\alpha)}(u) \in [\underline \alpha,\overline \alpha]$
for $u \in \mathring V_T \setminus \{A\}$ and $\c_T^{(\alpha)}(A) \in [2\underline \alpha,2\overline \alpha]$.
\end{lemma}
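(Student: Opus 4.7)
The plan is to reduce everything to Kraft's identity for full binary trees, after reinterpreting $\DD(u,a)$ as a graph-theoretic depth.

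First I would fix $u \in \mathring V_T$ with $u \neq A$ and analyse the unique path in $T$ from $u$ down to the leaf $\{a\}$, say $u = v_0 \supsetneq v_1 \supsetneq \cdots \supsetneq v_k = \{a\}$. The inner nodes $v \in \mathring V_T$ satisfying $a \in v \subset u$ are exactly $v_0, v_1, \ldots, v_{k-1}$ (the leaf $\{a\} = v_k$ is not an inner node), so $\DD(u,a)$ equals $k$, the depth of the leaf $\{a\}$ in the binary subtree $T_u$ rooted at $u$. Since $T$ is binary (every inner node has exactly two children), $T_u$ is a full binary tree whose leaves are exactly $\{\{a\} : a\in u\}$. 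Kraft's identity for full binary trees (a one-line induction on the number of inner nodes) then gives
\begin{equ}
\sum_{a \in u} 2^{-\DD(u,a)} \;=\; \sum_{\text{leaves $\ell$ of $T_u$}} 2^{-\mathrm{depth}(\ell)} \;=\; 1\;.
\end{equ}
Hence, recalling that $\DD(u,a) = +\infty$ for $a \notin u$, the value $\c_T^{(\alpha)}(u) = \sum_{a \in u} \alpha_a\, 2^{-\DD(u,a)}$ is a convex combination of $\{\alpha_a : a \in u\}$ and so lies in $[\underline\alpha, \overline\alpha]$.

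For $u = A$, the alternative definition subtracts one from the count, so by the same analysis $\DD(A,a)$ equals the depth of $\{a\}$ in $T$ itself minus $1$. Applying Kraft once more to the full tree $T$ yields
\begin{equ}
\sum_{a \in A} 2^{-\DD(A,a)} \;=\; 2\sum_{a \in A} 2^{-\mathrm{depth}(\{a\})} \;=\; 2\;,
\end{equ}
so $\c_T^{(\alpha)}(A)$ is twice a convex combination of $\{\alpha_a\}_{a\in A}$, placing it in $[2\underline\alpha, 2\overline\alpha]$.

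There is no real obstacle here; the only subtle point is reading the definition \eqref{e:defDD} correctly so that the inclusion $v \subset u$ in the first case is interpreted to include $v = u$ (consistent with the explicit subtraction of $1$ in the second case), which is what makes $\DD(u,a)$ agree with the depth rather than the depth minus one. Once that is settled the statement is an immediate application of Kraft's equality.
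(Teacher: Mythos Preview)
Your proof is correct. Both your argument and the paper's reduce (by linearity in $\alpha$) to showing that the weights $2^{-\DD(u,a)}$ sum to $1$ for $u\neq A$ and to $2$ for $u=A$; the paper establishes this via an explicit induction on $|A|$ that splits the tree at its root, whereas you recognise the identity directly as Kraft's equality for the full binary subtree rooted at $u$, which is a cleaner packaging of the same induction.
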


\begin{proof}
By convexity of \eqref{e:defCum}, it suffices to consider the case where $\alpha_a = 1$ for all $a$.
We proceed by induction on the size of $A$. When $|A| = 2$, one has $\mathring V_T = \{A\}$,
so that $\DD(A,a) = 0$ and therefore $\c_T^{(\alpha)}(A) = 2$ as claimed.

Assume now that $|A| > 2$ and fix a binary tree $T$ over $A$. Write $A_1$ and $A_2$ for the children of 
$A$ in $\mathring V_T$, so that $A = A_1 \sqcup A_2$. 
We distinguish two cases. In the first case, $|A_1| \wedge |A_2| \ge 2$, so that the tree $T$
can naturally be thought of as two trees $T_1$, $T_2$ over $A_1$, $A_2$ joined by their roots.
By \eqref{e:defDD} (in particular the fact that there is an additional $-1$ at the root), we then have
\begin{equ}
\c_T^{(\alpha)}(A_1) = {1\over 2}\c_{T_1}^{(\alpha)}(A_1)\;,\quad
\c_T^{(\alpha)}(A_2) = {1\over 2}\c_{T_2}^{(\alpha)}(A_2)\;,\quad
\c_T^{(\alpha)}(A) = {1\over 2}\bigl(\c_{T_1}^{(\alpha)}(A_1) + \c_{T_2}^{(\alpha)}(A_2)\bigr)\;,
\end{equ}
while $\c_T^{(\alpha)}(u) = \c_{T_i}^{(\alpha)}(u)$ for all other $u \in \mathring V_T$, with
$i \in \{1,2\}$ depending on whether $u \subset A_1$ or $u \subset A_2$.
We conclude by using the induction hypothesis, which implies that $\c_{T_1}^{(\alpha)}(A_1) = \c_{T_2}^{(\alpha)}(A_2) = 2$.

In the second case, we have $|A_1| = 1$ and $|A_2| \ge 2$ (or vice-versa), the case $|A|=2$ having already
been dealt with. In this case, the tree $T$ consists of a subtree $T_2$ over $A_2$ as before, 
with an additional root vertex $A$ and single extra leaf. In this case, we have
\begin{equ}
\c_T^{(\alpha)}(A_2) = {1\over 2}\c_{T_2}^{(\alpha)}(A_2)\;,\quad
\c_T^{(\alpha)}(A) = 1 + {1\over 2}\c_{T_2}^{(\alpha)}(A_2)\;,
\end{equ}
whence we conclude as before.
\end{proof}

\begin{proposition}\label{prop:conditionAjay}
Under Assumption~\ref{ass:kappa}, the bound \eqref{e:boundCumScale} holds for the choice \eqref{e:defCum}.
\end{proposition}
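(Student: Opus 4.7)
The plan is to reduce everything to bounds on the cumulants of $\eta$ itself via the scaling relation \eqref{e:scaleKappa}, which extended to arbitrary exponents reads $\kappa_A(\{\eta^{(\alpha_a)}_\eps(z_a)\}) = (\prod_a \eps^{-\alpha_a})\, \kappa_A(\{\eta(y_a)\})$ with $y_a \eqdef S_\eps^{-1}z_a$. Since parabolic distances are multiplied by $\eps^{-1}$ under $S_\eps^{-1}$, the coalescence tree of $\{y_a\}$ is combinatorially identical to $\tree_z$ but with all scales uniformly shifted, so that $2^{-\scale_y(v)} \asymp \eps^{-1}\sigma_v$ where $\sigma_v \eqdef 2^{-\scale(v)}$. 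Substituting into Assumption~\ref{ass:kappa} (with all $k_i = 0$) gives
\[
|\kappa_A(\{\eta^{(\alpha_a)}_\eps(z_a)\})| \lesssim \eps^{-\sum_a \alpha_a}\, \rho(\eps^{-1}\sigma_A)^2 \prod_{v\in\mathring V_T\setminus\{A\}}\rho(\eps^{-1}\sigma_v)\;,
\]
with the square at the root coming from the separate factor $\rho(2^{-\scale(\Omega_p)})$ in \eqref{e:boundkappa}.

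The heart of the argument is then a vertex-by-vertex case split. For each $v$, either $\sigma_v \ge \eps$ (``large scale'', where $\rho(\eps^{-1}\sigma_v) = (\eps^{-1}\sigma_v)^{-\overline c}$) or $\sigma_v \le \eps$ (``small scale'', with $\underline c$ in place of $\overline c$). Writing $\beta_v$ for the resulting exponent and $m_v$ for the multiplicity ($m_A = 2$, $m_v = 1$ otherwise), I will apply the elementary inequality
\[
\sigma_v^{-\beta_v m_v} \le \eps^{\c_T^{(\alpha)}(v) - \beta_v m_v}\, \sigma_v^{-\c_T^{(\alpha)}(v)}\;,
\]
which holds in both cases provided $\c_T^{(\alpha)}(v) - \beta_v m_v$ has the right sign. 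This sign check is precisely what Lemma~\ref{lem:range} delivers: for $v \neq A$ one has $\c_T^{(\alpha)}(v) \in [\underline\alpha, \overline\alpha] \subset [\underline c, \overline c]$, sandwiching it between $\underline c\,m_v$ and $\overline c\,m_v$; at the root, the doubled range $[2\underline\alpha, 2\overline\alpha]$ of Lemma~\ref{lem:range} is exactly matched by the doubled exponent $m_A = 2$, which is the whole reason for treating the root asymmetrically in the definition \eqref{e:defDD} of $\DD$.

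To conclude, multiplying the resulting inequalities over all $v \in \mathring V_T$ and combining with the cumulant bound produces an overall $\eps$-exponent of $-\sum_a \alpha_a + \sum_v \c_T^{(\alpha)}(v)$, which vanishes identically by the remark following Definition~\ref{def:consistent}. What is left is exactly $\prod_{v \in \mathring V_T} \sigma_v^{-\c_T^{(\alpha)}(v)} = \prod_v 2^{\c_T^{(\alpha)}(v)\scale(v)}$, i.e.\ the required bound \eqref{e:boundCumScale}. The only potentially delicate point is the bookkeeping at the root (the squared factor and the matching doubled range in Lemma~\ref{lem:range}), which has been engineered to work by the very choice of \eqref{e:defCum}--\eqref{e:defDD}; once this is in place, the rest is a mechanical verification of exponents and I expect no real obstacle.
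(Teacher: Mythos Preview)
Your proposal is correct and follows essentially the same approach as the paper: both arguments rescale via \eqref{e:scaleKappa}, perform the large-scale/small-scale case split at each node, and then use the range constraints of Lemma~\ref{lem:range} (with the doubled range at the root matched by the squared $\rho$-factor) to interpolate the exponent to $\c_T^{(\alpha)}(v)$, making the total $\eps$-power vanish by the normalisation $\sum_v \c_T^{(\alpha)}(v) = \sum_a \alpha_a$. The only slip is notational: you want $y_a = S_\eps z_a$ rather than $S_\eps^{-1} z_a$, since it is $S_\eps$ that dilates parabolic distances by $\eps^{-1}$.
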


\begin{proof}
It follows from Assumption~\ref{ass:kappa} and \eqref{e:defeta} that 
\begin{equ}[e:bbasic]
|\kappa_A\bigl(\bigl\{\eta^{(\alpha_a)}_\eps(z_a)\bigr\}_{a\in A}\bigr)|
\lesssim \eps^{c_\eps(\scale(A)) - \sum_{a\in A} \alpha_a} 2^{c_\eps(\scale(A)) \scale(A)} \prod_{u \in \mathring V_T} \eps^{c_\eps(\scale(u))} 2^{c_\eps(\scale(u)) \scale(u)}\;,
\end{equ}
where 
\begin{equ}[e:boundceps]
c_\eps(n) = 
\left\{\begin{array}{cl}
	\underline c & \text{if $n \ge \log_2{1\over\eps}$,} \\
	\overline c & \text{otherwise.}
\end{array}\right.
\end{equ}
Let now $\hat c\colon \mathring V_T \to [\underline c, \overline c]$ be any map such that 
$\hat c(A) + \sum_{u \in \mathring V_T} \hat c(u) = \sum_{a\in A} \alpha_a$, and rewrite \eqref{e:bbasic}
as
\begin{equ}[e:better]
|\kappa_A\bigl(\bigl\{\eta^{(\alpha_a)}_\eps(z_a)\bigr\}_{a\in A}\bigr)|
\lesssim \eps^{c_\eps(\scale(A)) - \hat c(A)} 2^{c_\eps(\scale(A)) \scale(A)} \prod_{u \in \mathring V_T} \eps^{c_\eps(\scale(u))- \hat c(u)} 2^{c_\eps(\scale(u)) \scale(u)}\;.
\end{equ}
We now note that \eqref{e:boundceps} implies that 
\begin{equ}
\eps^{c_\eps(n)- \hat c} 2^{c_\eps(n) n} = (\eps 2^n)^{c_\eps(n)- \hat c} 2^{\hat c n} \le 2^{\hat c n}\;, 
\end{equ}
for every $\hat c \in [\underline c, \overline c]$, uniformly over $n \in \Z$ and $\eps \in (0,1]$.
Inserting this into \eqref{e:better} immediately yields that, uniformly in $\eps$, one has
\begin{equ}[e:good]
|\kappa_A\bigl(\bigl\{\eta^{(\alpha_a)}_\eps(z_a)\bigr\}_{a\in A}\bigr)|
\lesssim   2^{\hat c(A) \scale(A)} \prod_{u \in \mathring V_T} 2^{\hat c(u) \scale(u)}\;.
\end{equ}
Since the map $\c_T^{(\alpha)}$ is of the desired type by Lemma~\ref{lem:range} (modulo the additional
factor $2$ at the root which is taken care of explicitly in \eqref{e:good}), the claim follows.
\end{proof}

\begin{corollary}\label{cor:tight}
For any $\alpha \in \big(\underline c,{d+2\over 2}\big)$, one has $\eta_\eps^{(\alpha)} \to 0$ in probability
in $\CC^\beta$ for every $\beta < -\alpha$.
In particular, $\eta_\eps \to 0$ in probability in $\CC^\beta$ for every $\beta < -1$
and $\zeta_\eps \to 0$ in probability in $\CC^\beta$ for every $\beta < {d-2\over 2}$.
The same holds for $\eta_\eps^{(\alpha)} \one_A$ for any fixed Borel set $A$.
\end{corollary}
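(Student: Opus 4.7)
The plan is to combine the cumulant bound of Proposition~\ref{prop:conditionAjay} with the standard Kolmogorov criterion for distributions of negative Hölder regularity, and then to promote uniform boundedness to convergence to $0$ via the elementary scaling identity $\eta_\eps^{(\alpha)} = \eps^{\alpha'-\alpha}\eta_\eps^{(\alpha')}$.

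First I would fix $\alpha \in (\underline c, (d+2)/2)$ and apply Proposition~\ref{prop:conditionAjay} with the constant choice $\alpha_a \equiv \alpha$; by Lemma~\ref{lem:range} this forces $\c_T^{(\alpha)}(u) = \alpha$ at every non-root inner node and $\c_T^{(\alpha)}(A) = 2\alpha$ at the root, so that $\sum_{u \in \mathring V_T}\c_T^{(\alpha)}(u) = k\alpha$ for a tree with $k$ leaves. Since $\alpha, 2\alpha < d+2$, the usual parabolic power-counting—integrating out the children of each inner node from the leaves towards the root, as in \cite{Ajay}—gives $|\kappa_k(\eta_\eps^{(\alpha)}(\phi_\lambda^z),\ldots,\eta_\eps^{(\alpha)}(\phi_\lambda^z))| \lesssim \lambda^{-k\alpha}$ uniformly in $\eps$, in $z$, and in $\lambda \in (0,1]$, for any parabolically scaled test function $\phi_\lambda^z$. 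Plugging this into the moment-cumulant expansion, and using that $\eta$ is centred so only partitions with blocks of size at least two contribute, I would obtain $\E|\eta_\eps^{(\alpha)}(\phi_\lambda^z)|^{2p} \lesssim \lambda^{-2p\alpha}$. Choosing $p$ large and invoking the Kolmogorov criterion for random distributions (used repeatedly in \cite{regularity,Mate}) then yields $\E\|\eta_\eps^{(\alpha)}\|_{\CC^{-\alpha-\kappa}}^{2p} \lesssim 1$ uniformly in $\eps$, for any $\kappa > 0$.

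To upgrade this boundedness to convergence to $0$, given any $\beta < -\alpha$ I would pick $\alpha' \in (\alpha,(d+2)/2)$ and $\kappa > 0$ with $-\alpha'-\kappa \ge \beta$ (possible since $-\alpha > \beta$), and exploit $\eta_\eps^{(\alpha)} = \eps^{\alpha'-\alpha}\eta_\eps^{(\alpha')}$ together with the previous bound applied at $\alpha'$ to get $\E\|\eta_\eps^{(\alpha)}\|_{\CC^{-\alpha'-\kappa}}^{2p} \lesssim \eps^{2p(\alpha'-\alpha)} \to 0$. The continuous embedding $\CC^{-\alpha'-\kappa} \hookrightarrow \CC^\beta$ then delivers convergence to $0$ in probability in $\CC^\beta$. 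The two particular cases follow by specialising $\alpha$ appropriately, and the indicator-truncated statement is obtained by running exactly the same argument with $\phi_\lambda^z$ replaced by $\phi_\lambda^z\one_A$, which preserves the only features of the test function that enter the power-counting step, namely its $L^1$ and $L^\infty$ bounds and its support in a parabolic ball of radius $\lambda$ around $z$.

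The main obstacle I anticipate is executing the power-counting step cleanly: one has to carefully track how the $2^{\c(u)\scale(u)}$ factors combine as one integrates out the inner nodes of the binary tree, and the strict inequality $\alpha < (d+2)/2$ must be used at every stage to ensure local integrability—the borderline case $2\alpha = d+2$ would produce a logarithmic loss, which is precisely why the critical value is excluded from the hypothesis. Once this estimate is in hand, the Kolmogorov criterion and the $\eps$-scaling trick are essentially formal.
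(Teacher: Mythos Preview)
Your argument is correct. The paper's own proof is a single line citing \cite[Thm~2.31]{Ajay} as a black box: that theorem takes the cumulant homogeneity bounds of Proposition~\ref{prop:conditionAjay} as input and directly returns convergence of the associated model (here a trivial structure with one noise symbol of degree just below $-\alpha$). Your route unpacks what that citation does in this degenerate case: the power-counting you describe is exactly the $\lambda \le 1$ computation carried out in the proof of Proposition~\ref{prop:convGauss}, and the Kolmogorov step is standard. The one place you genuinely diverge is in obtaining convergence to $0$: the paper implicitly relies on the fact that $\eta_\eps^{(\alpha)} = \eps^{(d+2)/2-\alpha}\xi_\eps$ converges weakly to $0$ and then invokes the continuity of the BPHZ lift, whereas you use the scaling identity $\eta_\eps^{(\alpha)} = \eps^{\alpha'-\alpha}\eta_\eps^{(\alpha')}$ with $\alpha' > \alpha$ together with the uniform boundedness at the higher exponent. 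This is a cleaner and more elementary way to extract the vanishing, and it avoids appealing to the heavy \cite{Ajay} machinery at all. The treatment of the indicator function is identical in both approaches: the relevant bounds only use the $L^\infty$ norm and the support of the test function, so multiplication by $\one_A$ is harmless.
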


\begin{proof}
The first statement is an immediate consequence of \cite[Thm~2.31]{Ajay}.
The fact that we can multiply $\eta_\eps^{(\alpha)}$ by an arbitrary indicator function 
follows from the fact that these bounds do not involve the derivative of the test function
in this case.
\end{proof}

\subsection{Power-counting conditions}

By \eqref{e:defCum}--\eqref{e:defDD}, the quantity $\mcb{h}_{\c,D}(A)$ defined in \cite[Def.~A.24]{Ajay} can be
estimated by 
\begin{equ}[e:boundh]
\mcb{h}_{\c,D}(A) = \sum_{a \in D} 2^{-\DD(D,a)} \alpha_a \ge 2^{-(|D|-1)}\sum_{a \in D} \alpha_a\;.
\end{equ}
This is because for $D \in \mathring V_T$ with $D \neq A$, one always has
$\DD(D,a) \le |D|-1$. Furthermore, by \cite[Rem.~2.28]{Ajay}, one has $\mcb{j}_D(A) \ge {d+2\over 2} -\kappa$
for all typed sets $A$ and $D$.
These observations allow us to conclude the following.

\begin{lemma}\label{lem:regular}
All the decorated trees generated by the rule $R_{\invariant}$ of Definition~\ref{def:ruleInvariant} 
are $\c$-super regular in the terminology of \cite[Def.~A.27]{Ajay}.
\end{lemma}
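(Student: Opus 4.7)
The plan is to verify condition \cite[Def.~A.27]{Ajay} tree-by-tree. That definition requires, for every suitably admissible subtree $S$ of every decorated tree $\tau$ generated by $R_{\invariant}$, a power-counting inequality of the form
\begin{equ}
\mcb{h}_{\c,D(S)}(A) + \sum_{e\in E(S)}\mcb{j}_{D(e)}(A) > (d+2)(|V(S)|-1)\;,
\end{equ}
where $D(S)$ collects the noise leaves of $S$ and $D(e)$ the endpoints of an integration edge $e$. Two ingredients are already in hand: the bound \eqref{e:boundh}, together with $\alpha_a\ge\underline c>0$, gives a uniform lower bound on the noise contribution, while $\mcb{j}_{D}\ge (d+2)/2-\kappa$ provides a lower bound on each Schauder exponent.

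The first step is to observe that the rule $R_{\invariant}$ is subcritical and complete, so that only finitely many symbols have $\degb\tau\le 0$; these are enumerated in \eqref{e:listBasis}. It therefore suffices to verify the required strict inequality for subtrees $S$ that actually arise in one of these finitely many symbols. For the remaining (larger) trees allowed by $R_{\invariant}$ the inequality follows automatically from subcriticality, because adding further integration edges or noises to a subtree only adds at least $(d+2)/2-\kappa$ to the left-hand side, against a cost of at most $d+2$ per new vertex, while the extra noise contributes a further $2^{-(|D|-1)}\underline c > 0$.

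For the subtrees appearing in \eqref{e:listBasis}, I would simply tabulate the two sides of the inequality. In each case one uses $\mcb{j}_D\ge (d+2)/2-\kappa$ for every integration edge, plus the lower bound from \eqref{e:boundh}, to conclude by direct numerical comparison. The main obstacle to expect is the exponential decay factor $2^{-(|D|-1)}$ in \eqref{e:boundh}, which could a priori leave the noise contribution too small for the largest subtrees one encounters (e.g.\ those inside $\<Xi1IXi1IXi>$, $\<Xi2IXiIXi1IXi>$ or $\<Xi1IXi2IXi^2>$). The remedy is that in each such case at least one of the noises is accented, so that $\alpha_a=\alpha(i,j)$ is strictly larger than $\underline c$ (by at least $\min(d/2,1-d/2)$); inserting this boost into \eqref{e:boundh} restores a margin of size $\delta$ (from the definition of $\underline c,\overline c$ through $\delta\in(0,1/2)$) and finishes the verification. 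The virtual edge $\hat \CI$ plays no role here since, by the shape of $R_{\invariant}$, it can only sit at the root and so is absent from every proper subtree $S$.
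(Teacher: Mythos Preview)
Your overall strategy---reduce to a finite list and verify case-by-case---matches the paper's, and a correct tabulation would succeed. However, two pieces of auxiliary reasoning are wrong.

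First, your reduction argument via subcriticality does not work as stated. You claim that adding a noise to a subtree contributes an additional $2^{-(|D|-1)}\underline c$ to the lower bound for $\mcb{h}_{\c,D}$, but adding a noise increases $|D|$ by one, so the prefactor in \eqref{e:boundh} drops from $2^{-(|D|-1)}$ to $2^{-|D|}$; the net change in the lower bound can be negative. Even granting your (incorrect) estimate, the balance you describe is off: one new edge contributes at most $\tfrac{d+2}{2}-\kappa$ while one new vertex costs $d+2$, and $2^{-(|D|-1)}\underline c < \tfrac12$ is far from making up the shortfall. The paper's reduction is different and does not rely on any such monotonicity: it observes that the definition of $\c$-super regularity is vacuous for trees containing fewer than three noises, and then restricts directly to the short list \eqref{e:simpleList} of negative-degree symbols with $\ge 3$ noises.

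Second, your ``remedy'' misidentifies $\alpha_a$. The exponent $\alpha_a$ appearing in \eqref{e:boundh} is not $\alpha(i,j)$ but (up to $\kappa$) essentially $|\deg \XX ij| \approx \tfrac{d+2}{2}-\alpha(i,j)$, so accented noises carry a \emph{smaller} $\alpha_a$, not a larger one. Accents do still help the inequality, but through a different mechanism: they make $\deg\tau$ less negative, and this gain dominates the loss in the $\mcb h$ term. The paper sidesteps this discussion entirely by writing down the sufficient condition
\[
\deg\tau + \Bigl(\tfrac{d+2}{2}\wedge 2^{-(k-1)}\sum_{i=1}^k|\deg\Xi_i|\Bigr)-\kappa\ge 0
\]
for each subtree $\tau$ with $k\ge 2$ noises and verifying it by direct inspection of \eqref{e:simpleList}, without invoking any structural argument about accents.
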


\begin{remark}
Since our scaling and degree assignment are fixed throughout and since we consider all cumulants, 
i.e.\ we choose $\mfL_{\CCum}$ to contain all possible cumulants, we omit these from our notation.   
\end{remark}

\begin{proof}
We reduce ourselves to considering symbols of negative degree since the claim then follows
automatically for the remaining ones. These symbols are listed in \eqref{e:listBasis}.
Note also that the definition of $\c$-super regularity is non-trivial only for trees that 
contain at least three noises, so that it suffices to consider the symbols
\begin{equ}[e:simpleList]
\<Xi1IXi1IXi>, \<Xi1IXi2IXi^2>, \<Xi2IXi^2>, \<Xi2IXiIXi1IXi>, \<Xi2IXiIXiX>, \<Xi2IXiIXi11>, \<Xi21IXi1IXi>, \<IXiIXi1IXi>, \<Xi1IIXi^2>\;.
\end{equ}
By \eqref{e:boundh}, it is sufficient to verify that for every subtree $\tau$ containing 
$k\ge 2$ instances of a noise one has the bound
\begin{equ}
\deg \tau + \Bigl({d+2\over 2} \wedge 2^{-(k-1)}\sum_{i=1}^k |\deg \Xi_i|\Bigr) - \kappa \ge 0\;,
\end{equ}
where $\Xi_i$ denotes the $i$th noise appearing in $\tau$. This can be seen simply by inspection
of the list \eqref{e:simpleList}.
\end{proof}

\subsection{Special bounds}

For some of the symbols in our regularity structure, we will bounds that are 
stronger than what is suggested by the degrees of the symbols in question. 
In this statement, $\phi$ denotes an arbitrary measurable function with 
\begin{equ}[e:constraintphi]
\sup_z|\phi(z)| \le 1\;,\qquad
\supp \phi \subset B(0,1)\;.
\end{equ}

\begin{proposition}\label{prop:boundsUseful}
Let $\delta>0$ be as in Section~\ref{sec:ass}.
For $\alpha \in [-\delta,1]$ and $\beta = -\f12-\alpha$, we have the bounds
\begin{equ}
\big(\hat \PPi_\eps \<Xi2IXi^2>\bigr)(\phi^\lambda_z)
\lesssim \eps^{\alpha} \lambda^{\beta}\;,
\qquad
\big(\hat \PPi_\eps \<Xi2IXi>\bigr)(\phi^\lambda_z)
\lesssim \eps^{\f12 + \alpha} \lambda^{\beta}\;,
\qquad
\big(\hat \PPi_\eps \<Xi2>\bigr)(\phi^\lambda_z)
\lesssim \eps^{1+\alpha} \lambda^{\beta}\;,
\end{equ}
where we write $X \lesssim Y$ as a shorthand for the existence, for every $p \ge 1$,
of a constant $C$  such that $\E |X|^p \le C Y^p$, uniformly over all 
$\lambda \le 1$, $\eps \le 1$ and $\phi$ as in \eqref{e:constraintphi}.
\end{proposition}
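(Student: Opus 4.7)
The natural approach is to estimate $\E\big|(\hat\PPi_\eps\tau)(\phi^\lambda_z)\big|^{2p}$ for each of the three symbols directly via the moment-cumulant formula, the claimed $L^p$ bounds for every $p$ then following by Chebyshev combined with an interpolation in $\alpha$. The first step is to write each $\hat\PPi_\eps\tau$ explicitly using $\hat\PPi_\eps = (g_\eps \otimes \tilde\PPi_\eps)\Deltam$ together with \eqref{e:renormalisedModel} and the character $g_\eps$; one finds
\begin{equ}
\hat\PPi_\eps\<Xi2> = \sigma_\eps\;,\qquad \hat\PPi_\eps\<Xi2IXi> = \sigma_\eps\,(K\star\xi_\eps) - \<XiIXic>\;,
\end{equ}
and an analogous centred expression for $\hat\PPi_\eps\<Xi2IXi^2>$. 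The crucial observation is that the $\<Xi2>$-vertex carries $\eps^{d/2}$ more than $\xi_\eps$ (since $\alpha(2,0) = d$), and this factor is what accounts for the extra $\eps^{1+\alpha}$, $\eps^{1/2+\alpha}$ and $\eps^\alpha$ in the three bounds relative to what one would expect from the degrees alone.

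The moment-cumulant expansion then produces a sum, indexed by Feynman-diagram pairings of the $2p$ noise vertices, of integrals containing $2p$ copies of $\phi^\lambda_z$, a bounded number of $K$-kernel factors along the abstract integrations $\CI$, and joint cumulants $\kappa_n^{(\eps)}$ of $\eta_\eps$ at clusters of vertices. Using the rescaling \eqref{e:scaleKappa} and the coalescence-tree bound of Assumption~\ref{ass:kappa}, each cumulant factor is controlled by a product of $\rho(\|\cdot\|/\eps)$ evaluated at pairwise parabolic distances, with $\rho(r) = r^{-\overline c}\wedge r^{-\underline c}$. The bound for every $\alpha \in [-\delta,1]$ is then produced by splitting each pairwise integration domain at parabolic scale $\eps$: the small-scale exponent $\underline c = 1/2 - \delta$ governs distances $\lesssim \eps$ and yields the endpoint $\alpha = -\delta$, while the large-scale exponent $\overline c = (d+2)/2 + \delta$ governs distances $\gtrsim \eps$ and yields the endpoint $\alpha = 1$ (where the $\eps^{d/2}$-gain attached to the $\<Xi2>$-vertex is combined with the full large-scale integrability of $\kappa_2$); intermediate $\alpha$ follow by interpolating these two regimes.

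The hardest step is the uniform combinatorial bookkeeping across the different Feynman diagrams: one must check that after the $\alpha$-interpolation, every integral is absolutely convergent and reassembles into the single $\eps$- and $\lambda$-exponent claimed. A clean way to organise this is to enlarge the regularity structure by an auxiliary noise type representing $\eta^{(\gamma)}_\eps$ with $\gamma = 1/2 + \alpha$, which for every admissible $\alpha$ lies in the range of Corollary~\ref{cor:tight}, write $\sigma_\eps = \eps^{d/2 + \gamma - 1}\eta^{(\gamma)}_\eps$, and then invoke \cite[Thm~2.31]{Ajay} using the cumulant homogeneity of Proposition~\ref{prop:cumulant} adapted to this enlarged structure. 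The $\c$-super-regularity condition \cite[Def.~A.27]{Ajay} is verified exactly as in Lemma~\ref{lem:regular} (checking only the finite list \eqref{e:simpleList} of subsymbols), and the renormalisation constants $\<XiIXic>$ introduced in \eqref{e:renormalisedModel} are precisely what is required to ensure that the mean of the relevant contraction is subtracted, so that only the genuinely fluctuating contributions enter the variance computation.
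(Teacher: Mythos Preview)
Your final paragraph lands on essentially the paper's own argument, but with one ingredient missing that makes the difference between getting the claimed bound and not. The paper does not merely reinterpret the root vertex $\<Xi2>$ as $\eps^{1+\alpha}\eta_\eps^{(\frac12+\alpha)}$; it \emph{also} reinterprets each leaf $\<Xi>$ as $\eps^{1-d/2}\eta_\eps^{(2)}$, so that (in $d=3$)
\begin{equ}
\PPi_\eps \<Xi2IXi^2> = \eps^\alpha \bigl(K*\eta_\eps^{(2)}\bigr)^2\,\eta_\eps^{(\frac12+\alpha)}\;,\qquad
\PPi_\eps \<Xi2IXi> = \eps^{\frac12+\alpha} \bigl(K*\eta_\eps^{(2)}\bigr)\,\eta_\eps^{(\frac12+\alpha)}\;.
\end{equ}
This second reassignment is essential: $\CI$ applied to a noise of degree $-2$ has degree $0$, so the $\<IXi>$ factors contribute nothing to the $\lambda$-exponent, and the $\eps^{1-d/2}$ prefactors combine with $\eps^{1+\alpha}$ to produce exactly $\eps^\alpha$, $\eps^{\frac12+\alpha}$, $\eps^{1+\alpha}$ for the three symbols. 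If you only reassign the root, the symbol $\<Xi2IXi^2>$ in your enlarged structure still has degree $-(\tfrac12+\alpha)+2\deg\<IXi> = -\tfrac32-\alpha-2\kappa$, and \cite[Thm~2.31]{Ajay} only delivers $\eps^{1+\alpha}\lambda^{-\frac32-\alpha-2\kappa}$, which is \emph{not} dominated by $\eps^\alpha\lambda^{-\frac12-\alpha}$ when $\lambda \ll \eps$. The same obstruction applies to $\<Xi2IXi>$.

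Two smaller points. First, the renormalisation of $\<Xi2IXi^2>$ and $\<Xi2IXi>$ is governed by the character $g_\eps$ acting on the translation-invariant sector, not by \eqref{e:renormalisedModel} (which concerns the boundary symbols $\<XiS>$). Second, $\hat\PPi_\eps$ does not coincide exactly with the BPHZ lift in the enlarged structure because $g_\eps$ uses the full heat kernel $P$ rather than $K_\eps$; the paper notes that the resulting discrepancy for $\<Xi2IXi^2>$ is $2\eps^{3/2}\delta g_\eps(\<Xi1IXi>)\,\PPi_\eps\<IXi>$, which is easily bounded separately. Your proposal does not address this mismatch.
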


\begin{proof}
Recall that, by Proposition~\ref{prop:conditionAjay}, we can view any $\eta_\eps^{(\theta)}$ 
as in \eqref{e:defeta} for $\theta \in [{1\over 2}-\delta, {5\over 2}]$ as a ``noise'' of 
regularity $-\theta$ whose ``norm'', as measured by \cite[Def.~A.18]{Ajay} with respect to the cumulant homogeneity
just described remains uniformly bounded as $\eps \to 0$.

In particular, we can write
\begin{equ}
\PPi_\eps \<Xi2IXi^2> = \eps^\alpha (K*\eta_\eps^{(2)})^2 \eta_\eps^{({1\over 2}+\alpha)}\;,
\end{equ} 
and we can apply \cite[Thm~2.31]{Ajay}, showing that the BPHZ renormalisation of this term 
satisfies the required bounds. Recall that $\hat \PPi_\eps \<Xi2IXi^2>$ doesn't quite agree with
the BPHZ renormalisation, but the error between the two is given by 
$2\eps^{3/2} \delta g_\eps(\<Xi1IXi>) \PPi_\eps \<IXi>$ with $\delta g_\eps$ as in \eqref{e:deltag1},
which is easily seen to satisfy the required bounds.

The other two terms can be dealt with similarly by writing
\begin{equ}
\PPi_\eps \<Xi2IXi> = \eps^{1\over 2+\alpha} (K*\eta_\eps^{(2)}) \eta_\eps^{({1\over 2}+\alpha)}\;,\qquad
\PPi_\eps \<Xi2> = \eps^{1+\alpha} \eta_\eps^{({1\over 2}+\alpha)}\;,
\end{equ}
thus concluding the proof.
\end{proof}

\begin{corollary}\label{prop:boundXieps}
For every $\kappa > 0$, every $\alpha \in [0,1)$, and every Borel set $A \subset \R^{d+1}$, 
one has the bound $\big(\E \|\one_A \xi_\eps\|_{\CC^w}^p\big)^{1/p} \lesssim \eps^{-\alpha}$
with $w = \alpha - {5\over 2} - \kappa$.
\end{corollary}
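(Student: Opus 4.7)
The plan is to rewrite $\xi_\eps$ in terms of the family $\eta_\eps^{(\theta)}$ from \eqref{e:defeta} and then invoke Proposition~\ref{prop:conditionAjay} together with \cite[Thm~2.31]{Ajay}. A direct comparison of \eqref{e:scalingEta}, \eqref{e:defveps} and \eqref{e:defeta} gives $\xi_\eps = \eps^{\theta - (d+2)/2}\,\eta_\eps^{(\theta)}$ for every exponent $\theta$. In dimension $d=3$ the natural choice is $\theta = 5/2-\alpha$, which (since $\alpha\in[0,1)$) lies in $(3/2, 5/2]$ and hence in the admissible range $[\underline c, (d+2)/2]$ for which Proposition~\ref{prop:conditionAjay} provides the uniform cumulant bounds required by \cite[Def.~A.18]{Ajay}. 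In dimensions $d\le 2$ one can simply take $\theta=(d+2)/2$: then $\xi_\eps=\eta_\eps^{(\theta)}$ carries no $\eps$-prefactor and the desired estimate follows a fortiori from the continuous embedding $\CC^{-(d+2)/2-\kappa}\hookrightarrow\CC^{\alpha-5/2-\kappa}$, which holds under the present range of parameters.

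Second, following exactly the proof of Corollary~\ref{cor:tight}, I would apply \cite[Thm~2.31]{Ajay} to the cumulant homogeneity \eqref{e:defCum} to obtain the uniform $L^p$ estimate
\[
\bigl(\E\,\|\eta_\eps^{(\theta)}\|_{\CC^{-\theta-\kappa}}^p\bigr)^{1/p}\lesssim 1\;,
\]
valid for every $p\ge 1$ and every $\eps\in (0,1]$. Substituting $\theta=5/2-\alpha$ produces precisely the regularity index $w=\alpha-5/2-\kappa$ appearing in the statement, and the scaling identity from the previous paragraph transfers the entire $\eps$-loss onto the explicit prefactor $\eps^{-\alpha}$.

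The only point requiring care is the multiplication by the (generically rough) indicator $\one_A$, and this is the step I would identify as the main obstacle. It is handled exactly as in the closing sentence of the proof of Corollary~\ref{cor:tight}: for objects of negative regularity, the Kolmogorov-type bound coming out of \cite[Thm~2.31]{Ajay} only tests against rescalings of a fixed smooth test function and never involves derivatives of that test function, so composing with multiplication by an arbitrary Borel indicator leaves the estimate unchanged. Once this is granted, the three observations combine into
\[
\bigl(\E\,\|\one_A\,\xi_\eps\|_{\CC^w}^p\bigr)^{1/p}=\eps^{-\alpha}\bigl(\E\,\|\one_A\,\eta_\eps^{(\theta)}\|_{\CC^{-\theta-\kappa}}^p\bigr)^{1/p}\lesssim\eps^{-\alpha}\;,
\]
which is the claimed bound.
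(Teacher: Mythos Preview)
Your argument is correct and takes essentially the same approach as the paper: both write the noise as a power of $\eps$ times $\eta_\eps^{(\theta)}$, appeal to the uniform cumulant bounds of Proposition~\ref{prop:conditionAjay} together with \cite[Thm~2.31]{Ajay}, pass to the Besov norm via Kolmogorov's criterion, and absorb $\one_A$ into the test function by noting that these estimates do not use derivatives of the test function. The paper merely packages the first two steps as ``the third bound of Proposition~\ref{prop:boundsUseful}'' (whose proof performs exactly this rescaling) and the identity $\hat\PPi_\eps\<Xi2>=\eps^3\xi_\eps$, while you inline that computation directly.

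One small remark on your $d\le 2$ clause: the embedding $\CC^{-(d+2)/2-\kappa}\hookrightarrow\CC^{\alpha-5/2-\kappa}$ requires $\alpha\le(3-d)/2$, so in $d=2$ it does not cover $\alpha\in(\tfrac12,1)$. For that range simply reuse your $d=3$ argument with $\theta=\tfrac52-\alpha\in(\tfrac32,2]\subset[\underline c,2]$; the resulting prefactor $\eps^{\theta-(d+2)/2}=\eps^{1/2-\alpha}$ is even stronger than the claimed $\eps^{-\alpha}$.
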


\begin{proof}
This follows from the third bound of Proposition~\ref{prop:boundsUseful} by Kolmogorov's criterion,
using the fact that $\hat \PPi_\eps \<Xi2> = \eps^3 \xi_\eps$ and that we can move the multiplication
by $\one_A$ onto the test function.
\end{proof}

\subsection{Tightness and convergence for the noise}

In this section, we show that the convergence announced in Theorem~\ref{theo:model}
holds. As usual, convergence is obtained by
first showing tightness and then identification of the limiting distribution.
More precisely, we prove the following.

\begin{theorem}\label{theo:main1}
One has $\xi_\eps \to \xi$ weakly in $\CC^\alpha$ for every $\alpha < -{d+2\over 2}$.
\end{theorem}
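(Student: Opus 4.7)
The strategy is the standard one for proving convergence to space-time white noise: first establish tightness in the target space, then identify the weak limit via convergence of cumulants. For tightness, I note that $\xi_\eps = \eta_\eps^{((d+2)/2)}$ in the notation of \eqref{e:defeta}, so we are sitting at the critical exponent. Applying the coalescence-tree cumulant bound of Proposition~\ref{prop:conditionAjay} with $\alpha_a \equiv (d+2)/2$, together with a Kolmogorov/wavelet argument of the same type as the one behind Corollary~\ref{cor:tight}, yields the white-noise-type moment bound
\begin{equation*}
\E |\xi_\eps(\phi^\lambda_z)|^{2p} \lesssim \lambda^{-p(d+2)},
\end{equation*}
uniformly over $\eps,\lambda \in (0,1]$, test functions $\phi$ as in \eqref{e:constraintphi}, and $z\in\R^{d+1}$. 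Via a wavelet characterisation of $\CC^\alpha$, this bound implies tightness in $\CC^\alpha$ for every $\alpha < -(d+2)/2$.

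For identification of the limit, since space-time white noise is a centered Gaussian field with covariance $\E[\xi(\phi)\xi(\psi)] = \int \phi\psi$, by the method of cumulants it suffices to verify that, for any fixed smooth test functions, (i) $\E[\xi_\eps(\phi)\xi_\eps(\psi)] \to \int \phi\psi$, and (ii) the joint cumulants of order $p\ge 3$ of $(\xi_\eps(\phi_1),\ldots,\xi_\eps(\phi_p))$ tend to zero as $\eps\to 0$. A direct change of variables based on \eqref{e:scaleKappa} gives
\begin{equation*}
\E[\xi_\eps(\phi)\xi_\eps(\psi)] = \int_{\R^{d+1}} \kappa_2(0,w) \int_{\R^{d+1}} \phi(z)\psi(z + S_\eps w)\,dz\,dw,
\end{equation*}
with $S_\eps(t,x) = (\eps^2 t,\eps x)$. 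Since $\kappa_2(0,\cdot)\in L^1$, dominated convergence combined with the normalisation \eqref{e:normalisation} shows that this converges to $\int \phi\psi$, proving (i). The same computation applied to the $p$-th cumulant produces an explicit prefactor $\eps^{(p-1)(d+2)/2}$, so (ii) reduces to a uniform-in-$\eps$ bound on the remaining cumulant integral; this follows from the coalescence-tree bound \eqref{e:boundkappa} together with the same power-counting bookkeeping already performed in the proof of Proposition~\ref{prop:justif}.

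The main analytical subtlety lies in this last uniform bound: at the critical exponent there is no slack left in the small-scale cumulant scaling, so closing the estimate relies crucially on the large-scale improvement $\overline c = (d+2)/2 + \delta$ encoded in Assumption~\ref{ass:kappa}. With that in hand, the argument is structurally identical to Proposition~\ref{prop:justif}, and the remaining pieces (Kolmogorov-type tightness, Gaussianity via vanishing higher cumulants) are entirely standard.
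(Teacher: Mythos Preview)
Your strategy—tightness via uniform moment bounds, then identification of the limit by showing convergence of the second cumulant and vanishing of higher cumulants—is exactly the paper's approach. The paper packages the key estimate as Proposition~\ref{prop:convGauss}, which bounds $|\kappa_p(\scal{\eta,\phi_0^\lambda})|$ directly; tightness is then Corollary~\ref{cor:tightSimple}, and the identification of the limit follows by rescaling. So structurally you are on the right track.

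There are, however, a few genuine slips in your execution that would need to be repaired. First, the prefactor you quote for the $p$th cumulant is wrong: carrying out the change of variables you describe for $p=2$ yields a prefactor $\eps^0$, and the general answer is $\eps^{(p-2)(d+2)/2}$, not $\eps^{(p-1)(d+2)/2}$. This does not affect the conclusion for $p\ge 3$, but it means the ``remaining integral'' you need uniformly bounded is $\int|\kappa_p(0,u_2,\dots,u_p)|\,du$ over a region of diameter $\sim 1/\eps$, and showing this is bounded is exactly the content of the $\lambda>1$ case of Proposition~\ref{prop:convGauss}—a nontrivial multiscale argument via \cite[Lem.~A.10]{Jeremy}, not something one can read off from Proposition~\ref{prop:justif} (which concerns the Poisson example). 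Second, your tightness argument via Proposition~\ref{prop:conditionAjay} at the endpoint $\alpha_a\equiv(d+2)/2$ does not close as stated: for $p=2$ that proposition only gives $|\kappa_2^{\xi_\eps}(z,z')|\lesssim\|z-z'\|^{-(d+2)}$ uniformly in $\eps$, which is exactly non-integrable, so the second-moment bound $\E|\xi_\eps(\phi^\lambda)|^2\lesssim\lambda^{-(d+2)}$ does not follow from that estimate alone. One genuinely needs the two-regime structure of Assumption~\ref{ass:kappa} (both $\underline c$ and $\overline c$) to get a uniform-in-$\eps$ bound on $\int_{B_\lambda}|\kappa_2^{\xi_\eps}(0,z)|\,dz$; this is again what Proposition~\ref{prop:convGauss} provides. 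In short, your outline is correct, but the shortcuts through Propositions~\ref{prop:conditionAjay} and~\ref{prop:justif} leave precisely the gap that Proposition~\ref{prop:convGauss} is there to fill.
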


It turns out that this statement is a relatively straightforward consequence of the
following proposition.
Writing 
$\kappa_p(X)$ for the $p$th cumulant of a real-valued random variable $X$, one has the following.

\begin{proposition}\label{prop:convGauss}
For $p \ge 2$, we have the bound
\begin{equ}
|\kappa_{p}\big(\scal{\eta,\phi_0^\lambda}\big)| \lesssim \big(\lambda^{-\overline c p} + \lambda^{-(d+2)(p-1)}\big) \wedge \lambda^{-\underline c p}\;,
\end{equ}
uniformly over all $\lambda \in \R_+$ and all $\phi$ as in \eqref{e:constraintphi}.
\end{proposition}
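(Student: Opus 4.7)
The plan is to expand the cumulant as an integral against the coefficient functions $\kappa_p$ and carry out a careful scale decomposition governed by the coalescence tree structure. Starting with
\[ \kappa_p(\scal{\eta,\phi_0^\lambda}) = \int_{\R^{(d+1)p}} \kappa_p(z_1,\ldots,z_p)\prod_{i=1}^p \phi_0^\lambda(z_i)\,dz_1\cdots dz_p\,, \]
I would use the translation invariance of $\kappa_p$ to integrate $z_1$ against $\phi_0^\lambda(z_1)$ (producing an $O(1)$ factor) and then estimate $|\phi_0^\lambda(z_i)| \le \lambda^{-(d+2)}\one_{\|z_i\| \le C\lambda}$ for the remaining points. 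This reduces matters to bounding
\[ I(\lambda) := \int_{\|z_i\|\le C\lambda,\,i\ge 2} |\kappa_p(0,z_2,\ldots,z_p)|\,dz_2\cdots dz_p\,, \]
since $|\kappa_p(\scal{\eta,\phi_0^\lambda})| \lesssim \lambda^{-(d+2)(p-1)} I(\lambda)$.

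To estimate $I(\lambda)$, I would decompose the domain of integration according to the (finitely many) shapes of the coalescence tree $T$ on the $p$ leaves and, for each shape, according to the scale assignment $\scale\colon \mathring V_T \to \Z$. The admissible scale assignments satisfy the monotonicity condition inherited from Section~\ref{sec:coal} and the truncation $\scale(A)\ge N_\lambda$ (where $2^{-N_\lambda}\simeq \lambda$) coming from the support of the test functions. On each such region, the volume is controlled by $\prod_{A \in \mathring V_T} 2^{-(d+2)\scale(A)}$, while Assumption~\ref{ass:kappa} provides the cumulant bound $\rho(2^{-\scale(\Omega_p)})\prod_{A \in \mathring V_T}\rho(2^{-\scale(A)})$.

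At this point, the key observation is that the inequality
\[ \sum_{n\ge m}\rho(2^{-n})2^{-(d+2)n}\lesssim \rho(2^{-m})2^{-(d+2)m} \]
holds uniformly in $m\in\Z$, being valid in both the small-$r$ and large-$r$ regimes of $\rho$. Iterating this bound from the leaves upwards along $T$ collapses all scale sums except at the root into a single factor $\rho(2^{-m})^{p-1}2^{-(d+2)(p-1)m}$ evaluated at the root scale $m=\scale(\Omega_p)$. Together with the extra root factor $\rho(2^{-m})$, this leaves the single sum $\sum_{m\ge N_\lambda}\rho(2^{-m})^{p}2^{-(d+2)(p-1)m}$, which I would analyse in two regimes. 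When $\lambda\lesssim 1$, so that $\rho(2^{-m})=2^{m\underline c}$ throughout, the exponent $p\underline c-(p-1)(d+2)$ is negative and the sum is dominated by its endpoint $m=N_\lambda$, giving $\lambda^{(p-1)(d+2)-p\underline c}$ and hence $\lambda^{-\underline c p}$ after combining with the prefactor. When $\lambda\gtrsim 1$, the sum splits at $m=0$: for $p=2$ it is $O(1)$, which combined with the prefactor yields the $\lambda^{-(d+2)(p-1)}$ bound, while for $p\ge 3$ it is dominated by $m=N_\lambda$ using $\rho(2^{-m})=2^{m\overline c}$, producing the $\lambda^{-\overline c p}$ bound. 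The two types of bounds in the large-$\lambda$ regime are then conveniently combined into the single sum $\lambda^{-\overline c p}+\lambda^{-(d+2)(p-1)}$ appearing in the statement.

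I expect the main obstacle to be the careful bookkeeping in the iterative scale estimate: the bound from Assumption~\ref{ass:kappa} has $p$ factors of $\rho$ against only $p-1$ matching volume factors, so the root scale has to be treated specially, and the monotonicity constraint on $\scale$ must be invoked at each step to ensure the telescoping goes through both when $\lambda\lesssim 1$ and when $\lambda\gtrsim 1$. Verifying that this genuinely reproduces all three exponents with the correct dependence on $p$ and the dimension (in particular that the crossover between the $\lambda^{-\overline c p}$ and $\lambda^{-(d+2)(p-1)}$ regimes happens at the right value of $p$) is the delicate part of the calculation.
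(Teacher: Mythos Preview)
Your approach is correct and essentially the same as the paper's: both expand the cumulant, decompose the domain by coalescence-tree shape and scale assignment, combine Assumption~\ref{ass:kappa} with the volume bound $\prod_{A\in\mathring V_T}2^{-(d+2)\scale(A)}$, and then split into the regimes $\lambda\le 1$ and $\lambda>1$ (with a further $p=2$ versus $p\ge 3$ split in the latter). The only cosmetic difference is that the paper outsources the nested scale sums to \cite[Lem.~A.10]{Jeremy}, while you carry out the leaf-to-root collapse explicitly and reduce everything to the single sum $\sum_{m\ge N_\lambda}\rho(2^{-m})^p 2^{-(d+2)(p-1)m}$.

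Two small points worth making precise when you write this up. First, the iteration as you state it uses only the $k=1$ version of the key inequality, but for non-balanced trees (say a caterpillar on four leaves) the node you sum next already carries several collapsed factors, so you actually need $\sum_{n\ge m}\bigl[\rho(2^{-n})2^{-(d+2)n}\bigr]^k\lesssim\bigl[\rho(2^{-m})2^{-(d+2)m}\bigr]^k$ for all $k\ge 1$. This follows immediately from the $k=1$ case, since the latter forces $\rho(2^{-n})2^{-(d+2)n}$ to decay geometrically in $n$ (both $\underline c-(d+2)$ and $\overline c-(d+2)$ are negative), but it deserves a sentence. Second, in the large-$\lambda$, $p=3$ regime you implicitly need $3\overline c<2(d+2)$, i.e.\ $\delta<(d+2)/6$, for the root sum to be dominated by its endpoint; the paper handles this by assuming $\delta<(d+2)/6$ without loss of generality, and you should do the same (or note that when the inequality fails the sum is $O(1)$ and the $\lambda^{-(d+2)(p-1)}$ term in the stated bound already absorbs it).
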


Before we turn to the proof of Proposition~\ref{prop:convGauss}, let us show how to deduce
Theorem~\ref{theo:main1} from it. First, we have the following corollary.

\begin{corollary}\label{cor:tightSimple}
For any $\kappa \le {1\over 2}$ and $p \ge 2$, one has the bounds
\begin{equ}
\E \bigl|\scal{\xi_\eps,\phi_0^\lambda}\bigr|^{p}
\lesssim \lambda^{-{d+2\over 2}p}\;,
\end{equ}
uniformly over $\lambda \le 1$, $\eps \le 1$ and $\phi$ as in \eqref{e:constraintphi}.
\end{corollary}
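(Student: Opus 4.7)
The plan is to reduce everything to the already-established bounds on cumulants of $\scal{\eta,\phi_0^\mu}$ from Proposition~\ref{prop:convGauss} via a scaling argument, then expand the $p$-th moment via the moment--cumulant formula and check each term by case analysis.

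First I would exploit the definition $\xi_\eps=\eps^{-d/2}\eta_\eps$ together with \eqref{e:scalingEta} to perform the change of variables $\bar z=S_\eps z$. A direct computation shows that, setting $\mu=\lambda/\eps$, one has the scaling identity
\begin{equ}
\scal{\xi_\eps,\phi_0^\lambda} \;=\; \eps^{-d/2-1}\,\scal{\eta,\phi_0^\mu}\;,
\end{equ}
where $\phi_0^\mu$ is again a test function of the form appearing in \eqref{e:constraintphi} (the parabolic rescaling absorbs cleanly into the $\phi_0^\mu$ notation). By centredness and homogeneity of the cumulants, this gives the identity $\kappa_q(\scal{\xi_\eps,\phi_0^\lambda})=\eps^{-q(d/2+1)}\kappa_q(\scal{\eta,\phi_0^\mu})$ for every $q\ge1$.

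Second, since $\eta$ (hence $\xi_\eps$) is centred, the moment--cumulant formula expresses $\E|\scal{\xi_\eps,\phi_0^\lambda}|^p$ as a finite sum over partitions $\pi$ of $\{1,\dots,p\}$ with all blocks of size $\ge 2$ of the products $\prod_{B\in\pi}\kappa_{|B|}(\scal{\xi_\eps,\phi_0^\lambda})$. Writing the block sizes as $q_1,\ldots,q_k$ with $q_j\ge 2$ and $\sum q_j=p$ (so $k\le p/2$), it suffices to show that each such product is bounded by $\lambda^{-(d+2)p/2}$, since the number of partitions depends only on $p$.

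Third, I would substitute Proposition~\ref{prop:convGauss} into the factored product and split into two regimes according to the sign of $\log\mu$:
\begin{equs}
{}&\text{If }\mu\le 1\colon\ |\kappa_q(\scal{\eta,\phi_0^\mu})|\lesssim \mu^{-q\underline c}\,,\\
{}&\text{If }\mu\ge 1\colon\ |\kappa_q(\scal{\eta,\phi_0^\mu})|\lesssim \mu^{-q\overline c}+\mu^{-(d+2)(q-1)}\,.
\end{equs}
Multiplying through by the prefactors $\eps^{-q_j(d/2+1)}$ and forming the product over $j$, in the regime $\mu\le1$ (so $\lambda\le\eps$) the resulting bound $\eps^{-p((d+2)/2-\underline c)}\lambda^{-p\underline c}$ is controlled by $\lambda^{-(d+2)p/2}$ because the exponent $(d+2)/2-\underline c$ is positive and $\lambda\le\eps$. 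In the regime $\mu\ge 1$ (so $\lambda\ge\eps$), the $\mu^{-q\overline c}$ contribution gives $\eps^{p\delta}\lambda^{-p\overline c}\le \lambda^{-(d+2)p/2}$ directly since $\overline c=(d+2)/2+\delta$ and $\lambda\ge\eps$, while the $\mu^{-(d+2)(q-1)}$ contribution yields
\begin{equ}
\eps^{(d+2)(p/2-k)}\,\lambda^{-(d+2)(p-k)}\;\lesssim\;\lambda^{-(d+2)p/2}\,,
\end{equ}
the last inequality being equivalent to $\eps^{(d+2)(p/2-k)}\le\lambda^{(d+2)(p/2-k)}$, which holds because $p/2-k\ge 0$ (since all blocks have size $\ge 2$) and $\lambda\ge\eps$.

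The main (and only slightly delicate) point is this last verification: it is precisely the constraint that all partition blocks have size at least two that guarantees the $\eps$-exponent $(d+2)(p/2-k)$ is non-negative, which in turn converts the ``bad'' $\mu^{-(d+2)(q-1)}$ growth into a bound compatible with $\lambda^{-(d+2)p/2}$ uniformly in $\eps$. Without centredness this step would fail. Once all three regime/contribution estimates are collected, summing over the finitely many partitions concludes the proof.
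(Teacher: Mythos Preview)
Your argument is correct and follows the same route as the paper: rescale to $\mu=\lambda/\eps$ and combine Proposition~\ref{prop:convGauss} with the moment--cumulant formula. The paper streamlines your $\mu\ge1$ analysis by first noting that both $\mu^{-q\overline c}$ and $\mu^{-(d+2)(q-1)}$ are bounded by $\mu^{-(d+2)q/2}$ for $q\ge2$ (this in particular handles the cross terms arising when expanding $\prod_j\bigl(\mu^{-q_j\overline c}+\mu^{-(d+2)(q_j-1)}\bigr)$, a point you left implicit), and it explicitly restricts to even $p$ so that the moment--cumulant formula applies directly to $\E|X|^p$, with odd $p$ then recovered by H\"older interpolation.
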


\begin{proof}
By simple rescaling, it is straightforward to see that the required bound is
equivalent to the bound
\begin{equ}[e:rescaled]
\E \bigl|\scal{\eta,\phi_0^{\bar \lambda}}\bigr|^{p}
\lesssim {\bar\lambda}^{-{d+2\over 2}p}\;,
\end{equ}
uniformly over $\bar \lambda \le 1/\eps$ and $\eps \le 1$.

For even integers $p \ge 2$, Proposition~\ref{prop:convGauss} implies that 
$|\kappa_{p}\big(\scal{\eta,\phi_0^\lambda}\big)| \lesssim \lambda^{-{d+2\over 2} p} \wedge \lambda^{-\underline c p}$, so that.
\begin{equ}[e:goodbound]
\E |\scal{\eta,\phi_0^\lambda}|^{p} \lesssim \lambda^{-{d+2\over 2} p} \wedge \lambda^{-\underline c p}\;,
\end{equ}
It remains to observe that \eqref{e:goodbound} implies that
$\E |\scal{\eta,\phi_0^{\bar \lambda}}|^{p} \lesssim {\bar \lambda}^{-c p}$,
uniformly over all $\bar \lambda > 0$, for any $c \in [\underline c, {d+2\over 2}]$.
Since \eqref{e:rescaled} is of this form and our assumptions guarantee that the values of $c$ appearing
there fall into the correct interval, this concludes the proof.
\end{proof}

\begin{proof}[of Theorem~\ref{theo:main1}]
Using \cite[Eq.~10.4]{regularity} (which is nothing but an analogue of Kolmogorov's continuity criterion) and the compactness of the
embeddings $\CC^\alpha \subset \CC^\beta$ for $\alpha > \beta$ (over any bounded domain), 
it follows from Corollary~\ref{cor:tightSimple} that the laws $\xi_\eps$ are tight in 
$\CC^\alpha$ for every $\alpha < -{d+2\over 2}$.

It remains to show that every limit $\xi$ of a convergent subsequence of $\xi_\eps$ is 
space-time white noise. It thus suffices to show that, for every $\phi \in \CC_0^\infty$ with
$\int \phi^2(z)\,dz = 1$, $\xi(\phi)$ is centred normal with variance $1$. 
Since all moments of $\scal{\xi_\eps,\phi}$ remain bounded as $\eps \to 0$ by Corollary~\ref{cor:tightSimple}, 
one has
\begin{equs}
\kappa_p \bigl(\xi(\phi)\bigr)
&= \lim_{\eps \to 0} \kappa_p \bigl(\scal{\xi_\eps,\phi}\bigr)
= \lim_{\eps \to 0} \eps^{-{(d+2)p\over 2}} \kappa_p \bigl(\scal{\eta,\phi_0^{1/\eps}}\bigr) \\
&\lesssim \lim_{\eps \to 0} \eps^{-{(d+2)p\over 2}} \bigl(\eps^{\overline c p} + \eps^{(d+2)( p-1)}\bigr)\;.
\end{equs}
For $p \ge 3$, this vanishes, thus showing that $\xi(\phi)$ is Gaussian.
It clearly has zero mean since this is already the case for $\scal{\xi_\eps,\phi}$.
Its variance is given by
\begin{equ}
\E |\scal{\xi_\eps,\phi}|^2 = \eps^{-(d+2)}\int \kappa_2(z,\bar z)\phi_0^{1/\eps}(z)\phi_0^{1/\eps}(\bar z)\,dz\,d\bar z\;.
\end{equ}
Note now that, for every $\eps > 0$, every $\delta \in [0,1]$ and any $z, \bar z \in \R^{d+1}$, one has the bound
\begin{equ}
|\phi_0^{1/\eps}(z) - \phi_0^{1/\eps}(\bar z)|
\lesssim \eps^{d+2} \bigl(\eps\|z-\bar z\|\bigr)^{\delta}\;.
\end{equ}
Recalling that $\int \phi^2 =1$ and that $\int \kappa_2(0,z)\,dz = 1$ by \eqref{e:normalisation}, we thus obtain
\begin{equs}
\bigl|\E |\scal{\xi_\eps,\phi}|^2 - 1\bigr|
&= \Bigl| \eps^{-(d+2)}\int \kappa_2(z,\bar z)\phi_0^{1/\eps}(z)\bigl(\phi_0^{1/\eps}(\bar z)- \phi_0^{1/\eps}(z)\bigr)\,dz\,d\bar z \Bigr|\\
&\lesssim \eps^{\delta}  \int \Bigl|\kappa_2(z,\bar z)\phi_0^{1/\eps}(z)\Bigr|\, \|z-\bar z\|^{\delta} \,dz\,d\bar z \\
&\lesssim \eps^{\delta}  \int \bigl(\|z'\|^{-2\overline c} \wedge \|z'\|^{-2\underline c} \bigr)\|z'\|^{\delta} \,dz'  \lesssim \eps^{\delta}\;.
\end{equs}
Here, we used the fact that the integral of $\phi_0^{1/\eps}$ is independent of $\eps$,
as well as our assumptions \eqref{e:normalisation} and  \eqref{e:boundkappa} on the covariance function of $\eta$. In particular,
we choose $\delta$ as in the definition of $\overline c$, see Section~\ref{sec:ass}, 
whence $2\overline c - \delta > d+2$ which guarantees integrability at infinity. (Integrability at $0$ is guaranteed
by $2\underline c - \delta = d - \delta < d+2$.)
\end{proof} 

\begin{proof}[of Proposition~\ref{prop:convGauss}]
We expand the expression for the cumulant as
\begin{equ}[e:exprcumul]
\kappa_{p}\big(\scal{\eta,\phi_0^\lambda}\big) = \int \kappa_p(z_1,\ldots,z_p) \,\phi_0^\lambda(z_1)\ldots \phi_0^\lambda(z_p)\,dz_1\ldots dz_p \;.
\end{equ}
In order to bound this integral, we use a simplified version of the 
type of multiscale analysis used in \cite{HS,Jeremy}. Let us recall how this works.

We now write $\tree = (T,\scale)$ for a generic binary tree, together with a scale assignment
as above (i.e.\ we enforce the fact that $\scale$ is monotone) and $\trees$ for the set of all such $\tree$.
Given $\tree \in \trees$, we write $V_\tree$ / $E_\tree$ for the vertex / edge set of the corresponding tree and
$\scale_\tree$ for the scale assignment. In particular, $\scale_\tree(\Omega_p)$ denotes the scale assignment
for the root, which controls the diameter of the set $\{z_1,\ldots,z_p\}$ in $\R^{d+1}$.
The number $p$ will always be considered fixed, so we do not include it explicitly in our notation.
Denoting by $\CT$ the map $\CT \colon (z_1,\ldots,z_p) \mapsto \tree \in \trees$ defined at the start of
Section~\ref{sec:ass},
we set $D_\tree = \CT^{-1}(\tree)$ for the set of all configurations of points $z \in (\R^{d+1})^p$ giving rise to
a given combinatorial data. Then, 
it was shown in \cite[Lem.~A.13]{Jeremy} that, for every bounded Borel set $U \in \R^{d+1}$ one has the bound
\begin{equ}
|D_\tree \cap \{z\,:\, z_1 \in U\}| \lesssim |U|\prod_{A \in \mathring V_\tree} 2^{-(d+2) \scale_\tree(A)}\;,
\end{equ}
where $|\cdot|$ denotes Lebesgue measure. Furthermore, by construction, $\bigcup_{\tree \in \trees} D_\tree$ is
of full Lebesgue measure.

Without loss of generality, we can restrict ourselves to the case
where the support of $\phi$ has diameter bounded by $1$ in the parabolic distance.
With all of this notation at hand, we then bound \eqref{e:exprcumul} by
\begin{equ}
|\kappa_{p}\big(\scal{\eta,\phi_0^\lambda}\big)|
\lesssim \lambda^{-(d+2)p} \sum_{\tree \in \trees} |D_\tree \cap \{z\,:\, z_i \in \supp \phi_0^\lambda \;\forall i\}|\, \sup_{z \in D_\tree} |\kappa_p(z_1,\ldots,z_p)|\;.
\end{equ}
We simplify this expression as follows. First, we note that 
\begin{equ}
D_\tree \cap \{z\,:\, z_i \in \supp \phi_0^\lambda \;\forall i\} = \emptyset
\end{equ}
as soon as $2^{- \scale(\Omega_p)} \ge \lambda$, since the support of $\phi_0^\lambda$ is bounded
by $\lambda$, so that we can restrict the sum above to those $\tree$ satisfying $2^{- \scale(\Omega_p)} \le \lambda$.
Furthermore, one has 
\begin{equ}
|D_\tree \cap \{z\,:\, z_i \in \supp \phi_0^\lambda \;\forall i\}|
\le 
|D_\tree \cap \{z\,:\, z_1 \in \supp \phi_0^\lambda\}|
\lesssim \lambda^{d+2} \prod_{A \in \mathring V_\tree} 2^{-(d+2) \scale_\tree(A)}\;.
\end{equ}
Combining this with Assumption~\ref{ass:kappa}, we conclude that
\begin{equ}
|\kappa_{p}\big(\scal{\eta,\phi_0^\lambda}\big)|
\lesssim \lambda^{(d+2)(1-p)} \sum_{\tree \in \trees} \one_{2^{- \scale(\Omega_p)} \le \lambda}\,
2^{c(\Omega_p)\, \scale_\tree(\Omega_p)} \prod_{A \in \mathring V_\tree} 2^{(c(A)-(d+2)) \scale_\tree(A)}\;.
\end{equ}

We treat separately the cases $\lambda \le 1$ and $\lambda \ge 1$.
In the former case, we can apply \cite[Lem.~A.10]{Jeremy} with the distinguished vertex $\nu_\star$ appearing there
equal to the root $\Omega_p$. The first condition appearing there is then satisfied by the fact that 
$2\underline c - (d+2) < 0$ by assumption, while the second condition is empty and therefore trivially satisfied.
Note that the shape $T$ of the tree is fixed in \cite[Lem.~A.10]{Jeremy}, while we also sum over all possible shapes,
but since there are finitely many of them for any fixed $p$, this just yields an additional prefactor.
We thus obtain the bound
\begin{equs}
|\kappa_{p}\big(\scal{\eta,\phi_0^\lambda}\big)|
&\lesssim \lambda^{(d+2)(1-p)} \sum_{\tree \in \trees} \one_{2^{- \scale(\Omega_p)} \le \lambda}\,
2^{\underline c\, \scale_\tree(\Omega_p)} \prod_{A \in \mathring V_\tree} 2^{(\underline c-(d+2)) \scale_\tree(A)}\\
&\lesssim \lambda^{(d+2)(1-p) - \underline c}  \prod_{A \in \mathring V_\tree} \lambda^{d+2-\underline c}\;.
\end{equs}
Since $\mathring V_\tree$ contains exactly $p-1$ elements (the tree $T$ is binary and has $p$ leaves), 
we finally obtain
\begin{equ}
|\kappa_{p}\big(\scal{\eta,\phi_0^\lambda}\big)| \lesssim \lambda^{-p\underline c}\;,
\end{equ}
as required.

The case $\lambda > 1$ is split into two subcases. If $p \ge 3$, 
we use the fact that $|\kappa_p(z)| \lesssim \bar \kappa_{p,\overline c}(z)$ uniformly over all $z$,
so that, using \cite[Lem.~A.10]{Jeremy} as above, we have
\begin{equ}
|\kappa_{p}\big(\scal{\eta,\phi_0^\lambda}\big)|
\lesssim \lambda^{(d+2)(1-p)} \sum_{\tree \in \trees} \one_{2^{- \scale(\Omega_p)} \le \lambda}\,
2^{\overline c\, \scale_\tree(\Omega_p)} \prod_{A \in \mathring V_\tree} 2^{(\overline c-(d+2)) \scale_\tree(A)}
\lesssim \lambda^{-p\overline c}\;.
\end{equ}
Note that, in order to be able to apply that result, we need to verify that, for every subtree
$\hat \tree$ of $\tree$ spanned by some subset of its leaves satisfies 
$\sum_{A \in \mathring V_{\hat \tree}} \alpha(A) < 0$, where $\alpha(A) = 2\overline c-(d+2)$ if 
$A = \Omega_p$ and $\alpha(A) = \overline c-(d+2)$ otherwise.
This is of course trivially satisfied as soon as $\hat \tree \neq \tree$ since $\overline c < d+2$ by assumption. 
The exponent at the root however is given by $2\overline c-(d+2)$, which
is positive, but since $p \ge 3$, the sum of all exponents is given by
$2\overline c-(d+2) + (p-2)(\overline c - (d+2)) = p\overline c - (d+2)(p-1)$, which is indeed negative for $p \ge 3$ and $\delta < {d+2\over6}$,which we assume w.l.o.g..

It remains to consider the case $p = 2$ and $\lambda > 1$. In this case, the above computation reduces to
\begin{equ}
|\kappa_{2}\big(\scal{\eta,\phi_0^\lambda}\big)|
\lesssim \lambda^{-(d+2)}  \Bigl(\sum_{1 \le 2^{-n} \le \lambda} 2^{(2\overline c - (d+2))n} + \sum_{2^{-n} < 1} 2^{(2\underline c - (d+2))n}\Bigr)
\lesssim \lambda^{-(d+2)}\;,
\end{equ}
as required, thus concluding the proof.
\end{proof}

We also use the following convergence results which do not strictly speaking follow
from Theorem~\ref{theo:main1} since multiplication by an indicator function (even that of a hypercube)
is not a continuous operation on $\CC^{\beta}$ for $\beta < 0$.

\begin{lemma}\label{lem:convergeRestrict}
Let $\xi_{\eps,D} = \xi_\eps \one_{\R \times D}$ and $\xi_{\eps,D}^+ = \xi_\eps \one_{\R_+ \times D}$.
Then, $\xi_{\eps}$, $\xi_{\eps,D}$,  and $\xi_{\eps,D}^+$ jointly weakly converge to limits
$\xi$, $\xi_D$ and $\xi_D^+$.
\end{lemma}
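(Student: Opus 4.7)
The plan is to combine tightness of each of the three families with an identification of the joint limiting law through cumulant and covariance computations. The main obstacle is that multiplication by an indicator function is not continuous on $\CC^\beta$ for $\beta<0$, so once $\xi_\eps\to\xi$ is established the continuous mapping theorem provides nothing further; the argument must proceed via finite-dimensional joint distributions.

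First I would establish tightness of $\xi_{\eps,D}$ and $\xi_{\eps,D}^+$ in $\CC^\alpha$ for every $\alpha<-{d+2\over 2}$. For $A\in\{\R\times D,\R_+\times D\}$ one has $\scal{\xi_\eps\one_A,\phi_z^\lambda}=\scal{\xi_\eps,\one_A\phi_z^\lambda}$, and $\one_A\phi_z^\lambda$ still has supremum norm bounded by $\|\phi\|_\infty$ and support of parabolic diameter at most $\lambda$. After the obvious affine change of variables it can be written as $\tilde\phi_z^\lambda$ for some $\tilde\phi$ satisfying \eqref{e:constraintphi}, so the bound of Corollary~\ref{cor:tightSimple} applies verbatim. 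Tightness then follows from the Kolmogorov-type criterion of \cite[Eq.~10.4]{regularity}, and since tightness of each marginal implies joint tightness on a product of Banach spaces, Prokhorov's theorem extracts from any subsequence a further jointly convergent subsequence $(\xi_\eps,\xi_{\eps,D},\xi_{\eps,D}^+)\to(\xi,\xi_D,\xi_D^+)$, with the first marginal forced to be space-time white noise by Theorem~\ref{theo:main1}.

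To identify the joint law, I would consider arbitrary smooth test functions $\phi_1,\phi_2,\phi_3$ and scalars $a_1,a_2,a_3$ and study the limit of
\begin{equ}
L_\eps=a_1\scal{\xi_\eps,\phi_1}+a_2\scal{\xi_{\eps,D},\phi_2}+a_3\scal{\xi_{\eps,D}^+,\phi_3}=\scal{\xi_\eps,\Phi},
\end{equ}
with $\Phi=a_1\phi_1+a_2\one_{\R\times D}\phi_2+a_3\one_{\R_+\times D}\phi_3$. Although $\Phi$ is not smooth, it is bounded and compactly supported; crucially, the proof of Proposition~\ref{prop:convGauss}, and hence that of the vanishing of $\kappa_p$ for $p\ge 3$ in Theorem~\ref{theo:main1}, uses \emph{only} this structural hypothesis and no smoothness, so higher cumulants of $L_\eps$ still tend to zero and the limiting law of $L_\eps$ is Gaussian and centred.

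The last step is the variance computation
\begin{equ}
\E L_\eps^2=\int \Phi(y)\Phi(y')\,c_\eps(y-y')\,dy\,dy',\qquad c_\eps(z)=\eps^{-(d+2)}\kappa_2(0,S_\eps z),
\end{equ}
which, after the change of variables $z''=S_\eps z$, becomes $\int\kappa_2(0,z'')\,G_\eps(z'')\,dz''$ with $G_\eps(z'')=\int\Phi(y+S_\eps^{-1}z'')\Phi(y)\,dy$. Since the boundaries $\partial(\R\times D)$ and $\partial(\R_+\times D)$ are Lebesgue-null in $\R^{d+1}$, each $\one_A$ is continuous almost everywhere, and continuity of translation in $L^2$ gives $G_\eps(z'')\to\int\Phi^2$ pointwise as $\eps\to 0$; combined with absolute integrability of $\kappa_2(0,\cdot)$ from Assumption~\ref{ass:kappa} and the normalisation \eqref{e:normalisation}, dominated convergence yields $\E L_\eps^2\to\int\Phi^2$. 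Expanding $\int\Phi^2$ shows that the cross-covariances of $(\xi,\xi_D,\xi_D^+)$ agree with those of $(\xi,\xi\one_{\R\times D},\xi\one_{\R_+\times D})$ under the natural $L^2$ pairing $\scal{\xi\one_A,\phi}=\scal{\xi,\one_A\phi}$, which uniquely determines the joint Gaussian law. Since this identification is independent of the extracted subsequence, the full sequence converges. The delicate point is the variance computation near the boundary of the indicator sets, but the measure-zero nature of those boundaries is precisely what allows dominated convergence to suffice, so no finer analysis is required.
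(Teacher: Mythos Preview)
Your proposal is correct and follows essentially the same approach as the paper: the paper's one-sentence proof simply observes that all the bounds used in the proof of Theorem~\ref{theo:main1} (namely those of Proposition~\ref{prop:convGauss} and Corollary~\ref{cor:tightSimple}) are uniform over measurable test functions satisfying \eqref{e:constraintphi}, so multiplying by indicator functions costs nothing. You have spelled out the details, including the Cram\'er--Wold step for joint convergence and the variance computation via dominated convergence, which the paper leaves implicit in the word ``identical''.
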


\begin{proof}
The proof is identical to that of Theorem~\ref{theo:main1}, using the fact that 
all bounds we used are uniform over test functions as in \eqref{e:constraintphi}, so that
multiplying them by the indicator function of some domain changes nothing.
\end{proof}

\subsection{Boundary term}

Recall that we have set
\begin{equ}[e:defhatPi]
\hat \PPi_\eps\<Xi1IXib> = \PPi_\eps\<Xi1IXib> - \sum_{i=1}^d \eps^{1-{d\over 2}} \bigl(c_{i,0} \delta_{\d_{i,0} D}
+c_{i,1} \delta_{\d_{i,1} D}\big)\;,
\end{equ}
where $\d_{i,0} D = \{x \in D \,:\, x_i = 0\}$ and $\d_{i,1} D = \{x \in D \,:\, x_i = 1\}$ and
the constants are given by 
\begin{equ}[e:intQ]
c_{i,j} = \int_{\R_+} Q_{i,j}(s)\,ds\;,
\end{equ}
where the function $Q_{i,j}$ is defined as follows. For $i=1,\ldots,d$, write $\iota_i\colon \R\times \R^{d} \to \R^{d+1}$
for the map given by 
\begin{equ}
\iota_i^{-1}(t,x) = (x_i,v)\;,\quad v = (t,x^{(i)})\;,
\end{equ}
where $x^{(i)} \in \R^{d-1}$ denotes the vector obtained from $x$ by deleting the $i$th coordinate.
With this notation, we then set
\begin{equs}
Q_{i,0}(s)&=\int_{\R_+\times \R^d}(P\circ \iota_i)\left(s+\beta,v\right)\left[(\kappa_2\circ \iota_i)\left(s-\beta,v\right)-
(\kappa_2\circ \iota_i)\left(s+\beta,v\right)\right]d\beta dv\;,\\
Q_{i,1}(s)&=\int_{\R_+\times \R^d}(P\circ \iota_i)\left(s+\beta,v\right)\left[(\kappa_2\circ \iota_i)\left(\beta-s,v\right)-
(\kappa_2\circ \iota_i)\left(-s-\beta,v\right)\right]d\beta dv\;.
\end{equs}

\begin{remark}
We will show in Lemma~\ref{lem:boundQ} below that both the integrands in the definition of $Q_{i,j}$ and the functions $Q_{i,j}$ 
themselves are integrable, so that these expressions are all finite. 
\end{remark}

\begin{remark}
The formula given above is valid for the case of Neumann boundary conditions. In the case
of Dirichlet boundary conditions, a similar formula holds, but the precise values of the
constants do not matter since they do not affect the solutions.
\end{remark}

Note that although $P\circ \iota_i\neq P$, it does not depend on $i$, while $\kappa_2\circ \iota_i$ does depend
on $i$ in general, since we do not assume that the driving noise is isotropic.
We henceforth write $\hat P = P\circ \iota_i$. One of the main results of this section is that 
the renormalised model on $\<Xi1IXib>$ vanishes in a suitable sense as $\eps \to 0$.
We first provide a bound on its expectation, which requires the bulk of the work.
For the formulation of this result, we write $\CB_0^1$ for the set of all test functions
$\phi \in \CC^1_0$  with support contained in the parabolic ball of radius $1$ and 
such that $\max\{\|\phi\|_\infty, \|D\phi\|_\infty\} \le 1$.

\begin{proposition}\label{prop:correctionTerm}
With $\hat \PPi_\eps\<Xi1IXib>$ defined as in \eqref{e:defhatPi}, one has for $d \in \{2,3\}$ and any $\kappa > 0$ small enough,
\begin{equ}[e:actualBound]
\big|\E \bigl(\hat \PPi_\eps\<Xi1IXib>\bigr)(\phi_z^\lambda)\big| \lesssim \eps^{\kappa} \lambda^{-{d\over 2}-\kappa}\;,
\end{equ}
uniformly over $\eps, \lambda \le 1$, $z \in \R^{d+1}$, and $\phi \in \CB_0^1$.
\end{proposition}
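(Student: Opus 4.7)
The plan is to compute $\E \PPi_\eps\<Xi1IXib>$ explicitly, identify its leading singular behavior near $\partial D$, and verify that the chosen boundary counterterm exactly cancels this leading term, leaving an error of the desired order.

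First, using $\<IXib> = \<dIXiS> - \<IXi>$ and the decomposition $K_\Neu = K + K_\d$, one writes
\begin{equ}
\PPi_\eps \<IXib>(z) = -\int K(z-z') (1-\one_D(z'))\xi_\eps(z')\,dz' + \int K_\d(z,z')\one_D(z')\xi_\eps(z')\,dz'\;.
\end{equ}
Taking the product with $\eta_\eps(z)$ and expectation yields a deterministic function $g_\eps(z) = \E\bigl[\eta_\eps(z)\PPi_\eps\<IXib>(z)\bigr]$ given by a double integral against the covariance $\kappa_2^{(\eps)}$, which by stationarity is a rescaled version of $\kappa_2$. The key observation is that $g_\eps$ is supported within distance $O(\eps)$ of $\partial D$ up to exponentially small contributions, since away from the boundary the two integrals mutually cancel (after using the reflection identity defining $K_\d$ and the fact that $K_\d$ agrees with the reflection kernel modulo smoother corrections).

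Next, I would perform a localisation near each face $\partial_{i,j}D$ (handling corners separately). Near face $(i,0)$ away from corners, $K_\d$ reduces to leading order to a single reflection $K(t-t', x_i+x_i', v-v')$, and after the rescaling $s = x_i/\eps$, $\beta = x_i'/\eps$, $\tilde t = (t'-t)/\eps^2$, $\tilde u = (v'-v)/\eps$ (using the self-similarity $P(\eps^2 \tilde t, \eps\cdot, \eps \tilde u) = \eps^{-d}\hat P(\tilde t, \cdot, \tilde u)$ and the rescaling of $\kappa_2$), the combined contribution of both integrals precisely produces $g_\eps(z) = \eps^{-d/2}Q_{i,0}(x_i/\eps) + \text{(corrections)}$, with the difference $\hat\kappa_2(s-\beta,v) - \hat\kappa_2(s+\beta,v)$ emerging naturally from the two integrals of opposite sign. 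Contributions from multi-face reflections contribute only in corner regions of size $O(\eps)$ and produce additional terms of the same type. A lemma on the integrability and decay of $Q_{i,j}$ (the content of Lemma~\ref{lem:boundQ}) is needed here to verify absolute convergence of all integrals uniformly in $\eps$.

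The identity $\int_{\R_+} Q_{i,j}(s)\,ds = c_{i,j}$ implies that the measure $\eps^{-d/2}Q_{i,j}(x_i/\eps)\,dx_i$, integrated along the normal to the face, equals $\eps^{1-d/2}c_{i,j}$ plus lower-order corrections, so subtracting the counterterm $\sum_{i,j}\eps^{1-d/2}c_{i,j}\delta_{\partial_{i,j}D}$ precisely eliminates the delta-like singularity. It then remains to control the error $(\eps^{-d/2}Q_{i,j}(x_i/\eps) - \eps^{1-d/2}c_{i,j}\delta_{\partial_{i,j}D})(\phi_z^\lambda)$, which I would handle by a dichotomy: for $\lambda \le \eps$ use the crude pointwise bound $|g_\eps(z)| \lesssim \eps^{-d/2}$ and $\|\phi_z^\lambda\|_{L^1} \lesssim 1$; for $\lambda \ge \eps$, use a first-order Taylor expansion of $\phi_z^\lambda$ across the $\eps$-thick boundary layer, the expansion being valid because $\|D\phi_z^\lambda\|_\infty \lesssim \lambda^{-(d+3)}$ on a support of volume $\lambda^{d+2}$. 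In both regimes the bound $\eps^\kappa\lambda^{-d/2-\kappa}$ follows by direct computation once one uses $\lambda \le 1$ and $\eps \le 1$ together with $d/2+\kappa-1 > 0$ for $d \in \{2,3\}$.

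The main obstacle I anticipate is the combined bookkeeping of three items: correctly identifying the corner contributions (where the one-face-at-a-time decomposition breaks down and one must sum over relevant reflection images while discarding negligible multi-reflection terms), controlling the error between the true $K_\d$ and the idealised half-space reflection kernel (which is smooth but non-local and requires careful use of the properties of the extension constructed in Appendix~\ref{app:extHeat}), and establishing the decay/integrability properties of $Q_{i,j}$ with moments (needed to make the Taylor remainder bound uniform in $\eps$), which rely sharply on the large-scale decay exponent $\overline c$ assumed on $\kappa_2$.
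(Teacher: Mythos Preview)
Your strategy is essentially the same as the paper's: compute the expectation via the reflection formula, localise to a single face, rescale to produce $Q_{i,j}$, and then interpolate between the crude sup bound and a first-order Taylor bound to get $\eps^{-d/2}(\eps\lambda^{-1}\wedge \eps^2\lambda^{-2})\lesssim \eps^\kappa\lambda^{-d/2-\kappa}$. Two small points: the ``exponential'' localisation you claim is really only polynomial (governed by the decay exponent $\overline c$ of $\kappa_2$), and the paper controls the interior case and the error between $K_\d$ and a single half-space reflection by direct kernel estimates (Lemmas~\ref{lem:boundSimple} and~\ref{lem:boundLargeDistance}) rather than any smoothness of the extension.

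The genuine missing idea in your proposal is the treatment of corners, which you flag as an obstacle but do not resolve. Near a codimension-$k$ corner ($k\ge 2$) the single-face rescaling breaks down and there is no obvious ``$Q$-function'' to compare against; multi-reflection images contribute simultaneously and the counterterms on the adjacent faces interact. The paper's device is to avoid any direct computation in the corner regions: instead it introduces a Whitney-type dyadic partition of unity $\chi_{n,k,\ell}$ graded by distance to the corner, so that each piece $\psi\chi_{n,k,\ell}$ is a rescaled test function with $d_\psi\le 1$ (resp.\ $\le 2$), to which the already-established bound applies. Summing the resulting estimates over the dyadic scales converges precisely because $d/2+\kappa < 2$ (resp.\ $<3$), and this is what makes the argument close for $d\in\{2,3\}$. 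Without this bootstrapping step your proof sketch does not cover test functions whose support overlaps a codimension-$\ge 2$ stratum of $\partial D$, and the ``correctly identifying the corner contributions'' you mention cannot be carried out as a direct computation of the same type as the single-face case.
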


Before we turn to the proof, we introduce a number of notations and preliminary bounds.
Write $\CG$ for 
the reflection group generated by ``elementary'' reflections across the $2d$ planes containing the faces of $D$.
The group $\CG$ is naturally identified with $\Z^d$ (as a set, not as a group!) since for each 
$k \in \Z^d$ there exists exactly one
element $R_k \in \CG$ mapping $k+D$ into $D$. 
We also write $\CG \ni R \mapsto (-1)^R \in \{-1,1\}$
for the group homomorphism mapping the elementary reflections to $-1$.
We will write $\Lambda \colon \R^{d+1} \to \R \times D$
for the map such that $\Lambda\restr \R \times (k+D) = \id \times \CR_k$
and $S \colon \R^{d+1} \to \{-1,1\}$ by $S \restr \R \times (k+D) = (-1)^{R_k}$.

With these notations, it follows from Proposition~\ref{prop:NeumannBC} that the truncated Neumann and 
Dirichlet heat kernels are such that for $z \in \R\times D$ one
has the identities 
\begin{equs}[e:reprHeat]
\int_{\R\times D}  K_\Neu(z, z')f(z')\,dz' &= \int_{\R^{d+1}} K(z-z') f(\Lambda(z'))\,dz'\;,\\
\int_{\R\times D}  K_\Dir(z, z')f(z')\,dz' &= \int_{\R^{d+1}}  S(z') K(z-z') f(\Lambda(z'))\,dz'\;.
\end{equs}
(Note that $S$ is ill-defined on the measure zero set consisting of the reflection planes, but since it always
appears in an integral this does not matter.)

\begin{lemma}\label{lem:boundSimple}
Let $U \subset \R^{d+1}$ and let $\Phi \colon U \to \R^{d+1}$ be a diffeomorphism between $U$
and its image such that $\|D\Phi^{-1}\|$ is bounded uniformly over $\Phi(U)$.
Assume furthermore that $z$ is such that $\|z-z'\| \ge \lambda$ for all $z' \in U$.
Then,  
\begin{equ}
\int_{U} K(z-z')\,|\kappa^{(\eps)}_2(z,\Phi(z'))| dz'
\lesssim 
 \eps^{{d\over 2}+\kappa} \lambda^{-{d\over 2}-\kappa}\;.
\end{equ}
for all $\kappa \in [0,{d\over 2}]$.
\end{lemma}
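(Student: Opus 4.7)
The plan is to apply Hölder's inequality with a carefully chosen pair of conjugate exponents, so as to place the $\lambda$-singularity of $K$ on one side and the concentration of $\kappa_2^{(\eps)}$ at scale $\eps$ on the other. Concretely, I would take $p = 2(d+2)/(d-2\kappa)$ and $q = 2(d+2)/(d+4+2\kappa)$, with the convention that $p=\infty$ and $q=1$ when $\kappa = d/2$; these satisfy $1/p+1/q=1$.

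On the $K$-side, the pointwise bound $|K(z-z')| \lesssim \|z-z'\|^{-d}$ together with the compact support of $K$ yields, via integration in parabolic polar coordinates,
\begin{equ}
\Big(\int_{\lambda \le \|z-z'\| \le 2} |K(z-z')|^p\,dz'\Big)^{1/p} \lesssim \lambda^{-d+(d+2)/p} = \lambda^{-d/2-\kappa}\;,
\end{equ}
with the endpoint $\kappa=d/2$ amounting to the trivial bound $|K(z-z')| \lesssim \lambda^{-d}$ on the support.

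On the $\kappa_2^{(\eps)}$-side, the hypothesis $\|D\Phi^{-1}\| \lesssim 1$ lets me change variables $\bar z = \Phi(z')$ with a bounded Jacobian, after which the scaling relation \eqref{e:scaleKappa} and the substitution $\hat z = S_\eps \bar z$, together with stationarity of $\kappa_2$, give
\begin{equ}
\int_U |\kappa_2^{(\eps)}(z,\Phi(z'))|^q\,dz' \lesssim \eps^{d+2-2q} \int_{\R^{d+1}} |\kappa_2(0,\hat z)|^q\,d\hat z\;.
\end{equ}
Assumption~\ref{ass:kappa} gives $|\kappa_2(0,\hat z)| \lesssim \rho(\|\hat z\|)^2$, so the last integral is finite provided $q(1-2\delta) < d+2$ (integrability at the origin) and $q(d+2+2\delta) > d+2$ (integrability at infinity). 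This yields $\|\kappa_2^{(\eps)}(z,\Phi(\cdot))\|_{L^q(U)} \lesssim \eps^{(d+2)/q-2} = \eps^{d/2+\kappa}$.

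Multiplying the two $L^p$/$L^q$ estimates via Hölder gives the stated inequality. The only nontrivial part is checking that the chosen exponents stay in the admissible range for both integrals as $\kappa$ varies over $[0,d/2]$; for $q$ this boils down to $\kappa < d/2 + 2\delta$ and $d+2\kappa > -2-4\delta$, both automatic from $\kappa \in [0,d/2]$ and $\delta > 0$. It is precisely the rigidity of this admissibility window that forces the specific form $\eps^{d/2+\kappa}\lambda^{-d/2-\kappa}$ appearing in the conclusion.
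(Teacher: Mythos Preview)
Your proof is correct and takes a genuinely different route from the paper. The paper does not use H\"older at all; instead it splits $U = U_1 \sqcup U_2$ according to whether $\|z-z'\| \ge \|z-\Phi(z')\|$ or not. On $U_1$ this allows the pointwise domination $K(z-z') \lesssim \|z-\Phi(z')\|^{-d} \wedge \lambda^{-d}$, after which a change of variables $\bar z = \Phi(z')$ and a rescaling by $\eps$ reduce the integral to $\int_{\R^{d+1}} \bigl(\|z'\|^{-d}\wedge (\lambda/\eps)^{-d}\bigr)\rho^2(\|z'\|)\,dz'$. On $U_2$ the monotonicity of $\rho$ is used to replace $\Phi(z')$ by $z'$ inside $\kappa_2^{(\eps)}$, leading to the same integral. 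The paper thus obtains the sharper endpoint bound $1 \wedge \eps^d\lambda^{-d}$ first and then interpolates to reach $\eps^{d/2+\kappa}\lambda^{-d/2-\kappa}$.

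Your H\"older argument is cleaner in that it avoids the case split and lands directly on the stated exponent for each $\kappa$; the price is a mild integrability check for $|\kappa_2|^q$, which indeed holds for all $q$ in your range thanks to the short- and long-range exponents in Assumption~\ref{ass:kappa}. The paper's argument, on the other hand, is entirely elementary (only pointwise bounds and a change of variables) and produces the slightly stronger statement that the integral is bounded by $1 \wedge \eps^d\lambda^{-d}$, though this extra strength is not used anywhere downstream.
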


\begin{proof}
We write $U = U_1 \sqcup U_2$ where
\begin{equ}
U_1 = \{z'\in U\,:\, \|z-z'\| \ge \|z-\Phi(z')\|\}\;.
\end{equ}
Since $\|z-z'\| \ge \lambda$ for $z' \in U$ by assumption,
we have the bound $K(z-z') \lesssim \lambda^{-d}$. On $U_1$, it follows from the definition that 
$K(z-z') \lesssim  \|z- \Phi(z')\|^{-d}$.
As a consequence, we obtain the bound
\begin{equs}
\int_{U_1} &K(z-z')\,|\kappa^{(\eps)}_2(z,\Phi(z'))| dz'
\lesssim 
 \int_{\Phi(U_1)} \bigl(\|z-z'\|^{-d} \wedge \lambda^{-d} \bigr) |\kappa^{(\eps)}_2(z,z')|\,dz' \\
&\lesssim \int_{\R^{d+1}} \bigl(\|z'\|^{-d} \wedge (\lambda/\eps)^{-d} \bigr) \rho^2(\|z'\|)\,dz' \lesssim 1 \wedge \eps^d \lambda^{-d} \\
&\le \eps^{{d\over 2}+\kappa} \lambda^{-{d\over 2}-\kappa}\;,
\end{equs}
as claimed. On $U_2$ on the other hand, we use the fact that $\rho$ is a decreasing function, so that 
\begin{equs}
\int_{U_2} K(z-z')&\,|\kappa^{(\eps)}_2(z,\Phi(z'))| dz' \\
&\lesssim \eps^{-2} \int_{U_2} \bigl(\|z-z'\|^{-d} \wedge \lambda^{-d} \bigr) \rho^2(\|S_\eps(z-\Phi(z'))\|)\,dz'\\
&\lesssim \eps^{-2} \int_{\R^{d+1}} \bigl(\|z-z'\|^{-d} \wedge \lambda^{-d} \bigr) \rho^2(\|S_\eps(z-z')\|)\,dz'\;,
\end{equs}
which is then bounded exactly as above.
\end{proof}

\begin{lemma}\label{lem:boundLargeDistance}
For any fixed $c > 0$, one has
\begin{equ}
\int_{\|z\| \ge c} |\kappa^{(\eps)}_2(0,z)|\, dz \lesssim  \eps^{d+2\delta}\;,
\end{equ}
uniformly over $\eps \in (0,1]$.
\end{lemma}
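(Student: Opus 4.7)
The plan is to reduce the integral to the unscaled cumulant by a change of variables, and then exploit the decay bound from Assumption~\ref{ass:kappa}. First, recall from \eqref{e:scaleKappa} that
\begin{equ}
\kappa_2^{(\eps)}(0,z) = \eps^{-2}\kappa_2(0,S_\eps z)\;,\qquad S_\eps(t,x)=(t/\eps^2,x/\eps)\;.
\end{equ}
Performing the substitution $w = S_\eps z$, which has Jacobian $\eps^{-(d+2)}$ and sends the parabolic ball $\{\|z\| \ge c\}$ to $\{\|w\| \ge c/\eps\}$ (since the parabolic norm scales as $\|S_\eps z\| = \eps^{-1}\|z\|$), I obtain
\begin{equ}
\int_{\|z\|\ge c}|\kappa^{(\eps)}_2(0,z)|\,dz = \eps^{d}\int_{\|w\|\ge c/\eps}|\kappa_2(0,w)|\,dw\;.
\end{equ}

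Next, I would invoke Assumption~\ref{ass:kappa} for $p=2$. The binary tree built from two points has a single internal node which coincides with the root, so the bound \eqref{e:boundkappa} yields $|\kappa_2(0,w)| \lesssim \rho(\|w\|)^2$. Since $\|w\| \ge c/\eps \ge c$ (for $\eps \le 1$), we are in the large-scale regime $\rho(r) = r^{-\overline c}$ with $\overline c = \frac{d+2}{2}+\delta$, giving $|\kappa_2(0,w)|\lesssim \|w\|^{-(d+2+2\delta)}$.

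Finally, a standard dyadic decomposition over parabolic annuli $\{2^k \le \|w\| \le 2^{k+1}\}$, each of (Lebesgue) measure of order $2^{k(d+2)}$, gives
\begin{equ}
\int_{\|w\|\ge c/\eps}\|w\|^{-(d+2+2\delta)}\,dw \lesssim \sum_{k\,:\,2^k\ge c/\eps}2^{-2\delta k}\lesssim \eps^{2\delta}\;,
\end{equ}
and combining this with the factor $\eps^d$ produced by the change of variables yields the claimed bound $\eps^{d+2\delta}$. There is no real obstacle here; the only thing to be mindful of is keeping track of the parabolic scaling of both the norm and Lebesgue measure under $S_\eps$, which could otherwise produce spurious powers of $\eps$.
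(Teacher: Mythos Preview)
Your proof is correct and follows essentially the same approach as the paper's: both use the scaling relation \eqref{e:scaleKappa} together with the large-scale decay $|\kappa_2(0,w)|\lesssim \|w\|^{-(d+2+2\delta)}$ from Assumption~\ref{ass:kappa}, the only difference being that the paper bounds $|\kappa_2^{(\eps)}(0,z)|$ directly in the $z$-variable rather than changing variables to $w$. One minor imprecision: your claim that $\|w\|\ge c/\eps\ge c$ puts you in the large-scale regime $\rho(r)=r^{-\overline c}$ tacitly assumes $c\ge 1$; for general $c>0$ you should first reduce to the case $\eps\le c$ (so that $c/\eps\ge 1$), the complementary range $\eps\in(c,1]$ being trivial since then $\eps^{d+2\delta}$ is bounded below by a constant depending only on $c$.
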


\begin{proof}
We can assume that $\eps < c$, so that 
\begin{equ}
|\kappa^{(\eps)}_2(0,z)| \lesssim \eps^{d+2+2\delta} \|z\|^{-d-2-2\delta}
\end{equ}
by Assumption~\ref{ass:kappa}, and the bound follows at once.
\end{proof}

\begin{proof}[of Proposition~\ref{prop:correctionTerm}]
We now consider the Neumann case, the Dirichlet case follows from a virtually identical calculation.
We start by bounding the expectation of $\bigl(\hat \PPi_\eps\<Xi1IXib>\bigr)(\phi_z^\lambda)$.
By the reflection principle \eqref{e:reprHeat}, the correction due to the Neumann boundary conditions is given by
\begin{equs}
\int_{\R\times D} K_{\Neu}&(z,z')\xi_\eps(z')dz'-\int_{\R^{d+1}}K(z-z')\xi_\eps(z')dz'\\ 
&= \int_{\R^{d+1}} K(z-z')\bigl(\xi_\eps(\Lambda z') - \xi_\eps(z')\bigr) dz'\;.
\end{equs}
It then follows from the definitions that 
\begin{equ}[e:expectPPi]
\E \bigl(\hat\PPi_\eps\<Xi1IXib>\bigr)(\psi)
= \eps^{-{d\over 2}}\int_{\R \times D} \psi(z) \int_{\R^{d+1}} K(z-z')\bigl(\kappa^{(\eps)}_2(z,\Lambda z') - \kappa^{(\eps)}_2(z,z')\bigr) dz'\,dz\;.
\end{equ}
Writing $\lambda = \lambda_\psi$ for the diameter of $\supp \psi$, we aim to give a bound of the form
\begin{equ}[e:aimExpectation]
\big|\E \bigl(\hat \PPi_\eps\<Xi1IXib>\bigr)(\psi)\big| \lesssim \eps^{\kappa} \lambda^{-{d\over 2}-\kappa} \bigl(\lambda^d\|\psi\|_\infty + \lambda^{d+1}\|D_x\psi\|_{\infty}\bigr)\;,
\end{equ}
for any $\kappa > 0$ small enough, uniformly over $\eps, \lambda \le 1$.
We restrict ourselves to the case
$\lambda_\psi \le {1\over 8}$ and we set 
\begin{equ}
W_\psi = \{z\,:\, \exists z' \in \supp \psi \quad \|z-z'\| \le \lambda_\psi\}\;.
\end{equ}
We furthermore assume for the moment that 
\begin{equ}[e:propW]
W_\psi \subset \R \times [-3/4,3/4]^d
\end{equ}
and that, whenever $W_\psi \cap \d D = \emptyset$, one has $W_\psi \subset D$.
We will see later that it is always possible to reduce oneself to this case.
Finally, for $x \in \R^{d}$, we write $|x|_0$ for the number of vanishing coordinates 
of $x$ and we set $d_\psi = \sup\{|x|_0\,:\, (t,x) \in W_\psi\}$.
We treat the different cases separately.

\smallskip\noindent\textbf{The case $d_\psi = 0$.}
In this case $W_\psi \cap \d D = \emptyset$, so that $\supp \psi \cap \d D = \emptyset$ and in particular 
$\bigl(\hat \PPi_\eps\<Xi1IXib>\bigr)(\psi)
= \bigl(\PPi_\eps\<Xi1IXib>\bigr)(\psi)$. We then have
\begin{equ}[e:expectationInterior]
|\E\bigl(\hat \PPi_\eps\<Xi1IXib>\bigr)(\psi)|
\le \eps^{-{d\over 2}}\int_{\R\times D} |\psi(z)| \int_{\R \times D^c} K(z-z')\bigl(|\kappa^{(\eps)}_2(z,\Lambda z')| + |\kappa^{(\eps)}_2(z,z')|\bigr) dz'\,dz\;.
\end{equ}
We break the inner integral into a finite sum of integrals over $\R \times (k+D)$, since $K$ has compact support and
$z \in \R \times D$.
Since we can restrict $z$ to the support of $\psi$, 
we have $|x-x'| \ge \lambda$ for all $x' \in k+D$ with $k \neq 0$ by the definition of $W_\psi$.
We can then apply Lemma~\ref{lem:boundSimple} with $U = \R\times (k+D)$ and $\Phi = \Lambda$
for the first term, while $\Phi = \id$ for the second term. 
It follows that $|\E\bigl(\hat \PPi_\eps\<Xi1IXib>\bigr)(\psi)|$
is bounded by
\begin{equ}[e:boundIntPsi]
\eps^{\kappa} \lambda^{-{d\over 2}-\kappa} \int |\psi(z)|\,dz\;,
\end{equ}
which is indeed bounded by the right hand side of \eqref{e:aimExpectation}.

\smallskip\noindent\textbf{The case $d_\psi = 1$.}
In this case, the support of $\psi$ is located near one of the faces of $D$ (say $\d_{i,0}D$), but 
its distance to the other faces is at least $\lambda$. Write $\fR$ for the element of $G$ which corresponds to
reflection around the plane containing $\d_{i,0}D$ and $\pi_i \colon \R^{d+1} \to \R^{d+1}$ for the
orthogonal projection onto that plane. We also write $E_i = \{(t,x) \,:\, x_i < 0\}$.

We first note that 
\begin{equ}[e:expectPPiGood]
\E \bigl(\PPi_\eps\<Xi1IXib>\bigr)(\psi) = 
\eps^{-{d\over 2}}\int_{\R \times D} \psi(z) \int_{E_i} K(z-z')\bigl(\kappa^{(\eps)}_2(z,\fR z') - \kappa^{(\eps)}_2(z,z')\bigr) dz'\,dz
+ R\;,
\end{equ}
where $|R|$ is bounded by \eqref{e:boundIntPsi}. Indeed,  the integrands in \eqref{e:expectPPi} and \eqref{e:expectPPiGood}
vanish on $D$ and coincide on $\fR D$. Their integral over the complement of these two regions is then bounded
exactly as before by applying  Lemma~\ref{lem:boundSimple} to finitely many translates of $D$.
By Lemma~\ref{lem:boundLargeDistance}, we can furthermore replace $K$ by $P$ so that 
\begin{equ}[e:expectPPiGood2]
\E \bigl(\PPi_\eps\<Xi1IXib>\bigr)(\psi) = 
\eps^{-{d\over 2}}\int_{\R \times D} \psi(z) \int_{E_i} P(z-z')\bigl(\kappa^{(\eps)}_2(z,\fR z') - \kappa^{(\eps)}_2(z,z')\bigr) dz'\,dz
+ \tilde R\;,
\end{equ}
where $|\tilde R|$ is bounded by \eqref{e:boundIntPsi}. The reason for this is that 
 $P-K$ is supported outside of an annulus of radius $1$ and $\psi$ is supported at a distance of at most 
$1/4$ from the reflection plane of $\fR$, and one has $\|z - \fR z'\| \ge 1/2$ for all $z,z'$ with $z \in \supp\psi$
and $\|z - z'\| \ge 1$.

On the other hand, we claim that 
\begin{equ}
\eps^{1-{d\over 2}} c_{i,0} \delta_{\d_{i,0}D} (\psi) 
= \eps^{-{d\over 2}}\int_{\R \times D} \psi(\pi_i z) \int_{E_i} P(z-z')\bigl(\kappa^{(\eps)}_2(z,\fR z') - \kappa^{(\eps)}_2(z,z')\bigr) dz'\,dz\;.
\end{equ}
To see that this is the case, we first perform the change of variables $z = \iota_i(s,v)$ and similarly for $z'$,
and we note that $\pi_i \iota_i (s,v) = \iota_i(0,v)$, so that the right hand side is given by
\begin{equs}
\eps^{-{d\over 2}}&\int_{\R_+ \times \R^{d}} \psi(\iota_i(0,v)) \int_{\R_- \times \R^d} P(\iota_i(s-s',v-v'))\\
&\quad \bigl(\kappa^{(\eps)}_2(\iota_i(s+s',v-v')) - \kappa^{(\eps)}_2(\iota_i(s-s',v-v'))\bigr)\, d(s',v')\,d(s,v) \\
&= \eps^{-{d\over 2}}\int_{\R^{d}} \psi(\iota_i(0,v))\,dv \int_{\R_+} \int_{\R_- \times \R^d} P(\iota_i(s-s',v'))\\
&\quad\qquad \bigl(\kappa^{(\eps)}_2(\iota_i(s+s',v')) - \kappa^{(\eps)}_2(\iota_i(s-s',v'))\bigr)\, d(s',v')\,ds\\
&= \eps^{1-{d\over 2}}\int_{\R^{d}} \psi(\iota_i(0,v))\,dv \int_{\R_+} \int_{\R_- \times \R^d} P(\iota_i(s-s',v'))\\
&\quad\qquad \bigl(\kappa_2(\iota_i(s+s',v')) - \kappa_2(\iota_i(s-s',v'))\bigr)\, d(s',v')\,ds
\end{equs}
Performing the substitution $s' \mapsto -\beta$ and comparing to the definition of $Q_{i,0}$, we conclude that 
\begin{equ}[e:linkToQi]
\E \bigl(\hat \PPi_\eps\<Xi1IXib>\bigr)(\psi) = 
\eps^{-{d\over 2}}\int_{\R_+} Q_{i,0}\left(\frac{s}{\eps}\right)\int_{\R^d}\big((\psi\circ \iota_i)(s,u)-\psi\circ \iota_i)(0,u)\big)\,du\, ds + \tilde R\;.
\end{equ}
By Lemma~\ref{lem:boundQ}, the function $Q_{i,0}$ satisfies the bounds
\begin{equ}
\sup_{s\in \R_+}\left| Q_{i,0}\left(s\right) \right|<\infty\;,\quad \int_{\R_+} |Q_{i,0}\left(s\right)| ds <\infty
\;,\quad \int_{\R_+} |Q_{i,0}\left(s\right)|s\, ds <\infty\;.
\end{equ}
Note now that the integral over $u$ is restricted to the projection of the support of $\psi$ which is 
of volume at most $\lambda^{d+1}$ (since $u$ consists of $d-1$ spatial variables and one time variable).
Bounding $\psi$ by its supremum, it follows that
\begin{equ}
\E \bigl(\hat \PPi_\eps\<Xi1IXib>\bigr)(\psi) \lesssim
\eps^{1-{d\over 2}} \lambda^{d+1} \|\psi\|_\infty \int_{\R_+} |Q_{i,0}(s)|\,ds
+|R|\;.
\end{equ}
On the other hand, we can bound $|(\psi\circ \iota_i)(s,u)-\psi\circ \iota_i)(0,u)|$ by $|s|\,\|D_x\psi\|_\infty$,
which similarly yields
\begin{equ}
\E \bigl(\hat \PPi_\eps\<Xi1IXib>\bigr)(\psi) \lesssim
\eps^{2-{d\over 2}} \lambda^{d+1} \|D_x\psi\|_\infty \int_{\R_+} |Q_{i,0}(s)|s\,ds
+|R|\;.
\end{equ}
Combining these and choosing $\psi = \phi_z^\lambda$ so that $\|\psi\|_\infty\lesssim \lambda^{-d-2}$
and $\|D_x\psi\|_\infty\lesssim \lambda^{-d-3}$ yields a bound of the order
$\eps^{-{d\over 2}}(\eps \lambda^{-1} \wedge \eps^2 \lambda^{-2})$, which does imply \eqref{e:actualBound}
since $-{d\over 2} - \kappa \in (-2,-1)$.

\smallskip\noindent\textbf{The case $d_\psi = 2$.}
We claim that this case can be obtained as a consequence of the bounds for the cases $d_\psi = 0$
and $d_\psi = 1$. We consider the case of dimension $d=2$ (but the computation below is done in a way that keeps
track of dimension and applies to $d=3$ as well) so that by \eqref{e:propW}
\begin{equ}
W_\psi \cap \{(t,x)\,:\, x_1 = x_2 = 0\} \neq \emptyset\;.
\end{equ}
We then fix a smooth function $\chi\colon \R \to \R_+$
such that $\supp \chi \subset [0,2]$ and such that $\sum_{k \in \Z} \chi_k(x)=1$, where $\chi_k(x) = \chi(x-k)$.
For some fixed constant $c > 1$ and integers $n$, $k_i$ and $\ell$, we then set
\begin{equ}
\chi_{n,k,\ell}(z) = \chi_n(-\log |x|/\log 2)\chi_{k_1}(c 2^n x_1)\chi_{k_2}(c 2^n x_2)\chi_\ell(c^2 2^{2n} t)\;.
\end{equ}
By choosing $c$ sufficiently large, we can guarantee that, for every $n$, $k$ and $\ell$, the
function $\chi_{n,k,\ell}$ is such that $d_{\chi_{n,k,\ell}} \in \{0,1\}$. Furthermore, there are
only finitely many values of $k$ (independently of $n$ and $\ell$)
for which $\chi_{n,k,\ell} \neq 0$. This is because $\chi_{n,k,\ell}(S_{2^{n}}z)$ is independent of $n$.
Fix now a test function of the form $\psi = \phi_{z_0}^\lambda$ and write
\begin{equ}
\psi_{n,k,\ell}(z) = \phi_{z_0}^\lambda(z) \chi_{n,k,\ell}(z)\;.
\end{equ}
By construction, one has $\psi_{n,k,\ell} = 0$ for $n$ such that $2^{-n} \ge 2\lambda$,
so that 
\begin{equ}
\lambda_{\psi_{n,k,\ell}} \le 2^{-n}\;,\qquad \|D^m\psi_{n,k,\ell}\|_\infty \lesssim \lambda^{-d-2} \, 2^{n|m|}\;.
\end{equ}
Applying the bounds we already obtained for $d_\psi \in \{0,1\}$, we conclude that
\begin{equ}
\big|\E \bigl(\hat \PPi_\eps\<Xi1IXib>\bigr)(\psi_{n,k,\ell}) \big| \lesssim
\eps^{\kappa} 2^{n({d\over 2}+\kappa)} \lambda^{-d-2}2^{-n(d+2)}\;.
\end{equ}
For any given $n$, the number of values for $k$ and $\ell$ leading to non-vanishing $\lambda_{\psi_{n,k,\ell}}$
is of the order of $(\lambda 2^n)^d$, so that we eventually obtain the bound
\begin{equs}
\big|\E \bigl(\hat \PPi_\eps\<Xi1IXib>\bigr)(\psi) \big| &\lesssim
\eps^{\kappa} \sum_{2^{-n} \ge 2\lambda} (\lambda 2^n)^d 2^{n({d\over 2}+\kappa)} \lambda^{-d-2}2^{-n(d+2)}\\
&= \eps^{\kappa} \lambda^{-2}\sum_{2^{-n} \ge 2\lambda} 2^{n({d\over 2}+\kappa-2)} 
\lesssim \eps^\kappa \lambda^{-{d\over 2}-\kappa}\;,
\end{equs}
as claimed. The case of dimension $d=3$ is identical, the only difference being that $\ell$ now has two components.
Note that this calculation breaks in $d=4$ where the sum over $n$ diverges. This suggests that in this
case one would have to add to $\hat \PPi_\eps\<Xi1IXib>$ an additional correction term that charges 
faces of codimension $2$.

\smallskip\noindent\textbf{The case $d_\psi = 3$.}
This is relevant only for $d=3$, we shall however keep track of $d$ in our calculation to illustrate how this would
behave in higher dimensions. We then proceed in the same way as for the case $d_\psi=2$, making use this time of the fact that 
we already have the required bound for all test functions with $d_\psi < 3$. This time, we have
\begin{equ}
W_\psi \cap \{(t,x)\,:\, x_1 = x_2 = x_3 = 0\} \neq \emptyset\;,
\end{equ}
and we set similarly to above
\begin{equ}
\chi_{n,k,\ell}(z) = \chi_n(-\log |x|/\log 2)\chi_{k_1}(c 2^n x_1)\chi_{k_2}(c 2^n x_2)\chi_{k_3}(c 2^n x_3)\chi_\ell(c^2 2^{2n} t)\;.
\end{equ}
This time, for any given $n$, the number of values for $k$ and $\ell$ leading to non-vanishing $\lambda_{\psi_{n,k,\ell}}$
is of the order of $(\lambda 2^n)^{d-1}$, which then yields similarly to above
\begin{equs}
\big|\E \bigl(\hat \PPi_\eps\<Xi1IXib>\bigr)(\psi) \big| &\lesssim
\eps^{\kappa} \sum_{2^{-n} \ge 2\lambda} (\lambda 2^n)^{d-1} 2^{n({d\over 2}+\kappa)} \lambda^{-d-2}2^{-n(d+2)}\\
&= \eps^{\kappa} \lambda^{-3}\sum_{2^{-n} \ge 2\lambda} 2^{n({d\over 2}+\kappa-3)} 
\lesssim \eps^\kappa \lambda^{-{d\over 2}-\kappa}\;.
\end{equs}
Note that this time the series actually converges for all $d < 6$. 

To conclude, we justify the assumption \eqref{e:propW} and the fact that, 
for $W_\psi \cap \d D = \emptyset$, one has $W_\psi \subset D$.
Indeed, by our assumption on $\lambda_\psi$, it is always possible
to enforce this by applying a finite number of reflections around
the planes $\{x\,:\, x_i = 1/2\}$. If $\supp \psi$ intersects $\d_{i,1} D$ for example, then we reflect
around $x_i = {1\over 2}$ to have $\supp \psi$ intersect $\d_{i,0} D$ instead. The only effect
of this reflection is that, in order to obtain the same answer, we only need to reflect the noise $\eta$
around that plane. The effect of this operation on its covariance function is to change the sign of the
$i$th spatial coordinate of its argument, which is why how we obtain $Q_{i,1}$ rather than $Q_{i,0}$ in
\eqref{e:linkToQi}.
\end{proof}

We now have the main ingredients in place to prove the main result of this section. 

\begin{theorem}\label{theo:correctionTermLp}
With $\hat \PPi_\eps\<Xi1IXib>$ defined as in \eqref{e:defhatPi}, one has for $d \in \{2,3\}$ and any $\kappa > 0$ small enough,
\begin{equ}[e:actualBound2]
\big\| \bigl(\hat \PPi_\eps\<Xi1IXib>\bigr)(\phi_z^\lambda)\big\|_{L^p} \lesssim \eps^{\kappa} \lambda^{-{d\over 2}-\kappa}\;,
\end{equ}
uniformly over $\eps, \lambda \le 1$, $z \in \R^{d+1}$, and $\phi \in \CB_0^1$.
\end{theorem}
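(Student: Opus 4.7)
The bound on the mean given by Proposition~\ref{prop:correctionTerm} is the $q=1$ case of a cumulant estimate; the plan is to extend it to all cumulants $\kappa_q(Y_\psi)$ of $Y_\psi \eqdef (\hat\PPi_\eps\<Xi1IXib>)(\phi_z^\lambda)$ and then conclude via the moment--cumulant formula
\[
\E Y_\psi^p = \sum_{\pi \in \mathcal{P}_p} \prod_{B \in \pi} \kappa_{|B|}(Y_\psi)\,,
\]
which immediately yields $\E|Y_\psi|^p \lesssim (\eps^\kappa\lambda^{-d/2-\kappa})^p$ once each cumulant is controlled by $C_q(\eps^\kappa\lambda^{-d/2-\kappa})^q$ (the prefactor $C_q$ being allowed to depend on $q$ only). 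Since $\hat\PPi_\eps\<Xi1IXib>$ and $\PPi_\eps\<Xi1IXib>$ differ only by a deterministic distribution (see \eqref{e:defhatPi}), for $q \ge 2$ these cumulants coincide with those of $(\PPi_\eps\<Xi1IXib>)(\psi)$, which by the admissibility of $\PPi_\eps$ and the definition \eqref{e:deffunnysymbol} can be written as
\[
(\PPi_\eps\<Xi1IXib>)(\psi) = \eps^{-d/2}\iint \psi(z)\,M(z,z')\,\eta_\eps(z)\eta_\eps(z')\,dz\,dz'\,,
\]
with $M(z,z') = \one^D(z)\bigl[K_\Neu(z,z')\one^D(z') - K(z-z')\bigr]$.

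For each $q \ge 2$, I would then apply the Leonov--Shiryaev cumulants-of-polynomials formula to expand
\[
\kappa_q(Y_\psi) = \eps^{-qd/2}\sum_{\gamma \in \mathcal{G}_q}\int \prod_{j=1}^q \psi(z_j)M(z_j,z'_j) \prod_{B\in\gamma}\kappa^{(\eps)}_{|B|}\bigl(\{\eta_\eps(\zeta):\zeta\in B\}\bigr)\,dz\,dz'\,,
\]
where $\mathcal{G}_q$ denotes the set of partitions of $\{z_1,z'_1,\ldots,z_q,z'_q\}$ with no singleton block whose reduced graph (obtained by contracting each pair $\{z_j,z'_j\}$ to a single vertex) is connected. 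The target then reduces to showing that each diagram contributes at most $C_q(\eps^\kappa\lambda^{-d/2-\kappa})^q$, or equivalently that the integral (without the prefactor $\eps^{-qd/2}$) is dominated by a product of $q$ single-pair contributions of size $\eps^{d/2+\kappa}\lambda^{-d/2-\kappa}$.

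To bound each diagram I would combine the multiscale/coalescence-tree machinery of Section~\ref{sec:coal} with a pair-by-pair reflection argument modelled on the proof of Proposition~\ref{prop:correctionTerm}. For each pair $(z_j,z'_j)$ the factor $M(z_j,z'_j)$ encodes the Neumann-minus-whole-space cancellation, which via the reflection identity \eqref{e:reprHeat} yields either (i) a bulk contribution absorbable via Lemma~\ref{lem:boundSimple} whenever $z_j$ is at distance $\gtrsim \lambda$ from $\d D$, or (ii) a boundary-layer contribution controlled by the boundary integrals \eqref{e:intQ}, producing the $\eps^{d/2+\kappa}\lambda^{-d/2-\kappa}$ factor per pair. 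Combined with the Assumption~\ref{ass:kappa} bound on $\prod_B \kappa^{(\eps)}_{|B|}$, expressed through the coalescence tree of the $2q$ integration points, this yields the desired per-diagram estimate.

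The main obstacle lies in this last step: whereas Proposition~\ref{prop:correctionTerm} handles a single factor of $M$ via the three-case analysis based on $d_\psi$, the higher-$q$ cumulants couple the $q$ pairs together through joint cumulants $\kappa^{(\eps)}_{|B|}$ with $|B|\ge 3$. The per-pair reflection cancellation must therefore be extracted from an integrand in which the $2q$ variables are strongly correlated, which requires a simultaneous splitting of the $q$ pairs according to their positions relative to the faces of $\d D$ (mirroring the $d_\psi = 0,1,2,3$ analysis), layered on top of the coalescence-tree multiscale decomposition used in the proof of Proposition~\ref{prop:convGauss} to handle the cumulant product. Once this bookkeeping is executed, combining the per-diagram bound with the $q=1$ estimate from Proposition~\ref{prop:correctionTerm} and the moment--cumulant formula completes the proof.
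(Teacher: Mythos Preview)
Your reduction to the centred part via Proposition~\ref{prop:correctionTerm} is correct, and for $q\ge 2$ the cumulants indeed coincide with those of $\bigl(\PPi_\eps\<Xi1IXib>\bigr)(\psi)$. However, the paper takes a much simpler route for the centred part, and the ``main obstacle'' you identify is in fact avoidable altogether.

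The key observation you are missing is that \emph{once the expectation has been subtracted, the boundary cancellation encoded in $M$ is no longer needed}. Writing the centred random variable via Wick products as in \eqref{e:PPi}, the paper bounds the two terms
\[
\eps^{-d/2}\int \psi(z)\int K(z-z')\,\Wick{\eta_\eps(z)\eta_\eps(\Lambda z')}\,dz'\,dz
\quad\text{and}\quad
\eps^{-d/2}\int \psi(z)\int K(z-z')\,\Wick{\eta_\eps(z)\eta_\eps(z')}\,dz'\,dz
\]
\emph{separately}, with no attempt to exploit the difference. The second term is exactly $\bigl(\hat\PPi_\eps\<XiS1IXi>\bigr)(\psi)$, already controlled by Theorem~\ref{theo:model}. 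For the first term, the crude pointwise bound $|K(z-Rz')|\lesssim\|z-z'\|^{-d}$ (valid because $K$ is compactly supported and $R$ is an isometry) reduces it to a stationary object of the same type, to which the general moment bounds of \cite{Ajay} apply directly.

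So the per-pair reflection cancellation you propose to extract inside higher-order cumulants is unnecessary: the $\eps^\kappa$ gain for the centred part comes entirely from the translation-invariant machinery, not from any boundary effect. Your approach could presumably be pushed through, but the bookkeeping you describe as the main difficulty is precisely what the paper's argument sidesteps.
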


\begin{proof}
Writing $\psi = \phi_z^\lambda$, the triangle inequality yields
\begin{equ}
\big\| \bigl(\hat \PPi_\eps\<Xi1IXib>\bigr)(\psi)\big\|_{L^p}
\le 
\big\| \bigl(\hat \PPi_\eps\<Xi1IXib>\bigr)(\psi)- \E \bigl(\hat \PPi_\eps\<Xi1IXib>\bigr)(\psi)\big\|_{L^p}
+ \big|\E \bigl(\hat \PPi_\eps\<Xi1IXib>\bigr)(\psi)\big|\;,
\end{equ}
and we already obtained the required bound on the second term in Proposition~\ref{prop:correctionTerm}, 
so we focus on the first one. Furthermore, $\hat \PPi_\eps\<Xi1IXib>$ differs from $\PPi_\eps\<Xi1IXib>$ by
a deterministic quantity, so that we only need to bound
\begin{equ}
\big\| \bigl( \PPi_\eps\<Xi1IXib>\bigr)(\psi)- \E \bigl( \PPi_\eps\<Xi1IXib>\bigr)(\psi)\big\|_{L^p}\;.
\end{equ}
By \eqref{e:defPPiepscorr} combined with \eqref{e:reprHeat}, this random variable equals
\begin{equ}[e:PPi]
\eps^{-{d\over 2}}\int_{\R \times D} \psi(z) \int_{\R^{d+1}} K(z-z')\bigl(\Wick{\eta_\eps(z)\eta_\eps(\Lambda z')}
- \Wick{\eta_\eps(z)\eta_\eps(z')}\bigr) dz'\,dz\;.
\end{equ}
This time, we will not need to exploit the cancellation between these two terms on $\R\times D$, so we simply bound both
terms separately. The second term equals 
$\bigl( \hat\PPi_\eps\<XiS1IXi>\bigr)(\psi)$, which is  bounded  by the right hand side 
of \eqref{e:actualBound2} by Theorem~\ref{theo:model}.

For the first term, we use the fact that $K$ is compactly supported, so that it can be
bounded by a finite sum of terms of the type
\begin{equ}
\eps^{-{d\over 2}}\int_{\R \times D} \psi(z) \int_{\R \times D} K(z-R z')\,\Wick{\eta_\eps(z)\eta_\eps(z')}\,dz'\,dz\;,
\end{equ}
with $R \in \CG$.

The expectation of the $p$th power of this expression is given by a multiple integral with 
the integrand given by a sum of terms, each of which is a product of heat kernels and of cumulants.
At this stage, we note that the bound in \cite{Ajay} does not exploit any further cancellations, so we
can put absolute values everywhere, bound $K(z-Rz')$ by $\|z-z'\|^{-d}$, and use 
the bounds from that paper.
\end{proof}

\subsection{Bounds on the function \texorpdfstring{$Q$}{Q}}

It remains to prove
\begin{lemma}\label{lem:boundQ}
Under Assumption~\ref{ass:kappa},
\[
	\sup_{s\in\R_+}\left| Q_{i,0}\left(s\right) \right|+\int_{\R_+} |Q_{i,0}\left(s\right)|(1+s)\, ds < \infty\;,
\]
and similarly for $Q_{i,1}$.
\end{lemma}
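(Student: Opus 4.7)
The plan is to rewrite each $Q_{i,j}(s)$ as an integral over a half-space and then bound it by a careful absolute-value estimate, exploiting the Gaussian decay of the heat kernel $P$ in addition to the cumulant decay from Assumption~\ref{ass:kappa}. First, using the change of variables $z = \iota_i(s+\beta, v)$ (Jacobian $1$), I recognise
\[
  Q_{i,0}(s) = \int_{\{x_i \ge s\}} P(z)\bigl[\kappa_2(0, \tau_s z) - \kappa_2(0, z)\bigr]\,dz,
\]
where $\tau_s$ denotes the reflection of $\R^{d+1}$ across the hyperplane $\{x_i = s\}$. An entirely analogous representation holds for $Q_{i,1}$; it is obtained by a similar substitution followed by a translation, and leads to an integral of the same shape (the two arguments of $\kappa_2$ are then related by a reflection across $\{x_i = -s\}$). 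Since the two cases are strictly parallel, I focus below on $Q_{i,0}$.

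Next, passing to absolute values, I bound $|Q_{i,0}(s)| \le T_1(s) + T_2(s)$ where
\[
  T_2(s) = \int_{\{x_i \ge s\}} P(z)|\kappa_2(0, z)|\,dz,\qquad
  T_1(s) = \int_{\{x_i \ge s\}} P(z)|\kappa_2(0, \tau_s z)|\,dz.
\]
The piece $T_2$ is straightforward: the inclusion $\{x_i \ge s\} \subset \{\|z\| \ge s\}$, combined with the standard heat kernel estimate $P(z) \lesssim \|z\|^{-d}$ and the cumulant bound $|\kappa_2(0, z)| \lesssim \rho(\|z\|)^2$ from Assumption~\ref{ass:kappa}, gives by integration in parabolic spherical shells $T_2(s) \lesssim 1 \wedge s^{-d-2\delta}$. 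The small-scale integrability of the integrand here follows from $\underline c < 1$ and the large-scale integrability from $2\bar c > d+2$. This already gives $\sup_s T_2(s) < \infty$ and $\int_0^\infty T_2(s)(1+s)\,ds < \infty$ for $d \ge 2$.

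For $T_1$, the isometric substitution $w = \tau_s z$ gives $T_1(s) = \int_{\{w_i \le s\}} P(\tau_s w)|\kappa_2(0,w)|\,dw$, and I split the domain into the easy region $\{\|w\| \ge s/2\}$, where the decay of $|\kappa_2(0,w)|$ at scale $\gtrsim s$ permits the same argument as for $T_2$, and the delicate region $\{\|w\| < s/2\}$. On the delicate region $\kappa_2(0, w)$ may be singular (when $w$ is close to the origin), but $\tau_s w$ has $i$th coordinate $2s - w_i \ge 3s/2$, so the Gaussian factor $\exp\!\bigl(-(2s-w_i)^2/(4w_t)\bigr)$ in $P(\tau_s w)$ forces $w_t \gtrsim s^2$ for any non-negligible contribution. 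Decomposing into parabolic dyadic annuli $\|w\| \in [2^n,2^{n+1})$ with $2^{n+1} \le s/2$ and using the pointwise estimate $\sup_w P(\tau_s w) \lesssim 2^{-nd}\exp(-c s^2\, 2^{-2n})$ on each such annulus, the Gaussian factor localises the dominant contribution near the critical scale $2^n \sim s$; summation then yields $T_1(s) \lesssim 1 \wedge s^{-d-2\delta}$.

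The main obstacle is this last estimate on $T_1$ in dimension $d=2$: a naive use of only the polynomial envelope $P \lesssim \|\cdot\|^{-d}$ would give $T_1(s) \lesssim s^{-d}$, which makes $\int_0^\infty s\, |Q_{i,0}(s)|\,ds$ just barely divergent when $d=2$. Exploiting instead the full Gaussian tail of $P(\tau_s w)$ in the region where $w$ is close to the origin supplies the missing factor $s^{-2\delta}$ and closes the argument. Combining the bounds for $T_1$ and $T_2$ (and the matching ones for $Q_{i,1}$), one obtains $|Q_{i,j}(s)| \lesssim 1 \wedge s^{-d-2\delta}$, which trivially implies both claims of the lemma for $d \ge 2$.
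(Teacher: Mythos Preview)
Your proof is correct and takes a genuinely different route from the paper's. The paper bounds $|(\kappa_2\circ\iota_i)(\cdot,v)| \lesssim t^{-\underline c}\wedge t^{-\overline c}$ (discarding all spatial dependence of $\kappa_2$), integrates out the $d-1$ transverse spatial variables in $\hat P$ to reduce to a one-dimensional Gaussian, and then evaluates the resulting two-dimensional integrals by explicit substitutions; this yields $\sup_s|Q(s)|<\infty$ and $\int(1+s)|Q(s)|\,ds<\infty$ directly, without ever extracting a pointwise decay rate for $Q$. Your argument instead recognises the reflection structure, obtains the sharper pointwise bound $|Q_{i,j}(s)|\lesssim 1\wedge s^{-d-2\delta}$ via a parabolic dyadic decomposition, and crucially uses the full Gaussian tail of $P(\tau_s w)$ (not just the envelope $\|\cdot\|^{-d}$) to close the estimate in $d=2$. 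The paper's approach is shorter and avoids multiscale machinery; yours is more geometric, yields a quantitative decay rate, and ties the computation back to the boundary-layer heuristic behind the definition of $Q$. Both are only valid for $d\ge 2$, which is the only case in which the lemma is used.
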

\begin{proof} 
We fix $i$ and we simply write $Q$ instead of $Q_{i,0}$.
In all the estimates below we will use repeatedly the inequality 
$|(\kappa_2\circ\iota_i)(s-\beta,v)|\vee|(\kappa_2\circ\iota_i)(s+\beta,v)|\lesssim t^{-\underline{c}}\wedge t^{-\overline{c}}$.

We first estimate $\sup_{s\in\R_+}\left| Q\left(s\right) \right|$.
Here $\beta\in\R_+$, $v=(t,x)$, with $x\in\R^{d-1}$. 
\begin{equs}
\left| Q\left(s\right) \right|&\lesssim\int_0^\infty\int_0^\infty\int_{\R^{d-1}} \hat P(s+\beta,v)t^{-\underline{c}}\wedge t^{-\overline{c}}d\beta\, dv\\
&\lesssim \int_0^\infty\int_0^\infty\frac{e^{-(s+\beta)^2/(4t)}}{\sqrt{t}}t^{-\underline{c}}\wedge t^{-\overline{c}}d\beta \,dt\\
&\lesssim \int_s^\infty\int_0^\infty\frac{e^{-1/t}}{\sqrt{t}}|a|\,\left(\big(a^2t\big)^{-\underline{c}}\wedge \big(a^2t\big)^{-\overline{c}}\right)\,da\, dt
\\
&\lesssim\int_0^\infty\frac{e^{-1/t}}{t^{1-\delta}}dt\int_0^{t^{-1/2}}a^{2\delta}da
+\int_0^\infty \frac{e^{-1/t}}{t^{\overline{c}+1/2}}dt\int_{t^{-1/2}}^\infty \frac{da}{a^{2\overline{c}-1}}\\
&\lesssim\int_0^\infty\frac{e^{-1/t}}{t^{3/2}} dt +\int_0^\infty \frac{e^{-1/t}}{t^{3/2}}dt <\infty\, ,
\end{equs}
and the bound does not depend upon $s$. Here, to go from the second to the third line, we 
set $a = s+\beta$ and we performed the substitution $t \mapsto a^2 t/4$.

It remains to estimate $\int_{\R_+} |Q\left(s\right)|(1+s) ds$. Again $v=(t,x)$, with $x\in\R^{d-1}$.
\begin{equs}
\int_{\R_+} |Q\left(s\right)| &(1+s)ds \lesssim\int_{0}^\infty\int_0^\infty\int_{\R^d}(1+s)\hat P(s+\beta,v)t^{-\underline{c}}\wedge t^{-\overline{c}}dv\,d\beta\, ds\\
&\lesssim \int_0^\infty\int_{\R^d}(a+a^2)\hat P(a,v)t^{-\underline{c}}\wedge t^{-\overline{c}}dv\,da\\
&\lesssim \int_0^\infty\int_0^\infty (a^2+a^3)\frac{e^{-1/t}}{\sqrt{t}}\left(a^2t\right)^{-\underline{c}}\wedge \left(a^2t\right)^{-\overline{c}}dt\,da\\
&\lesssim\int_0^\infty\frac{e^{-1/t}}{t^{1-\delta}}dt\int_0^{t^{-1/2}}(a^{1+2\delta}+a^{2+2\delta})da\\&\qquad
+\int_0^\infty \frac{e^{-1/t}}{t^{\overline{c}+1/2}}dt\int_{t^{-1/2}}^\infty (a^{2-2\overline{c}}+a^{3-2\overline{c}})da\\
&\lesssim\int_0^\infty\left(\frac{1}{t^{2}}+\frac{1}{t^{5/2}}\right)e^{-1/t} dt +\int_0^\infty\left(\frac{1}{t^{2}}+\frac{1}{t^{5/2}}\right)e^{-1/t} dt <\infty\,,
\end{equs}
thus concluding the proof.
\end{proof}

\section{Auxiliary results}

In this section, we collect a number of results that are more or less straightforward
consequences of known results, specialised to our setting. 
Throughout this section, we assume that we are working with 
the regularity structure defined in Section~\ref{sec:structure} and that $\xi_\eps$ is
defined as in \eqref{e:defveps} and satisfies Assumption~\ref{ass:kappa}.

We will write $\CC^\gamma$ as a shorthand for $\CD^\gamma(\bar T)$, where
$\bar T$ is the sector spanned by the Taylor polynomials, and similarly
for $\CC^{\gamma,\eta}$, etc. Note that for $\gamma \not\in\N$ this is consistent with the usual
definition of $\CC^\gamma$.

\begin{proposition}\label{prop:commuteSpecialCase}
Let $\zeta^+, \zeta \in \CC^{-{5\over 2}-\kappa}$ be such that $\zeta^+(\phi) = 0$ 
for $\phi$ supported in $\{t < 0\}$ and $\zeta^+(\phi) = \zeta(\phi)$ 
for $\phi$ supported in $\{t > 0\}$ and let $\PPi$ be an admissible model with
$\PPi \<XiS> = \zeta$. 
Write $\CK \one_+\<XiS> = \CK^{\hat \zeta} \one_+\<XiS>$ for $\CK^{\hat \zeta}$ defined as in \cite[Sec.~4.5]{Mate}.
Define $\CK_\Neu \one_+\<XiS>$, $\CK_i \one_+\<XiS>$ and $\CK_{\Neu,i} \one_+\<XiS>$ analogously
 and, given $\Phi$, let $V^{(0)}$ be given by \eqref{e:defV0} for some
 $\u \in \CC^{3-\kappa}$.
 
Then, setting $\bar \gamma = {3\over 2}-\kappa$ and $\bar \eta = -{1\over 2}-\kappa$,
there exists a choice $\Phi \in \CC^{\bar \gamma,\bar \eta}$ such that 
$\CR V^{(0)} = K_\Neu \bigl(G(\u)\zeta^+\bigr)$ and $V^{(0)}$ belongs to $\CD^{\bar \gamma,\bar \eta}$. 
In particular, if $\zeta^+ \in \CC^\alpha$ for some
$\alpha > -1$ is supported in $\R_+ \times D$ then, for $t \in [0,1]$,  $\CR V^{(0)}$
coincides with the solution of $\d_t v = \Delta v + G(\u) \zeta^+$ with vanishing
initial condition, endowed
with Neumann (respect.\ Dirichlet) boundary conditions.

If furthermore $\zeta_n \to \zeta$ in $\CC^{-{5\over 2}-\kappa}$ and $\PPi_n \to \PPi$
as admissible models, then
one has $\$V^{(0)}_n ; V^{(0)}\$_{\bar \gamma,\bar \eta} \to 0$.
\end{proposition}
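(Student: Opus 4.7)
The plan is to construct $\Phi$ as an explicit remainder and to identify it as a classical function in $\CC^{\bar\gamma,\bar\eta}$ using the near-commutation estimate of Appendix~\ref{sec:almostComm} together with the boundary-kernel bounds of Proposition~\ref{prop:diffKernel}. First I would check that $\one_+\<XiS>$ is a well-defined modelled distribution in a suitable $\CD^{\gamma,w}$ (with $\gamma$ slightly above $0$) by combining the hypothesis that $\zeta^+$ vanishes for $t < 0$ with the reconstruction/restriction theorem of \cite{Mate}. Applying the integration operators $\CK_\Neu = \CK + \tilde \CK_\d$ and $\CK_{\Neu,i} = \CK_i + \tilde \CK_{\d,i}$ built in Section~\ref{sec:LLN} produces two modelled distributions whose reconstructions coincide with $K_\Neu \zeta^+$ and $K_{\Neu,i}\zeta^+$ respectively on $\R_+\times D$, and whose singular parts live at levels $\deg \dCI \<XiS> = {3\over 2}-\kappa$ and $\deg \dCI_i \<XiS> = {5\over 2}-\kappa$ respectively.

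Next, multiplying by the Taylor lifts $\CL G(\u)$ and $G'(\u)\,\d_i \u$ — both living in $\CC^{3-\kappa}$ thanks to our assumption on $\u$ — preserves the abstract structure, so the two singular summands of \eqref{e:defV0} belong to $\CD^{\bar\gamma,\bar\eta}$ up to an element whose reconstruction is a classical function. I would then \emph{define} $\Phi$ by writing
\begin{equ}
\Phi(z) = K_\Neu\bigl(G(\u)\zeta^+\bigr)(z) - G(\u(z))\bigl(K_\Neu\zeta^+\bigr)(z) - G'(\u(z))\,\d_i\u(z)\bigl(K_{\Neu,i}\zeta^+\bigr)(z)\;,
\end{equ}
modulo those lower-order Taylor polynomials already carried by the other two terms of \eqref{e:defV0}. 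This automatically enforces $\CR V^{(0)} = K_\Neu(G(\u)\zeta^+)$.

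The main analytic task is then to prove $\Phi \in \CC^{\bar\gamma,\bar\eta}$. For the bulk (whole-space) part this is exactly Corollary~\ref{cor:commute} applied with $f = G(\u)$: the operator $\zeta \mapsto K\star(f\zeta) - f\,K\star\zeta - \d_i f \cdot K_i\star\zeta$ is smoothing by $2 + \kappa$ orders relative to $\zeta$, which yields the claimed $\bar\gamma = {3\over 2}-\kappa$ since $\zeta \in \CC^{-{5\over 2}-\kappa}$. The boundary correction $\tilde \CK_\d$ is handled by Proposition~\ref{prop:diffKernel}, which gives strictly better interior regularity for kernels supported in a strip around the diagonal; since $\zeta^+$ has the $t > 0$ vanishing property, the initial-time singularity weight $\bar\eta = -{1\over 2}-\kappa$ is inherited from elementary heat-kernel bounds on the convolution of a $\CC^{-{5\over 2}-\kappa}$ distribution with a kernel of order $2$.

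With $V^{(0)} \in \CD^{\bar\gamma,\bar\eta}$ and the reconstruction identity established, identifying $\CR V^{(0)}$ with the classical PDE solution for $\zeta^+ \in \CC^\alpha$, $\alpha > -1$, is then routine via the mild-formulation uniqueness of the heat equation with Neumann (resp.\ Dirichlet) boundary condition. Finally, the continuity statement $\$V^{(0)}_n;V^{(0)}\$_{\bar\gamma,\bar\eta} \to 0$ is immediate from the fact that every step above — the Taylor lift, the abstract integration, the multiplication, the near-commutation estimate of Corollary~\ref{cor:commute} and the boundary bound of Proposition~\ref{prop:diffKernel} — is Lipschitz in the admissible model and in $\zeta,\zeta^+ \in \CC^{-{5\over 2}-\kappa}$, with constants uniform on bounded sets. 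The hard part will be bookkeeping the singular weights $(\bar\gamma,\bar\eta)$ simultaneously through the boundary kernel $K_\d$ and the initial-time cutoff $\one_+$, in particular making sure that the ``almost-commutation'' remainder picks up the gained $\d_i\u$ term and not merely a coarser bound that would destroy the weight $\bar\eta = -{1\over 2}-\kappa$.
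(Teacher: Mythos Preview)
Your approach is essentially the paper's, which condenses the whole argument into a single application of Corollary~\ref{cor:commutationSingular}: take $\CB = \CC^{3-\kappa}$, $\iota g = (\CL_{3-\kappa}g)\,\one_+\<XiS>$, $\hat\CR g = g\,\zeta^+$ and $K_0 = K_\Neu$, and read off $\Phi \in \CC^{\gamma+2,\eta+2}$ directly with $\gamma = {1\over 2}-2\kappa$, $\eta = -{5\over 2}-\kappa$; the continuity statement is then inherited from that corollary.

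Your unpacking into ``bulk via Corollary~\ref{cor:commute}'' plus ``boundary via Proposition~\ref{prop:diffKernel}'' is correct in spirit, but two technical points are glossed over. First, Corollary~\ref{cor:commute} as stated requires $\theta > 0$; keeping only the $|\ell|\le 1$ terms as in \eqref{e:defV0} would force $\theta \le -{1\over 2}-\kappa$, so you cannot invoke it literally. The fix is to apply it with all $|\ell|\le 2$ terms and observe that the $|\ell|=2$ contributions are polynomial-valued (there being no symbol $\dCI_\ell(\<XiS>)$ for $|\ell|=2$ in the structure) and therefore absorbed into $\Phi$. Second, the singular weight $\bar\eta = -{1\over 2}-\kappa$ at $t=0$ does not follow from an ``elementary heat-kernel bound'' on $\zeta^+$ but from the weighted Schauder estimate of \cite[Lem.~4.12]{Mate}, which is precisely what Corollary~\ref{cor:commutationSingular} invokes in place of the unweighted integration theorem. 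Neither point is a real obstruction, but together they are the reason the paper isolated Corollary~\ref{cor:commutationSingular} as a standalone tool rather than reassembling the pieces each time.
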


\begin{remark}
In principle, the model $\PPi$ does contain non-trivial information through its action
on $\<dIXiS>$. This is because the kernel $K_\d$
is not $2$-regularising (condition~5.5 in \cite[Ass.~5.1]{regularity} fails to be satisfied), so 
that the extension theorem \cite[Thm~5.14]{regularity} cannot be applied here.
\end{remark}

\begin{proof}
We aim to apply Corollary~\ref{cor:commutationSingular}.
Let $\gamma = {1\over 2}-2\kappa$, $\eta = -{5\over 2} -\kappa$ (so that in particular
$\gamma - \eta = 3-\kappa$), and let 
$\CB = \CC^{\gamma-\eta}$ (on which $\CC^{\gamma-\eta}$ then acts canonically by multiplication)
with the injection $\iota g = (\CL_{\gamma-\eta}g)\,\one_+ \<XiS> \in \CD^{\gamma,\eta}$.
(This is actually independent of the model $\PPi$.)
Given $g \in \CB$, we set
\begin{equ}
\hat \CR g = g \,\zeta^+\;,
\end{equ}
which is consistent with the reconstruction operator by our assumption
on $\zeta^+$. We are therefore in the setting of Corollary~\ref{cor:commutationSingular} provided
that we set $K_0 = K_\Neu$.

This guarantees that we can find $\Phi \in \CC^{\gamma+2,\eta+2}$ with the 
desired properties. 
The continuity as a function of $\zeta^+$ and the model $\PPi$
follows from the corresponding continuity statement in Corollary~\ref{cor:commutationSingular}.
\end{proof}

\begin{proposition}\label{prop:specialSymbol}
For every $g \in \CC^{2-\kappa}$ one can find $\Phi$ taking values in the Taylor polynomials 
such that, setting
\begin{equ}
V = \one_+^D\,g (\hat\PPi_\eps \<IXib>) \<IXi1> + \Phi\;,
\end{equ}
one has $V \in \CD^{2-2\kappa,\bar w}$ with
$\bar w = \big({1\over 2}-2\kappa,{1\over 2}-2\kappa,0\big)$, and $\CR_\eps V = K \big(\one_+g \hat \PPi_\eps \<Xi1IXib>\big)$.
\end{proposition}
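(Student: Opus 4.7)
The plan is to realise $V$ as a decomposition of $K\star(\one_+ g \hat \PPi_\eps \<Xi1IXib>)$ into a ``leading singular part'' proportional to $\<IXi1>$ and a smoother Taylor remainder $\Phi$. The starting observation is that, by admissibility of $\hat \PPi_\eps$ together with the renormalisation \eqref{e:renormalisedModel}, one has on $\R\times D$ the pointwise identity $\hat \PPi_\eps \<Xi1IXib> = \eta_\eps \cdot \hat \PPi_\eps \<IXib> - \sum_i \eps^{1-d/2}(c_{i,0}\delta_{\d_{i,0}D} + c_{i,1}\delta_{\d_{i,1}D})$. Setting $f = g\cdot \hat \PPi_\eps \<IXib>$, the target rewrites as $K \star (\one_+ f \eta_\eps)$ plus a smooth-in-the-bulk contribution coming from the boundary Dirac masses, which will be absorbed into $\Phi$.

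The main preparatory step is to establish the weighted H\"older regularity of $f$ that is required for the coefficient of $\<IXi1>$ in $\CD^{2-2\kappa,\bar w}$. Since $g\in \CC^{2-\kappa}$ is smooth, this reduces to bounding $\hat \PPi_\eps \<IXib>$: in the bulk this function is smooth and essentially vanishes (the reflection kernels $K_\Neu$ and $K$ agree away from $\d D$), while near $\d D$ it develops a singularity consistent with $\deg \<IXib> = -\f12-\kappa$. Pointwise bounds uniform in $\eps$ can be extracted by arguments parallel to those used in the proof of Proposition~\ref{prop:correctionTerm}, yielding a bound on $f$ matching the weight of $\bar w$ at level $1-\kappa$.

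With $f$ controlled, the proof then proceeds by the almost-commutation machinery of Appendix~\ref{sec:almostComm} in exactly the spirit of Proposition~\ref{prop:commuteSpecialCase}. One applies the version of Corollary~\ref{cor:commutationSingular} appropriate to weighted spaces with $\iota f = (\CL f)\,\one_+\<Xi1>$ and $\hat \CR \iota f$ set to $\one_+ f\eta_\eps$ (with the boundary-delta piece grafted on). The corollary produces a modelled distribution of the required form $f\<IXi1> + \Phi$, with $\Phi$ taking values in the Taylor polynomials and belonging to $\CC^{2-2\kappa,\bar w}$, and it automatically yields the reconstruction identity $\CR_\eps V = K\star(\one_+ g \hat\PPi_\eps \<Xi1IXib>)$.

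The main obstacle will be matching the weights in $\bar w$ precisely, especially the corner weight $0$ at $\{t=0\}\cap \d D$: one must show both that the boundary-Dirac contribution to $K\star(\one_+ g \hat\PPi_\eps\<Xi1IXib>)$ lies in $\CC^{2-2\kappa,\bar w}$ (via the Schauder-type estimates for $K_\d$ from Proposition~\ref{prop:diffKernel}) and that $f$ does not pick up a worse singularity at the corner than the weight permits. This bookkeeping is the most delicate point, but it should follow by combining the almost-commutation argument with the kernel decomposition $K_\Neu = K + K_\d$ together with the explicit cancellation structure already exploited in the proof of Proposition~\ref{prop:correctionTerm}.
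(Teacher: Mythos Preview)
Your approach is essentially the same as the paper's but you make it harder than it needs to be in three places. First, there is no need to split $\hat\PPi_\eps\<Xi1IXib>$ into the product $f\eta_\eps$ and the boundary Diracs: the paper simply sets $\hat\CR g \eqdef \one_+\, g\,\hat\PPi_\eps\<Xi1IXib>$ as a single distribution and feeds it directly into \cite[Lem.~4.12]{Mate}; since this agrees with the local reconstruction of $\iota g = \one_+^D\CL(g\,\hat\PPi_\eps\<IXib>)\<Xi1>$ away from $\d D$, the lemma applies immediately and the Dirac contribution is absorbed automatically into the Taylor part $\Phi$. Second, the regularity of $\hat\PPi_\eps\<IXib>$ does not require anything like the machinery of Proposition~\ref{prop:correctionTerm}: since $\<IXib> = \<dIXiS> - \<IXi>$, one has $\hat\PPi_\eps\<IXib> = K_\d(\one^D\xi_\eps) - K(\one^{D^c}\xi_\eps)$, and Proposition~\ref{prop:diffKernel} gives $\hat\PPi_\eps\<IXib>\in\CC^{\tilde\gamma,(-\frac12-\kappa)_3}$ directly from kernel estimates. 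Third, the corner weight $0$ in $\bar w$ is not a separate obstacle: once you know $\iota g \in \CD^{1-2\kappa,(-\frac32-2\kappa)_3}$ (which follows from the previous point via \cite[Lem.~4.3]{Mate}), the output weights $\bar w = (\frac12-2\kappa,\frac12-2\kappa,0)$ are exactly what \cite[Lem.~4.12]{Mate} produces, with no further work.

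So your route is correct, but the paper's proof is a two-line application of \cite[Lem.~4.12]{Mate} once Proposition~\ref{prop:diffKernel} is invoked; your decomposition and the appeal to Proposition~\ref{prop:correctionTerm}-type arguments are detours.
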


\begin{proof}
We make use of \cite[Lem.~4.12]{Mate} and Corollary~\ref{cor:commutationSingular}. For this, similarly to above,
we set $\CB = \CC^{2-\kappa}$ and, for $g \in \CB$, we set
\begin{equ}
\iota g \eqdef \one_+^D \CL_{2-\kappa}\bigl(g \hat \PPi_\eps \<IXib>\bigr)\<Xi1>\;,\qquad
\hat \CR g \eqdef \one_+\,g \hat \PPi_\eps\<Xi1IXib>\;.
\end{equ}
Note that as a consequence of Proposition~\ref{prop:diffKernel}
one has $\hat \PPi_\eps \<IXib> \in \CC^{\tilde \gamma,\tilde w}$ 
for any $\tilde \gamma > 0$ and for $\tilde w = \big(-{1\over 2}-\kappa\big)_3$,
where $(\eta)_3 \eqdef (\eta,\eta,\eta)$.
Since $\deg \<Xi1> = -1-\kappa$, it follows from \cite[Lem.~4.3]{Mate} that
$\iota g \in \CD^{\gamma, w}$ for $\gamma = 1-2\kappa$ and
for $w = \bigl(-{3\over 2}-2\kappa\bigr)_3$.
It then follows from \cite[Lem.~4.12]{Mate} that
one can find a modelled distribution $V \in \CD^{2-2\kappa,\bar w}$ 
with $\bar w = \big({1\over 2}-2\kappa,{1\over 2}-2\kappa,0\big)$ of the form
\begin{equ}
V = \one_+^D\bigl(g \hat \PPi_\eps \<IXib>\bigr)\<IXi1> +  \hat \Phi\;,
\end{equ}
with $\hat \Phi$ taking values in the Taylor polynomials and
such that \[\CR_\eps V = K \hat \CR g  = K \big(\one_+ g \hat \PPi_\eps \<Xi1IXib>\big)\;,\]
thus concluding the proof.
\end{proof}

\begin{proposition}\label{prop:specialSymbol2}
Equip the regularity structure $(\CT,\CG)$ with an admissible model $\PPi$
in the sense of Definition~\ref{def:admissible} and
let $G$ be a modelled distribution of the form
\begin{equ}
G = \bigl(\CL(g_1)\<IXi> + \CL(g_{2,i})\CK_i(\<Xi>) + \CL(g_3)\bigr)\;,
\end{equ}
for some functions $g_1, g_2 \in \CC^{2-\kappa}$ and $g_3 \in \CC^{{3\over2}-\kappa,w}$ with 
$w = \bigl(-{1\over 2}-\kappa,{1\over 2}-\kappa,{1\over 2}-\kappa\bigr)$. Then, there exists
a unique modelled distribution which we call $\CR(\<XiS1> G)$ such that 
$\CR(\<XiS1> G) = \tilde \CR(\<XiS1> G)$ on test functions whose support does not intersect $\R \times \d D$
and such that 
\begin{equ}
\big(\CR(\<XiS1> G)-\Pi_x(\<XiS1> G(x))\big)(\psi_x^\lambda)\lesssim \lambda^{-\f12 -2\kappa} 
\end{equ}
locally uniformly over $x \in \R \times D$ and uniformly over $\lambda \in (0,1]$.

Furthermore, there exists $V  \in \CD^{2-2\kappa,\bar w}$ with
$\bar w = \big({1\over 2}-2\kappa,{1\over 2}-2\kappa,0\big)$
taking values in the translation invariant sector, and
such that $\CR V = K \CR(\<XiS1> G)$. The function $V$ is of the form
\begin{equ}
V = \one_+ \CI(\<Xi1> G) + \Phi\;,
\end{equ}
with $\Phi$ taking values in the Taylor polynomials,
and the map $(g_1,g_2,g_3,\PPi) \mapsto V$ is uniformly Lipschitz continuous on bounded sets.
\end{proposition}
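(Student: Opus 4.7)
The argument has three stages: establish membership of $\<XiS1> G$ in an appropriate modelled distribution space, extract a well-defined distributional reconstruction despite the singular boundary behaviour, and finally recast the resulting convolution with $K$ as a modelled distribution in the translation invariant sector.

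For the first stage, I would read off the regularity of $G$ directly from the multiplication rule of \cite[Lem.~4.3]{Mate}: the summands $\CL(g_1)\<IXi>$ and $\CL(g_{2,i}) \CK_i(\<Xi>)$ lie in $\CD^{2-\kappa}$ (the Taylor lifts being smooth), while $\CL(g_3) \in \CD^{3/2-\kappa, w}$ with $w = (-\f12-\kappa, \f12-\kappa, \f12-\kappa)$. Multiplying by $\<XiS1>$ (of degree $-1-\kappa$) and applying \cite[Lem.~4.3]{Mate} again, one places $\<XiS1> G$ in a space $\CD^{\gamma,w'}$ where the boundary index in $w'$ drops below $-1$ precisely because of the $\CL(g_3) \<XiS1>$ contribution. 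The first two contributions are benign for reconstruction since their degrees after multiplication remain above $-1$, so the only real obstacle is the third.

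For the second stage, to define $\CR(\<XiS1> G)$ I would invoke Theorem~\ref{thm:reconstructDomain} of the appendix, noting that its hypothesis is satisfied in the strongest form: the model admits the canonical decomposition $\PPi = \PPi^+ + \PPi^-$ with $\PPi^+ \<Xi1>\tau \eqdef \PPi\<XiS1>\tau$ supported in $\R \times D$, as recorded in the remark following Definition~\ref{def:admissible}. The theorem then produces a distribution on $\R^{d+1}$ agreeing with $\tilde \CR(\<XiS1> G)$ away from $\R \times \d D$ and satisfying the stated $\lambda^{-\f12-2\kappa}$ bound. Uniqueness is automatic: two candidates would differ by a distribution of order strictly greater than $-1$ supported on $\R \times \d D$, which must vanish since the codimension of $\d D$ equals $1$. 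The continuous dependence on the inputs follows from the quantitative version of the reconstruction theorem.

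For the third stage, I would apply \cite[Lem.~4.12]{Mate} with $\zeta = \CR(\<XiS1> G)$ to produce an initial modelled distribution $\tilde V$ with $\CR \tilde V = K * \CR(\<XiS1> G)$, then bring $\tilde V$ into the translation invariant sector by applying Corollary~\ref{cor:commutationSingular} (in the same spirit as Proposition~\ref{prop:commuteSpecialCase}) to each component of $G$ separately. The algebraic content is that $\<XiS1>\tau$ and $\<Xi1>\tau$ differ only by a contribution supported on $\R \times D^c$; after convolution with the non-anticipative smooth kernel $K$ and restriction to $\R \times D$, this difference is absorbed into the Taylor-polynomial remainder $\Phi$, leaving $\one_+ \CI(\<Xi1> G)$ as the leading modelled-distribution component. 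The factor $\one_+$ arises from the vanishing initial condition encoded in $\CR(\<XiS1> G)$ being supported in $\R_+ \times D$. Lipschitz continuity in $(g_1, g_{2,i}, g_3, \PPi)$ is inherited from the Lipschitz continuity of each step.

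The main obstacle is the second stage: the reconstruction of $\<XiS1> G$ is genuinely singular at $\R \times \d D$ (boundary index below $-1$), and it is precisely the support property of $\PPi\<XiS1>$ — which is why we introduced the restricted noise $\<XiS>$ in the first place — that unlocks the applicability of Theorem~\ref{thm:reconstructDomain} and forces the quantitative boundary bound by which uniqueness is witnessed.
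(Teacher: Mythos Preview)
Your overall architecture (invoke Theorem~\ref{thm:reconstructDomain} for the reconstruction, then \cite[Lem.~4.12]{Mate} for the integration) matches the paper's, but two points need correcting.

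First, your diagnosis of the obstruction in Stage~1 is inverted. The spatial boundary index of $\<XiS1> G$ is $-\f12-2\kappa$, computed from the $g_3$ contribution via $(\f12-\kappa) + \deg\<XiS1> = -\f12 - 2\kappa$, and this is \emph{above} $-1$. The obstruction to the standard reconstruction of \cite{Mate} is rather that the \emph{sector regularity} $\underline\alpha = \deg\<XiS1IXi> = -\f32-2\kappa$ (coming from the $g_1$ term, not the $g_3$ term) lies below $-1$; see the remark immediately following the proposition. The distinction is not cosmetic: Theorem~\ref{thm:reconstructDomain} requires precisely $\sigma > -1 \ge \underline\alpha$ (Assumption~\ref{as:exponents}), so had the spatial index genuinely been below $-1$ the theorem would not apply. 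Here $\<XiS1> G \in \CD^{\f12-2\kappa, w}$ with $w = (-\f32-2\kappa, -\f12-2\kappa, -\f32-2\kappa)$, and it is exactly the fact that $\sigma = -\f12-2\kappa > -1$ that makes the theorem fire.

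Second, your Stage~3 is more roundabout than necessary. The paper applies \cite[Lem.~4.12]{Mate} once, taking as input the modelled distribution $F = \one_+ \<Xi1> G$ (already translation invariant) together with the distribution $\zeta = \CR(\<XiS1> G)$. This is legitimate because admissibility in the sense of Definition~\ref{def:admissible} forces $\tilde\CR(\one_+ \<Xi1> G)$ and $\tilde\CR(\<XiS1> G)$ to agree on test functions supported in $\R\times D$ away from the boundaries, so $\zeta$ is a valid extension of $\tilde\CR F$. The output $\CK^\zeta F$ is then automatically of the form $\one_+\CI(\<Xi1> G) + \Phi$ with no post-hoc conversion needed. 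Your proposed invocation of Corollary~\ref{cor:commutationSingular} to ``bring $\tilde V$ into the translation invariant sector'' is not what that corollary does---it commutes multiplication by smooth functions through an integration kernel---and while the underlying algebraic observation about $\<XiS1>\tau$ versus $\<Xi1>\tau$ is sound, routing it through that corollary would require additional justification.
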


\begin{proof}
We use again the same strategy of proof as in Proposition~\ref{prop:specialSymbol}, but 
this time we take as our space $\CB$ the space of triples $g = (g_1,g_{2,i},g_3)$
as in the statement of the proposition and we set
\begin{equ}
\iota g = \one_+ \<Xi1> G\;,\qquad
\hat \CR g =  \CR \bigl(\<XiS1> G\bigr)\;,
\end{equ}
where $ \CR$ is the reconstruction operator given by Theorem~\ref{thm:reconstructDomain}. 
This time, we have $\deg \<Xi1IXi> = -{3\over 2}-2\kappa$ and 
$\deg \<Xi1> = -1-\kappa$, so that it follows from \cite[Lem.~4.3]{Mate} that   
$\iota g, \<XiS1> G \in \CD^{\gamma, w}$ for $\gamma = {1\over 2}-2\kappa$ and
 $w = \bigl(-{3\over 2}-2\kappa,-\f12-2\kappa,-\f32-2\kappa\bigr)$. 

This shows that Theorem~\ref{thm:reconstructDomain} can indeed be applied to this situation since
furthermore our admissible models
are such that $\hat \PPi_\eps(\<XiS1> \tau)(\phi) = 0$ as soon as $\phi$ is 
supported outside of $\R \times D$.
The remainder of the proof then follows from an application of \cite[Lem.~4.12]{Mate} in the same
way as in the proof of Proposition~\ref{prop:specialSymbol}.
\end{proof}

\begin{remark}
Note that the results of \cite{Mate} do not apply here since 
$\deg \<Xi1IXi>$ and $\deg \<Xi1>$ are both strictly below $-1$. Our saving grace
is that the coefficients are sufficiently well-behaved near the boundary of the domain.
\end{remark}

\appendix

\section{Extension of the kernel}
\label{app:extHeat}

We fix a function $K \colon \R^{d+1} \to \R$ which is smooth, non-anticipative 
(i.e.\ which is supported on positive times), even in the spatial variable, agrees with the heat kernel
on $[0,1] \times [-1,1]^d$, and is supported on $[0,2] \times [-2,2]^d$. 
As usual, we write $K = \sum_{n \ge 0} \tilde K_n$ with $\tilde K_n$ satisfying the conditions
of Definition~\ref{def:regK} with $\beta = 2$.
We furthermore write
\begin{equ}
K_i(x,y) = (y-x)_i K(y-x)\;.
\end{equ}

In order to implement integration against the heat kernel with Neumann boundary
conditions, we set
\begin{equ}[e:defKNeu]
K_\Neu(z,z') = 
\sum_{n \ge 0} \sum_{R \in \CG}  \phi(2^n \|z-z'\|)\, \tilde K_n (z-R(z'))\;,
\end{equ}
(recall that the reflection group $\CG$ was defined in Section~\ref{sec:decomp})
where $\psi \colon \R_+ \to \R_+$ is a smooth function such that $\phi(r) = 1$ for
$r \le 2$ and $\phi(r) = 0$ for $r \ge 3$. 
The kernel $K_\Dir$ is defined similarly.

Furthermore, it is obvious that Assumption~\ref{def:regK} is satisfied with $\beta = 2$, since
this is the case for $K$ itself.
Note that this would not be true if it weren't for the presence of the cutoff
$\phi$ in \eqref{e:defKNeu} since the kernel without cutoff has singularities at
$z = R(z')$ for  all reflections $R \in \CG$.

\begin{proposition}\label{prop:NeumannBC}
One has $K_\Neu(z,z') = \sum_{R \in \CG} K(z-R(z'))$ for $z,z' \in ([0,1]\times D)^2$.
\end{proposition}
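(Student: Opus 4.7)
The plan is to prove the equality term by term in the dyadic decomposition $K = \sum_n \tilde K_n$. Writing out both sides, the claim reduces to showing that for every $n \ge 0$, every $R \in \CG$, and every $z, z' \in [0,1]\times D$, one has $\phi(2^n\|z-z'\|)\,\tilde K_n(z-R(z')) = \tilde K_n(z-R(z'))$; equivalently, that $\phi(2^n\|z-z'\|) = 1$ whenever $\tilde K_n(z-R(z')) \neq 0$.

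The two inputs are an analytic one and a geometric one. On the analytic side, Definition~\ref{def:regK} guarantees that $\tilde K_n$ is supported in a parabolic ball of radius at most $2\cdot 2^{-n}$ about the origin, so the non-vanishing of $\tilde K_n(z-R(z'))$ forces $\|z-R(z')\| \le 2\cdot 2^{-n}$. On the geometric side, I would establish the reflection-monotonicity lemma: for $z, z' \in \R\times [0,1]^d$ and any $R \in \CG$, $\|z-R(z')\| \ge \|z-z'\|$ in the parabolic metric. Since $R$ acts trivially on the time coordinate and $\|(t,x)\|^4 = |x|^4 + |t|^2$, this reduces to $|x-R(x')| \ge |x-x'|$ in the Euclidean metric on $[0,1]^d$. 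Any $R \in \CG$ acts coordinate-wise as $x'_i \mapsto \epsilon_i x'_i + 2k_i$ with $\epsilon_i \in \{\pm 1\}$ and $k_i \in \Z$, so this reduces further to checking $|x_i - \epsilon_i x'_i - 2k_i| \ge |x_i - x'_i|$ for $x_i, x'_i \in [0,1]$, a short case analysis on $(\epsilon_i, k_i)$.

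Combining the two ingredients, whenever $\tilde K_n(z-R(z'))\ne 0$ we have $\|z-z'\| \le \|z-R(z')\| \le 2\cdot 2^{-n}$, hence $2^n\|z-z'\| \le 2$, and the normalisation $\phi \equiv 1$ on $[0,2]$ gives $\phi(2^n\|z-z'\|) = 1$. Summing the resulting termwise identity over $n$ and $R$ yields the claim (only finitely many $R$ contribute since $K$ is compactly supported).

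The only nontrivial step is the coordinate-wise inequality in the reflection-monotonicity lemma. The cases $(\epsilon_i,k_i) = (1,0)$ and $(-1,0)$ are immediate (equality in the first, and $|x_i+x'_i| \ge |x_i-x'_i|$ by nonnegativity in the second). The subtle case is $(\epsilon_i,k_i) = (-1,1)$, giving $|x_i + x'_i - 2| = 2 - x_i - x'_i$, where one needs $2 - x_i - x'_i \ge |x_i - x'_i|$, i.e.\ $\max(x_i, x'_i) \le 1$; here the constraint $x_i, x'_i \in [0,1]$ is used in an essential way. All remaining cases $|k_i| \ge 1$ with $\epsilon_i = 1$, or $|k_i| \ge 2$ with $\epsilon_i = -1$, give $|x_i - \epsilon_i x'_i - 2k_i| \ge 1 \ge |x_i - x'_i|$ trivially. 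This compatibility between the width of the cutoff $\phi$ and the support radius of $\tilde K_n$ is precisely what the normalisations in Section~\ref{app:extHeat} were chosen to ensure.
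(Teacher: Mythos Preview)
Your proof is correct and follows exactly the same approach as the paper: show that whenever $\tilde K_n(z-R(z')) \neq 0$ the cutoff $\phi(2^n\|z-z'\|)$ equals $1$, via the reflection-monotonicity inequality $\|z-R(z')\| \ge \|z-z'\|$ for $z,z' \in \R\times D$. The paper simply asserts this inequality, whereas you supply the coordinate-wise case analysis; note only that your enumeration of ``remaining cases'' omits $(\epsilon_i,k_i)=(-1,-1)$, which is however equally trivial since $|x_i+x'_i+2| \ge 2 \ge |x_i-x'_i|$.
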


\begin{proof}
Recall that $\tilde K_n$ is supported in the ball of radius $2^{-n}$ and note that
$\|z-R(z')\| \ge \|z-z'\|$ whenever both $z$ and $z'$ belong to $\R\times D$.
As a consequence, for every $z,z' \in ([0,1]\times D)^2$, every $n\ge 0$ and every $R \in \CG$,
one has either $\tilde K_n (z-R(z')) = 0$ or $\phi(2^n \|z-z'\|) = 1$, so that one can
replace $\phi$ by $1$ in \eqref{e:defKNeu}.
\end{proof}

In order to state the main result of this appendix, we write $\tilde D = \R_+ \times D$ as a shorthand.

\begin{proposition}\label{prop:diffKernel}
Let $K$, $K_\d$, $K_i$ and $K_{\d,i}$ be as above and let $\zeta, \zeta^c \in \CC^{\alpha}$
with $\alpha \le -2$ be such that $\zeta$ is supported on $\tilde D$ and $\zeta^c$ is supported on
its complement. 
Then, restricted to $\tilde D$, both $K \zeta^c$ and 
$K_{\d} \zeta$ belong to $\CC^{\gamma,w}$ for $w = \bigl(\alpha+2\bigr)_3$ and any $\gamma > 0$.
Furthermore, when restricted to $\tilde D^c$, $K \zeta$ belongs to $\CC^{\gamma,w}$.

The same statements hold when $\alpha \le -3$ with $K$ and $K_\d$ replaced by $K_i$ and
$K_{\d,i}$ respectively and $w = \bigl(\alpha+3\bigr)_3$.
\end{proposition}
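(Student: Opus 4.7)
The plan is to reduce all three assertions to a single dyadic estimate. Using the decomposition $K = \sum_{n \ge 0} \tilde K_n$ with $\tilde K_n$ supported in the parabolic ball of radius $2^{-n}$ and $\|D^k \tilde K_n\|_\infty \lesssim 2^{(d+|k|)n}$, one can write $(D^k_z \tilde K_n)(z-\cdot) = 2^{-n(2-|k|)} \phi_z^{2^{-n}}$ for a normalized parabolic test function, so that testing against $\zeta \in \CC^\alpha$ yields
\begin{equ}
|D^k_z(\tilde K_n \star \zeta)(z)| \lesssim 2^{-n(\alpha+2-|k|)}\;.
\end{equ}

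First consider $K\zeta^c \restr \tilde D$. Let $d(z)$ denote the parabolic distance from $z \in \tilde D$ to the support of $\zeta^c$, which is contained in $\tilde D^c$. Since $\tilde K_n(z-\cdot)$ is supported in a ball of radius $2^{-n}$ around $z$, the contribution $\tilde K_n \star \zeta^c(z)$ vanishes unless $2^{-n} \gtrsim d(z)$. Because $\alpha \le -2$ forces $\alpha+2-|k| < 0$ for every $|k| \ge 0$, the geometric sum over the contributing scales $n$ is dominated by its largest term and produces
\begin{equ}
|D^k_z(K\zeta^c)(z)| \lesssim d(z)^{\alpha+2-|k|}\;.
\end{equ}
Applying this with $d(z)$ replaced successively by the parabolic distances to $\{t=0\}$, to $\R\times\d D$, and to their intersection reproduces the three weight exponents of $w = (\alpha+2)_3$ required by the definition of $\CC^{\gamma,w}$. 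The third assertion $K\zeta \restr \tilde D^c \in \CC^{\gamma,w}$ follows from an identical argument after interchanging the roles of source and observation.

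For $K_\d \zeta \restr \tilde D$, the representation of $K_\Neu$ from Proposition~\ref{prop:NeumannBC} together with cancellation of the $R=\id$ term yields
\begin{equ}
K_\d(z,z') = \sum_{n \ge 0} \sum_{R \in \CG \setminus \{\id\}} \phi(2^n \|z-z'\|)\,\tilde K_n(z - Rz')\;.
\end{equ}
Viewed as a test function in $z'$, each reflected dyadic piece is localized at scale $2^{-n}$ around $R^{-1}z$; for $z \in \tilde D$ with $R\neq \id$ this point lies in $\tilde D^c$ at parabolic distance $d_R(z)$ from $z$ equal to twice the distance from $z$ to the fixed plane of $R$. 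Hence the piece intersects $\supp \zeta \subset \tilde D$ only when $2^{-n} \gtrsim d_R(z)$, and the preceding summation applies and yields $|D^k_z(K_\d\zeta)(z)| \lesssim d(z)^{\alpha+2-|k|}$ where $d(z) \sim \min_R d_R(z)$ is comparable to the parabolic distance from $z$ to $\d D$.

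For $K_i$ and $K_{\d,i}$, the additional factor $(y-x)_i$ contributes an extra $2^{-n}$ of decay at scale $2^{-n}$, so that the basic dyadic bound improves to $|D^k(\tilde K_n^i \star \zeta)(z)| \lesssim 2^{-n(\alpha+3-|k|)}$; under $\alpha \le -3$ the same summation yields $w = (\alpha+3)_3$. The main obstacle will be careful book-keeping at the corner $\{0\}\times\d D$ where the time and spatial boundaries meet, to verify the third weight component in the triple $w$. The borderline cases $\alpha \in \{-2,-3\}$ introduce a logarithmic factor in the lowest-order sum, which is harmlessly absorbed into the weight exponent $0$. One also needs to check that the cutoff $\phi$ in the definition of $K_\Neu$ does not degrade the dyadic bounds, which follows immediately from its smoothness and compact support.
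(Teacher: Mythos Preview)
Your approach is essentially the same as the paper's: decompose into dyadic pieces, use the support constraint to truncate the sum at scale $2^{-n} \sim d(z,\d\tilde D)$, and sum the geometric series. Two small points: your phrase ``fixed plane of $R$'' only makes sense for elementary reflections, whereas $\CG$ contains arbitrary compositions; the paper instead observes directly that the joint support conditions from $\tilde K_n$ and the cutoff $\phi$ force $d(z,\d\tilde D) \lesssim 2^{-n}$ and that at most finitely many $R$ contribute at each scale. Also, your claim that a logarithm at the endpoint $\alpha=-2$ is ``harmlessly absorbed into the weight exponent $0$'' is not literally true (the paper simply does not comment on this endpoint), and your anticipated ``main obstacle'' at the corner is not one: the single bound $|D^k f(z)|\lesssim d(z,\d\tilde D)^{\alpha+2-|k|}$ already yields all three components of $w=(\alpha+2)_3$ at once.
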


\begin{proof}
Consider the case $z \not \in \tilde D$, so that
\begin{equ}
\bigl(D^k K \zeta\bigr)(z) = \sum_{n \ge 0} \scal{\zeta, D^k \tilde K_n(z-\cdot)}\;.
\end{equ}
We now note that $\scal{\zeta, D^k \tilde K_n(z-\cdot)} = 0$ whenever $2^{-n} \le d(z, \tilde D)$,
while in general 
\begin{equ}[e:boundKzeta]
|\scal{\zeta, D^k \tilde K_n(z-\cdot)}| \lesssim 2^{-n(\alpha+2-|k|)}\;,
\end{equ}
by definition of $\CC^\alpha$. It immediately follows that one has
\begin{equ}[e:wantedBound]
|\bigl(D^k K \zeta\bigr)(z)| \lesssim d(z, \tilde D)^{\alpha+2-|k|}\;,
\end{equ}
so that one has indeed $K \zeta \in \CC^{\gamma,(\alpha+2)_3}$ for every $\gamma > 0$.
The case of $K \zeta^c$, $K_i \zeta$ and $K_i \zeta^c$ is dealt with in exactly the same way.

Consider now the case $z\in \tilde D$ and define
\begin{equ}
\tilde K_n^R(z,z') = \phi(2^n |z-z'|)\, \tilde K_n (z-R(z'))\;.
\end{equ}
We then have the identity
\begin{equ}[e:sumKd]
\bigl(D^k K_\d \zeta\bigr)(z) = \sum_{n \ge 0} \sum_{R \in \CG \setminus\{\id\}} 
\scal{\zeta, D^k \tilde K_n^R(z,\cdot)}\;.
\end{equ}
It follows again from the definition of $\CC^\alpha$ that 
the bound \eqref{e:boundKzeta} holds with $\tilde K_n(z-\cdot)$ replaced by $\tilde K_n^R(z,\cdot)$.

We also note that $\tilde K_n^R(z,\cdot) = 0$ unless there exists a point $z'$ such that one has
on one hand $\|z-z'\| \lesssim 2^{-n}$ and on the other hand $\|z-R(z')\| \lesssim 2^{-n}$. 
In particular, there exists a constant $C$ such that, if $d(z, \d\tilde D) \ge C 2^{-n}$,
one has $\tilde K_n^R(z,\cdot) = 0$ unless $R = \id$ (which is excluded from the sum \eqref{e:sumKd}).
If on the other hand $d(z, \d\tilde D) \le C 2^{-n}$, then $\tilde K_n^R(z,\cdot)$ is only non-zero 
for at most eight different reflections $R$.
Combining these observations shows as before that the bound \eqref{e:wantedBound}
holds with $K\zeta$ replaced by $K_\d \zeta$. The proof for $K_\d$ replaced by $K_{\d,i}$ is virtually identical.
\end{proof}

\begin{corollary}\label{cor:boundaryTerm}
For $\alpha \in (-3,-2]$  and $\zeta \in \CC^\alpha$ supported on $\R_+ \times D$, one has
$K_{\d} \zeta \in \CC^{\alpha + 2}$.
\end{corollary}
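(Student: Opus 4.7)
The plan is to read off the result from Proposition~\ref{prop:diffKernel} with $k=0$ by converting the pointwise weighted bound near $\partial\tilde D$ into a test-function estimate, using crucially that $\alpha+3>0$ so that $d(\cdot,\partial\tilde D)^{\alpha+2}$ remains locally integrable. Specifically, the proposition with $k=0$ yields that $K_\d \zeta$ is a smooth function on $\tilde D^\circ$ satisfying $|K_\d\zeta(y)|\lesssim d(y,\partial\tilde D)^{\alpha+2}$ uniformly (and the analogous statement holds outside $\tilde D$ after extending $K_\d\zeta$ by the naturally corresponding expression there, or simply by zero, which is the convention consistent with $\zeta$ being supported in $\tilde D$). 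Since $\alpha+2\in(-1,0]$, this is enough data to test against scaled test functions.

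The estimate then proceeds by splitting into two cases for a test function $\psi_z^\lambda$ of parabolic scale $\lambda\in(0,1]$. In the \emph{far} case $d(z,\partial\tilde D)\ge 2\lambda$, every $y$ in the support of $\psi_z^\lambda$ satisfies $d(y,\partial\tilde D)\ge\lambda$; since $\alpha+2\le 0$ the function $d\mapsto d^{\alpha+2}$ is nonincreasing, so $|K_\d\zeta(y)|\lesssim\lambda^{\alpha+2}$, and integrating against $\psi_z^\lambda$ (which has $L^1$ norm $\lesssim 1$) gives $|(K_\d\zeta)(\psi_z^\lambda)|\lesssim\lambda^{\alpha+2}$; the case $z\notin\tilde D$ with $d(z,\partial\tilde D)\ge2\lambda$ is immediate. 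In the \emph{near} case $d(z,\partial\tilde D)<2\lambda$, I bound
\begin{equ}
|(K_\d\zeta)(\psi_z^\lambda)|\lesssim \lambda^{-(d+2)}\int_{B(z,C\lambda)\cap\tilde D}d(y,\partial\tilde D)^{\alpha+2}\,dy.
\end{equ}
Near a single face I parameterise $y=(s,v)$ with $s\in[0,C\lambda]$ the distance to the face and $v$ varying in a parabolic ball of radius $\lambda$ of volume $\lambda^{d+1}$; the integral reduces to $\lambda^{d+1}\int_0^{C\lambda}s^{\alpha+2}\,ds\lesssim\lambda^{d+1}\cdot\lambda^{\alpha+3}=\lambda^{d+\alpha+4}$, where finiteness of the $s$-integral uses $\alpha+3>0$. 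Multiplying by $\lambda^{-(d+2)}$ yields $\lambda^{\alpha+2}$.

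The only mild technical point is to handle points near edges and corners of $\tilde D$, where several faces meet. There one decomposes $B(z,C\lambda)\cap\tilde D$ into pieces labelled by which subset of faces lies within distance $C\lambda$; on each such piece the distance $d(y,\partial\tilde D)$ is comparable to the minimum of the distances to those particular faces, and the resulting multi-dimensional integral of a product of powers $\prod_i s_i^{\beta_i}$ (with $\sum\beta_i = \alpha+2$) against the tangential $\lambda^{d+k}$-factor is again finite by repeatedly using $\alpha+3>0$ (which implies each individual integration yields a positive power of $\lambda$ since the degrees only improve as we move to deeper strata). This step is routine bookkeeping rather than a genuine obstacle; the genuine input is the pointwise bound from Proposition~\ref{prop:diffKernel} and the strict inequality $\alpha>-3$.
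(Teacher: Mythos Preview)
Your approach is correct and essentially amounts to re-proving, in this particular instance, the embedding $\CC^{\gamma,(\eta)_3} \subset \CC^{\eta}$ for $\gamma \ge \eta > -1$ that the paper simply quotes. The paper's proof is a one-liner: Proposition~\ref{prop:diffKernel} gives $K_\d\zeta \in \CC^{\gamma,(\alpha+2)_3}$ for any $\gamma>0$, and since $\alpha+2\in(-1,0]$ the stated embedding yields $K_\d\zeta\in\CC^{\alpha+2}$. You instead extract from the proof of Proposition~\ref{prop:diffKernel} the pointwise bound $|K_\d\zeta(y)|\lesssim d(y,\partial\tilde D)^{\alpha+2}$ and integrate it directly against $\psi_z^\lambda$; the condition $\alpha+3>0$ that makes your boundary integral converge is exactly the condition $\eta>-1$ that makes the embedding hold.

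One small inaccuracy: in the corner case the integrand is $\min_i s_i^{\,\alpha+2}$, not a product $\prod_i s_i^{\beta_i}$. The integral still behaves correctly (e.g.\ near a codimension-two edge $\int_{[0,C\lambda]^2}\min(s_1,s_2)^{\alpha+2}\,ds_1\,ds_2\lesssim\lambda^{\alpha+4}$, and combined with the remaining $\lambda^d$ tangential factor and the $\lambda^{-(d+2)}$ prefactor you recover $\lambda^{\alpha+2}$), so the conclusion stands; but the justification you wrote does not match the actual computation.
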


\begin{proof}
This follows immediately from Proposition~\ref{prop:diffKernel}, combined with the fact that 
$\CC^{\gamma,(\eta)_3} \subset \CC^{\eta}$ whenever $\gamma \ge \eta > -1$.
\end{proof}

\begin{proposition}\label{prop:R+}
Let $\zeta \in \CC^{\alpha}$ with $\alpha \le -2$ be supported on $\R_- \times \R^d$. 
Then, restricted to $\R_+ \times \R^d$, $K_\Neu \zeta$ belongs to $\CC^{\gamma,w}$ for 
$w = \bigl(\alpha+2,\gamma, \alpha+2\bigr)$ and any $\gamma > 0$.
\end{proposition}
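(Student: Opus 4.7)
The plan is to mimic the multiscale decomposition used in the proof of Proposition~\ref{prop:diffKernel}. Fix $z = (t, x) \in \R_+ \times \R^d$ and a multiindex $k$. Using the decomposition \eqref{e:defKNeu}, we write
\begin{equ}
D^k (K_\Neu \zeta)(z) = \sum_{n \ge 0} \sum_{R \in \CG} \scal{\zeta, D^k_z \tilde K_n^R(z, \cdot)}\;,
\end{equ}
where $\tilde K_n^R(z, z') = \phi(2^n \|z-z'\|)\, \tilde K_n(z-R(z'))$. Each $D^k_z \tilde K_n^R(z,\cdot)$ is, up to the factor $2^{n|k|}$, a scale-$2^{-n}$ test function localised in a parabolic ball of radius $O(2^{-n})$ around $z$, so the $2$-regularising property of $K$ combined with $\zeta \in \CC^\alpha$ yields
\begin{equ}
\bigl|\scal{\zeta, D^k_z \tilde K_n^R(z,\cdot)}\bigr| \lesssim 2^{-n(\alpha + 2 - |k|)}\;.
\end{equ}

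Two support constraints then control which scales and which reflections can actually contribute. First, since $\zeta$ is supported in $\{t' \le 0\}$, the pairing vanishes unless the support of $\tilde K_n^R(z,\cdot)$ reaches negative times, which forces $d_0(z) \eqdef \sqrt{t} \lesssim 2^{-n}$. Second, exactly as in the proof of Proposition~\ref{prop:diffKernel}, for $R \ne \id$ the kernel $\tilde K_n^R(z,\cdot)$ is nonzero only when $z$ lies within parabolic distance $O(2^{-n})$ of the reflection plane of $R$, in particular forcing $d_1(z) \eqdef d(x, \d D) \lesssim 2^{-n}$; at each scale $n$, only a bounded number of $R \ne \id$ contribute.

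Because $\alpha + 2 - |k| \le 0$, the resulting geometric sums are dominated by the terms with the largest $n$ allowed. The $R = \id$ contribution is therefore bounded by $d_0(z)^{\alpha+2-|k|}$, while the $R \ne \id$ contribution is bounded by $(d_0(z) \vee d_1(z))^{\alpha+2-|k|}$, and vanishes altogether once $d_1(z)$ exceeds the support of $K$. Translating these pointwise bounds into the language of \cite[Def.~3.2]{Mate} gives exactly the triple $w = (\alpha+2, \gamma, \alpha+2)$: in the interior regime $d_0 \lesssim d_1$ the $R=\id$ piece furnishes the temporal singularity $|D^k (K_\Neu \zeta)(z)| \lesssim d_0^{\alpha+2-|k|}$; when $d_0$ is bounded below, only finitely many scales contribute and the whole sum is uniformly bounded, so $K_\Neu\zeta$ is $\CC^\gamma$ near $\d D$ for any $\gamma$; near the corner $\{0\}\times \d D$ both the $R=\id$ and the reflected contributions produce a singularity of common order $(d_0 \vee d_1)^{\alpha+2-|k|}$. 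No upper bound on $|k|$ (hence on $\gamma$) is needed since the geometric sum always terminates at $n \sim \log_2(1/d_0)$ and $K$ is smooth away from the origin.

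The main obstacle will be the bookkeeping needed to match these scale-by-scale estimates with the rather intricate three-indexed norm of $\CC^{\gamma,w}$ in \cite[Def.~3.2]{Mate}. In particular, one must verify that the reflected contributions do not generate a spurious boundary-type singularity in the pure-$P_1$ regime $d_1 \ll d_0$ (which follows from the support restriction $2^{-n} \gtrsim d_0$ that leaves only finitely many scales once $d_0$ is bounded below), and confirm that the corner exponent $\eta_{0,1} = \alpha+2$ is indeed correct once $d_1 \lesssim d_0$, where the geometric sum of the reflected pieces produces the same rate as the $R=\id$ term.
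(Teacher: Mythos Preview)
Your proof is correct and follows essentially the same multiscale argument as the paper. The paper's version is slightly more economical: rather than splitting into the $R=\id$ and $R\neq\id$ contributions and tracking $d_1(z)$, it simply observes that for every $z\in\R_+\times\R^d$ and every $n\ge 0$ only a uniformly bounded number of reflections $R$ can contribute (by the same ``$z'$ and $R(z')$ both within $2^{-n}$ of $z$'' reasoning you use), and that the support of $\zeta$ forces $2^{-n}\gtrsim\sqrt t$. This yields directly $|D^k(K_\Neu\zeta)(z)|\lesssim t^{(\alpha+2-|k|)/2}$, a bound that depends only on $d_0$ and from which the triple $(\alpha+2,\gamma,\alpha+2)$ follows without the regime-by-regime discussion of $d_1$ that you go through.
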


\begin{proof}
With the same notations as above, we have for $z = (t,x)$,
\begin{equ}
\bigl(D^k K_\Neu \zeta\bigr)(z) = \sum_{n \ge 0} \sum_{R \in \CG} 
\scal{\zeta, D^k \tilde K_n^R(z,\cdot)}\;.
\end{equ}
As before, the summand vanishes as soon as $2^{-n} \lesssim \sqrt{|t|}$
and, for all $z \in \R_+ \times \R^d$ and $n\ge 0$, only at most a fixed number of test functions 
$\tilde K_n^R(z,\cdot)$ are non-vanishing, so that we obtain for $t > 0$ the bound
\begin{equ}
\big|\bigl(D^k K_\Neu \zeta\bigr)(t,x)\big| \lesssim t^{{\alpha + 2\over 2}-k_0}\;.
\end{equ}
This then implies the required bound by the definition of the spaces $\CC^{\gamma,w}$.
\end{proof}

\section[Integration and multiplication almost commute]{Integration and multiplication by smooth functions almost commute}
\label{sec:almostComm}

We will use a form of the multilevel Schauder theorem of \cite[Sec.~5]{regularity} with slightly weaker
assumptions on the kernel $K$. In this section, we fix a space-time scaling $\s$ as in \cite{regularity}.
(In our case this would be the parabolic scaling $\s = (2,1,\ldots,1)$.)

\begin{definition}\label{def:regK}
A function $K\colon \R^d \times \R^d \to \R$ is said to be $\beta$-regularising if it
can be decomposed as
\begin{equ}[e:defKbar]
K(x,y) = \sum_{n \ge 0} \tilde K_n(x,y)\;,
\end{equ}
where the functions $\tilde K_n$ have the following properties:
\begin{claim}
\item For all $n \ge 0$, the map $\tilde K_n$ is supported in the set $\{(x,y)\,:\, \|x-y\|_\s \le 2^{-n}\}$.
\item For any two multiindices $k$ and $\ell$, there exists a constant $C$ such that
the bound
\begin{equ}[e:propK1]
\bigl|D_x^k D_y^\ell \tilde  K_n(x,y)\bigr| \le C 2^{(|\s| - \beta + |\ell|_\s + |k|_\s)n}\;,
\end{equ}
holds uniformly over all $n \ge 0$ and all $x,y \in \R^d$.
\end{claim}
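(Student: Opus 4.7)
The plan is to exhibit an explicit Littlewood--Paley-style decomposition of the kernel $K$ constructed in Appendix~\ref{app:extHeat} and verify the two listed properties directly. I would fix a smooth cut-off $\chi\colon\R_+\to[0,1]$ with $\chi(r)=1$ for $r\le1$ and $\chi(r)=0$ for $r\ge2$, set $\chi_n(z)=\chi(2^n\|z\|_\s)$, and define the telescoping pieces
\[
\tilde K_n(z) = \bigl(\chi_n(z)-\chi_{n+1}(z)\bigr) K(z) \quad (n\ge1),\qquad \tilde K_0(z) = \bigl(1-\chi_1(z)\bigr)K(z),
\]
which manifestly sum to $K$ pointwise away from the origin. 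Note that translation invariance lets me regard $\tilde K_n$ as a function of one variable and write $D_x^k D_y^\ell \tilde K_n(x,y) = (-1)^{|k|}D^{k+\ell}\tilde K_n(x-y)$, so the stated bound reduces to a pointwise estimate on $D^{k+\ell}\tilde K_n$.

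The support condition is immediate: for $n\ge1$, the factor $\chi_n-\chi_{n+1}$ vanishes outside the parabolic annulus $\{2^{-(n+1)}\le\|z\|_\s\le 2^{1-n}\}$, so $\tilde K_n$ is supported in $\{\|z\|_\s\le 2^{-n+1}\}$, which is of the required form after absorbing a factor of $2$ into the constant $C$ in the second bullet. For $n=0$, $\tilde K_0$ is supported in the compact support of $K$ (fixed radius), and adjusting $C$ again puts it inside $\{\|z\|_\s\le 2^0\cdot C'\}$.

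For the derivative bound I would use the two facts that (i) $K$ agrees with the standard heat kernel $P$ on the parabolic unit ball, and $P$ satisfies the classical pointwise estimate $|D^k P(z)|\lesssim\|z\|_\s^{-d-|k|_\s}=\|z\|_\s^{-|\s|+2-|k|_\s}$; and (ii) outside the unit ball $K$ is smooth and compactly supported, so it and all its derivatives are bounded. Since $\chi_n-\chi_{n+1}$ is a fixed smooth profile rescaled by $2^{-n}$, the chain rule gives $|D^m(\chi_n-\chi_{n+1})(z)|\lesssim 2^{n|m|_\s}$ with a constant depending only on $m$. Combining these via the Leibniz rule, and using that on the support of $\tilde K_n$ one has $\|z\|_\s\sim 2^{-n}$, one obtains
\[
|D_x^k D_y^\ell \tilde K_n(x,y)|\lesssim \|x-y\|_\s^{-|\s|+2-|k|_\s-|\ell|_\s}\lesssim 2^{n(|\s|-2+|k|_\s+|\ell|_\s)},
\]
which is precisely \eqref{e:propK1} with $\beta=2$. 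The $n=0$ term is bounded trivially using (ii) alone. The main (minor) obstacle is bookkeeping in the Leibniz expansion to ensure the implicit constants depend only on $k,\ell$ and not on $n$; this follows automatically from the scale-invariance of the construction, as every factor arising from differentiating $\chi_n-\chi_{n+1}$ comes with exactly the homogeneity $2^{n|m|_\s}$ needed to match the corresponding loss from differentiating $P$.
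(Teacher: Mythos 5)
The two bullets you are proving are the \emph{conditions} of Definition~\ref{def:regK}, and the paper never proves them for its particular $K$; it simply asserts, ``as usual, we write $K = \sum_{n\ge 0}\tilde K_n$ with $\tilde K_n$ satisfying the conditions of Definition~\ref{def:regK} with $\beta=2$'', relying on the standard Littlewood--Paley-type decomposition from \cite[Sec.~5]{regularity}. Your construction is exactly that standard decomposition, and the derivative estimate is correctly derived: Leibniz's rule combined with the parabolic scaling of the cutoff differences ($|D^m(\chi_n-\chi_{n+1})|\lesssim 2^{n|m|_\s}$) and the classical heat-kernel bound $|D^mP(z)|\lesssim\|z\|_\s^{-|\s|+2-|m|_\s}$ gives precisely \eqref{e:propK1} with $\beta=2$, and the translation-invariance reduction $D_x^kD_y^\ell\tilde K_n(x,y)=(-1)^{|k|}D^{k+\ell}\tilde K_n(x-y)$ is fine since $|k+\ell|_\s=|k|_\s+|\ell|_\s$.

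One point in your write-up is not quite right: you note that $\tilde K_n$ for $n\ge1$ is supported in $\{\|z\|_\s\le 2^{1-n}\}$ and say this ``is of the required form after absorbing a factor of $2$ into the constant $C$ in the second bullet''. The support condition in the first bullet contains no constant, so nothing can be absorbed there; the support condition is genuinely violated by your choice of cutoff scale. This is a purely cosmetic defect and is repaired by tightening the cutoffs (e.g.\ take $\chi_n(z)=\chi(2^{n+1}\|z\|_\s)$, so that $\chi_n-\chi_{n+1}$ is supported in $\{\|z\|_\s\le 2^{-n}\}$) or by reindexing. A similar, equally cosmetic mismatch affects $n=0$: the paper's $K$ is supported in the ball of radius $2$, not $1$, so $\tilde K_0$ cannot literally have support radius $1$; one either rescales or accepts, as is standard, that the precise constant in the support radius is immaterial (all estimates only use it up to $n$-independent constants). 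Modulo this bookkeeping, your argument is correct and is the argument the paper is implicitly invoking.
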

\end{definition}

\begin{remark}
This is identical to \cite[Ass.~5.1]{regularity}, but with the last condition absent.
It turns out that the only place where the third condition of  \cite[Ass.~5.1]{regularity} is ever used in
\cite[Sec.~5]{regularity} is in the proof of
\cite[Lemma~5.19]{regularity}, which in turn is only used for the proof of the extension 
theorem, \cite[Thm~5.14]{regularity}. 
\end{remark}

%
%

We assume that we are given a regularity structure of the type studied in \cite{regularity,Lorenzo}, 
endowed with a family of integration maps $\CI_k$ for multiindices $k \in \N^d$. We also assume that 
we are given a model $(\Pi,\Gamma)$ which is admissible for the collection of kernels $K_k$ such that
\begin{equ}[e:admissible]
K_k(x,y) = (y-x)^k K_0(x,y)\;.
\end{equ}
We will furthermore assume that our regularity structure contains the polynomial structure on
$\R^d$ (for some fixed scaling) and that admissible models satisfy the 
usual identity $\Pi_x \X^k\tau = (\fat - x)^k \Pi_x \tau$ for every $\tau \in \CT$.
Assuming that $\CI_0$ is of order $\beta$, we assume that $\CI_k$ is of order $\beta + |k|_\s$,
which is compatible with \eqref{e:admissible} in the sense that if $K_0$ is $\beta$-regularising in
the sense of Definition~\ref{def:regK}, then $K_k$ is $(\beta + |k|_\s)$-regularising.

\begin{lemma}\label{lem:commute}
Let $K_0$ be a $\beta$-regularising kernel for some $\beta > 0$ and
let $(\Pi,\Gamma)$ be an admissible model for the collection of kernels $K_k$ 
given in \eqref{e:admissible}. Then, one has the identities
\begin{equs}
\Pi_x \CI_k(X^\ell \tau) &= \sum_{m \le \ell} \binom{\ell}{m} \Pi_x X^{\ell-m} \CI_{k+m}(\tau)\;,\label{e:combinatorial}\\
\Gamma_{xy}  \CI_k(X^\ell \tau) -  \CI_k(\Gamma_{xy}(X^\ell \tau))
&= \sum_{m \le \ell} \binom{\ell}{m} 
\Gamma_{xy} X^{\ell-m} \bigl(\Gamma_{xy}\CI_{k+m}(\tau) - \CI_{k+m}(\Gamma_{xy}\tau)\bigr)\;.
\end{equs}
\end{lemma}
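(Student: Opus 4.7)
The plan is to verify both identities by unpacking the admissibility formula and applying elementary binomial combinatorics. The key algebraic observation underlying everything is the absorption identity $(z-y)^m K_k(y,z) = (z-y)^{k+m} K_0(y,z) = K_{k+m}(y,z)$, which combined with the binomial expansion $(z-x)^\ell = \sum_{m \le \ell}\binom{\ell}{m}(z-y)^m (y-x)^{\ell-m}$ drives both formulae.

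First I would handle \eqref{e:combinatorial}. By admissibility,
\begin{equ}
(\Pi_x \CI_k \sigma)(y) = \int K_k(y,z)(\Pi_x\sigma)(dz) - \sum_{|j|_\s < \deg\sigma + \beta + |k|_\s}\frac{(y-x)^j}{j!}\int (D_1^j K_k)(x,z)(\Pi_x\sigma)(dz)\;.
\end{equ}
Setting $\sigma = X^\ell\tau$ and using $(\Pi_x X^\ell\tau)(z) = (z-x)^\ell(\Pi_x\tau)(z)$, the ``principal part'' integral expands via the binomial identity and the absorption identity into $\sum_m\binom{\ell}{m}(y-x)^{\ell-m}(K_{k+m} * \Pi_x\tau)(y)$, which matches the principal part of $\sum_m \binom{\ell}{m}\Pi_x X^{\ell-m}\CI_{k+m}(\tau)$. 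For the Taylor correction, the degree threshold on the LHS is $\deg\tau + |\ell|_\s + \beta + |k|_\s$, while on the RHS each $X^{\ell-m}\CI_{k+m}(\tau)$ contributes $(y-x)^{\ell-m}$ times a Taylor polynomial with threshold $\deg\tau + \beta + |k+m|_\s$, summing to precisely $\deg\tau + |\ell|_\s + \beta + |k|_\s$. The coefficients then agree termwise by the same binomial decomposition applied to $(z-x)^\ell = \sum_m\binom{\ell}{m}(z-x)^m(x-x)^{\ell-m}$... more accurately, by pushing the derivative $D_1^j$ through the kernel-factor expansion and invoking Leibniz.

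For the second identity, I would combine \eqref{e:combinatorial} with the consistency relation $\Pi_y = \Pi_x\Gamma_{xy}$ and the fact that on the polynomial sector $\Gamma_{xy}$ is multiplicative with $\Gamma_{xy}X^\ell = \sum_m\binom{\ell}{m}(x-y)^{\ell-m}X^m$. The cleanest packaging is to write, via the standard Hopf-algebraic description of admissible models in \cite[Sec.~5]{regularity} (or \cite{Lorenzo}), the ``commutator'' $(\Gamma_{xy}\CI_k - \CI_k\Gamma_{xy})\sigma = \sum_{|j|_\s < \deg\sigma + \beta + |k|_\s}\frac{X^j}{j!}\,L_{xy,k,j}(\sigma)$ for explicit analytic functionals $L_{xy,k,j}$ built from $K_k$, $\Pi_x$, and $\Pi_y\Gamma_{yx}$, which are linear in $\sigma$. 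Evaluating $L_{xy,k,j}(X^\ell\tau)$ via the same $(z-x)^\ell = \sum_m\binom{\ell}{m}(z-y)^m(y-x)^{\ell-m}$ trick then regroups the result into $\sum_m\binom{\ell}{m}\Gamma_{xy}X^{\ell-m} L_{xy,k+m,j'}(\tau)$, at which point recognising each $L_{xy,k+m,j'}(\tau)$ as $\Gamma_{xy}\CI_{k+m}\tau - \CI_{k+m}\Gamma_{xy}\tau$ yields the claim.

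The main obstacle will be the bookkeeping for the Taylor polynomial corrections, particularly in the second identity where the presence of $\Gamma_{xy}X^{\ell-m}$ (rather than just $X^{\ell-m}$) on the right hand side forces a careful matching of the polynomial terms introduced by the action of $\Gamma_{xy}$ on $X^\ell$; the verification that the two rearrangements of the double sum over $(m, j)$ and $(m', j')$ coincide is essentially a bivariate Vandermonde/Leibniz exercise, but one that must be carried out attentively to keep track of which multiindices govern which thresholds.
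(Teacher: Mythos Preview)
Your approach to the first identity is essentially the paper's: expand both sides via the admissibility formula, match the principal parts by the binomial/absorption identity, and match the Taylor corrections by a Leibniz expansion followed by a multinomial cancellation. The paper carries this out explicitly (after reducing to $k=0$), performing the substitution $q=k-p$, $r=k+m$ and observing that the resulting alternating sum over $p$ vanishes unless $r=q$; this is the ``Vandermonde/Leibniz exercise'' you allude to.

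For the second identity, however, the paper takes a much shorter route than your direct computation of the commutator functionals $L_{xy,k,j}$. The key observation is that both sides of the second identity take values in the \emph{Taylor polynomials} (this is the defining property of admissible models: $\Gamma_{xy}\CI_k - \CI_k\Gamma_{xy}$ lands in $\bar T$). Since $\Pi_x$ is injective on $\bar T$, it suffices to check equality after applying $\Pi_x$. But $\Pi_x$ applied to either side can be computed using the \emph{first} identity together with the multiplicativity of $\Gamma_{xy}$ on the polynomial sector, and a short calculation (the paper's display \eqref{e:calculation}) shows the two agree. This completely sidesteps the explicit matching of polynomial coefficients that you flag as the main obstacle: no second round of Leibniz/Vandermonde bookkeeping is needed. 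Your approach would work, but the paper's injectivity trick is the cleaner packaging.
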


\begin{proof}
We first consider the identity for $\Pi_x$.
We can assume that $k=0$ since the general case then follows at once by simply setting
$\tilde \CI_\ell = \CI_{k+\ell}$.
We also assume without loss of generality that $\tau$ is homogeneous of degree $\alpha$, 
so that 
\begin{equs}
\bigl(\Pi_x \CI_0(X^\ell \tau)\bigr)(y) &= \int (z-x)^\ell K(y,z)\, \bigl(\Pi_x \tau\bigr)(dz) \label{e:firstIden}\\
&\quad - \sum_{k} {1\over k!} \int (z-x)^\ell (y-x)^k \,D_x^k K(x,z)\, \bigl(\Pi_x \tau\bigr)(dz)\;,
\end{equs}
where the sum is constrained by $|k|_\s < \alpha + \beta + |\ell|_\s$.
On the other hand, one has
\begin{equs}
\sum_{m} \binom{\ell}{m} &\bigl(\Pi_x X^m \CI_{n}(\tau))\bigr)(y) = 
\sum_m \binom{\ell}{m} (y-x)^m \int (z-y)^{\ell-m} K(y,z)\, \bigl(\Pi_x \tau\bigr)(dz) \\
&- \sum_{m,k} \binom{\ell}{m}(y-x)^m \int {(y-x)^{k} \over k!}D_x^k \big((z-x)^{\ell-m} K(x,z)\bigr)\, \bigl(\Pi_x \tau\bigr)(dz)\;,
\end{equs}
where the sum this time is constrained by $|k|_\s < \alpha + \beta + |\ell-m|_\s$.
The first term in this expression clearly equals the first term of \eqref{e:firstIden}
by the binomial theorem, so we only need to consider the second term. The integrand can be written as
\begin{equs}
\sum_{m,k} &\binom{\ell}{m}(y-x)^m {(y-x)^{k} \over k!}D_x^k \big((z-x)^{\ell-m} K(x,z)\bigr) \\
&= \sum_{m,k,p} {  (-1)^p \ell! \over m! (\ell-m-p)! p! (k-p)!}
(y-x)^{k+m} (z-x)^{\ell-m-p} D_x^{k-p} K(x,z)\;,
\end{equs}
where $p$ is constrained by $p \le k \wedge (\ell-m)$.
Setting $q = k-p$ and $r = k+m$, this can be written as
\begin{equ}
\sum_{q,r,p} {(-1)^{p} \ell! \over (r-q-p)! (\ell-r+q)! p! q!}
(y-x)^{r} (z-x)^{\ell-r+q} D_x^{q} K(x,z)\;,
\end{equ}
where the sum is constrained by the fact that all three variables are positive mutiindices and furthermore
\begin{equ}
|r|_\s < \alpha + \beta + |\ell|_\s\;,\quad p+q \le r\;,\quad
r \le \ell+q\;.
\end{equ}
It follows that for any fixed values of $q$ and $r$ the above sum vanishes, except when
$r-q = 0$, so that it equals
\begin{equ}
\sum_{q} {1 \over q!}
(y-x)^{q} (z-x)^{\ell} D_x^{q} K(x,z)\;,
\end{equ}
constrained by $|q|_\s < \alpha + \beta + |\ell|_\s$, which is precisely the 
integrand appearing in the second term of \eqref{e:firstIden}.

The second identity immediately follows from the first one. Indeed, 
both sides take values in the Taylor polynomials by the definition of an admissible model.
Furthermore, the first identity implies that
\begin{equs}
\Pi_x \CI(\Gamma_{xy}(X^\ell \tau))
&= \sum_{p \le \ell} \binom{\ell}{p} (x-y)^{\ell-p} \Pi_x \CI(X^p \Gamma_{xy} \tau) \label{e:calculation}\\
&= \sum_{p \le \ell} \sum_{m \le p} \binom{\ell}{p} \binom{p}{m} (x-y)^{\ell-p} \Pi_x X^{p-m} \CI_{m}(\Gamma_{xy}\tau) \\
&= \sum_{m \le \ell} \sum_{n \le \ell-m} \binom{\ell}{m} \binom{\ell-m}{n} (x-y)^{\ell-m-n} \Pi_x X^{n} \CI_{m}(\Gamma_{xy}\tau)\\
&= \sum_{m \le \ell} \binom{\ell}{m}  \Pi_x \bigl((\Gamma_{xy} X^{\ell-m}) \CI_{m}(\Gamma_{xy}\tau)\bigr)\;.
\end{equs}
As a consequence of combining this with the first identity of \eqref{e:combinatorial}, 
both sides of the second identity of \eqref{e:combinatorial} are equal after applying $\Pi_x$
to them. Since furthermore both sides belong to the space of Taylor polynomials on which 
$\Pi_x$ is injective, the claim follows.
\end{proof}

\begin{corollary}\label{cor:commutation}
In the context of Lemma~\ref{lem:commute}, write $\CK_k$ for the integration operator
associated to $\CI_k$ as in \cite{regularity} and write $\CL_\gamma$ for the
Taylor lift $\CC^\gamma \to \CD^\gamma$. Then, for every $F \in \CD_\chi^\gamma$ with
$\chi \le 0 < \gamma$
and $g \in \CC^{\theta-\chi}$ with $\theta \in (0,\gamma]$ (and such that $\theta + \beta \not \in \Z$), 
one can find a function $\phi \in \CC^{\theta + \beta}$ such that, setting 
\begin{equ}[e:defG]
G = \CL_{\theta+\beta} \phi + \sum_{|\ell|_\s < \theta-\chi}{1\over \ell!} \CL_{\theta-\chi -|\ell|_\s}(D^\ell g) \,\CK_{\ell} \bigl(F\bigr) \;,
\end{equ}
one has $G \in \CD^{\theta + \beta}$ and $\CR G = K (g\CR F)$.
\end{corollary}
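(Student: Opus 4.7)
Proof plan:

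The plan is to define $G_0 := \sum_{|\ell|_\s < \theta-\chi} \frac{1}{\ell!} \CL_{\theta-\chi-|\ell|_\s}(D^\ell g)\,\CK_\ell(F)$ directly as the main term, to compute its reconstruction via the Taylor-expansion structure of $g$, and to show that the residual $\phi(x) := K(g \CR F)(x) - \CR G_0(x)$ has H\"older regularity $\theta+\beta$. Setting $G := \CL_{\theta+\beta}\phi + G_0$ will then automatically satisfy $G \in \CD^{\theta+\beta}$ and $\CR G = K(g \CR F)$, yielding the decomposition \eqref{e:defG}.

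For each $\ell$ with $|\ell|_\s < \theta-\chi$, the multilevel Schauder theorem \cite[Thm.~5.12]{regularity} applied to the $(\beta+|\ell|_\s)$-regularising kernel $K_\ell$ of \eqref{e:admissible} places $\CK_\ell(F)$ in $\CD^{\gamma+\beta+|\ell|_\s}_{\chi+\beta+|\ell|_\s}$ with $\CR \CK_\ell(F)(x) = \int (y-x)^\ell K_0(x,y)\,\CR F(dy)$. Since $D^\ell g \in \CC^{\theta-\chi-|\ell|_\s}$, the multiplication rules of \cite[Sec.~6]{regularity} make each summand, and hence $G_0$, belong to $\CD^{\theta+\beta}$. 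Using that the reconstruction of the product of two modelled distributions of positive regularity equals the pointwise product of their reconstructions, one gets $\CR G_0(x) = \sum_\ell \frac{D^\ell g(x)}{\ell!} \int (y-x)^\ell K_0(x,y)\,\CR F(dy)$, whence
\[ \phi(x) = \int K_0(x,y)\Bigl(g(y) - \sum_{|\ell|_\s < \theta-\chi} \frac{D^\ell g(x)}{\ell!}(y-x)^\ell\Bigr)\,\CR F(dy). \]

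The bracket in this integrand is the Taylor remainder of $g$ at $x$, of order $\|y-x\|_\s^{\theta-\chi}$ since $g \in \CC^{\theta-\chi}$. Using the kernel decomposition $K_0 = \sum_n \tilde K_n$ of Definition~\ref{def:regK}, the $n$-th slice is a test function of $L^\infty$ size $2^{(|\s|-\beta)n}\cdot 2^{-(\theta-\chi)n}$ supported in a parabolic ball of radius $2^{-n}$, against which $\CR F$ pairs with strength $2^{\chi n}$ (thanks to the $\CD^\gamma_\chi$-estimate on $F$ and the reconstruction theorem). The $n$-th contribution is therefore bounded by $2^{-(\theta+\beta)n}$, and summing the geometric series gives $|\phi(x)|\lesssim 1$. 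The standard splitting-of-scales argument at $\|x-x'\|_\s$ then upgrades this to $|\phi(x)-\phi(x')|\lesssim \|x-x'\|_\s^{\theta+\beta}$, so that $\phi \in \CC^{\theta+\beta}$ (the hypothesis $\theta+\beta \notin \Z$ ensures this coincides with the standard H\"older space).

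The main obstacle is establishing the clean formula for $\phi$ in the display above, because each $\CK_\ell(F)$ in fact carries additional polynomial correction terms from the $J$ and $\CN$ operators used in the construction of $\CK$ in \cite[Sec.~5]{regularity}, whose contribution to $\CR G_0$ would \emph{a priori} pollute the pure Taylor-remainder structure. Lemma~\ref{lem:commute} is tailored precisely to handle this: its first identity relates $\CI_0(X^\ell F)$ to the basis vectors $X^{\ell-m}\CI_m(F)$ by a polynomial transformation matching the Leibniz expansion of $\CL(g)\cdot F$, so that the polynomial corrections telescope and leave only the expected remainder. Once this matching is verified, the analytic bound above is essentially a variant of the multilevel Schauder estimate combined with Taylor's theorem.
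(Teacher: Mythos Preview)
Your approach differs substantially from the paper's. The paper never estimates $\phi$ directly: instead it introduces the auxiliary object $\bar G = \CK_0\bigl(\CL_{\theta-\chi}(g)\,F\bigr)$ and uses \emph{both} identities of Lemma~\ref{lem:commute} to show that $\Pi_x G(x)=\Pi_x\bar G(x)$ and $\bar\CQ\bigl(G(x)-\Gamma_{xy}G(y)\bigr)=\bar\CQ\bigl(\bar G(x)-\Gamma_{xy}\bar G(y)\bigr)$. Since $\bar G\in\CD^{\theta+\beta}$ by the Schauder estimate, this immediately yields both $G\in\CD^{\theta+\beta}$ and $\CR G=K(g\CR F)$, and the regularity $\phi\in\CC^{\theta+\beta}$ then falls out as a consequence rather than being proved by hand. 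Your route --- compute $\phi(x)=\int K_0(x,y)\bigl[g(y)-T_xg(y)\bigr]\,\CR F(dy)$ and bound it scale by scale --- is a legitimate alternative for establishing $\phi\in\CC^{\theta+\beta}$, and is closer in spirit to Corollary~\ref{cor:commute}.

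There is, however, a genuine gap in your argument for $G\in\CD^{\theta+\beta}$. The multiplication rule only gives $\CL_{\theta-\chi-|\ell|_\s}(D^\ell g)\,\CK_\ell(F)\in\CD^{\gamma_\ell}$ with $\gamma_\ell=(\gamma_1+\alpha_2)\wedge(\gamma_2+\alpha_1)$, where $\alpha_2=\min(0,\chi+\beta+|\ell|_\s)$ is the regularity of the sector carrying $\CK_\ell(F)$. Whenever $\chi+\beta+|\ell|_\s>0$ (which happens for all large enough $\ell$, and already for $\ell=0$ if $\chi+\beta>0$), one gets $\gamma_\ell=\theta-\chi-|\ell|_\s<\theta+\beta$. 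In particular $G_0$ need \emph{not} lie in $\CD^{\theta+\beta}$: its polynomial increments at degree $0$ can genuinely be of size $\|x-y\|^{\theta-\chi-|\ell|_\s}$. Consequently, even with $\phi\in\CC^{\theta+\beta}$ in hand, you cannot conclude $G=\CL_{\theta+\beta}\phi+G_0\in\CD^{\theta+\beta}$ by simply adding the two pieces. What rescues the situation is a cancellation in the polynomial increments between $\CL\phi$ and $G_0$, and this is precisely what the paper's $\bar\CQ$-identity (coming from the \emph{second} identity of Lemma~\ref{lem:commute}) captures. Your last paragraph senses that the lemma is needed ``for the polynomial corrections'', but the issue is not the formula for $\CR G_0$ --- that one is clean --- it is the $\CD^{\theta+\beta}$-bound on the polynomial components of $G$, and your outline does not explain how the lemma closes that gap.
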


\begin{proof}
A straightforward calculation virtually identical to \eqref{e:calculation}
shows that, as a consequence of the first identity
of Lemma~\ref{lem:commute}, one has
\begin{equ}[e:identityBasic]
\Pi_x \CI(\CL_{\theta-\chi}(g)(x)F(x)) = 
\sum_{|\ell|_\s < \theta-\chi}{1\over \ell!}  \,\Pi_x \bigl(\CL_{\theta-\chi-|\ell|_\s}(D^\ell g)(x) \CI_{\ell} \bigl(F(x)\bigr)\bigr)\;.
\end{equ}

For $F\in \CD^\gamma_\chi$, write $\CN_k (F) = \CK_k(F) - \CI_k(F)$, so that $\CN_k (F)$ takes values in the 
Taylor polynomials of degree at most $\gamma+\beta$, and set
\begin{equ}
\Phi = \CN_0 (\CL_{\theta-\chi}(g)F) - \sum_{|\ell|_\s < \theta-\chi}{1\over \ell!} \CL_{\theta-\chi-|\ell|_\s}(D^\ell g) \,\CN_{\ell} \bigl(F\bigr)\;.
\end{equ}
We then set
\begin{equs}
 G &= \Phi + \sum_{|\ell|_\s < \theta-\chi}{1\over \ell!} \CL_{\theta-\chi-|\ell|_\s}(D^\ell g) \,\CK_{\ell} \bigl(F\bigr) \\
&= \CN_0 (\CL_{\theta-\chi}(g)F) + \sum_{|\ell|_\s < \theta-\chi}{1\over \ell!} \CL_{\theta-\chi-|\ell|_\s}(D^\ell g) \,\CI_{\ell} \bigl(F\bigr)\;.
\end{equs}
Combining this with \eqref{e:identityBasic}, it follows that, setting
$\bar G = \CK_0(\CL_{\theta-\chi}(g)F)$, one has
\begin{equ}[e:TaylorG]
\Pi_x  G(x) =
\Pi_x \CN_0 (\CL_{\theta-\chi}(g)F)(x) + \Pi_x \CI(\CL_{\theta-\chi}(g)(x)F(x))
= \Pi_x \bar G(x)\;.
\end{equ}
Combining this with the second identity of Lemma~\ref{lem:commute} and writing $\bar\CQ$
for the projection onto the Taylor polynomials, we conclude that
\begin{equ}
\bar\CQ\bigl( G(x) - \Gamma_{xy} G(y)\bigr) =
\bar\CQ\bigl(\bar G(x) - \Gamma_{xy}\bar G(y)\bigr)\;.
\end{equ} 
Since furthermore $\bar G$  and each of the terms
$\CL_{\theta-\chi-|\ell|_\s}(D^\ell g) \,\CK_{\ell} \bigl(F\bigr)$
belong to $\CD^{\theta + \beta}$, we conclude that one
necessarily has $ G \in \CD^{\theta + \beta}$. This in turn implies that
$\Phi \in \CD^{\theta + \beta}$, so that it is the lift of a 
function $\phi \in \CC^{\theta+\beta}$. By \eqref{e:TaylorG},
we furthermore have $\CR G = \CR \bar G = K (g\CR F)$, thus concluding the proof.
\end{proof}

One simple but very useful corollary of this result can be formulated as follows.

\begin{corollary}\label{cor:commute}
Let $\zeta \in \CC^\chi$ with $\chi \le 0$, let $K$ and $K_k$ be as above, and let $g \in \CC^{\theta-\chi}$ with 
$\theta > 0$ and $\theta + \beta \not \in \Z$. Then,
\begin{equ}
K(g\zeta) - \sum_{|\ell|_\s < \theta-\chi} {D^\ell g \over \ell!} K_\ell \zeta \in \CC^{\theta + \beta}\;. 
\end{equ}
\end{corollary}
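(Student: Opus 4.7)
The plan is to reduce Corollary~\ref{cor:commute} directly to Corollary~\ref{cor:commutation}. The idea is to view the distribution $\zeta$ as the reconstruction of a constant modelled distribution in a toy regularity structure built specifically to encode it. Concretely, I would work in (or adjoin to) a regularity structure containing a single abstract noise symbol $\Xi$ of degree $\chi$ on which the structure group acts trivially, together with the polynomial sector, equipped with the canonical admissible model for the kernels $K_k$ in which $\Pi_x \Xi = \zeta$ (independently of $x$) and all $\Gamma_{xy}\Xi = \Xi$. Then the modelled distribution $F \equiv \Xi$ belongs to $\CD^\gamma_\chi$ for any $\gamma > 0$, and $\CR F = \zeta$.

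Next I would simply apply Corollary~\ref{cor:commutation} to this $F$ and to the given $g \in \CC^{\theta-\chi}$. It produces a function $\phi \in \CC^{\theta+\beta}$ and a modelled distribution
\begin{equ}
G = \CL_{\theta+\beta}\phi + \sum_{|\ell|_\s < \theta-\chi}\frac{1}{\ell!}\CL_{\theta-\chi-|\ell|_\s}(D^\ell g)\,\CK_\ell(F) \in \CD^{\theta+\beta}
\end{equ}
satisfying $\CR G = K(g\,\CR F) = K(g\zeta)$. It remains to identify the right-hand side of this identity. Since $\CK_\ell$ is admissible for the kernel $K_\ell$ and $F$ is a constant modelled distribution reconstructing to $\zeta$, one has $\CR(\CK_\ell F) = K_\ell \zeta$, which is an honest function under our assumptions. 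Multiplication by the Taylor lift of a $\CC^{\theta-\chi-|\ell|_\s}$ function is a continuous operation on modelled distributions that, at the level of reconstructions, simply multiplies the underlying function. Hence
\begin{equ}
\CR G = \phi + \sum_{|\ell|_\s < \theta-\chi}\frac{D^\ell g}{\ell!}\,K_\ell \zeta.
\end{equ}
Comparing the two expressions for $\CR G$ gives $\phi = K(g\zeta) - \sum_{|\ell|_\s < \theta-\chi}(\ell!)^{-1}D^\ell g \cdot K_\ell \zeta$, and since $\phi \in \CC^{\theta+\beta}$ by construction, this is precisely the claimed statement.

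There is essentially no obstacle: all the analytic content sits in Corollary~\ref{cor:commutation} (which itself rests on Lemma~\ref{lem:commute} and the multilevel Schauder framework), so the remaining work is purely bookkeeping. The only mild subtlety worth stating carefully is the identification $\CR(\CL(D^\ell g)\,\CK_\ell F) = D^\ell g \cdot K_\ell \zeta$, but this follows from the admissibility of the model and the fact that $K_\ell \zeta \in \CC^{\beta+|\ell|_\s+\chi}$ is a \emph{function} whenever $\beta + |\ell|_\s + \chi > 0$ — a regime one can always reduce to by truncating the sum at the same cutoff $|\ell|_\s < \theta-\chi$ and, if necessary, splitting low-regularity terms off into $\phi$ using the standard reconstruction-theoretic identifications. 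The assumption $\theta+\beta \not\in \Z$ is needed only to legitimately work in the space $\CC^{\theta+\beta}$ (the non-integer H\"older scale) as required by the hypothesis of Corollary~\ref{cor:commutation}.
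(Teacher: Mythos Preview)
Your proposal is correct and takes essentially the same approach as the paper: introduce a regularity structure with a single noise symbol $\Xi$ (together with Taylor polynomials and their products with $\Xi$ and with the $\CI_\ell(\Xi)$), set $F=\Xi$, apply Corollary~\ref{cor:commutation}, and read off the claim by applying the reconstruction operator to both sides of \eqref{e:defG}. The paper's proof is a one-line version of exactly this; your additional discussion of the identification $\CR(\CL(D^\ell g)\,\CK_\ell F)=D^\ell g\cdot K_\ell\zeta$ is fine but the hedging about ``splitting low-regularity terms off into $\phi$'' is unnecessary.
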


\begin{proof}
It suffices to consider the case of a regularity structure with symbol $\sXi$ (plus Taylor 
polynomials and their products with $\sXi$) and model mapping $\sXi$ to $\zeta$. We 
then apply the reconstruction operator to both sides of \eqref{e:defG} with $F = \sXi$.
\end{proof}

In our context, we will need an analogous result, but 
for the spaces $\CD^{\gamma,\eta}$ of \cite[Sec.~6]{regularity}. 
Furthermore, we will need to be able to cover situations in which 
\cite[Prop.~6.16]{regularity} does not apply because we consider elements taking
values in a sector of regularity below $-2$, so that the reconstruction theorem
 \cite[Prop.~6.9]{regularity} fails. We therefore make use instead of \cite[Lem.~4.12]{Mate}
which, given $F\in \CD^{\gamma,\eta}(V)$ with $V$ of regularity $\alpha \ge \eta$, allows us to 
specify a distribution $\zeta \in \CC^\eta$ such that 
$(\CR F)(\phi) = \zeta(\phi)$ for every test function $\phi$ whose support does not intersect
the plane $P = \{(t,x)\,:\, t = 0\}$. We then have the following result.

\begin{corollary}\label{cor:commutationSingular}
Let $\gamma > 0$, let $V$ be a sector of regularity $\alpha\le 0$, and let $w = (\eta,\sigma,\mu)$
with $\eta, \sigma, \mu \le \alpha$ and $\eta \le \sigma \wedge \mu$, $\eta  + \beta > -2$, 
$\sigma+\beta > -1$. 
For each admissible model $\PPi$, 
let $\CB$ be a Banach space equipped with a bounded map 
$\iota \colon \CB \to \CD^{\gamma,w}(V)$. (Since the latter depends on
$\PPi$ in general, this can also be the case for $\CB$ and / or $\iota$. In this case, we 
assume that $\iota$ is bounded independently of the underlying model.)
Let furthermore $\hat \CR \colon \CB \to \CC^{\eta}$
be a continuous linear operator such that 
$(\hat \CR F)(\phi) = (\CR \iota F)(\phi)$ for every $F \in \CB$ and every test function 
$\phi$ whose support does not intersect the two boundaries.

We furthermore assume that we
have a continuous bilinear map  
\begin{equ}
\CC^{\gamma - \eta} \times \CB \to \CB \qquad
(g,F) \mapsto g\,F
\end{equ}
such that $\iota(gF) = \CL_{\gamma-\eta}(g)\iota F$ and such that
\begin{equ}
\hat \CR(g\,F) = g\,\hat \CR(F)\;,
\end{equ}
where the right hand side is meaningful thanks to the fact that $\gamma > 0$. 
Then, with $\bar \gamma$
and $\bar w$ as in \cite[Lem.~4.12]{Mate}, one can find $\phi \in \CC^{\bar \gamma,\bar w}$ such that
the modelled distribution $G$ given by \eqref{e:defG} (with suitably defined $\CK_\ell$, see the proof) belongs to $\CD^{\bar\gamma,\bar w}$ and 
satisfies $\CR G = K_0 (g\hat \CR F)$.

If furthermore one has a sequence of models $\PPi_n \to \PPi$ with associated 
subspaces $\CB_n \subset \CD^{\gamma,w}(V)$ and reconstruction
operators $\hat \CR_n$, as well as a sequence $F_n \in \CB_n$
such that $\$\iota F_n; \iota F\$_{\gamma,w} \to 0$ and 
$\|\hat \CR_n F_n - \hat \CR F\|_{\eta} \to 0$, then the sequence of modelled
distributions $G_n$ constructed in the first part of the statement converges to
$G$ in the sense that $\$G_n; G\$_{\bar \gamma,\bar w} \to 0$.
\end{corollary}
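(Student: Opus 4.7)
The plan is to mimic the proof of Corollary~\ref{cor:commutation} verbatim at the algebraic/combinatorial level, but to replace each use of the standard Schauder estimate by an application of \cite[Lem.~4.12]{Mate}, which is precisely designed to integrate modelled distributions whose reconstruction is only defined away from a lower-dimensional ``singular set'' (here the union of $P_0$ and $P_1$). Note that the algebraic identities in Lemma~\ref{lem:commute} and equation \eqref{e:identityBasic} depend only on admissibility of the model and hold pointwise, so they transfer to the present setting without change.

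The first step is to define, for each multiindex $\ell$ with $|\ell|_\s < \gamma-\eta$, an integration operator $\CK_\ell$ acting on $\iota\CB$. Since the kernel $K_\ell$ from \eqref{e:admissible} is $(\beta+|\ell|_\s)$-regularising, and since by hypothesis $(\hat \CR F)(\phi) = (\CR\iota F)(\phi)$ away from the boundary, \cite[Lem.~4.12]{Mate} yields a modelled distribution $\CK_\ell(\iota F)$ with $\CR \CK_\ell(\iota F) = K_\ell \star \hat \CR F$; the conditions $\eta+\beta > -2$ and $\sigma+\beta > -1$ (combined with the regularities assumed for $V$) are exactly what is needed to apply that lemma and produce output in $\CD^{\gamma+\beta+|\ell|_\s, w^{(\ell)}}$ for a triple $w^{(\ell)}$ computed according to \cite[Lem.~4.12]{Mate}; taking the minimum across $\ell$ (and with $\bar\gamma$, $\bar w$ as in that lemma) gives the target space $\CD^{\bar\gamma,\bar w}$.

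The second step is to set, exactly as in the proof of Corollary~\ref{cor:commutation},
\begin{equ}
\Phi = \CN_0(\CL_{\gamma-\eta}(g)\,\iota F) - \sum_{|\ell|_\s < \gamma-\eta} \tfrac{1}{\ell!}\, \CL_{\gamma-\eta-|\ell|_\s}(D^\ell g)\,\CN_{\ell}(\iota F)\;,
\end{equ}
where $\CN_\ell = \CK_\ell - \CI_\ell$ takes values in the Taylor polynomials. The algebraic identity \eqref{e:identityBasic} and the combinatorial calculation \eqref{e:calculation} then show that $G$, as given by \eqref{e:defG}, differs from $\bar G \eqdef \CK_0(\CL_{\gamma-\eta}(g)\,\iota F)$ only by an element taking values in the Taylor polynomials, and that $\Pi_x G(x) = \Pi_x \bar G(x)$. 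Since $\bar G$ and every term $\CL_{\gamma-\eta-|\ell|_\s}(D^\ell g)\,\CK_\ell(\iota F)$ belong to $\CD^{\bar\gamma,\bar w}$ (by \cite[Lem.~4.12]{Mate} and the product rule \cite[Lem.~4.3]{Mate}, using that $g \in \CC^{\gamma-\eta}$ is a Taylor-lift factor), it follows that $G \in \CD^{\bar\gamma,\bar w}$, that $\Phi$ is the Taylor lift of some $\phi \in \CC^{\bar\gamma,\bar w}$, and that $\CR G = \CR \bar G = K_0\star (g\,\hat \CR F)$, thanks to the hypothesis $\hat \CR(gF) = g\,\hat \CR F$.

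The final (continuity) statement follows from the joint continuity, in the model and in the input data, of each of the ingredients used: the injection $\iota$ and the map $\hat\CR$ are continuous by assumption, the product $\CL_{\gamma-\eta}(g)\,\iota F$ depends continuously on $F$ in $\CD^{\gamma,w}$, and the integration operators $\CK_\ell$ are continuous in both model and argument by the Lipschitz statement built into \cite[Lem.~4.12]{Mate}. The main obstacle in the argument is really the bookkeeping in Step~2: one has to verify that the triple $\bar w$ produced by the three operations (multiplication by $\CL(g)$, application of $\CN_\ell$, and application of \cite[Lem.~4.12]{Mate}) does indeed dominate the weight triples of every summand so that $\Phi$ lands in a single space $\CC^{\bar\gamma,\bar w}$; this is where the assumption $\eta \le \sigma \wedge \mu$, together with the positivity conditions $\eta+\beta > -2$ and $\sigma+\beta > -1$, is used crucially to guarantee that no reconstruction step is obstructed by the boundaries.
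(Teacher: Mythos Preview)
Your proposal is correct and follows exactly the same approach as the paper's own proof: replace the ordinary Schauder estimates in the argument of Corollary~\ref{cor:commutation} by applications of \cite[Lem.~4.12]{Mate} to define the maps $\CK_\ell$, and then repeat the algebraic computation verbatim. The paper's proof is in fact much terser than yours---it simply says ``virtually identical to Corollary~\ref{cor:commutation}'' and notes that $\eta+\beta>-2$ is what allows \cite[Lem.~4.12]{Mate} to apply---so your write-up is a faithful expansion of the intended argument.
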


\begin{proof}
The proof is virtually identical to that of Corollary~\ref{cor:commutation}. 
The only difference is that we use \cite[Lem.~4.12]{Mate} to define maps
$\CK_k \colon \CB \to \CD^{\bar \gamma_k,\bar w_k}$ (with $\bar \gamma_k$ and $\bar w_k$
defined like $\bar \gamma$ and $\bar w$, but with $\beta$ replaced by $\beta + |k|_\s$) such that 
$\CR \CK_k F = K_k \hat \CR F$. (Since $\eta+\beta > -2$ by assumption, the assumptions
of that lemma are satisfied and $\CR \CK_k F$ is always well-defined
as a distribution on the whole space-time.)
\end{proof}

\section{Reconstruction theorem}
\label{sec:reconstruction}

In this section, we present a version of the reconstruction theorem that allows to 
bypass to some extent the condition $\nu > -1$ appearing in \cite{Mate}.
This appendix was written in collaboration with Máté Gerencsér.
In this section we assume that $P_0 = \{0\}\times \R^d$, $P_1 = \R \times \d D$,
and $P = P_0 \cup P_1$, and we write $|x|_{P_i}$ for the parabolic distance between
$x$ and $P_i$. Generic points $x$, $y$, etc are \textit{space-time} points.

Throughout this section, we fix a regularity structure $(T,G)$ containing the polynomial structure
and such that the product with elements of the polynomial structure $\bar T$ is well-defined in $T$.
We also only consider models such that $\Gamma_{x,y}$ acts on $\bar T$ by translations by $y-x$.
Let us recall from \cite[Sec~8]{regularity} that, given any regularity structure $(T,G)$, a model 
$(\Pi,\Gamma)$ can alternatively be described by a linear map 
$\PPi\colon T\to\CD'$, together with a continuous map $F\colon\R^{d}\to G$ such that,
setting $\Pi_x=\PPi F_x$, $\Gamma_{xy}=(F_x)^{-1}F_y$, the analytic bounds for models are satisfied.
The main assumption we impose on our models in the present setting is the following.
\begin{assumption}\label{as:decomposition}
Setting $T_{<} = \bigoplus_{\alpha \le 1} T_\alpha$,
there exist linear maps $\PPi^+, \PPi^-\colon T_{<} \to \CD'$ 
with $\PPi^+ + \PPi^- = \PPi \restr T_{<}$ and such that
\begin{claim}
\item $\big(\PPi^+\tau\big)(\psi)=0$ for all $\tau \in T_<$ and all $\psi$ supported in $\R\times D^c$ and
$\big(\PPi^- \tau\big)(\psi)=0$ for all $\psi$ supported in $\R\times D$;
\item Setting $\Pi^+_x = \PPi^+F_x$, $\Pi^-_x:=\PPi^-F_x$ the pairs $(\Pi^+,\Gamma)$ and $(\Pi^-,\Gamma)$
are models on $(T_<,G)$ in the sense of \cite{regularity}. (But they are not admissible in general!)
\end{claim}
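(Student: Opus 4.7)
The plan is to establish Assumption~\ref{as:decomposition} for the admissible models $\hPeps$ constructed in Section~\ref{sec:defModel}, following the strategy sketched in the remark immediately preceding the assumption. The goal is to explicitly construct the decomposition $\PPi = \PPi^+ + \PPi^-$ on $T_<$ using the pairing between ``round'' symbols $\<Xi>_i^j\tau$ and ``square'' symbols $\<XiS>_i^j\tau$ built into the regularity structure in Section~\ref{sec:structure}.

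First, I would observe that by inspection of the degree formula \eqref{e:degXX} and the rule \eqref{e:defRule}, every basis vector $\tau \in T_<$ with $\deg\tau < -1$ that is not already in the ``square'' sector is of the form $\tau = \<Xi>_i^j\tau'$ for some $\tau'$ taking values in a sector whose reconstruction is locally integrable. Define $\PPi^+$ on such a basis vector by $\PPi^+(\<Xi>_i^j\tau') := \PPi(\<XiS>_i^j\tau')$, which is automatically supported in $\overline{\R\times D}$ by Definition~\ref{def:admissible}. On the remaining basis vectors $\tau \in T_<$ with $\deg\tau \ge -1$, set $\PPi^+\tau := \one_{\R\times D}\,\PPi\tau$, which is well-defined because $\PPi\tau$ is then locally integrable and $D$ has piecewise-smooth boundary. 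Finally set $\PPi^-\tau := \PPi\tau - \PPi^+\tau$; property (i) follows directly from the construction.

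Next, I would verify the model bounds for the pair $(\Pi^+,\Gamma)$. The bound $|(\Pi^+_x\tau)(\psi_x^\lambda)| \lesssim \lambda^{\deg\tau}$ splits into two regimes: for $\deg\tau < -1$, it reduces to the model bounds for the corresponding ``square'' symbol, which are part of the admissibility hypothesis and hold by Theorem~\ref{theo:model}; for $\deg\tau \ge -1$, it amounts to showing that multiplication by $\one_{\R\times D}$ preserves $\CC^\alpha$ for $\alpha > -1$, which is standard since test functions can be split into pieces supported on either side of $\d D$ without regularity loss. The model relation $\Pi^+_x = \Pi^+_y \Gamma_{yx}$ is then verified basis-vector by basis-vector, using that $\Gamma$ acts as translations on the polynomial part and trivially on noise types, so it preserves the index $(i,j)$ distinguishing $\<Xi>_i^j$ from $\<XiS>_i^j$. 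The bounds for $(\Pi^-,\Gamma)$ then follow by subtraction.

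The main obstacle will be the compatibility with $\Gamma$ for basis vectors of the form $\<Xi>_i^j\tau'$ in which $\tau'$ itself contains integration operators, since then $\Gamma_{xy}(\<Xi>_i^j\tau')$ expands into a sum of lower-degree basis vectors whose decomposition into $\Pi^+$ and $\Pi^-$ pieces must be tracked consistently. The resolution lies in the design of the regularity structure: $\<Xi>_i^j$ and $\<XiS>_i^j$ are both ``noise types'' on which the structure group acts trivially, so the linear map $\iota$ defined in Section~\ref{sec:LLN} by $\iota(\<Xi>_i^j\tau) = \<XiS>_i^j\tau$ commutes with $\Gamma$. This is precisely the consistency condition \eqref{e:consistency} that our renormalised models were built to satisfy, and it ensures that replacing $\<Xi>_i^j$ by $\<XiS>_i^j$ inside any tree is compatible with the action of the structure group, yielding the required model property for both $(\Pi^+,\Gamma)$ and $(\Pi^-,\Gamma)$.
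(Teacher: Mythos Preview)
Your proposal is correct and takes essentially the same approach as the paper. The paper itself only sketches this verification in the remark immediately following Definition~\ref{def:admissible}, where it observes that all basis vectors of degree below $-1$ are of the form $\<Xi>_i^j\tau$ and sets $\PPi^+(\<Xi>_i^j\tau) = \PPi(\<XiS>_i^j\tau)$; your write-up simply fleshes out the details (treatment of components of degree $\ge -1$, model bounds, and the fact that $\iota$ commutes with the structure group) that the paper leaves implicit.
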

\end{assumption}

We will always write $\underline \alpha$ for the lowest degree appearing in our ambient regularity structure $T$.
We also fix $\gamma > 0$ as well as exponents $\eta$ and $\sigma$ on which we make the following assumption.

\begin{assumption}\label{as:exponents}
The exponents satisfy the condition
\begin{equ}\label{eq:exponents}
0>\sigma>-1\geq\underline\alpha \ge \eta >-2\;.
\end{equ}
\end{assumption}

We also use the shorthand $w = (\eta,\sigma,\eta)$ similarly to \cite{Mate} (except that we make the simplifying assumption that 
the ``corner exponent'' coincides with $\eta$ which is not essential but simplifies our argument).
One crucial ingredient for our result is the following.

\begin{lemma}
For every $f \in \CD^{\gamma,w}$, there exist $f_\pm \in \CD^{\sigma,\eta}$ such that 
$f_+(x) = f(x)$ for $x \in \R \times D$ and $f_-(x) = f(x)$ for $x \in \R \times D^c$.
\end{lemma}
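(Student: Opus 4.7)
The plan is the simplest possible: extension by zero. I would define
\[
	f_+(x) = \one_{\R\times D}(x)\, f(x), \qquad f_-(x) = \one_{\R\times D^c}(x)\, f(x),
\]
where the ``multiplication by an indicator'' is really just a case distinction on $x$, so no product of modelled distributions is involved. The required identities $f_+=f$ on $\R\times D$ and $f_-=f$ on $\R\times D^c$ then hold tautologically, and the entire content of the lemma is to verify $f_\pm \in \CD^{\sigma,\eta}$.

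The pointwise bound $\|f_\pm(x)\|_\alpha \lesssim |x|_{P_0}^{(\eta-\alpha)\wedge 0}$ for $\alpha<\sigma$ is immediate from the corresponding $\CD^{\gamma,w}$-bound on $f$, noting that the additional $P_1$-weight $|x|_{P_1}^{\sigma-\alpha}$ present in the $\CD^{\gamma,w}$-estimate goes in the right direction (and trivialises) for $\alpha<\sigma$, which by Assumption~\ref{as:exponents} covers all degrees $\alpha$ at which the $\CD^{\sigma,\eta}$-bound is non-trivial.

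The translation bound $\|f_\pm(x) - \Gamma_{xy} f_\pm(y)\|_\alpha \lesssim |x-y|^{\sigma-\alpha} (|x|_{P_0} \vee |y|_{P_0})^{\eta-\sigma}$ for $\alpha < \sigma$ splits into three cases. When $x, y$ both lie in $\R\times D$, the $\CD^{\gamma,w}$-bound on $f$ in the regime $|x-y| \le |x|_{P_1}$ yields $|x-y|^{\gamma-\alpha} |x|_{P_1}^{\sigma-\gamma}$, which is dominated by $|x-y|^{\sigma-\alpha}$ because $\sigma-\gamma<0$ and $|x-y|\le|x|_{P_1}$; the complementary regime $|x-y|>|x|_{P_1}$ reduces to the pointwise bound via the triangle inequality. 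When $x, y$ both lie in $\R\times D^c$ both modelled distributions vanish identically and there is nothing to prove. The mixed case $x \in \R\times D$, $y \in \R\times D^c$ (or vice versa) is the critical one: there one has $\Gamma_{xy}f_+(y)=0$, so the left-hand side reduces to $\|f(x)\|_\alpha \lesssim |x|_{P_1}^{\sigma-\alpha}$; any parabolic segment from $x$ to $y$ must cross $\R\times\d D$, forcing $|x-y|\ge|x|_{P_1}$, and combining this with $\sigma-\alpha>0$ gives $|x|_{P_1}^{\sigma-\alpha}\le|x-y|^{\sigma-\alpha}$, as required. The argument for $f_-$ is entirely symmetric.

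The main (mild) obstacle is the mixed case above, and the positivity $\sigma-\alpha>0$ for every relevant degree $\alpha$ --- guaranteed precisely by $\sigma > -1 \ge \underline\alpha$ in Assumption~\ref{as:exponents} --- is exactly what allows the $P_1$-singularity of $f$ inside $\R\times D$ to be absorbed into the spatial distance $|x-y|$ across the cut, making a naive cut-off sufficient and avoiding any reflection or higher-order Taylor correction through the boundary. Behaviour near the corner $P_0\cap P_1$ requires no separate treatment, because the corner exponent in $w$ coincides with the $P_0$-exponent $\eta$ by hypothesis, so the relevant $P_0$-bounds for $f_\pm$ are inherited directly from $f$.
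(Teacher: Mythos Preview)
Your extension-by-zero argument breaks in the mixed case. The crucial claim you make there is
\[
\|f(x)\|_\alpha \lesssim |x|_{P_1}^{\sigma-\alpha}\qquad(\alpha<\sigma),
\]
i.e.\ that the low-degree components of $f$ \emph{decay} as $x$ approaches $P_1$. But the pointwise bound defining $\CD^{\gamma,w}$ carries an exponent $(\sigma-\alpha)\wedge 0$, which for $\alpha<\sigma$ gives only boundedness, not vanishing. A constant modelled distribution $f(x)=\tau$ on $\R\times D$ and $f(x)=0$ on $\R\times D^c$, with $\tau$ a noise-type basis vector of degree $\alpha_0<\sigma$ (on which $\Gamma$ acts trivially), lies in $\CD^{\gamma,w}$; its extension by zero $f_+=f$ then satisfies $\|f_+(x)-\Gamma_{xy}f_+(y)\|_{\alpha_0}=1$ for $x\in\R\times D$, $y\in\R\times D^c$ arbitrarily close together, violating the $\CD^{\sigma,\eta}$ translation bound. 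Your inequality $|x|_{P_1}^{\sigma-\alpha}\le|x-y|^{\sigma-\alpha}$ is correct, but the preceding step is not.

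The paper takes a genuinely different route. It first observes that the restriction of $\CQ_{<\sigma}f$ to the \emph{closed} half $\R\times\bar D$ belongs to $\CD^{\sigma,\eta}$ on that domain: the translation bound in $\CD^{\gamma,w}$ does force the low-degree components to extend \emph{continuously} up to $P_1$ from each side, but the two one-sided limits need not agree (this is exactly the content of the Remark following the lemma). One then invokes a regularity-structure version of Whitney's extension theorem to extend from the closed set $\R\times\bar D$ to all of $\R^{d+1}$ while remaining in $\CD^{\sigma,\eta}$. The resulting $f_+$ is nonzero on $\R\times D^c$ in general, which is why a naive cutoff cannot work.
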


\begin{proof}
It follows from the definition of the spaces $\CD^{\gamma,w}$ that the restriction of $\CQ_{<\sigma}f$ to
either $\R \times \bar D$ or $\R \times \overline{D^c}$ belongs to $\CD^{\sigma,\eta}$. In particular, components of
$f$ of degree below $\sigma$ can be extended continuously to $(\R\setminus \{0\}) \times D$. (Note that $\sigma < 0$ though!)
The claim then follows from an adaptation of Whitney's extension theorem to the setting
of regularity structures, see for example \cite[Thm~5.3.16]{WhitneyRS}.
\end{proof}

\begin{remark}
In general there is no reason for $f_+$ and $f_-$ to coincide on $\R \times \d D$.
\end{remark}

We define spaces $\CC^{\alpha,\eta}$ with $\eta \le \alpha < 0$ as consisting of those distributions
$\zeta \in \CC^{\eta}(\R^{1+d}) \cap \CC^{\alpha}(\R^{1+d} \setminus P_0)$ such that
\begin{equ}
\big|\zeta(\psi_x^\lambda)\big| \lesssim \lambda^\alpha |x|_{P_0}^{\eta-\alpha}\;,
\end{equ}
uniformly over $x$ in compacts away from $P_0$, $2\lambda \in (0, 2 \wedge |x|_{P_0}]$,
and test functions $\psi \in \CB$. Here and below we write $\CB$ for the set of functions supported in
the centred (parabolic) ball of radius $1$ and with $r$ derivatives bounded by $1$, where $r$ 
is some fixed sufficiently large value.
Note that if $\eta > -2$, $\zeta \in \CC^{\alpha,\eta}$ is uniquely
determined by its action on test functions supported outside $P_0$, see \cite{regularity}.

\begin{theorem}\label{thm:reconstructDomain}
Under Assumptions~\ref{as:decomposition} and~\ref{as:exponents}, 
there then exists a unique continuous linear operator
$\CR:\CD^{\gamma,w}_P\to\CC^{\underline\alpha,\eta}$ such that
$\CR f(\psi)=\tilde\CR f(\psi)$ for all $\psi$ supported in 
$\R^d\setminus P$,
and such that one has the bound
\begin{equ}\label{eq:reco on boundary}
\big(\CR f-\Pi_x^+ f_+(x)-\Pi_x^- f_-(x)\big)(\psi_x^\lambda)\lesssim \lambda^\sigma|x|_{P_0}^{\eta-\sigma}
\end{equ}
uniformly over $x\in P_1\setminus P_0$ (in compacts), over $\lambda\in(0,1]$ such that
$2\lambda\leq |x|_{P_0}$, and over $\psi\in\CB$.
\end{theorem}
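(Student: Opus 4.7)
The plan is to exploit the decomposition $\PPi = \PPi^+ + \PPi^-$ to build $\CR f$ as a sum of two pieces, each capturing the local behavior on one side of $P_1$, thereby bypassing the usual obstruction $\underline\alpha\le-1$ that prevents a direct application of \cite[Thm~4.9]{Mate}. First, I would use the lemma preceding the theorem statement to lift $f$ to the pair $f_\pm\in\CD^{\sigma,\eta}$, with $f_+(x) = f(x)$ on $\R\times D$ and $f_-(x) = f(x)$ on $\R\times D^c$. The candidate reconstruction is
\[
\CR f := \CR^+ f_+ + \CR^- f_-,
\]
where $\CR^\pm$ is a reconstruction operator associated to the model $(\Pi^\pm,\Gamma)$; the support property of $\PPi^\pm$ ensures that $\CR^\pm f_\pm$ is essentially supported in $\R\times\overline{D}$ (resp.\ $\R\times\overline{D^c}$), so that on test functions supported away from $P$ the two pieces recombine to reproduce $\tilde\CR f$.

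To construct $\CR^\pm f_\pm$ itself, note that since $\sigma\le 0$ the reconstruction of $\CD^{\sigma,\eta}$ is a priori not unique, so I would proceed by a Daubechies-wavelet approximation in the spirit of \cite[Sec.~3]{regularity} and \cite[Sec.~4]{Mate}, using a wavelet basis $\{\phi^{(n)}_y\}$ adapted to the parabolic scaling and aligned with $P_0$. At each scale I set
\[
\CR^{(n),\pm}f_\pm := \sum_y \bigl(\Pi^\pm_y f_\pm(y)\bigr)\bigl(\phi^{(n)}_y\bigr)\,\phi^{(n)}_y,
\]
and prove convergence in the topology of $\CC^{\underline\alpha,\eta}$ by controlling the differences between consecutive scales using the $\CD^{\sigma,\eta}$-bounds for $f_\pm$ together with the model bounds on $(\Pi^\pm,\Gamma)$; the key consistency estimate is
\[
\bigl|\bigl(\Pi^\pm_y f_\pm(y) - \Pi^\pm_{y'} f_\pm(y')\bigr)(\phi^{(n)}_y)\bigr|\lesssim 2^{-n\sigma}\,|y|_{P_0}^{\eta-\sigma},
\]
uniform over neighboring grid points. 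The support constraint inherited from $\PPi^\pm$ then selects a canonical limit among the otherwise non-unique reconstructions, and by construction $\CR^+ f_+ + \CR^- f_-$ agrees with $\tilde\CR f$ on test functions supported in $\R^{d+1}\setminus P$.

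To verify \eqref{eq:reco on boundary} at $x\in P_1\setminus P_0$ with $2\lambda\le |x|_{P_0}$, I would decompose $\psi_x^\lambda$ on the wavelet basis and sum the contributions from all relevant scales: by design the leading term is $(\Pi^+_x f_+(x) + \Pi^-_x f_-(x))(\psi_x^\lambda)$, while the remaining sum telescopes via the consistency estimate above to give exactly $\lambda^\sigma |x|_{P_0}^{\eta-\sigma}$. For uniqueness, the difference $\zeta$ of two candidate reconstructions lies in $\CC^{\underline\alpha,\eta}$ and is supported on $P$; the bound \eqref{eq:reco on boundary} with $\sigma>-1$ rules out any mass on the parabolically codimension-one surface $P_1\setminus P_0$, while the $\CC^{\underline\alpha,\eta}$ regularity with $\eta>-2$ rules out mass on $P_0$, so that $\zeta=0$. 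The main technical obstacle I anticipate is the behavior near the corner $P_0\cap P_1$, where the $P_0$ and $P_1$ singularities of $\CR f$ interact: controlling the wavelet sum there requires carefully combining the boundary-aware decomposition on the $P_1$-side with the $P_0$-weighted bounds on $f_\pm$, and it is essential both that we have fixed $\mu = \eta$ in $w = (\eta,\sigma,\eta)$ and that $\Gamma_{xy}$ acts by translations on $\bar T$, since otherwise incompatible rates in the two directions would prevent the dyadic sum from converging in $\CC^{\underline\alpha,\eta}$.
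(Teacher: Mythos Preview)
Your candidate $\CR f := \CR^+ f_+ + \CR^- f_-$ does satisfy the boundary bound \eqref{eq:reco on boundary}, but it fails the other requirement of the theorem, namely that $\CR f = \tilde\CR f$ on test functions supported away from $P$. The point is that $f_\pm \in \CD^{\sigma,\eta}$ forces $f_\pm$ to take values in $T_{<\sigma}$, so on $\R\times D$ one has $f_+(x) = \CQ_{<\sigma} f(x)$, not the full $f(x)$ (this is explicit in the proof of the lemma you invoke). Your wavelet limit $\CR^+ f_+$ is therefore a reconstruction of $\CQ_{<\sigma} f$, and since $\sigma<0$ there is no reason for it to coincide with the unique reconstruction of $f\in\CD^{\gamma}$. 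Concretely, take a structure with a single noise $\Xi$ of degree $\underline\alpha<-1$ and the polynomials, and $f=a\,\Xi+b\,\one$ with $a,b$ smooth. Then $f_+=a_+\Xi$, and for $\psi$ supported in $\R\times D$ your construction gives $(\CR^+ f_+ + \CR^- f_-)(\psi)=(a\zeta)(\psi)$, whereas $\tilde\CR f(\psi)=(a\zeta+b)(\psi)$; the polynomial part is simply lost. The support property of $\PPi^\pm$ that you invoke pins down \emph{a} canonical element among the $\CD^\sigma$-reconstructions of $f_\pm$, but it does not force that element to be $\tilde\CR f$.

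The paper fixes exactly this by not taking $\CR_+ f_+ + \CR_- f_-$ as the answer but by adding the explicit correction
\[
\sum_{n\ge 0}\sum_{x\in\Xi_n}\bigl(\tilde\CR f-\CR_+ f_+-\CR_- f_-\bigr)\bigl(\phi_{x,n}\psi_y^\lambda\bigr),
\]
built from a dyadic partition of unity $\{\phi_{x,n}\}$ adapted to the distance from $P_1$. For $\psi_y^\lambda$ supported away from $P_1$ this sum collapses to $(\tilde\CR f-\CR_+ f_+-\CR_- f_-)(\psi_y^\lambda)$ and restores the identity $\CR f=\tilde\CR f$ there; near $P_1$ each summand is controlled by $|z|_{P_0}^{\eta-\sigma}\bar\lambda^{\sigma}$ with $\bar\lambda\sim 2^{-n}$, and the sum over scales converges precisely because $\sigma>-1$. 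Your uniqueness argument and your treatment of the bound \eqref{eq:reco on boundary} are in the right spirit, but the construction itself needs this correction term (or an equivalent device that reinjects the $\CQ_{\ge\sigma}$ part of $f$).
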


\begin{remark}
We did not specify the continuity of the map $\CR$ with respect to different models.
We will continue to omit continuity statements in the sequel, on one hand for the sake of easing the presentation, 
and on the other hand due to the fact that since all the operations discussed here are linear,
the ``linearisation trick'' of \cite[Prop~3.11]{HP}  automatically implies all the 
required continuity properties.
\end{remark}

\begin{proof}
The proof is very similarly to that of \cite[Thm~4.10]{Mate}, but
due to the central role of the statement in our proof, we provide some detail.
The uniqueness part is quite straightforward: take two $\CC^{\underline\alpha,\eta}$ distributions $\xi_1,\xi_2$
that have the properties claimed for $\CR f$ in the theorem.
Their difference then vanishes away from $P$, and thanks to the bound \eqref{eq:reco on boundary},
must belong to $\CC^{\sigma,\eta}$.
Since $\sigma > -1$ such a distribution has to vanish on $P_1$ and since $\eta > -2$
it has to vanish on $P_0$, so that $\xi_1 = \xi_2$.

To construct $\CR$, we use essentially the same construction as in the proof of \cite[Prop.~6.9]{regularity}. 
Similarly to the construction of the functions $\phi_{x,n}$ performed there,
we can find, for every $n \in \N$, a countable index set $\Xi_n$ and functions $\phi_{x,n}$ with $n \in \N$
and $x \in \Xi_n$ with the following properties. There exist constants $c_i > 0$ such that:
\begin{claim}
\item[(i)] For every $n \in \N$ and $x \in \Xi_n$ there exists $\psi \in \CB$, $y \in \R^{d+1}$ with 
$|y|_{P_1} = 2^{-n}$ such that, setting $\lambda = 2^{-n-1}$, one has $\phi_{x,n} = c_1 \lambda^{d+2} \psi_{y}^\lambda$.
\item[(ii)] For every ball $B$ of (parabolic) radius $\lambda$, there exist at most $c_2\lambda^{d+1}$ elements $x \in \Xi_n$ with 
$\supp \phi_{x,n} \cap B \neq \emptyset$.
\item[(iii)] For every $y \in \R^{d+1}$ with $0 < |y|_{P_1} \le 1$, one has
$\sum_{n \in \N}\sum_{x \in \Xi_n} \phi_{x,n}(y) = 1$.
\end{claim}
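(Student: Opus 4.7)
The plan is to build the family $\{\phi_{x,n}\}$ by combining a Littlewood--Paley type cutoff in the distance to $P_1$ with a translation-invariant partition of unity on $\R^{d+1}$, in the spirit of the construction in \cite[Prop.~6.9]{regularity} but adapted from a hyperplane to the codimension-$1$ manifold with corners $P_1$. Fix a nonnegative $\rho \in \CC_c^\infty(\R_+)$ supported in $(1/2, 2)$ and normalised so that $\sum_{n \in \Z} \rho(2^n t) = 1$ for every $t > 0$. For $|y|_{P_1} \in (0, 1]$ only terms with $n \ge 0$ contribute, and $y \mapsto \rho(2^n |y|_{P_1})$ is supported in the parabolic shell $S_n \eqdef \{y \,:\, |y|_{P_1} \in (2^{-n-1}, 2^{-n+1})\}$. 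Since $|\cdot|_{P_1}$ is only Lipschitz (and not $\CC^\infty$) near edges and corners of $\d D$, I first fix a finite $\CC_c^\infty$ partition of unity $\{\chi_\alpha\}$ of a neighbourhood of $P_1$ by charts adapted to each face, edge and corner of $\d D$, within each of which $|\cdot|_{P_1}$ is a smooth function of a subset of the spatial coordinates.

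Next fix a nonnegative $\tilde\psi \in \CC_c^\infty(\R^{d+1})$ supported in the parabolic unit ball with $\sum_{k \in \Z^{d+1}} \tilde\psi(z - k) = 1$ on $\R^{d+1}$, and set $\tilde\psi_{n,k}(z) \eqdef \tilde\psi(2^{n+1} z - k)$, so that $\sum_k \tilde\psi_{n,k} \equiv 1$ and each $\tilde\psi_{n,k}$ is supported in the parabolic ball of radius $2^{-n-1}$ around $k/2^{n+1}$. In each chart $\alpha$, I then define
\begin{equ}
\phi_{k,n,\alpha}(z) \eqdef \chi_\alpha(z)\,\rho\big(2^n |z|_{P_1}\big)\,\tilde\psi_{n,k}(z)\;,
\end{equ}
and take $\Xi_n$ to be the disjoint union over $\alpha$ of the indices $k$ for which $\phi_{k,n,\alpha}$ does not vanish identically. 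Any surviving $\phi_{k,n,\alpha}$ is supported in a parabolic ball of radius $\lesssim 2^{-n-1}$ about some point $\tilde y \in S_n$, which lets me rewrite it as $c_1 \lambda^{d+2} \psi_y^\lambda$ with $\lambda = 2^{-n-1}$ and $y$ the (uniquely defined, inside a single chart, for small enough $\lambda$) nearest point to $\tilde y$ lying on the level set $\{|\cdot|_{P_1} = 2^{-n}\}$. The profile $\psi(u) \eqdef c_1^{-1}\phi_{k,n,\alpha}(y + \lambda u)$ has support in a bounded parabolic ball and derivatives bounded uniformly in $n$, $k$ and $\alpha$: after the parabolic rescaling $z = y + \lambda u$, each derivative of $\rho(2^n|\cdot|_{P_1})$ acquires a factor of at most $(2^n \lambda)^{|k|} = 2^{-|k|}$, while $\chi_\alpha$ and $\tilde\psi_{n,k}$ rescale without penalty. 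A large enough universal constant $c_1$ then ensures $\psi \in \CB$, giving (i).

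Property (iii) is then immediate from the telescoping
\begin{equ}
\sum_{n\ge 0}\sum_{x \in \Xi_n} \phi_{x,n}(y) = \sum_{n \ge 0} \rho(2^n |y|_{P_1}) \sum_\alpha \chi_\alpha(y) \sum_{k \in \Z^{d+1}} \tilde\psi_{n,k}(y) = 1\;,
\end{equ}
for $y$ with $|y|_{P_1} \in (0, 1]$. For (ii), the supports at fixed scale $n$ are centred on points of the $2^{-n-1}\Z^{d+1}$-mesh confined to the codimension-$1$ shell $S_n$; the number of such mesh points whose supports meet a given parabolic ball of radius $\lambda$ is bounded, chart by chart, by the parabolic volume of the intersection divided by the mesh cell volume $(2^{-n-1})^{d+2}$, and a direct volume estimate (using the parabolic codimension of $P_1$) yields the required bound. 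The main obstacle is the smoothness check in (i) near lower-dimensional strata of $\d D$, since $|\cdot|_{P_1}$ is only Lipschitz there; this is exactly why the chart decomposition $\{\chi_\alpha\}$ is indispensable, each chart being chosen small enough that $|\cdot|_{P_1}$ restricts to a smooth function (a single-coordinate distance near a face, or a smoothed maximum of such distances near an edge or corner) that agrees with the true distance up to universal constants.
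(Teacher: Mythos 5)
Your construction (radial dyadic cutoff in $|\cdot|_{P_1}$ times a rescaled lattice partition of unity, followed by a local chart decomposition of the boundary) is precisely the kind of adaptation of \cite[Prop.~6.9]{regularity} that the paper has in mind, and properties (ii) and (iii) come out essentially as you describe once the constants are taken care of. So the approach is the right one.

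The one place you need to be more careful is the smoothness of the factor $\rho(2^n|\cdot|_{P_1})$. Your sentence ``within each of which $|\cdot|_{P_1}$ is a smooth function of a subset of the spatial coordinates'' is false: near an edge of $\d D$, the exact distance equals $\min(x_i, x_j)$ for two coordinate directions, and this remains non-smooth on the bisector $\{x_i=x_j\}$ inside \emph{any} neighbourhood of the edge, however small the chart. A later clause (``a smoothed maximum of such distances\ldots that agrees with the true distance up to universal constants'') suggests replacing $|\cdot|_{P_1}$ by a smooth comparable function $d$ --- which is indeed the right fix, and incidentally the word should be ``minimum,'' not ``maximum'' --- but this is inconsistent with your actual definition $\phi_{k,n,\alpha} = \chi_\alpha\,\rho(2^n|\cdot|_{P_1})\,\tilde\psi_{n,k}$, which still uses the non-smooth distance, so the resulting $\phi_{x,n}$ would not be of the form $c_1\lambda^{d+2}\psi_y^\lambda$ with $\psi\in\CB$. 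You should commit to the smooth substitute $d \simeq |\cdot|_{P_1}$ throughout. Once you do so, the exact normalisation $|y|_{P_1}=2^{-n}$ in (i) becomes $|y|_{P_1}\simeq 2^{-n}$, but that is all the rest of the argument actually uses (the proof that follows only invokes $\bar\lambda\simeq|z|_{P_1}$), so nothing is lost. Two further cosmetic points: the rescaling $\tilde\psi(2^{n+1}z-k)$ should be the \emph{parabolic} rescaling (time by $2^{2(n+1)}$, space by $2^{n+1}$), and your choice of $y$ as the nearest point on the level set can sit at distance up to a few multiples of $2^{-n-1}$ from the support of $\phi_{k,n,\alpha}$, so either the scale or the constant $c_1$ in (i) has to be relaxed by a bounded factor.
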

(Note that the sum appearing in the last claim always converges since, by the first two properties, it is guaranteed to only
contain finitely many terms.)

We now write $\CR_\pm$ for the reconstruction operators for $\CD^\gamma$ spaces with $\gamma < 0$
associated to the models $\Pi^\pm$ as in the second part of \cite[Thm~3.10]{regularity}, we fix
$y \in \R^{d+1} \setminus P_0$, $\lambda \le 1 \wedge |y|_{P_0}/C$ for some large enough
(but fixed) constant $C$, and $\psi \in \CB$, and we define
\begin{equ}
(\CR f)(\psi_y^\lambda) = \big(\CR_+ f_+ + \CR_- f_-\big)(\psi_y^\lambda) + 
\sum_{n \ge 0}\sum_{x \in \Xi_n}  \bigl(\tilde \CR f - \CR_+ f_+ - \CR_- f_-\bigr) \big(\phi_{x,n}\psi_y^\lambda\big)\;.
\end{equ}
If one also has $\lambda \le |y|_{P_1}/2$, then the second sum only contains finitely many terms and one
has $(\CR f)(\psi) = (\tilde \CR f)(\psi)$. Since in this case it follows from \cite[Def.~3.1]{Mate} combined with 
\cite[Lem.~6.7]{regularity} that one has the bound
\begin{equ}
|(\tilde \CR f)(\psi_y^\lambda)| \lesssim |y|_{P_0}^{\eta - \underline\alpha} \lambda^{\underline\alpha}\;,
\end{equ}
it remains to consider the convergence of the second term.
Since we can restrict ourselves to the case $|y|_{P_1} \le 2\lambda$, we can assume without loss of 
generality that $y \in P_1 \setminus P_0$. In particular, the terms in the sum vanish unless
$2^{-n} \lesssim \lambda$.

We also note that for these terms, one can write 
\begin{equ}
\phi_{x,n}\psi_y^\lambda = \bar\lambda^{d+2} \lambda^{-d-2} \tilde \psi_{z}^{\bar \lambda}\;,
\end{equ}
for some $z$, some $\tilde \psi \in \CB$, and $\bar \lambda = 2^{-n-1}$.
Provided that $C$ is sufficiently large, properties (i) and (ii) guarantee furthermore that 
$z$ is such that $\bar \lambda \simeq |z|_{P_1}\le |z|_{P_0}$.
It follows from this that, when restricted to the support of $\tilde \psi_{z}^{\bar \lambda}$,
one has $\|\CQ_{<\delta} f\|_{\CD^\delta} \lesssim  |z|_{P_0}^{\eta-\sigma} \bar \lambda^{\sigma-\delta}$,
provided that $\delta \in [\sigma,\gamma]$, so that \cite[Lem.~6.7]{regularity} yields again
\begin{equs}
\bigl|\bigl(\tilde \CR f - \Pi_z f(z) \bigr) \big(\tilde \psi_{z}^{\bar \lambda}\big)\bigr|
&\lesssim  |z|_{P_0}^{\eta-\sigma} \bar \lambda^{\sigma}\;,\\
\bigl|\bigl(\CR_\pm f_\pm - \Pi_z^\pm f_\pm(z) \bigr) \big(\tilde \psi_{z}^{\bar \lambda}\big)\bigr|
&\lesssim |z|_{P_0}^{\eta-\sigma} \bar \lambda^{\sigma}\;.
\end{equs}
Finally, since the support of $\tilde \psi$ is either fully contained in $\R \times D$ or
fully contained in $\R \times D^c$, it follows that 
\begin{equs}
\big|\bigl(\Pi_z f(z) - \Pi_z^+ f_+(z) &- \Pi_z^- f_-(z)\bigr)(\tilde \psi_{z}^{\bar \lambda})\big|
= \sum_{\sigma \le \alpha < \gamma} \big|\bigl(\Pi_z Q_\alpha f(z)\bigr)(\tilde \psi_{z}^{\bar \lambda})\big| \\
&\lesssim |z|_{P_0}^{\eta-\sigma} \sum_{\sigma \le \alpha < \gamma}  |z|_{P_1}^{\sigma-\alpha} \bar \lambda^\alpha
\lesssim |z|_{P_0}^{\eta-\sigma} \bar \lambda^{\sigma}\;,
\end{equs}
and we conclude that 
\begin{equ}
\bigl|\bigl(\tilde \CR f - \CR_+ f_+ - \CR_- f_-\bigr) \big(\phi_{x,n}\psi_y^\lambda\big)\bigr|
\lesssim |z|_{P_0}^{\eta-\sigma} \lambda^{-d-2} 2^{-(\sigma+d+2) n}\;.
\end{equ}
Since there are at most $2^{n(d+1)}$ such terms and since $\sigma > -1$ by assumption, both the
convergence of the sum and the required bound \eqref{eq:reco on boundary} follow.
\end{proof}

\endappendix

\bibliographystyle{Martin}
\bibliography{refs}

\begin{thebibliography}{BCCH17}
\expandafter\ifx\csname url\endcsname\relax
  \def\url#1{\texttt{#1}}\fi
\expandafter\ifx\csname urlprefix\endcsname\relax\def\urlprefix{URL }\fi
\expandafter\ifx\csname href\endcsname\relax
  \def\href#1#2{#2}\fi
\expandafter\ifx\csname burlalt\endcsname\relax
  \def\burlalt#1#2{\href{#2}{\texttt{#1}}}\fi

\bibitem[Bal10]{Bal}
\textsc{G.~Bal}.
\newblock Homogenization with large spatial random potential.
\newblock \emph{Multiscale Model. Simul.} \textbf{8}, no.~4, (2010),
  1484--1510.
\newblock \burlalt{doi:10.1137/090754066}{http://dx.doi.org/10.1137/090754066}.

\bibitem[BCCH17]{Ilya}
\textsc{Y.~{Bruned}}, \textsc{A.~{Chandra}}, \textsc{I.~{Chevyrev}}, and
  \textsc{M.~{Hairer}}.
\newblock {Renormalising SPDEs in regularity structures}.
\newblock \emph{ArXiv e-prints} (2017).
\newblock Journal EMS, to appear.
\newblock \burlalt{arXiv:1711.10239}{http://arxiv.org/abs/1711.10239}.

\bibitem[BCD11]{BookChemin}
\textsc{H.~Bahouri}, \textsc{J.-Y. Chemin}, and \textsc{R.~Danchin}.
\newblock \emph{Fourier analysis and nonlinear partial differential equations},
  vol. 343 of \emph{Grundlehren der Mathematischen Wissenschaften [Fundamental
  Principles of Mathematical Sciences]}.
\newblock Springer, Heidelberg, 2011,  xvi+523.

\bibitem[BG15]{BalGu}
\textsc{G.~Bal} and \textsc{Y.~Gu}.
\newblock Fluctuations of parabolic equations with large random potentials.
\newblock \emph{Stoch. Partial Differ. Equ. Anal. Comput.} \textbf{3}, no.~1,
  (2015), 1--51.
\newblock
  \burlalt{doi:10.1007/s40072-014-0040-8}{http://dx.doi.org/10.1007/s40072-014-0040-8}.

\bibitem[BHZ19]{Lorenzo}
\textsc{Y.~Bruned}, \textsc{M.~Hairer}, and \textsc{L.~Zambotti}.
\newblock Algebraic renormalisation of regularity structures.
\newblock \emph{Invent. Math.} \textbf{215}, no.~3, (2019), 1039--1156.
\newblock \burlalt{arXiv:1610.08468}{http://arxiv.org/abs/1610.08468}.
\newblock
  \burlalt{doi:10.1007/s00222-018-0841-x}{http://dx.doi.org/10.1007/s00222-018-0841-x}.

\bibitem[CH16]{Ajay}
\textsc{A.~{Chandra}} and \textsc{M.~{Hairer}}.
\newblock {An analytic BPHZ theorem for regularity structures}.
\newblock \emph{ArXiv e-prints} (2016).
\newblock \burlalt{arXiv:1612.08138v5}{http://arxiv.org/abs/1612.08138v5}.

\bibitem[CW17]{IntroAjay}
\textsc{A.~Chandra} and \textsc{H.~Weber}.
\newblock Stochastic {PDE}s, regularity structures, and interacting particle
  systems.
\newblock \emph{Ann. Fac. Sci. Toulouse Math. (6)} \textbf{26}, no.~4, (2017),
  847--909.
\newblock \burlalt{arXiv:1508.03616}{http://arxiv.org/abs/1508.03616}.
\newblock \burlalt{doi:10.5802/afst.1555}{http://dx.doi.org/10.5802/afst.1555}.

\bibitem[FH14]{RP}
\textsc{P.~K. Friz} and \textsc{M.~Hairer}.
\newblock \emph{A course on rough paths}.
\newblock Universitext. Springer, Cham, 2014,  xiv+251.
\newblock With an introduction to regularity structures.
\newblock
  \burlalt{doi:10.1007/978-3-319-08332-2}{http://dx.doi.org/10.1007/978-3-319-08332-2}.

\bibitem[GH19]{Mate}
\textsc{M.~Gerencs\'{e}r} and \textsc{M.~Hairer}.
\newblock Singular {SPDE}s in domains with boundaries.
\newblock \emph{Probab. Theory Related Fields} \textbf{173}, no. 3-4, (2019),
  697--758.
\newblock \burlalt{arXiv:1702.06522}{http://arxiv.org/abs/1702.06522}.
\newblock
  \burlalt{doi:10.1007/s00440-018-0841-1}{http://dx.doi.org/10.1007/s00440-018-0841-1}.

\bibitem[GT19]{GuTsai}
\textsc{Y.~Gu} and \textsc{L.-C. Tsai}.
\newblock Another look into the {W}ong-{Z}akai theorem for stochastic heat
  equation.
\newblock \emph{Ann. Appl. Probab.} \textbf{29}, no.~5, (2019), 3037--3061.
\newblock
  \burlalt{doi:10.1214/19-AAP1474}{http://dx.doi.org/10.1214/19-AAP1474}.

\bibitem[Hai14]{regularity}
\textsc{M.~Hairer}.
\newblock A theory of regularity structures.
\newblock \emph{Invent. Math.} \textbf{198}, no.~2, (2014), 269--504.
\newblock \burlalt{arXiv:1303.5113}{http://arxiv.org/abs/1303.5113}.
\newblock
  \burlalt{doi:10.1007/s00222-014-0505-4}{http://dx.doi.org/10.1007/s00222-014-0505-4}.

\bibitem[Hai16]{IntroReg}
\textsc{M.~Hairer}.
\newblock Regularity structures and the dynamical {$\Phi^4_3$} model.
\newblock In \emph{Current developments in mathematics 2014},  1--49. Int.
  Press, Somerville, MA, 2016.
\newblock \burlalt{arXiv:1508.05261}{http://arxiv.org/abs/1508.05261}.

\bibitem[Hai18]{Feynman}
\textsc{M.~Hairer}.
\newblock An analyst's take on the {BPHZ} theorem.
\newblock In \emph{Computation and combinatorics in dynamics, stochastics and
  control}, vol.~13 of \emph{Abel Symp.},  429--476. Springer, Cham, 2018.
\newblock \burlalt{arXiv:1704.08634}{http://arxiv.org/abs/1704.08634}.

\bibitem[HP15]{HP}
\textsc{M.~Hairer} and \textsc{E.~Pardoux}.
\newblock A {W}ong-{Z}akai theorem for stochastic {PDE}s.
\newblock \emph{J. Math. Soc. Japan} \textbf{67}, no.~4, (2015), 1551--1604.
\newblock \burlalt{arXiv:1409.3138}{http://arxiv.org/abs/1409.3138}.
\newblock
  \burlalt{doi:10.2969/jmsj/06741551}{http://dx.doi.org/10.2969/jmsj/06741551}.

\bibitem[HQ18]{Jeremy}
\textsc{M.~Hairer} and \textsc{J.~Quastel}.
\newblock A class of growth models rescaling to {KPZ}.
\newblock \emph{Forum Math. Pi} \textbf{6}, (2018), e3, 112.
\newblock \burlalt{arXiv:1512.07845}{http://arxiv.org/abs/1512.07845}.
\newblock
  \burlalt{doi:10.1017/fmp.2018.2}{http://dx.doi.org/10.1017/fmp.2018.2}.

\bibitem[HS16]{HS}
\textsc{M.~Hairer} and \textsc{H.~Shen}.
\newblock The dynamical sine-{G}ordon model.
\newblock \emph{Comm. Math. Phys.} \textbf{341}, no.~3, (2016), 933--989.
\newblock \burlalt{arXiv:1409.5724}{http://arxiv.org/abs/1409.5724}.
\newblock
  \burlalt{doi:10.1007/s00220-015-2525-3}{http://dx.doi.org/10.1007/s00220-015-2525-3}.

\bibitem[Kru56]{Kruskal}
\textsc{J.~B. Kruskal, Jr.}
\newblock On the shortest spanning subtree of a graph and the traveling
  salesman problem.
\newblock \emph{Proc. Amer. Math. Soc.} \textbf{7}, (1956), 48--50.
\newblock
  \burlalt{doi:10.1090/S0002-9939-1956-0078686-7}{http://dx.doi.org/10.1090/S0002-9939-1956-0078686-7}.

\bibitem[Mar18]{WhitneyRS}
\textsc{J.~Martin}.
\newblock \emph{Refinements of the Solution Theory for Singular {SPDE}s}.
\newblock Ph.D. thesis, Humboldt-Universität zu Berlin, 2018.
\newblock \urlprefix\url{https://d-nb.info/1182541526/34}.

\bibitem[PT11]{PeccatiTaqqu}
\textsc{G.~Peccati} and \textsc{M.~S. Taqqu}.
\newblock \emph{Wiener chaos: moments, cumulants and diagrams}, vol.~1 of
  \emph{Bocconi \& Springer Series}.
\newblock Springer, Milan; Bocconi University Press, Milan, 2011,  xiv+274.
\newblock A survey with computer implementation, Supplementary material
  available online.
\newblock
  \burlalt{doi:10.1007/978-88-470-1679-8}{http://dx.doi.org/10.1007/978-88-470-1679-8}.

\bibitem[Wal86]{Walsh}
\textsc{J.~B. Walsh}.
\newblock An introduction to stochastic partial differential equations.
\newblock In \emph{\'{E}cole d'\'{e}t\'{e} de probabilit\'{e}s de
  {S}aint-{F}lour, {XIV}---1984}, vol. 1180 of \emph{Lecture Notes in Math.},
  265--439. Springer, Berlin, 1986.
\newblock
  \burlalt{doi:10.1007/BFb0074920}{http://dx.doi.org/10.1007/BFb0074920}.

\bibitem[Whi34]{Whitney}
\textsc{H.~Whitney}.
\newblock Analytic extensions of differentiable functions defined in closed
  sets.
\newblock \emph{Trans. Amer. Math. Soc.} \textbf{36}, no.~1, (1934), 63--89.
\newblock \burlalt{doi:10.2307/1989708}{http://dx.doi.org/10.2307/1989708}.

\end{thebibliography}

\end{document}